\numberwithin{equation}{section}
\newtheorem{lettertheo}{Theorem}
\newtheorem{lettercor}[lettertheo]{Corollary}
\newtheorem{theorem}{Theorem}[section]
\newtheorem{corollary}[theorem]{Corollary}
\newtheorem{proposition}[theorem]{Proposition}
\newtheorem{question}[theorem]{Question}
\newtheorem{observation}[theorem]{Observation}
\theoremstyle{definition} 
\newtheorem{definition-proposition}[theorem]{Definition-Proposition}
\newtheorem{definition}[theorem]{Definition}
\newtheorem{remark}[theorem]{Remark}
\newtheorem{example}[theorem]{Example}
\newcommand{\act}{\curvearrowright}
\DeclareMathOperator{\ad}{ad}
\newcommand{\al}{\alpha}
\newcommand{\bs}{\backslash}
\newcommand{\betti}{\beta^{(2)}_1}
\newcommand{\cC}{\mathcal C}
\DeclareMathOperator{\cm}{cm}
\newcommand{\cD}{\mathcal D}
\newcommand{\cE}{\mathcal E}
\newcommand{\cF}{\mathcal F}
\DeclareMathOperator{\Fix}{Fix}
\DeclareMathOperator{\Frac}{Frac}
\DeclareMathOperator{\FC}{ForC}
\DeclareMathOperator{\FM}{ForM}
\DeclareMathOperator{\Forest}{Forest}
\newcommand{\cG}{\mathcal G}
\DeclareMathOperator{\Gr}{Gr}
\DeclareMathOperator{\Groupoid}{Groupoid}
\newcommand{\ga}{\gamma}
\newcommand{\Ga}{\Gamma}
\DeclareMathOperator{\Gar}{Gar}
\DeclareMathOperator{\Hilb}{Hilb}
\DeclareMathOperator{\Hom}{Hom}
\newcommand{\cI}{\mathcal I}
\DeclareMathOperator{\id}{id}
\newcommand{\into}{\hookrightarrow}
\newcommand{\la}{\langle}
\DeclareMathOperator{\Leaf}{Leaf}
\DeclareMathOperator{\length}{length}
\DeclareMathOperator{\link}{link}
\newcommand{\linkd}{\link_{\downarrow}}
\DeclareMathOperator{\mcm}{mcm}
\DeclareMathOperator{\Mon}{Mon}
\newcommand{\N}{\mathbf{N}}
\newcommand{\cO}{\mathcal O}
\DeclareMathOperator{\ob}{ob}
\newcommand{\onto}{\twoheadrightarrow}
\newcommand{\ot}{\otimes}
\newcommand{\ov}{\overline}
\newcommand{\cP}{\mathcal P}
\newcommand{\Q}{\mathbf{Q}}
\newcommand{\R}{\mathbf{R}}
\newcommand{\ra}{\rangle}
\DeclareMathOperator{\Root}{Root}
\newcommand{\cS}{\mathcal{S}}
\DeclareMathOperator{\Set}{Set}
\DeclareMathOperator{\Stab}{Stab}
\DeclareMathOperator{\Sp}{Spine}
\newcommand{\cT}{\mathcal T}
\newcommand{\ti}{\tilde}
\DeclareMathOperator{\TR}{TR}
\newcommand{\cU}{\mathcal U}
\newcommand{\cUF}{\mathcal{UF}}
\DeclareMathOperator{\Ver}{Ver}
\newcommand{\wh}{\widehat}
\DeclareMathOperator{\word}{word}
\newcommand{\Z}{\mathbf{Z}}
\begin{document}

\title[Forest-skein groups I]{Forest-skein groups I: between Vaughan Jones' subfactors and Richard Thompson's groups}

\thanks{
AB is supported by the Australian Research Council Grant DP200100067.}
\author{Arnaud Brothier}
\address{Arnaud Brothier\\ School of Mathematics and Statistics, University of New South Wales, Sydney NSW 2052, Australia}
\email{arnaud.brothier@gmail.com\endgraf
\url{https://sites.google.com/site/arnaudbrothier/}}

\begin{abstract}
Vaughan Jones discovered unexpected connections between Richard Thompson's group and subfactor theory while attempting to construct conformal field theories (in short CFT).
Among other this founded {\it Jones' technology:} a powerful new method for constructing actions of {\it fraction groups} which had numerous applications in mathematical physics, operator algebras, group theory and more surprisingly in knot theory and noncommutative probability theory. 

We propose and outline a program in the vein of Jones' work but where the Thompson group is replaced by a family of groups that we name {\it forest-skein groups}. 
These groups are constructed from diagrammatic categories, are tailor-made for using Jones' technology, capture key aspects of the Thompson group, and aim to better connect subfactors with CFT.
Our program strengthens Jones' visionary work and moreover produces a plethora of concrete groups which satisfy exceptional properties.

In this first article we introduce the general theory of forest-skein groups, provide criteria of existence, give explicit presentations, prove that their first $\ell^2$-Betti number vanishes, construct a canonical action on a totally ordered set, establish a topological finiteness theorem showing that many of our groups are of type $F_\infty$, and finish by studying a beautiful class of explicit examples. 
\end{abstract}

\maketitle

%\keywords{
%{\bf Keywords:} 
%Richard Thompson's group, calculus of fractions, forest, skein relation,complete monoid presentation, topological finiteness property, totally ordered set, operad group, braid group, Jones technology, subfactor, conformal field theory.}

\tableofcontents

%%%%%%%%%%%%%%%%%%%%%%%%%%%%%%%%%%%%%%%%%%%%%%%
%%%%%%%%%%INTRODUCTION%%%%%%%%%%%%%%%%%%%%%%%%%%

\section*{Introduction}
We start by explaining how subfactor theory, conformal field theory, and the groups of Richard Thompson got connected. Second, we outline a general program: what do we wish and what are our motivations and aims. Third, we explain our general formalism based on the manipulation of diagrams and explain how it relates to existing constructions in the literature of group theory. We end by describing the content of this present article and briefly mentioning some works in progress.

{\bf Subfactor, planar algebra, conformal field theory, and braid.}
In the 1980's Vaughan Jones initiated subfactor theory which rapidly became a major field in operator algebras \cite{Jones83}.
Connections with mathematical physics naturally appeared (see \cite{Evans-Kawahigashi92, Kawahigashi18-ICM,Evans22}) but also with seemingly completely unrelated subjects such as knot theory with the introduction of the Jones polynomial \cite{Jones85,Jones87}.
Jones worked very hard in finding new point of views and powerful formalisms to study objects. 
This led to the introduction of Jones' planar algebras: a description of the standard invariant of subfactors using diagrams similar to string diagrams in tensor categories \cite{Jones99}.

{\bf Richard Thompson's groups $F,T,$ and $V$.}
On a rather different part of mathematics lives the three Richard Thompson group $F\subset T\subset V$ \cite{Cannon-Floyd-Parry96}.
These groups naturally appear in various parts of mathematics such as logic, topology, dynamics, complexity theory to name a few.
They follow rather unusual properties and display new phenomena in group theory.
For instance, the groups $T$ and $V$ were the first examples found of finitely presented simple infinite groups. 
Moreover, Brown and Geoghegan proved that $F$ satisfied the topological finiteness property $F_\infty$ and was of infinite geometric dimension, providing the first torsion-free group of this kind, see Section \ref{sec:def-finiteness} for definitions \cite{Brown-Geoghegan84}.
These groups seem completely unrelated to subfactors although they share one common feature: elements of Thompson's groups can be described by planar diagrams (a pair of rooted trees) as shown by Brown \cite{Brown87}.

%Story of JPA-CFT-Thompson
{\bf From subfactors to almost CFT, and Thompson's group $T$.}
In the 2010's Jones discovered an unexpected connection between subfactors and the Thompson groups \cite{Jones17}, see also the survey \cite{Brothier20-survey}.
The story goes as follows.
Subfactors and conformal field theory (in short CFT) in the formalism of Doplicher-Haag-Roberts (DHR), have been closely linked since the late 1980's when Longo quickly realised that the Jones index is equal to the square of the statistical dimension in DHR theory \cite{Doplicher-Haag-Roberts71, Longo89}. 
Moreover, in the mid 1990's Longo and Rehren prove that any CFT produces a subfactor \cite{Longo-Rehren95}.
However, the converse remains mysterious and is one of the most important question in the field, see  \cite{Evans-Gannon11, Bischoff17, Xu18}.
Jones famous question was: "Does every subfactor has something to do with conformal field theory?"
There are some case by case reconstruction results but the most exotic and interesting subfactors are not known to provide CFT at the moment.
Using planar algebras Jones constructed a field theory associated to {\it each} subfactor that is not quite conformal but has a rather discrete group of symmetry.
This symmetry group is nothing else than Thompson's group $T$. 

{\bf New connections and Jones' technology.}
From there, new field theories were introduced but also connections between subfactors, Thompson's groups, and braid groups \cite{Jones17,Jones16}.
For the last connection we invite the reader to consult the survey of Jones \cite{Jones19-survey}.
%Jones technology
Moreover, Jones found an efficient technology for constructing actions of Thompson's group $F$ using diagrams.
Originally, this was done by transforming tree-diagrams of Thompson group into string diagrams in a planar algebra or a nice tensor category. 
Jones quickly realised that it could be greatly generalised founding {\it Jones' technology}.
It works as follows.
Consider a monoid or a category $\cC$ admitting a calculus of fractions. 
This allows to formally inverting elements (or more precisely morphisms) of $\cC$ obtaining a fraction groupoid $\Frac(\cC)$ and fraction groups $\Frac(\cC,e)$ (that are isotropy groups at object $e$).
Now, the new part of Jones is that any functor $\Phi:\cC\to \cD$ starting from $\cC$ and ending in any category can produce an action of $\Frac(\cC)$ and thus of $\Frac(\cC,e)$.
Hence, if a complicated group $G$ can be expressed as $\Frac(\cC,e)$ from a somewhat simpler category $\cC$, then we can produce many actions of $G$ using the simpler structure $\cC$.
For instance, any isometry $H\to H\ot H$ between Hilbert spaces provides a unitary representation of $F$ (which in fact extends to $V$).
This is done by applying Jones' technology to the categorical description of the Thompson groups.
Moreover, Jones' technology is very explicit giving practical algorithms for extracting information of the action constructed such as matrix coefficients in the case of unitary representations.
Applications has been given in various fields such as: knot theory, mathematical physics, general group theory, study of Thompson's groups, operator algebras, noncommutative probability theory \cite{Jones19-survey,Aiello-Brothier-Conti21, Jones18-Hamiltonian, Brothier-Stottmeister19,Brothier-Stottmeister-P, Brothier19WP, Brothier-Jones19,Brothier-Jones19bis, Kostler-Krishnan-Wills20, Kostler-Krishnan22}. 

\subsection*{Program: wishes, motivations, and aims.}
Our program is to consider groups $G=\Frac(\cC,e)$ constructed from categories $\cC$ of diagrams that are well-suited for approximating CFT, but also for applying Jones' technology, and are interesting groups on their own.
More specifically:
\begin{itemize}
\item (Calculus of fractions)
We wish to be able to decide easily if a category of diagrams $\cC$ admits a  cancellative {\it calculus of fractions} requiring $\cC$ to be left-cancellative and satisfying Ore's property (i.e.~if $t,s\in\cC$ have same target, then there exists $f,g\in\cC$ satisfying $tf=sg$). This permits to construct groups.
\item (From subfactors to CFT) Given any subfactor, Jones constructed from its planar algebra $\cP$ a field theory where Thompson's group $T$ took the role of the space-time diffeomorphism group.
With $\cP$ fixed we wish to perform a similar construction but having a more sophisticate and specific symmetry group $\Frac(\cC,e)$. 
\item (Exceptional group properties) Forgetting about CFT and subfactors we wish to construct fraction groups $\Frac(\cC,e)$ that are interesting on their own: groups satisfying some of the exceptional properties of the Thompson groups. This would provide new examples of rare phenomena in group theory but also would help understanding why the Thompson groups are so special. 
\item (Applying Jones' technology)
We wish to apply efficiently Jones' technology to the fraction groups $\Frac(\cC,e)$ for constructing actions. 
This would occur if $\cC$ admits a concrete presentation with few generators as a category or better describe by explicit skein relations of diagrams (see below).
\item (Operator algebras) The {\it Pythagorean algebra} $P$ was a C*-algebra that naturally appeared when applying Jones' technology to the Thompson groups \cite{Brothier-Jones19bis}.
It provides a powerful method for constructing representations of the Cuntz algebra using only {\it finite dimensional} operators, new class of representations of the Thompson groups, and moreover exhibits a rare example of a non-nuclear C*-algebra satisfying the lifting property \cite{Cuntz77,Brothier-Jones19bis,Brothier-Wijesena22,Brothier-Wijesena22bis,Courtney21}.
We wish to extend these connections and applications in a larger setting.
\end{itemize}

\subsection*{Our class of diagrams and structures.}
{\bf Between forests and string diagrams.}
Here is our choice that we believe fit into all the constraints and wishes enunciated above.
This class is inspired by the tree diagrams used by Brown to describe elements of Thompson's group $F$ and string diagrams appearing in Jones' planar algebras.

%Tree diagrams
Recall that an element of $F$ is described by a class of pairs of trees $(t,s)$ that have the same number of leaves.
Here, {\it tree} means a finite ordered rooted binary tree. Two such pairs defined the same element of $F$ when they are equal up to removing or adding corresponding carets on each tree.
We often describe the element associated to $(t,s)$ by a diagram in the plane with the roots of $t$ on the bottom and its leaves on top that are connected to the leaves of $s$ where $s$ is placed upside down on top of $t$.
Now removing a pair of corresponding carets is the operation of removing a diamond as described in the following diagrams:
\[\includegraphics{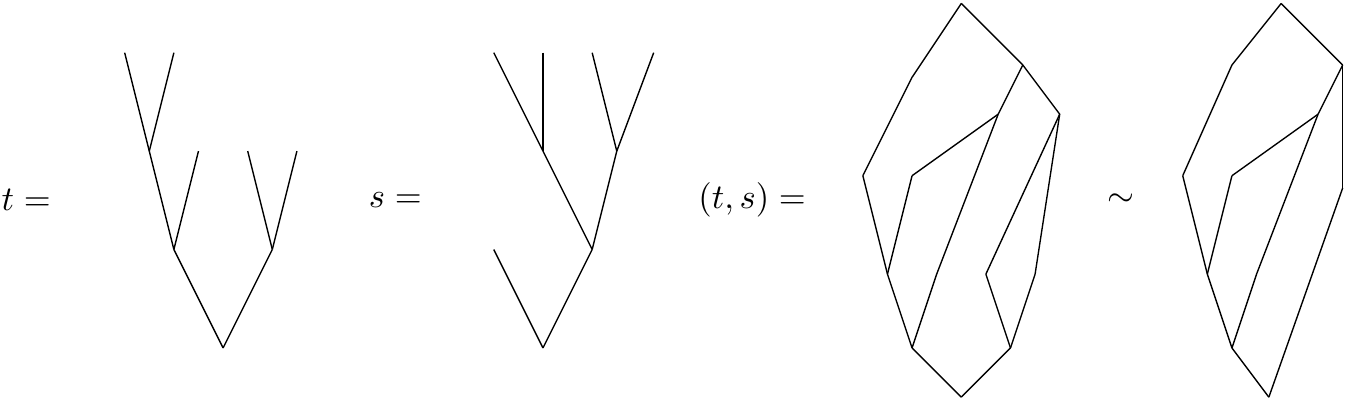}\]
More generally, we can consider pairs of {\it forests} rather than {\it trees} and obtaining the Thompson groupoid.

%String diagrams
Elements of Jones' planar algebras (or other similar structures like Etingof-Nikshych-Ostrik's fusion categories or rigid C*-tensor categories) are described by linear combinations of {\it string diagrams} \cite{Jones99,Etingof-Nikshych-Ostrik05}.
String diagrams are planar diagrams made of a large outer disc, some inner labelled discs, and some non-intersecting strings that start and end at certain boundary points of discs and perhaps some closed loops. 
Here is an example with three inner discs having 4,2,4 boundary points (when read from top to bottom and left to right) and one closed loop:
\[\includegraphics{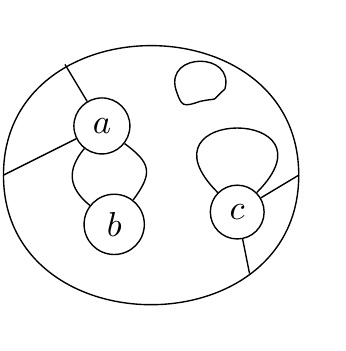}\]
A class of such diagrams, as originally defined by Jones, is generated by a collection of labelled $n$-valent vertices and satisfies relations expressed by linear combinations of string diagrams constructed from the generators.
We call the later {\it skein relations} (as in the context of knot theory and Conway tangles) that are identities closed under taking concatenation of diagrams. 
Explicit examples of skein relations of planar algebras can be found for instance in \cite{Peters10} or \cite[Section 3]{Morrison-Peters-Snyder10} or in the PhD thesis of Liu \cite{Liu-thesis}.
Having such planar presentations for Jones' planar algebras is powerful as it permits to express in few symbols very rich algebraic structures.

We define classes of certain planar diagrams called {\it forest-skein categories} using forest diagrams whose vertices are labelled and satisfy certain skein relations, although removing any linear structures. The set of skein relations completely describes the forest-skein category and provides a {\it skein presentation}. 

{\bf Forest-skein categories.}
We consider all (finite ordered rooted binary) forests that we represent as planar diagrams with roots on the bottom and leaves on top that we order from left to right.
We fix a set $S$ and colour each interior vertex of a forest with an element of $S$ obtaining a {\it coloured forest}.
Here is an example of coloured forest with three roots, eleven leaves, and colour set $S=\{a,b\}:$
\[\includegraphics{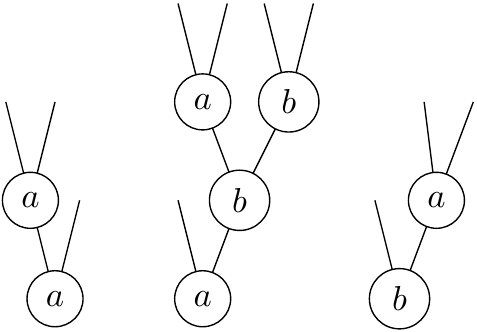}\]
Note, we do not colour the leaves just like boundary points of the exterior disc of a string diagram is not coloured in Jones' framework.
This produces a small category (where composition is given by horizontal stacking) that we call the {\it free forest-skein category over $S$}. 
This category is always left-cancellative.
Now, if $S$ is a single colour, then we recover the usual category of {\it monochromatic} forests whose fraction group is Thompson's group $F$.
If $S$ has more than two colours, then this category never satisfies Ore's property and thus does not admit any calculus of fractions nor produces fraction groups.

The idea for obtaining Ore's property is to mod out by some relations similar to skein relations.
These relations are expressed by a pair of coloured trees $(u,u')$ that have the same number of leaves.
Now, two forests $f,f'$ are equivalent if by substituting {\it subtrees} of $f$ isomorphic to $u$ by $u'$ (or the opposite) we can transform $f$ into $f'$.
The quotient is a small category $\cF$ of equivalence classes of diagrams that we call a {\it forest-skein category}.
Such a $\cF$ is expressed in a compact way by a {\it skein presentation} $(S,R)$ where $S$ is the set of colours and $R$ is a set of pairs of trees $(u,u')$ that are {\it skein relations}.
A skein presentation allows to apply Jones' technology efficiently and to study in an effective manner associated structures like fraction groups.

%Example
Here is an example of a forest-skein category $\cF$ presented by $(S,R)$ with two colours $S=\{a,b\}$ and one relation $R=\{(u,u')\}$ so that 
\[\includegraphics{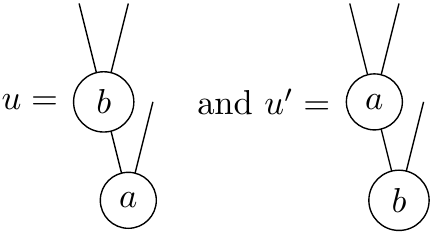}\]
For instance, the following two forests are equivalent:
\[\includegraphics{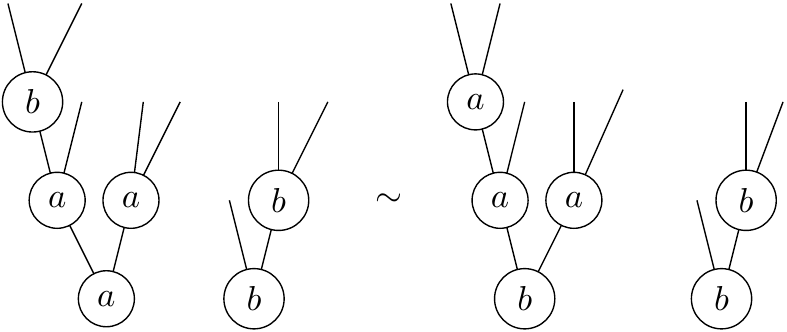}\]

%Litterature
{\bf Forest-skein monoids.}
A skein presentation $(S,R)$ defines a monoid $\cF_\infty:=\FM\la S|R\ra$ that we call a {\it forest-skein monoid}.
It is obtained by considering infinite forests with roots and leaves indexed by $\N_{>0}$ that have finitely many nontrivial trees.
It is indeed a monoid with composition being vertical stacking of diagrams.
Moreover, note that this monoid can be obtained from the forest-skein category by performing a certain inductive limit.
It is an auxiliary structure that is useful for studying $\cF$.

{\bf Forest-skein groups.}
In some occasion $\cF$ is a Ore category (i.e.~it is left-cancellative and satisfies Ore's property) and thus we may formally invert forests obtaining a {\it fraction groupoid} $\Frac(\cF)$.
Now, by considering any $r\geq 1$ we have an isotropy group $\Frac(\cF,r)$ corresponding to all $f\circ g^{-1}$ where $f,g$ have the same number of leaves and have $r$ roots.
We choose a favourite group $G:=\Frac(\cF,1)$ (produced with trees) which we call {\it the} fraction group of $\cF$ and call all these groups {\it forest-skein groups}.
Note that $\Frac(\cF,r)\simeq G$ for any $r$ since we work with binary forests (implying that the fraction groupoid $\Frac(\cF)$ is {\it path-connected} and thus all its isotropy groups are isomorphic).
Similarly, the forest-skein monoid $\cF_\infty$ provides a fraction group $H:=\Frac(\cF_\infty)$ that can be obtained as an inductive limit of the $\Frac(\cF,r)$ (letting $r$ tending to infinity).
All these groups embed in each other but are in general not isomorphic in an obvious way (unlike the classical Thompson group case).
Jones' technology is more effective for studying $G$ rather than $H$ and we thus focus more on $G$.

{\bf Previous constructions in the literature.} 
Examples of groups constructed from diagrams of forests with more than one caret have been previously considered. 
The first appeared in the work of Stein by allowing carets of various degrees, then in the work of Brin and the description of partition of hypercubes with several binary carets (one caret per dimension), and more recently in the work of Burillo, Nucinkis, and Reeves in the description of the Cleary irrational-slope Thompson group using two different binary carets \cite{Stein92,Brin-dV1, Burillo-Nucinkis-Reeves21}. 
They all act by transformation of the unit interval or higher dimensional cube that are piecewise affine.
We will come back to these examples but before let's say that the last example cited fits exactly into the framework of this article, the first fits in an extension of it, and the second not quite as we will explain later. 

{\bf Generalisation and extension of the formalism.}
In this first article we present the general theory of forest-skein categories and their forest-skein groups for coloured binary forests as briefly explained above. 
This produces forest-skein groups similar to Thompson's group $F$.
We list below some classical tree-diagrams that have been considered to construct Thompson-like groups by various authors. 
Each of these extension of Thompson's group $F$ can be applied and combined in our framework of forest-skein categories such as: 
\begin{itemize}
\item add permutations of leaves (obtaining Thompson's groups $T$ and $V$ see \cite{Brown87});
\item replace binary forests by $n$-ary forests for any $n\geq 2$ (by Higman and Brown obtaining Higman-Thompon's groups $V_{n,r}$ and the Brown subgroups $F_{n,r}, T_{n,r}$ \cite{Higman74,Brown87});
\item allow forests whose vertices have various degrees (by Stein \cite{Stein92});
\item label leaves with elements of an auxiliary group (independently by Tanushevski, Witzel-Zaremsky, the author \cite{Tanushevski16,Witzel-Zaremsky18, Brothier22, Brothier21});
\item add braids on top of leaves (independently by Brin and Dehornoy obtaining the braided Thompson group $BV$ \cite{Brin-BV1,Brin-BV2,Dehornoy06});
\item perform more sophisticate Brin-Zappa-Sz\'ep products between forests and an auxiliary category (by Witzel-Zaremsky via general cloning systems of groups \cite{Witzel-Zaremsky18}).
\end{itemize}
All these extensions are interesting and rich in applications. 
We can adapt these constructions in the more general framework of coloured forests with no more technical problems than in the classical case of the Thompson groups. 
In this first article we stick to the binary case for simplicity and clarity of the exposition. 
Although, we do explain how to incorporate permutations and braids using the approaches of Brown and Brin \cite{Brown87,Brin-BV1}.
This produces {\it $X$-forest-skein categories} denoted $\cF^X$ (for $X=F,T,V,BV$) and {\it $X$-forest-skein groups} similar to the three Thompson groups $F,T,V$ and the braided Thompson group $BV$.

{\bf Forest-skein groups among Thompson-like groups.}
Here, we want to locate our class of forest-skein groups among various Thompson-like groups existing in the literature. 
We have listed above a number of ways to generalise the notion of forest-skein groups but here we discuss about the smaller class of forest-skein groups obtained from forest-skein categories of coloured {\it binary} forests without any decoration on leaves, so no permutations, braids, nor auxiliary group, (so manifestly constructed like $F$).
We start by giving properties of forest-skein groups we have observed, then list some known groups that happen to be forest-skein groups, and finally discuss about few families of Thompson-like groups that are related or not to our class of groups.

{\it Properties and observations on forest-skein groups.}
A forest-skein group contains a copy of $F$. In particular, they inherent various properties of $F$ such as not being elementary amenable, having exponential growth, and having infinite geometric dimension.
Although, they do not share all properties of $F$.
Indeed, most of the examples we have encounter have torsion in their abelianisation and have sometimes torsion themselves. 
There are some forest-skein group that contains a copy of the free group of rank two.
Some forest-skein groups are nontrivial extension of $F$ and so are their derived group (preventing them to be simple).
There exist forest-skein groups that decompose as nontrivial direct products and can have some finite conjugacy classes or even a nontrivial center.
Every forest-skein group admits a nice simplicial complex on which it acts. 
Although, this complex does not have any obvious cubical structure and we don't know any CAT(0) cubical complex on which a generic forest-skein group acts.
However, via this complex, we will prove that many forest-skein groups are of type $F_\infty$: for each $k\geq 1$ they admit a classifying space with finitely many cells of dimension smaller than $k$.
As commented below, most Thompson-like groups have natural actions on an interval or a similar structure. No such actions seem to exists for forest-skein groups but we will construct a canonical totally ordered set on which they act.

{\it Some forest-skein groups, see Section \ref{sec:example}}

{\it Thompson's group.}
Thompson's group $F$ is a forest-skein group. It is the unique forest-skein group whose underlying forest-skein category is monochromatic.
Its (standard) skein presentation consists on one colour and no relations.

{\it Higman-Thompson group.}
The group $F_{3,1}$ obtained from ternary (monochromatic) trees is isomorphic to the forest-skein group arising from the skein relation given by
\[\includegraphics{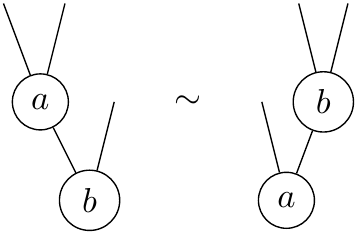}\]
By anticipating definitions and symbols given in Section \ref{sec:formalism} we can write its underlying forest-skein category by
$$\FC\la a,b| b_1 a_1 = a_1 b_2\ra.$$
The author was surprised to find an isomorphism and did not anticipated it from the diagrammatic description. The advantage of having a bicoloured description of $F_{3,1}$ is to have several embeddings of $F$ and $F_{3,1}$ inside it and to produce a priori new groups that are the $T,V,$ and $BV$-versions of it.

{\it Cleary's irrational-slope Thompson's group.}
The irrational-slope Thompson group of Cleary is a group of homeomorphism of the unit interval like $F$ but with elements having slopes powers of the golden ratio rather than powers of two \cite{Cleary00}.
Burillo, Nucinkis, and Reeves have provided a tree-diagrammatic description of it in \cite{Burillo-Nucinkis-Reeves21}, see also the follow up article with the $T$ and $V$-versions of this group \cite{Burillo-Nucinkis-Reeves22}.
It is easy to derive from this tree-like description that Cleary's group is a forest-skein group obtained from a forest-skein category with two colours and one relation
$$\FC\la a,b| a_1a_2=b_1b_1\ra.$$
We recommend the article of Cannon, Floyd, and Parry and the two of Burillo, Nucinkis, and Reeves as they provide very detailed and well-illustrated descriptions specific groups that can naturally be interpreted as forest-skein groups \cite{Cannon-Floyd-Parry96,Burillo-Nucinkis-Reeves21,Burillo-Nucinkis-Reeves22}.

{\it Brin's higher-dimensional Thompson groups and Belk-Zaremsky's extensions of it.}
Brin has constructed higher-dimensional Thompson groups $dV$ for each dimension $d\geq 1$ \cite{Brin-dV1,Brin-dV2}.
Informally, this is done by considering piecewise affine bijections between standard dyadic partitions of hypercubes of dimension $d$.
An element of $dV$ is described by two such partitions with same number of pieces and one correspondence saying which piece is sent to which.
A partition is described by a binary tree having carets coloured by $\{1,\cdots, d\}$ (one colour per dimension) and a labelling of the leaves corresponding to a labelling of the pieces of the partition.
Refining a partition corresponds to grow the associated tree. Unfortunately, they seem to not exists any meaningful way for labelling the leaves of the grown tree that is compatible with the labelling of the elements of the partitions, see the original articles of Brin but also an article of Burillo and Cleary for additional details \cite{Burillo-Cleary10}.
Hence, we may define a forest-skein group admitting similar diagrammatic description but it will not describe transformation of hypercubes and moreover are not isomorphic to Brin's groups, see Section \ref{sec:example}.
For $d=2$, such a forest-skein group admits the following skein presentation:
$$\FC\la a,b| a_1 b_1b_3=b_1a_1a_3\ra$$
which translates the two way to cut a square into four equal pieces. 
Note that Brin's construction extends to any nonempty set $S$ (including infinite sets) giving the group $SV$ of certain transformations of a hypercube where the axis are labelled by $S$.
Belk and Zaremsky have extended Brin's construction by adding in the data a group action $G\act S$ \cite{Belk-Zaremsky22}, see also the expository \cite{Zaremsky22}.
This provides {\it twisted Brin-Thompson groups} $SV_G$ that are particularly interesting when $S$ is infinite. 
Elements of $SV_G$ admit similar descriptions using forests with colour set $S$ but as in the original case of Brin this diagrammatic description must be taken with precaution.

{\it Auxiliary group and morphism.}
Given a group $\Ga$ and group morphism $\phi:\Ga\to\Ga\oplus\Ga$ one can construct a Thompson-like group $G(\Ga,\phi)$ obtained by taking binary (monochromatic) trees, decorating their leaves with elements of $\Ga$, and using $\phi$ for growing trees. 
This was independently discovered by Tanushevski, Witzel-Zaremsky, and the author \cite{Tanushevski16, Witzel-Zaremsky18, Brothier22}.
See the appendix of \cite{Brothier22} for details and a precise comparison of the three approaches.
We have discovered with surprise that this class of groups intersects nontrivially our class of forest-skein groups.
The intersection we found between the two classes is exactly the following.
All groups $G(\Ga,\phi)$ where $\phi(g)=(g,e)$ ($e$ being the neutral element of $\Ga$) and $\Ga=\ker(H\to \Z)$ where $H$ is the fraction group of a Ore monoid admitting a homogeneous presentation and where $H\to\Z$ is the word-length map of the presentation.
We will present in details this connection in the immediate successor of this article \cite{Brothier22-HPM}.

{\it Guba-Sapir's diagram groups.}
It is natural to wonder if forest-skein groups are related to Guba-Sapir diagram groups \cite{Guba-Sapir97}.
Indeed, both classes are constructed by planar diagrams similar to coloured forests (see in particular their dual diagrammatic representations appearing in \cite[Section 4]{Guba-Sapir97}), are related to the Thompson groups, and admit obvious $T$ and $V$-versions.
However, their resemblance seems misleading. 
Indeed, all diagram groups (the $F$-version) are torsion-free with torsion-free abelianisations and act freely properly and isometrically on a CAT(0) cubical complex in contrast with our examples of forest-skein groups.

{\it Hughes' FSS groups.}
Note that picture groups (i.e.~braided diagram groups in Guba-Sapir terminology corresponding to Thompson-like group similar to $V$) forms the same class than Hughes' finite similarity structure groups (in short FSS groups) by \cite{Hughes09, Farley-Hughes17}.
Hence, from our observations of above, it seems that FSS groups and forest-skein groups don't have a large intersection.

{\it Piecewise affine maps or action on trees.}
Many Thompson-like groups act as piecewise affine maps on an interval or a similar structure.
This is the case for the Higman-Thompson groups, Stein groups, Brin's $dV$, and more generally a class of groups defined by Farley and Hughes \cite{Farley-Hughes20}.
Similarly, they typically act on totally disconnected spaces like the Cantor space of binary sequences (by swapping finite prefixes) or more generally as almost automorphisms of trees. The family of groups just mentioned act in that way and other like R{o}ver-Nekrashevych groups, Brin's $dV$ groups and its generalisations like the one of Martinez-Perez and Nucinkis or Belk-Zaremsky's twisted Brin-Thompson groups \cite{Rover99,Nekrashevych04, Martinez-Perez-Nucinkis13, Belk-Zaremsky22}.

So far we have not find any canonical piecewise affine actions nor almost automorphism actions on trees for a generic forest-skein group. 
By construction our class of groups differ greatly from a number of previous families of Thompson-like groups and seems to behave differently as well.
Although, we found for any forest-skein group a canonical action on a {\it totally ordered space} which playes the role of the dyadic rationals of the torus for $F,T,V$, see Section \ref{sec:Q-space}.

{\it Thumann's operad groups.}
In his PhD thesis Thumann defines {\it operad groups} which play a special role in this article \cite{Thumann17}. 
A nice operad defines three operad groups: a planar, symmetric, and braided one corresponding to $F,V$, and $BV$.
The class of operad groups is huge and as far as the author observed all Thompson-like groups similar to $F,V,BV$ can be naturally identified with operad groups including our class of forest-skein groups.

Examples that seem to not admit an obvious description as operad groups are the one constructed using Brin-Zappa-Sz\'ep products as in the theory of cloning systems of Witzel-Zaremsky such as Thompson group $T$ constructed using cyclic permutations or $V^{mock}$ constructed from mock-symmetric permutations that seat in between $F$ and $V$, see \cite[Section 9]{Witzel-Zaremsky18}. Although, all these examples are probably isomorphic to operad groups anyway by encoding the Brin-Zappa-Sz\'ep product in the operad (like every group is the fraction group of itself).
Thumann provides a powerful abstract and very general framework.
Indeed, using some of Thumann's result we deduce that many forest-skein groups are of type $F_\infty$ (for any $n\geq 1$ the group admits a classifying space with finite $n$-skeleton, see Section \ref{sec:def-finiteness}).
We could present all the theory of forest-skein categories and groups using Thumann's operad approach. 
Although, we found many advantages in developing a more specific formalism that enables us to easily and explicitly construct new classes of examples, studying them with specific diagrammatic tools, and capture key features that we want all our structures to share in our program. 

\subsection*{Content of this present article.}
In this present article we introduce the formalism of forest-skein categories and their associated fraction groups called forest-skein groups.
In particular, we define {\it skein presentations} which will play a key role in applying Jones' technology in future articles.
We provide general criteria to decide that a forest-skein category admits a calculus of fractions.  
{\bf This is a key strength of our formalism: we are able to decide for many skein presentations if they can produce groups or not.}
We present two canonical actions of forest-skein groups: one on a simplicial complex giving a classifying space and another on a totally ordered space which mimics the classical action of the Thompson groups on the set of dyadic rationals of the unit torus or interval.
We deduce explicit presentations of forest-skein groups.

We do not apply Jones' technology nor explore connections with CFT in this article. 
Instead, we consider an exceptional property that the Thompson groups $F,T,V$ share. 
Indeed, we prove that a large class of forest-skein groups satisfy the topological finiteness property of being of type $F_\infty$.
It is a rare property for (infinite) groups that has pushed the study of $F,T,V$: Thompson-like groups are the main source of infinite simple groups with good finiteness properties such as $F_\infty$, see Section \ref{sec:def-finiteness} for definitions and references.
We end by providing a huge class of explicit forest-skein categories that are all Ore categories and whose fraction groups are of type $F_\infty$. 

As previously mentioned, the framework chosen has been inspired by Jones' planar algebra, Jones' reconstruction program of CFT, and Brown's diagrammatic description of Thompson's groups. 
The presentation does not use much categorical language but is rather set theoretical and constructive in the vein of the styles of Jones, Brin, and Dehornoy which inspired the author.
On the technical side we use previous work of Dehornoy on monoids and Thumann on operad groups.
The first permits to prove the existence of certain forest-skein groups and the second to establish a finiteness property.

{\bf Detailed plan and main results.}
In Section \ref{sec:formalism} we define forest-skein categories and monoids using presentations (generators and skein relations).
Forest-skein categories are monoidal small categories but we treat them as classical algebraic structures: sets equipped with two binary operations.
We define morphisms between them and in particular forest subcategories and quotients. 
Presented forest-skein categories can be alternatively defined as the solution of a universal problem.
This permits us to announce how Jones' technology will be used.
We provide explicit {\it category} presentations of forest-skein categories and monoids.

Section \ref{sec:Dehornoy} is about left-cancellativity and Ore's property. These are the two conditions we want for constructing groups.
We start by giving definition and some obvious but useful reformulations of them. 
Then we present some more advance techniques due to Dehornoy that we apply to our specific class of forest-skein categories and monoids. 
These techniques provide powerful characterisations and criteria for checking left-cancellativity of forest-skein categories and in fewer cases Ore's property.
In practice we prove Ore's property by explicitly constructing a cofinal sequence of tree.

Sections \ref{sec:forest-groups} and \ref{sec:presentation} are about forest-skein groups and presentations of them.
Given a presented Ore forest-skein category $\cF=\FC\la S|R\ra$ we define the forest-skein groups $G:=\Frac(\cF,1)$ and $H:=\Frac(\cF_\infty).$
Using obvious diagrammatic maps we prove that $G$ and $H$ embed in each other and moreover contain Thompson group $F$.
The monoid presentation of $\cF_\infty$ obtained in Section \ref{sec:formalism} provides a group presentation of $H$.
For $G$ we define a complex $E\cF$ canonically constructed from $\cF$ on which $G$ acts. 
The complex $E\cF$ is obtained by considering the two actions 
$$G\act \Frac(\cF)\curvearrowleft \cF$$
given by restricting the composition of the groupoid $\Frac(\cF)$.
From there we can define a $G$-poset and its associated order complex which is $E\cF$.
This latter is a free $G$-simplical complex and Ore's property of $\cF$ implies that $E\cF$ is contractible.
Hence, the quotient $B\cF:=G\bs E\cF$ is a classifying space of $G$ and in particular $G$ is isomorphic to its Poincar\'e group (taken at any point).
We deduce an infinite presentation of $G$ determined by a skein presentation $(S,R)$ and the choice of a colour $a\in S$.
Using a similar reduction performed for the classical Thompson groups we deduce presentations with less generators and relations that are sometime finite and obtain the following.

\begin{lettertheo}
Let $\cF=\FC\la S|R\ra$ be a presented Ore forest-skein category with forest-skein groups $G=\Frac(\cF,1)$ and $H=\Frac(\cF_\infty)$.
The groups $G,H$ admit explicit group presentations in terms of $S,R$, see Theorem \ref{theo:groupG-presentation}.
Moreover, if $S$ is finite (resp.~$S$ and $R$ are finite), then $G$ and $H$ are finitely generated (resp.~finitely presented).
\end{lettertheo}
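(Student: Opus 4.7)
The plan is to treat $H$ and $G$ separately, extracting natural infinite presentations from the structures built in the previous sections and then pruning them down to finite ones whenever $S$ and $R$ are.

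For $H=\Frac(\cF_\infty)$, I would invoke the explicit \emph{monoid} presentation of $\cF_\infty$ in terms of $(S,R)$ established in Section~\ref{sec:formalism}: its generators are indexed copies of the colours of $S$, and its relations encode both the skein relations of $R$ and the "distant commutation" of carets. Since $\cF$ is Ore, the forest-skein monoid $\cF_\infty$ is itself cancellative and satisfies Ore's property, so by the classical Ore theorem the enveloping group $H$ admits exactly the same presentation as the monoid, with the generators now interpreted as group elements. This handles $H$ almost for free.

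For $G=\Frac(\cF,1)$, the starting point is the contractible free $G$-simplicial complex $E\cF$ and its quotient classifying space $B\cF=G\bs E\cF$ built in Section~\ref{sec:forest-groups}. Since $B\cF$ is a $K(G,1)$, a presentation of $G$ can be read from the $2$-skeleton of $B\cF$ via the standard topological recipe: $G$-orbits of edges in $E\cF$ provide generators, and $G$-orbits of $2$-simplices provide defining relators. The orbits are themselves indexed by pairs of trees in $\cF$, so this produces a completely explicit but highly redundant presentation in which every generator and relation is visibly constructed from $(S,R)$.

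Next comes the Thompson-style reduction: fix a preferred colour $a\in S$, and for each colour $c\in S$ keep only the generator $x_c$ associated to the one-caret tree of colour $c$. Mimicking the classical identities $x_n=x_0^{-(n-1)}x_1x_0^{\,n-1}$ for Thompson's $F$, rewrite every other generator as a conjugate of some $x_c$ by a word in $x_a$; the relations then collapse to finitely many families, one family per element of $R$ together with "small-depth" commutation relations among the $x_c$'s. When $S$ is finite this immediately yields finitely many generators for both $G$ and $H$, and when $R$ is also finite one obtains finitely many relations. Finiteness for $H$ follows from the same reduction applied to its monoid presentation, using the embedding $G\hookrightarrow H$ to transfer the conjugation shortcuts.

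The hard part will be the reduction step. One must verify, colour by colour, that all the infinitely many relations at arbitrary depth—both the distant-commutation relations and the skein relations inserted at every internal position of every tree—are consequences of finitely many relations at shallow depth, once the higher generators have been rewritten in their conjugated form. In the monochromatic Thompson case a single element suffices to perform a global shift, but in the multi-coloured setting one has to track how distinct colours interact under the skein relations of $R$, and in particular to show that Ore's property supplies enough "common right-multiples" to carry out the Tietze transformations uniformly. Overcoming this is precisely what yields the finite generation and finite presentability assertions of the theorem.
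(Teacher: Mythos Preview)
Your approach for $G$ matches the paper's: build the classifying space $B\cF=G\bs E\cF$, read off an infinite presentation from its $2$-skeleton after choosing a maximal subtree (the paper uses the right $a$-vines $t_n=a_{1,1}\cdots a_{n-1,n-1}$), then reduce via conjugation by $a_1$ in the Thompson style. One inaccuracy: you cannot keep a \emph{single} generator per colour; already for monochromatic $F$ one needs $x_0,x_1$, and here the paper keeps $b_1,b_2,\hat b_1,\hat b_2$ for each colour $b$ (with $\hat a_1,\hat a_2$ trivial), giving $4|S|-2$ generators.

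Your treatment of $H$ differs from the paper's and contains a gap. You assert that $\cF_\infty$ is \emph{cancellative} and invoke the classical Ore theorem to conclude that $H$ inherits the monoid presentation; but the paper only assumes \emph{left}-cancellativity, and indeed remarks that $\cF\to\Frac(\cF)$ need not be injective without right-cancellativity. The conclusion you want is still true, but the correct justification is the universal property: $\Frac(\cF_\infty)$ and $\Gr\la S_\infty|R_\infty\ra$ both corepresent the functor ``monoid maps from $\cF_\infty$ to groups'', hence coincide. The paper avoids this issue entirely by taking the opposite route: it first obtains the presentation of $G$, then identifies $H$ with the explicit subgroup $G(a)\subset G$ via the embedding $\gamma_a$ of Proposition~\ref{prop:GHinclusion}, and observes that the subpresentation on the hat-free generators $\{b_j\}$ presents $H$. (Your final sentence has the embedding backwards: it is $H\hookrightarrow G$, not $G\hookrightarrow H$, that is used.) Your direct route is arguably cleaner once patched; the paper's route has the advantage that the reduction to finitely many generators and relations is done once, for $G$, and $H$'s finite presentation is then read off as a subset.
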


Note that the converse is false: there exists a Ore forest-skein category that does not admit any finite skein presentation but its forest-skein group is finitely presented and in fact of type $F_\infty$, see Section \ref{sec:example}.
This theorem provides explicit finite presentations for many forest-skein groups and in particular recovers results of the literature like the presentation of Cleary irrational-slope Thompson group given in \cite{Burillo-Nucinkis-Reeves21}.
Note, in this article we only cover the case of $F$-forest-skein groups but strongly believe that group presentations can be obtained (with some substantial work) to the other $T,V,BV$ cases by combining our presentations of $\Frac(\cF,1)$ with the presentations of $T,V,$ and $BV$ given by Cannon-Floyd-Parry and by Brin \cite{Cannon-Floyd-Parry96,Brin-BV2}.

In Section \ref{sec:Betti} we consider the first $\ell^2$-Betti numbers $\betti(G)$ of forest-skein groups $G$ (restricting to the countable ones) originally defined by Atiyah \cite{Atiyah76}. Using a key result of Peterson and Thom saying that $\betti(G)\leq \betti(H)$ when $H\subset G$ satisfies a weak normality condition (called {\it wq-normal}) we are able to conclude that all our groups have trivial $\ell^2$-Betti numbers \cite{Peterson-Thom11}.
\begin{lettertheo}
If $G$ is a countable forest-skein group and $X=F,T,V,$ or $BV$, then $\betti(G^X)=0$ where $\betti$ stands for the first $\ell^2$-Betti number.
\end{lettertheo}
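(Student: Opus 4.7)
My plan is to apply the Peterson--Thom inequality $\betti(G) \le \betti(H)$, valid for infinite wq-normal subgroups $H \le G$: I will exhibit, inside each forest-skein group $G^X$, an infinite wq-normal subgroup whose first $\ell^2$-Betti number vanishes.

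The natural candidate is the corresponding classical Thompson group (i.e.\ $F, T, V$, or $BV$ according to $X$). As noted in the introductory discussion, every forest-skein group contains a copy of Thompson's $F$, and the same argument extends to the $T, V, BV$-versions by inheriting the permutation or braid decorations; call this embedded copy $H \le G^X$. The classical Thompson groups all satisfy $\betti = 0$: for $F$ this follows from Peterson--Thom applied to the diagonal inclusion $F \times F \hookrightarrow F$, using that $F$ is finitely generated so $\betti(F) < \infty$, and the K\"unneth formula then forces $\betti(F \times F) = 0$ since $F$ is infinite. The cases $T, V, BV$ follow analogously or from classical results.

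The central step is to show that $H$ is wq-normal in $G^X$. I will establish the stronger property of q-normality: for every $g \in G^X$, the intersection $g H g^{-1} \cap H$ is infinite. Represent $g$ by a pair of forest-skein trees (plus a decoration when $X \in \{T, V, BV\}$) and use Ore's property of $\cF$, established in Section~\ref{sec:Dehornoy}, to grow both trees of the representative by a common monochromatic refinement, so that $g$ admits a form whose top levels are entirely in one fixed colour. Elements of $H$ supported on leaves deep inside this refinement are then conjugated by $g$ to elements whose forest representative remains monochromatic up to the refinement, and so lie again in $H$. This produces infinitely many elements of $g H g^{-1} \cap H$, establishing q-normality, and one application of Peterson--Thom yields $\betti(G^X) \le \betti(H) = 0$.

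The principal technical obstacle is the commensuration computation above: one must argue that monochromatic refinements can be chosen despite the presence of skein relations (which may a priori identify monochromatic forests with non-monochromatic ones and thus spoil the alignment), and that the permutation or braid decorations in the $T, V, BV$ cases interact harmlessly with the refinement. The diagrammatic structure of forest-skein categories, combined with Ore's property, should make this bookkeeping accessible, but it is the place where the skein presentation must be carefully exploited.
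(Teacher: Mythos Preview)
Your proposal has a genuine gap at the step you yourself flag as the principal obstacle: the q-normality of the monochromatic copy $H\simeq X$ inside $G^X$. The heuristic ``elements supported deep in a monochromatic refinement are conjugated back into $H$'' does not survive scrutiny. Write $g=[t,s]$ and grow to $g=[th,sh]$ with $h$ monochromatic. An element $v\in H$ that you can meaningfully call ``supported in $h$'' must be written as $v=[shk,shk']$ with $k,k'$ monochromatic, but then $shk$ is itself \emph{not} monochromatic (the prefix $s$ need not be), so such $v$ are not in $H$ to begin with. If instead you take $v=[u,u']$ with $u,u'$ genuinely monochromatic, then $gvg^{-1}$ has the form $[t\cdot(\text{something}),\,t\cdot(\text{something}')]$, and the non-monochromatic carets of $t$ sit at the root where no amount of growing on the right can remove them --- unless specific skein relations happen to rewrite them, which is exactly the presentation-dependent issue you acknowledge but never resolve. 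In concrete examples one can check that $[Y_b f, Y_b f']$ with $f,f'$ $a$-monochromatic is typically \emph{not} representable by $a$-monochromatic trees, so $\hat b_1 F \hat b_1^{-1}\cap F$ can be trivial.

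The paper avoids this difficulty by working with a much larger intermediate subgroup, namely $H:=\Frac(\cF_\infty)$ embedded as $G(a)\subset G$ via the map $\gamma_a$ of Proposition~\ref{prop:GHinclusion}. Membership in $G(a)$ only constrains the colour of the rightmost branch, not the whole tree, and this weaker condition \emph{is} stable under the relevant conjugations. The argument then runs in three steps: first $\betti(H)=0$ is obtained by exhibiting a list of generators $g_1,g_2,\ldots$ of $H$ in which consecutive elements commute and all have infinite order, so that $\langle g_1\rangle\simeq\Z$ is wq-normal in $H$; second, $H\subset G$ is shown to be q-normal using the generating set $\{b_1,b_2:b\in S\}\cup\{\hat c_3:c\in S\setminus\{a\}\}$ for $G$ and observing that each $\hat c_3$ commutes with $a_2a_1^{-1}\in H$; third, $G\subset G^X$ is q-normal because the extra generators $[t,\alpha,t]$ normalise a copy of $G^n$ inside $G$. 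Your route via the small classical Thompson group might be salvageable in specific presentations, but it cannot work uniformly without an argument you have not supplied.
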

The proof proceeds as follows: we consider a Ore forest-skein category $\cF$ with countably many colours with associated groups $H:=\Frac(\cF_\infty)$ and $G:=\Frac(\cF,1)$. 
We construct a sequence of generators $g_1,g_2,\cdots$ of the group $H$ satisfying that $g_j$ commutes with $g_{j+1}$ and moreover all of them have infinite order. From there we can conclude that the copy of $\Z$ generated by $g_1$ inside $H$ is a wq-normal subgroup implying that $\betti(H)\leq \betti(\Z)$. We conclude that $\betti(H)=0$ via the well-known result: $\betti(\Z)=0$.
To get the group $G$ and more generally all the other versions $G^X$ for $X=T,V,BV$ we realise $H$ as a wq-normal subgroup of $G$ (implying that $\betti(G)=0$) then show that $G\subset G^X$ is wq-normal as well obtaining the remaining cases.

Note that $F$ was proven to have {\it all} its $\ell^2$-Betti numbers equal to zero by L\"uck \cite[Theorem 7.20]{Luck02}. 
A different proof was later given by Bader, Furman, and Sauer which could be extended to Thompson's group $T$ \cite[Theorem 1.8]{Bader-Furman-Sauer14}.

In Section \ref{sec:Q-space} we construct a canonical action $G^V\act Q_{\cF}$ for all $V$-forest-skein groups $G^V:=\Frac(\cF^V,1)$.
This extends the classical action of Thompson's group $V$ on the set of dyadic rationals $\Z[1/2]/\Z$ in the unit torus. 
We provide three equivalent descriptions of it when restricted to $G^T$. 
One is simply the homogeneous space $G^T/G$, the second is obtained by quotienting a piece of the fraction groupoid $\Frac(\cF)$, and the third is constructed using Jones' technology.
The later approach permits to extend {\it canonically} the action to $G^V$.
The poset structure of $Q_\cF$ permits to characterise $G^F$ (resp.~$G^T$) inside $G^V$ as the subgroups of order-preserving (resp.~order preserving up to cyclic permutations) transformations.
We prove some strong transitivity statements analogous to the classical Thompson group case. 
Using these results and adapting an elegant argument due to Brown and Geoghegan we deduce the following theorem on finiteness properties, see \cite[Section 4B, Remark 2]{Brown87} and \cite[Theorem 9.4.2]{Geoghegan-book}.

\begin{lettertheo}
Let $G$ be a forest-skein group and let $G^T$ be its $T$-version.
If $G$ satisfies the topological finiteness property $F_n$ for a certain $n\geq 1$, then so does $G^T$.
\end{lettertheo}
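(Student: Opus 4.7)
The plan is to adapt the Brown--Geoghegan argument by producing a contractible simplicial complex on which $G^T$ acts cellularly with well-controlled stabilizers and finitely many orbits per dimension, then invoking Brown's criterion for topological finiteness.

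First, I would construct the natural $T$-analogue $E\cF^T$ of the complex $E\cF$ from Section~\ref{sec:forest-groups}. Since the $T$-version $\cF^T$ is obtained from $\cF$ by incorporating cyclic permutations of leaves à la Brown, the Ore property of $\cF$ upgrades to the Ore property of $\cF^T$, so the two-sided action $G^T \act \Frac(\cF^T) \curvearrowleft \cF^T$ yields a $G^T$-poset whose order complex $E\cF^T$ is a free contractible $G^T$-simplicial complex by the verbatim argument used for $E\cF$.

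Next, I would analyze cell stabilizers. A $p$-simplex $\sigma$ of $E\cF^T$ is represented by a chain of $T$-decorated trees, and its $G^T$-stabilizer fits in a short exact sequence $1 \to S \to \Stab_{G^T}(\sigma) \to C \to 1$, where $S$ is isomorphic to a finite direct product of copies of $G$ (one copy per leaf of the finest tree in the chain, exactly as in the $E\cF$ stabilizer computation) and $C$ is a finite cyclic group encoding the cyclic permutations of leaves compatible with the chain. Since the property $F_n$ is preserved under finite direct products and under finite-index extensions, each such stabilizer is of type $F_n$ whenever $G$ is.

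I would then verify that the $G^T$-action on $E\cF^T$ has finitely many orbits of simplices in each dimension. This follows from the strong transitivity of $G^T \act Q_\cF$ established in Section~\ref{sec:Q-space}, applied to chains of $T$-trees of a given combinatorial type: up to the $G^T$-action, a chain is determined by its abstract combinatorial shape, of which there are finitely many in each dimension. Brown's criterion, applied to the action of $G^T$ on the contractible complex $E\cF^T$ (contractible, finitely many orbits in each dimension $\leq n$, and $p$-cell stabilizers of type $F_{n-p}$), then yields that $G^T$ is of type $F_n$.

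The main obstacle is the explicit identification of stabilizers in the second step: the cyclic-permutation data of $T$-forests twists the product stabilizer $G^{\ell}$ by a cyclic permutation of factors compatible with the full chain, so one must carefully unwind the $T$-decoration on every simplex in the chain and check that the resulting group is genuinely a finite cyclic extension of a finite direct product of copies of $G$. Once this algebraic structure is pinned down, the closure of $F_n$ under finite products and finite-index extensions is classical, and the remainder of the argument is purely formal.
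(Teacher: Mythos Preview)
There is a genuine confusion in your proposal about which complex you are using. The complex $E\cF^T$ you describe is the order complex of $Q^T=\{x\in\Frac(\cF^T):\tau(x)=1\}$ under the right $\cF^T$-action, and as you correctly note in your first paragraph, the left $G^T$-action on it is \emph{free}. But then in your second paragraph you compute nontrivial stabilizers (products of copies of $G$ extended by a finite cyclic group), which is incompatible with freeness; there is no ``$E\cF$ stabilizer computation'' in the paper to appeal to, since $G\act E\cF$ is free by construction. Worse, in your third paragraph you claim $E\cF^T$ has finitely many $G^T$-orbits in each dimension, but this is false: the origin map $\omega:Q^T\to\N_{>0}$ is $G^T$-invariant and surjective, so already the vertex set has infinitely many orbits. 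The transitivity results of Section~\ref{sec:Q-space} concern the action on $Q_\cF\cong G^T/G$, not on $Q^T$, and these are very different $G^T$-sets.

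The paper's argument uses a different complex altogether: the full simplex $C$ with vertex set $G^T/G$ (realised inside $\ell^2(G^T/G)$). This complex is contractible, and the action $G^T\act G^T/G$ extends to a simplicial action on $C$. Proposition~\ref{prop:Q-space} shows that this action is transitive on $(k{+}1)$-element subsets of $G^T/G$ for every $k$, so there is exactly one orbit of $k$-simplices, and that the setwise stabilizer of such a subset is $G^{k+1}\rtimes\Z/(k{+}1)\Z$. Since $F_n$ is closed under finite products and finite extensions, these stabilizers are of type $F_n$, and Brown's criterion (Proposition~\ref{prop:stab-F-infty}) finishes the proof. The stabilizer description you were reaching for in step~2 is exactly the one in Proposition~\ref{prop:Q-space}, but it lives on this simplex $C$, not on $E\cF^T$.
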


In most examples, a finiteness property holding for the $F$-version of a Thompson-like group generally holds for its $T$-version. Although, it is usually proved by following the whole argument of the $F$-case and adapting it step-by-step to the $T$-case.
Our theorem allows us to prove a permanence property that holds for all forest-skein groups regardless of the strategy adopted to prove that $G$ satisfies a given finiteness property.
There is a (obvious) homological version of this theorem but we have not found any application of it. 
However, the topological version of above have application as we are about to see.

Section \ref{sec:Thumann} is about Thumann's theory of operad groups and his finiteness theorem \cite{Thumann17}.
Thumann formalism and theorem are impressively general.
He covers and adapts in a single categorical approach a number of key techniques pioneered by Brown that have been refined and extended (in a nontrivial way and using new ideas) to study many more groups like Stein groups, Brin's higher dimensional $dV$, braided Thompson group $BV$, Guba-Sapir diagram groups and Hughes' finite similarity structure groups, to cite a few \cite{Brown87,Stein92,Fluch-Marschler-Witzel-Zaremsky13, Bux-Fluch-Marschler-Witzel-Zaremsky16,Farley-Hughes15}. 
We refer the reader to the articles of Zaremsky and Witzel for a first read on these techniques and strategy \cite{Zaremsky21,Witzel19}.
We prove that all forest-skein groups are operad groups and establish a dictionary between the two theories.
This allows us to precisely translate and specialise the theorem of Thumann to forest-skein groups.
We define the {\it spine} of a forest-skein category $\cF$ that is roughly speaking the mcm-closure of the set of trees with two leaves (where mcm stands for ``minimal common multiples'') and deduce the following.

\begin{lettertheo}\label{ltheo:Finfty}
If $\cF$ is a Ore forest-skein category with a finite spine, then the $X$-forest-skein groups $\Frac(\cF^X,1)$ is of type $F_\infty$ for $X=F,T,V,BV.$
\end{lettertheo}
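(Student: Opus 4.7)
The plan is to deduce the theorem from Thumann's finiteness theorem for operad groups by way of the dictionary between forest-skein categories and operads set up earlier in Section \ref{sec:Thumann}. The three cases $X=F,V,BV$ are treated simultaneously, and the remaining case $X=T$ is obtained from Theorem C.

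For $X=F,V,BV$, I would start from the given Ore forest-skein category $\cF$ and use the dictionary to produce a corresponding operad (in the planar, symmetric, respectively braided sense) whose associated operad group is canonically isomorphic to $\Frac(\cF^X,1)$. Thumann's theorem guarantees that such an operad group is of type $F_\infty$ whenever the operad satisfies his ``color-tameness'' hypothesis together with high connectivity of the family of arc matching complexes attached to the operad. The heart of the argument is therefore to verify these hypotheses under the single assumption that $\Sp(\cF)$ is finite. The spine has been designed precisely to encode the minimal common multiples appearing when building Stein--Farley style descending links: finiteness of $\Sp(\cF)$ forces the underlying operad to be color-tame and yields the local finiteness and connectivity bounds demanded by Thumann.

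Concretely, I would filter trees in $\cF$ by a ``spine complexity'' (for example the minimal number of spine pieces needed to factor a tree), translate Thumann's arc matching complex into a complex of spine factorisations of a given tree, and then run a Morse-theoretic descending link argument in the spirit of Brown--Stein, adapted through the operad dictionary. Finiteness of $\Sp(\cF)$ ensures at each step that only finitely many matchings arise and that the relevant links are highly connected, which is exactly what one needs to feed Thumann's machine.

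Once $\Frac(\cF^X,1)$ has been shown to be of type $F_\infty$ for $X=F$, the $T$-version is handled by Theorem C: for every $n\geq 1$ the hypothesis of type $F_n$ is preserved from $G=\Frac(\cF,1)$ to $G^T=\Frac(\cF^T,1)$, and applying this for all $n$ yields type $F_\infty$ for $G^T$. The main obstacle I expect is the verification of Thumann's connectivity hypotheses from finiteness of the spine; this will require a careful combinatorial analysis of the poset of spine factorisations of a tree, guided by the analogous arguments in the Brown--Stein lineage cited in the excerpt (Stein, Fluch--Marschler--Witzel--Zaremsky, Bux--Fluch--Marschler--Witzel--Zaremsky), but expressed entirely in the diagrammatic forest language of the paper.
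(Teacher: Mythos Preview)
Your overall architecture is correct and matches the paper exactly: invoke Thumann's theorem for $X=F,V,BV$ via the operad dictionary, then apply Theorem~C to pass from $F$ to $T$. Where you go astray is in your statement of Thumann's hypotheses and hence in your assessment of the work required.

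Thumann's theorem (stated in the paper as Theorem~\ref{theo:Thumann}) does \emph{not} ask the user to verify connectivity of any arc matching complexes. Its hypotheses are: finitely many Op-colours, colour-tameness, cancellative calculus of fractions, finite type, and that $\cI(\cO)$ is a groupoid of type $F_\infty^+$. The connectivity arguments you describe are internal to Thumann's proof and are deduced by him from these abstract hypotheses; you do not reprove them. The paper's entire argument for $X=F,V,BV$ is a one-paragraph checklist: the operad $\cO_\cF$ is Op-monochromatic, so hypotheses (1), (2), and (5) are automatic (one Op-colour, hence colour-tame; $\cI(\cO_\cF)$ is the trivial group); hypothesis (3) is exactly the Ore assumption on $\cF$; and hypothesis (4) (``finite type'') was shown in the dictionary to be synonymous with finiteness of $\Sp(\cF)$. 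There is nothing further to verify, and in particular no Morse-theoretic or descending-link analysis on your part.

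So the ``main obstacle'' you anticipate does not exist. The filtration by spine complexity, the complex of spine factorisations, and the Brown--Stein style link computations you sketch are all work Thumann has already packaged into his theorem; the point of the dictionary is precisely that you get to skip them.
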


Note that Thumann's result takes care of the $F,V,$ and $BV$-cases using three parallel technical arguments corresponding to planar, symmetric, and braided operad groups, respectively.
Using our previous theorem we add the missing $T$-case.
Moreover, note that this theorem covers a class of groups that is closed under taking braiding. 
This is remarkable but possible thanks to the difficult proof on the braided Thomson group $BV$ \cite{Bux-Fluch-Marschler-Witzel-Zaremsky16} that was extended very cleverly by Thumann and recovered by us in this context of forest-skein groups.
Using Dehornoy's criteria on left-cancellativity and specialising to the two colours case we deduce the following result.

\begin{lettercor}
If $\cF$ is a forest-skein category with two colours and one relation (a pair of trees with roots of different colours), then if it satisfies Ore's property then it is a Ore category (hence is left-cancellative) and the $X$-forest-skein groups $\Frac(\cF^X,1)$ is of type $F_\infty$ for $X=F,T,V,BV.$
\end{lettercor}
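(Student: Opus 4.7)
The plan is to verify the two hypotheses of Theorem~\ref{ltheo:Finfty} applied to $\cF$: namely that $\cF$ is a Ore category and that its spine $\Sp(\cF)$ is finite. Once these are secured, Theorem~\ref{ltheo:Finfty} directly yields that $\Frac(\cF^X,1)$ is of type $F_\infty$ for every $X\in\{F,T,V,BV\}$.

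First I would establish left-cancellativity by invoking Dehornoy's criteria developed in Section~\ref{sec:Dehornoy}. The restricted form of the presentation --- a single skein relation $(u,u')$ whose two sides have roots of different colours --- makes the critical-pair analysis dramatically simpler than in the general case. Since $u$ starts with one colour and $u'$ with the other, the left-hand side $u$ cannot be a root-subtree of $u'$ and cannot overlap with itself near the root in any nontrivial way. The only critical pairs that need to be resolved arise from applying the relation at disjoint leaves of a common tree, and these close trivially. Dehornoy's criterion then delivers left-cancellativity. Combined with the standing Ore hypothesis, this proves that $\cF$ is a Ore category.

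Next I would prove that the spine $\Sp(\cF)$ is finite. By definition $\Sp(\cF)$ is the mcm-closure of the set of trees with exactly two leaves; with only two colours there are exactly two such trees (the $a$-caret and the $b$-caret). Ore's property ensures that their mcm exists, and because the unique relation $(u,u')$ is the only mechanism for merging trees of different root colour, this mcm is controlled by the trees $u$ and $u'$. Iterating the mcm operation, each new spine element is either already present or is obtained by gluing two existing spine elements via a single application of the relation, so that its height is bounded in terms of those of $u$ and $u'$. A careful induction on tree size then confines $\Sp(\cF)$ to a finite set of trees.

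The main difficulty will be the bookkeeping in this last step: making rigorous the claim that the mcm-closure of two carets stabilises requires a precise description of how the single relation interacts with substitutions of subtrees, and this is where the hypothesis ``one relation, two colours, different root colours'' is genuinely used. Conceptually, however, the argument rests only on the scarcity of colours and relations, so once the critical-pair analysis and the finiteness of the spine are written out, the corollary follows from a direct appeal to Theorem~\ref{ltheo:Finfty}.
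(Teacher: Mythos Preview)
Your overall plan is exactly the paper's: establish left-cancellativity via the Dehornoy machinery of Section~\ref{sec:Dehornoy}, combine with the assumed Ore property to get a Ore category, check that the spine is finite, and invoke Theorem~\ref{ltheo:Finfty}. For left-cancellativity the paper simply quotes Corollary~\ref{cor:presentation-free}: a skein presentation with a single relation whose two sides start with different colours is complemented, the associated monoid presentation is automatically complete (the SCC at triples of distinct colours is vacuous since there are only two colours), and left-cancellativity follows. Your discussion of critical pairs is pointing at this, just less precisely.

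Where you overcomplicate matters is the spine computation. You anticipate a ``careful induction on tree size'' and flag this as the main difficulty, but in fact the spine has \emph{exactly three elements} and the argument is one line. Because the presentation is complemented, there is a \emph{unique} minimal common multiple of $Y_a$ and $Y_b$: it is the tree $t$ appearing in the single relation $t=s$ (with $t=s$ in $\cF$). Hence $\Sp_0(\cF)=\{Y_a,Y_b\}$ and $\Sp_1(\cF)=\mcm(Y_a,Y_b)=\{t\}$ is a \emph{singleton}. Since $\Sp_{n+1}$ is the union of $\mcm(x,y)$ over \emph{distinct} pairs $x,y\in\Sp_n$, a singleton $\Sp_1$ forces $\Sp_2=\emptyset$ and the iteration stops. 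So $\Sp(\cF)=\{Y_a,Y_b,t\}$. The ``bookkeeping'' you worry about never arises: the finiteness of the spine is not obtained by bounding heights under repeated gluing, but by the structural fact that a complemented presentation yields at most one mcm per pair of carets.
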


%Generalisation
These results can be generalised in the spirit of Higman, Brown, and Stein by considering forests with vertices of various valencies and by considering groups formed by pairs of {\it forests} rather than trees, see Remark \ref{rem:Stein-Finfty}.
%Limits of it
Although, these finiteness theorems have their limits. 
We present the seemingly very elementary forest-skein category $\cF:=\FC\la a,b| a_1b_1=b_1a_1, a_1a_1=b_1b_1\ra$ with two colours, two relations (of length two) in Section \ref{sec:rebel}. 
It produces a group $G$ of type $F_\infty$ but this fact is surprisingly difficult to prove and resists to Thumann's theorem and a second approach due to Witzel \cite{Witzel19}. We will prove $F_\infty$ in a future article using a third approach inspired from the work of Tanushevski, Witzel-Zaremsky, and connections with our work \cite{Tanushevski16,Witzel-Zaremsky18,Brothier22-HPM}.

We end this first article by providing a huge class of skein presentations $(S,R)$ for which the associated forest-skein category $\cF$ is automatically a Ore category and whose fraction group is of type $F_\infty$ (and so are its $T,V,$ and $BV$-versions).
Given any nonempty family $\tau=(\tau_a:\ a\in S)$ of monochromatic trees with the same number of leaves we define a forest-skein category $\cF_\tau$, see Section \ref{sec:class-example}.

\begin{lettertheo}
The forest-skein category $\cF_\tau$ is a Ore category whose spine injects in the index set $S$ union a point.
In particular, $\cF_\tau$ admits forest-skein groups $G^X_\tau:=\Frac(\cF^X_\tau,1)$ for $X=F,T,V,BV.$
If $S$ is finite of order $n$, then $G_\tau^X$ is of type $F_\infty$.
Moreover, the $F$-group $G_\tau$ admits an explicit presentation with no more than $4n-2$ generators and $8n^2-4$ relations. 
\end{lettertheo}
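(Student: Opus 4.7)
Based on the preceding examples, I expect $\cF_\tau$ to be defined by the skein presentation identifying all the monochromatic trees $\tau_a$, $a\in S$, to a single element $\tau$ (equivalently, relations $\tau_{a_0}=\tau_a$ for a base color $a_0$ and each $a\neq a_0$). I would tackle the four claims of the theorem in sequence.

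For \emph{left-cancellativity}, I would apply the Dehornoy-style criteria developed in Section~\ref{sec:Dehornoy}. The defining relations are length-homogeneous — both $\tau_a$ and $\tau_b$ share the common leaf count $|\tau|$ — so the ``completion'' verification reduces to a finite check on the finitely many pairs $(\tau_a,\tau_b)$. For \emph{Ore's property}, I would exhibit an explicit cofinal sequence. Fix $a_0\in S$, let $T_0$ be the trivial tree (a single leaf), and inductively let $T_{k+1}$ be obtained from $T_k$ by grafting a copy of $\tau_{a_0}$ onto every leaf. The crucial observation is that in $\cF_\tau$ every two-leaf tree $a_1$ divides $\tau$: the class $\tau$ admits the representative $\tau_a$, which starts with an $a$-caret at its root. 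An induction on depth then shows every tree divides some $T_k$, so $(T_k)_k$ is cofinal and $\cF_\tau$ is a Ore category.

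For the \emph{spine}, $\Sp(\cF_\tau)$ is by definition the $\mcm$-closure of $\{a_1:a\in S\}$ inside $\cF_\tau$. The previous observation makes $\tau$ a common multiple of every $a_1$; a short argument using left-cancellativity — any element strictly below $\tau$ has a representative in the free forest-skein category, which cannot simultaneously carry an $a$-root and a $b$-root for $a\neq b$ — shows that $\mcm(a_1,b_1)=\tau$ whenever $a\neq b$. Since $a_1\mid\tau$, adjoining $\tau$ produces no new mcm's, so $\Sp(\cF_\tau)=\{a_1:a\in S\}\cup\{\tau\}$, which injects into $S\sqcup\{*\}$. When $S$ is finite the spine is finite, so Theorem~\ref{ltheo:Finfty} directly yields $F_\infty$ for $G_\tau^X$, $X=F,T,V,BV$.

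Finally, for the explicit presentation I would feed the skein presentation above into Theorem~A and apply the finite-reduction procedure that collapses the infinite $F$-presentation to its standard $2$-generator, $2$-relation one. With $|S|=n$ colors and $n-1$ skein relations, bookkeeping the generators (one family per color, of depth bounded by $|\tau|$ in $\tau_{a_0}$) and the relations (standard commutation relations plus skein-induced relations, each of size controlled by $|\tau|$) yields the upper bounds $4n-2$ and $8n^2-4$. The main obstacle will be the spine-minimality step: making rigorous the intuition that two distinct colored roots can only coexist via the identification $\tau_a=\tau_b$ requires a confluence or normal-form argument on the free forest-skein category modulo the skein relations; without it, one cannot rule out small common multiples that would blow up $\Sp(\cF_\tau)$ beyond $S\sqcup\{*\}$ and invalidate the application of Theorem~\ref{ltheo:Finfty}.
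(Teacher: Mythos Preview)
Your overall architecture matches the paper's, but the paper's argument for left-cancellativity and the spine is both cleaner and avoids exactly the gap you flag at the end. The key observation you are missing is this: since $C_a(\tau_a)=C_b(\tau_b)=:t$ for all $a,b$, decomposing $t=a_1\circ f^a$ shows that in the complemented presentation one has $(a_1\bs b_1)=f^a$, which is \emph{independent of $b$}. This single fact kills two birds. First, the SCC at any triple $(a_1,b_1,c_1)$ of distinct colours becomes
\[
[(a_1\bs b_1)\bs (a_1\bs c_1)]\bs [(b_1\bs a_1)\bs (b_1\bs c_1)]
= (f^a\bs f^a)\bs (f^b\bs f^b)=e\bs e=e,
\]
so Proposition~\ref{prop:LC-complemented} gives completeness and left-cancellativity with no ``finite check'' to perform. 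Second, and this is where your approach diverges, in a complete complemented presentation Dehornoy's theory gives that the mcm of two generators $a_1,b_1$ is exactly $a_1(a_1\bs b_1)$ when it exists; here this is $t$, so $\mcm(Y_a,Y_b)=\{t\}$ immediately, without any normal-form or confluence argument about representatives below $t$ in the free category. Thus $\Sp_1(\cF_\tau)=\{t\}$ is a singleton and the spine computation terminates.

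Your proposed route to spine minimality---arguing that proper subtrees of $t$ have representatives in $\cUF\la S\ra$ and hence cannot carry two distinct root colours---is not wrong in spirit, but making it rigorous really does require knowing that no skein relation applies below $t$, i.e.\ that trees with fewer than $|\Leaf(t)|$ leaves have unique representatives. That is true here (the relations all have length $|\Leaf(t)|-1$, so shorter words are untouched), but proving it cleanly still goes through completeness of the presentation, so you end up doing the Dehornoy work anyway. The paper simply uses the complemented structure to read off the mcm directly and sidesteps the issue. For the Ore property and the presentation bounds your sketches are on the right track and match the paper's arguments; the Ore induction needs the observation that a caret composed with a copy of $t$ at each leaf can be recoloured monochromatically (since $t=C_b(\tau_b)$ for every $b$), which is what makes the ``induction on depth'' go through.
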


This theorem permits to construct easily and explicitly a plethora of interesting examples of groups.
%two colours.
In particular, if $S=\{a,b\}$, then each pair of $(t,s)$ of monochromatic trees with the same number of leaves provides a group $G_{(t,s)}$ of type $F_\infty$ and we can provide an explicit group presentation of $G_{(t,s)}$ with 6 generators and 28 relations.
Though, the skein presentation of $\cF_\tau$ is very small with two colours and one relation. This makes very easy to construct actions of $G_{(t,s)}$ using Jones' technology.
It would be interesting to know when they are pairwise isomorphic and which further properties they satisfy other than being of type $F_\infty$.

Adding up all properties of groups considered in this article we have constructed a large family of groups $G$ satisfying that
\begin{itemize}
\item $F$ embeds in $G$ implying that $G$ is non-elementary amenable, has exponential growth, has infinite geometric dimension (i.e.~does not admit any finite dimensional classifying space);
\item $G$ has an explicit presentation that can be read from the presentation of its underlying category;
\item some $G$ admits an explicit positive homogeneous presentation;
\item the first $\ell^2$-Betti number of $G$ is trivial;
\item $G$ admits a canonical action on a totally order set;
\item some $G$ is of type $F_\infty$ (i.e.~for each $k\geq 1$ it admits a classifying space with finitely many cells of dimension smaller than $k$).
\end{itemize}

{\bf Some future work.}
One of the main strength of our approach is to be able to easily construct new examples of groups.
In order to keep reasonable the length of this first article we have postponed a number of detailed study of specific examples and classes of examples to future articles.
The immediate successor to this paper is about a specific class of forest-skein categories \cite{Brothier22-HPM}.
They are built from homogenenously presented monoids and can be interpreted as forest-skein categories with {\it one-dimensional} skein relations.
They produce very specific forest-skein groups that are split extensions of the Thompson groups. 
We classify them and relate precisely certain properties of the monoids and the associated forest-skein groups. 

After this second article we will continue working on the program described earlier with an emphasise on constructing and studying specific examples or classes of examples. 
Choice of examples may be dictate by specific Jones' planar algebra and CFT or in view of establishing group properties or to construct interesting operator algebras.
This is a vast program that will demand time to develop and grow.
The author is happy to help and answer questions to anybody wishing to work on it.
We are also certainly not against suggestions, advices, and collaborations!

\begin{center}\begin{large}{\bf Acknowledgement}\end{large}\end{center}
I warmly thank Matt Zaremsky for his enthusiasm, for providing a number of judicious remarks on a first version of this preprint, and for helping me navigating into the bibliography.
I am happy to express my gratitude to Matt Brin for fruitful communications related to this work.

In September 2022 I gave a presentation on a previous version of this work at the Hausdorff Institute of Mathematics (HIM) in Bonn. Sri (Srivatsav Kunnawalkam Elayavalli) immediately suspected that these groups had trivial first $\ell^2$-Betti numbers and suggested a strategy to prove it. Sri was right and his strategy worked without much troubles. It is my pleasure to thank him and to acknowledge the warm hospitality of HIM which has made this exchange of ideas possible.

All figures have been created with Tikzit. We thank the programmers for making freely available such a great tool.
%%%%%%%%%%INTRODUCTION-END%%%%%%%%%%%%%%%%%%%%%%%%
%%%%%%%%%%%%%%%%%%%%%%%%%%%%%%%%%%%%%%%%%%%%%%%

%%%%%%%%%%%%%%%%%%%%%%%%%%%%%%%%%%%%%%%%%%%%%%%
%%%%%%%%%%%%DEFINITION%%%%%%%%%%%%%%%%%%%%%%%%%%%
\section{A class of categories built from coloured forests}\label{sec:formalism}

In this section we define what is a {\it forest-skein category}. This is an algebraic structure similar than a monoid but with several units (a small category). It is roughly equal to a collection of forest that can be concatenated and whose vertices have various colours. 
We start by fixing the terminology and introducing what we call {\it coloured trees and forests}.
From there we introduce {\it free forest-skein categories} that are analogues of free monoids but obeying nontrivial relations induced by the diagrammatic structure.
Finally, we introduce relations and quotients of these free forest-skein categories. This leads to the notion of {\it skein presentations}, {\it forest-skein categories}, and morphisms between them.

\subsection{Coloured forests and trees}
\subsubsection{Monochromatic forests and trees}
We start by defining what we intend by {\it monochromatic} trees and forests (hence without colours). 
%Infinite tree
Consider the infinite regular rooted binary monochromatic tree $\cT_2$. 
The vertex set of $\cT_2$ is the set of all finite binary sequences of $0$ and $1$ (also called words) including the trivial one that is the {\it root} of $\cT_2$.
The edge set is equal to all sets $\{w, w0\}$ and $\{w,w1\}$ for $w$ a binary sequence.
We direct the tree keeping only the pairs $(w,w0)$ and $(w,w1)$ corresponding in having all edges going away from the root.
We call them {\it left and right edges}, respectively.
We say that $w0$ and $w1$ are the {\it children} of $w$ and $w$ is their {\it parent}. 
If $u=wv$ for some words $w,v$ with $v$ nontrivial, then $u$ is a {\it descendent} of $w$ and $w$ is an {\it ancestor} of $u$.
The union of $(w,w0)$ and $(w,w1)$ is called a {\it caret}.
We identify $\cT_2$ with a graph embedded in the plane with its root at the bottom and where a left edge is going to the top left and a right edge to the top right such as
\[\includegraphics{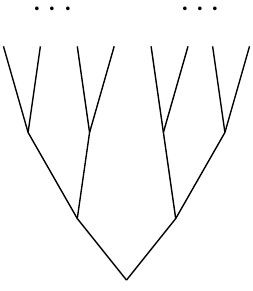}\]

%Tree
A {\it monochromatic tree} $t$ is a finite rooted subtree of $\cT_2$ so that each of its vertex has either no children or two.
A vertex of $t$ with no children is called a {\it leaf} and otherwise an {\it interior vertex}.
We may drop the terms monochromatic if the context is clear.
We order from left to right the leaves of $t$ using the embedding of $\cT_2$ in the plane.
%Trivial tree
The {\it trivial tree}, denoted $I$, is a tree with a single vertex. 
This vertex is equal to its root and is at the same time its unique leaf.
For pictorial reason we represent $I$ by a vertical bar rather than a point.
%Tree with two leaves
There is a unique tree with two leaves that we denote $Y$. 
Note that $Y$ is equal to a single caret and more generally any tree with $n+1$ leaves is equal to the union of $n$ carets.

%Forest
A monochromatic {\it forest} $f$ is a finite list of trees $(f_1,\cdots, f_n)$. 
We order the roots of $f$ from left to right. 
Note that every forest can be obtained from two trees. 
Indeed, if $s$ is a rooted subtree of $t$, then the complement of $s$ inside $t$ is a forest $f$ and all forest arise in that way.
We define composition of forests so that $t=s\circ f$. That is: $t$ is obtained by stacking vertically $f$ on top of $s$ and lining up leaves of $s$ with roots of $f$.
More generally we can compose any two forests as long as we match number of leaves and roots.
We write $\Leaf(f), \Root(f),\Ver(f)$ for the set of leaves, roots, and interior vertices, respectively, of a forest $f$. 
Note that the vertex set of $f$ is equal to the disjoint union of $\Ver(f)$ and $\Leaf(f)$. 
Moreover, $\Ver(f)$ is in bijection with the carets of $f$.

Here is the composition of two forests $f,g$.
The forest $f$ has two roots, three leaves, one interior vertex while $g$ has three roots, five leaves, and two interior vertices.
The composition $f\circ g$ is a forest with two roots, fives leaves, and three interior vertices.
\[\includegraphics{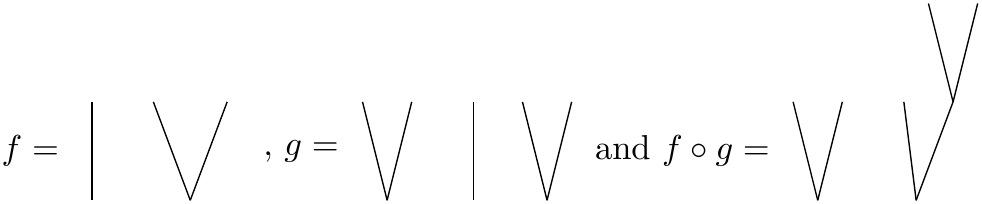}\]

\subsubsection{Coloured forests and trees}

We now add colours.
Let $S$ be a {\it nonempty} set.
A \textit{coloured forest (over $S$)} is a pair $(f,c)$ where $f$ is a monochromatic forest (as defined above) and a {\it colouring map} $c:\Ver(f)\to S$. 
A coloured tree is a coloured forest with one root.
We call $S$ the {\it set of colours} and its element a {\it colour}.
If the context is clear we may drop the term colour and write $f$ for $(f,c)$.
A caret coloured by $a$ or a {\it $a$-caret} is a caret whose origin is a vertex coloured by $a$.
We may interpret a monochromatic forest as a forest coloured by a single colour.

{\bf Notation.}
Typically we write $a,b,c$ or $x,y,z$ for colours, $t,s,u,v$ for trees, $f,g,h,k$ for forests. 
Recall that $I$ designates the trivial tree. 
For each colour $a\in S$ there is a unique tree with two leaves of colour $a$ which we denote by $Y_a, Y(a).$

\subsection{The collection of all forests, operations, quotient}

Let $S$ be a set of colour and write $\cUF=\cUF(S)=\FC\langle S\rangle$ for the set of all coloured forests over $S$. 

\subsubsection{Compositions and tensor products of forests}

We define two binary operations on $\cUF$.

{\bf Composition.}
The first one is called \textit{composition} and is only partially defined.
Consider $n\geq 1$, $(f,c_f),(g,c_g)\in\cUF$ so that $f$ has $n$ leaves and $g$ has $n$ roots.
Let $f\circ g$ or simply $fg$ be the forest obtained by concatenating vertically $g$ on top of $f$ where the leaves of $f$ are lined up with the roots of $g$: the $j$th leaf of $f$ gets attached to the $j$th root of $g$ for $1\leq j\leq n.$
We have the following obvious identifications: 
$$\Ver(f\circ g)=\Ver(f)\sqcup \Ver(g), \ \Root(f\circ g) = \Root(f), \text{ and }\Leaf(f\circ g)=\Leaf(g).$$
Define the colouring map $c_{f\circ g}:\Ver(f\circ g)\to S$ so that $c_{f\circ g}(v)=c_f(v)$ if $v\in \Ver(f)$ and $c_g(v)$ otherwise.
The composition of $(f,c_f)$ with $(g,c_g)$ denoted by $(f,c_f)\circ (g,c_g)$  is equal to $(f\circ g, c_{f\circ g}).$
We will often omit the colouring maps.
Diagrammatically we express as follows the composition of two coloured forests where the colours are $a,b,c$:
\[\includegraphics{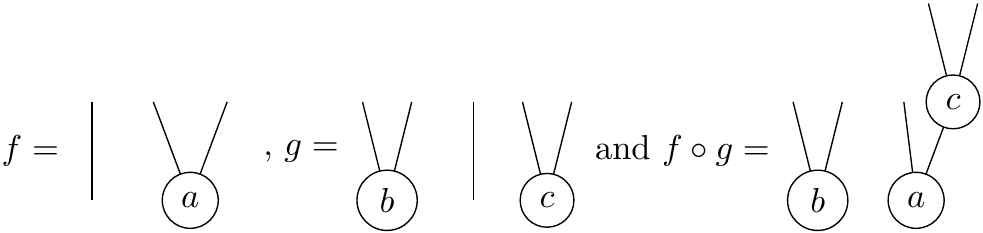}\]

{\bf Convention.} 
Given $f,g\in\cUF$, if we write $f\circ g$ we implicitly assume that this composition makes sense meaning that $f$ has its number of leaves equal to the number of roots of $g$.
In that case we may say that the pair $(f,g)$ is composable. 
Similarly, we may consider $k$-tuples that are composable.

{\bf Tensor product.}
We define a second binary operation called the \textit{tensor product} that is defined everywhere.
Consider two forests $(f,c_f),(g,c_g)\in\cUF$. 
Write $f\ot g$ for the forest obtained by concatenating horizontally $f$ with $g$ where $f$ is placed to the left of $g$.
We have the following obvious identifications
$$\Ver(f\ot g) = \Ver(f)\sqcup \Ver(g),\ \Root(f\ot g) = \Root(f)\sqcup \Root(g),$$
and 
$$\Leaf(f\ot g) = \Leaf(f)\sqcup \Leaf(g).$$
Define the colouring map $c_{f\ot g}:\Ver(f\ot g)\to S$ so that $c_{f\ot g}(v)=c_f(v)$ if $v\in \Ver(f)$ and $c_g(v)$ otherwise.
The tensor product of $(f,c_f)$ with $(g,c_g)$ denoted $(f,c_f)\ot (g,c_g)$ is equal to $(f\ot g, c_{f\ot g}).$
We will often omit the colouring maps.
Here is an example:
\[\includegraphics{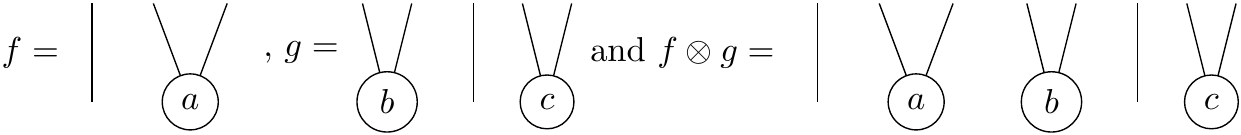}\]
Note that if we consider forests as lists of trees, then the tensor product is the concatenation of lists:
$$(f_1,\cdots,f_n)\ot (g_1,\cdots,g_m):=(f_1,\cdots,f_n, g_1,\cdots, g_m).$$

{\bf Subtrees and subforests.}
If $t$ is a tree, then $s$ is a \textit{rooted subtree} of $t$ if $s$ is a tree and there exists a forest $f\in\cUF$ satisfying $t=s\circ f$. 
A \textit{subtree} $u$ of a tree $t$ is a tree satisfying that there exist a rooted subtree $s$ of $t$ and some forests $f,g,h$ satisfying that 
$t= s\circ (f \ot u \ot g)\circ h$.
Similarly, we define rooted subforests and subforests.

{\bf Partial order.}
We define a partial order $\leq$ on $\cUF$.
If $f,g\in\cUF$, then we write $f\leq g$ if and only if there exists $h\in\cF$ satisfying $g=f\circ h$.
Hence, $f\leq g$ if and only if $f$ is a rooted subforest of $g$.
This is indeed a partial order. 
Moreover, if $f\leq g$, then $h\ot f\ot k\leq h\ot g\ot k$ and $r\circ f\leq r\circ g$ for all forests $f,g,h,k,r$.
If $f\leq g$, then we may say that $g$ is obtained by {\it growing} the forest $f$.

\begin{remark}
We have adopted the diagrammatic convention that the roots of a forest are at the bottom and the leaves on top. Moreover, $f\circ g$ stands for the forest with $f$ on the bottom and $g$ on top.
Hence, the composition can be perceived as concatenation of forests when reading the mathematical symbols from left to right corresponds in reading the diagram from bottom to top.
In the literature there exist all possible conventions; each of them having their advantages and drawbacks. 
We warn the reader that even the author have been using opposite conventions in previous articles. 
Hence, covariant functors appearing in \cite{Brothier21} would become contravariant functors for the conventions of this present article.
\end{remark}

\subsubsection{Equivalence relations on the set of forests}\label{sec:ER}

{\bf Relations and equivalence relations.}
Consider $S,\cUF$ as above and write $\cT$ for the set of trees of $\cUF$.
A \textit{skein relation in $\cUF$} or simply a \textit{relation} is a pair of trees $(t,t')$ in $\cT$ so that $t$ and $t'$ have the same number of leaves. 
If $R$ is a set of relations in $\cUF$, then we consider $\ov R\subset \cUF\times\cUF$ the smallest equivalence relation in $\cUF$ that contains $R$ and is closed under taking compositions and tensor products.
Hence, if $(f,f')\in \ov R$ and $g,h,k,p\in \cUF$, then $(g\circ f\circ h,g\circ f'\circ h)\in \ov R$ and $(k\ot f \ot p, k\ot f'\ot p)\in \ov R$.
Given $f,f'\in \cUF$ we write $f\sim_R f'$ or simply $f\sim f'$ for indicating that $(f,f')\in\ov R$.

{\bf Quotient.}
Let $R$ be a set of relations in $\cUF$ with associated equivalence relation $\ov R$.
We write 
$$\cF:=\FC\langle S| R\rangle$$ 
for the quotient of $\cUF$ with respect to equivalence relation $\ov R$.
If $f\in\cUF$, then we write $[f]$ for its class in $\cF$.
In order to keep light notations we may identify the equivalence class $[f]$ with a representative $f$.

{\bf Binary operations.}
By definition of $\ov R$ we have that if $f,f',g,g',h,h'\in\cUF$ satisfy $f\sim f', g\sim g', h\sim h'$, then $f\circ g\sim f'\circ g'$ and $f\ot h\sim f'\ot h'.$
This allows us to define on $\cF$ a composition and a tensor product as follows:
$$[f]\circ [g] := [f\circ g] \text{ and } [f]\ot [h]:=[f\ot h],$$
for $f,g,h\in\cUF.$
We will assume that $\cF$ is equipped with these two binary operations.

{\bf Partial order.}
The partial order $\leq$ of $\cUF$ provides a partial order on $\cF$ as follows.
Define the relation: $[f]\leq [g]$ if and only if $[g] = [f]\circ [h]$ for some $[h]\in\cF$ where $[f],[g]\in\cF$.
Note that $[f]\leq [g]$ if and only if there exists $f',g'\in\cUF$ in the classes of $[f],[g]$, respectively, satisfying $f'\leq g'.$

{\bf Elementary forests.}
If $a\in S$, $1\leq j\leq n$, then we write $a_{j,n}$ or simply $a_j$ for the forest having $n$ roots, its $j$th tree is $Y_a$, and all the other trees are trivial.
Hence, 
$$a_{j,n} = I^{\ot j-1} \ot Y_a \ot I^{\ot n-j}.$$
For instance:
\[\includegraphics{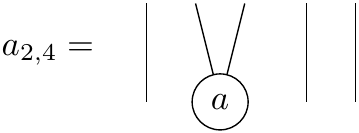}\]
Here we use the notation $I^{\ot m}$ for the forest equal to $m$ trivial trees place next to each other taking the convention that $I^{\ot m}$ is the empty diagram if $m\leq 0$.
We say that $a_{j,n}$ (or its class $[a_{j,n}]$) is an \textit{elementary forest} and write $\cE(S)$ for the set of all elementary forests coloured by $S$.

{\bf Forest with at most one nontrivial tree.}
By extending the notation of elementary forests we write $t_{j,n}$ for the forest with $n$ roots having its $j$th tree equal to $t$ and all other trivial.
For instance:
\[\includegraphics{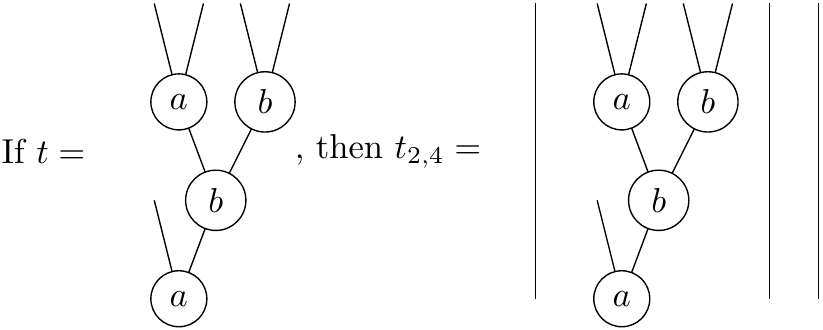}\]

{\bf Generators.}
It is not hard to see that any forest $f$ of $\cUF$ (resp. $[f]$ of $\cF$) is a finite composition of elementary forests.
Hence, the elementary forests are \textit{generators} of $\cUF$ and $\cF$ for the composition.

Note that the smallest subset of $\cUF$ (resp. $\cF$) containing $\{Y_a:\ a\in S\}$ (resp. $\{[Y_a]:\ a\in S\}$) that is closed under taking composition and tensor products with the trivial tree is equal to $\cUF$ (resp. $\cF$). 

\begin{remark}
Instead of taking pairs of {\it trees} for relations we could take pairs of {\it forests} $(f,f')$ providing they have the same number of leaves but also {\it roots}. This seems more general but in fact it is not.
Indeed, a relation of the form $(f,f')$ can be obtained from the set of all relations $(tf,tf')$ where $t$ runs over all trees composable with $f$.

Most of the examples investigate will have $S$ and $R$ finite. In fact, many interesting examples arise when $S$ has two elements $a,b$ and $R$ has one or two relations.
\end{remark}

\subsection{Forest-skein category}

\subsubsection{Algebraic structure}
We now analyse the algebraic structure of the set $\cF$ equipped with the two operations $\circ, \ot.$

The triple $(\cF,\circ,\ot)$ has an obvious structure of a monoidal small category, i.e.~a category whose collections of objects and morphisms are sets and equipped with a monoidal product (or tensor product).
We interpret $\cF$ as a category for the convenience of the terminology. 
Although, we think of $\cF$ as a classical algebraic structure: a set equipped with two composition laws.

{\bf Categorical structure.}
%Objects
The set of object $\ob(\cF)$ of $\cF$ is the set of natural number $\N:=\{0,1,2,\cdots\}$ (taking the convention that $0$ is a natural number). 
%Morphisms
If $m,n\in\N$, then we write $\cF(m,n)$ for the set of morphism from $m$ to $n$ equal to all forests having $m$ leaves and $n$ roots.
Hence, a morphism is a forest with source its number of leaves and target its number of roots.
Note that this is the opposite convention of \cite{Brothier21}. 
Observe that $\cF(m,n)$ is empty if $m<n$ and $\cF(m,m)$ contains one element: the trivial forest $I^{\ot m}$ with $m$ roots. 
This is the identity automorphism of $\cF(m,m)$.
Moreover, note that $\cF(m,0)$ is empty unless $m=0$. (Here we identify the empty forest with the trivial forest with $0$ roots.)
The composition is associative: it is clear in the free case since it corresponds in concatenating labelled graphs. The general case follows since it is a free forest-skein category mod out by an equivalence relation closed under composition.
All together we deduce that $(\cF,\circ)$ is a small category.

{\bf Monoidal structure.}
The monoidal structure is given by the binary operation $\ot.$
On objects it is defined as $n\ot m:=n+m$ for $n,m\in\N.$
For morphisms (i.e.~forests) it is defined as above by horizontal concatenations of forests.
The tensor unit is the empty forest. 
A similar reasoning than above shows that all the axioms of a monoidal category are satisfied.
Hence, $(\cF,\circ,\ot)$ is a monoidal small category.
We will continue to write $f\in\cF$ for a forest $f$ and will say that $f$ is an element of $\cF$ (rather than calling $f$ a morphism of the category $\cF$). 
Note that the object $0$ and the empty diagram are rather useless. We only added them in order to have a tensor unit and to fulfil precisely the axioms of a monoidal category.

\begin{definition}
\begin{enumerate}
\item The monoidal category $(\cF,\circ,\ot)$ is called a \textit{forest-skein category}.
\item Let $S$ be a nonempty set and $R$ a set of pairs of coloured trees over $S$ that have the same number of leaves.
Consider $\cF:=\FC\langle S| R\rangle$ as defined above and the binary operations of composition $\circ$ and tensor product $\ot$.
We say that $(S,R)$ is a {\it skein presentation}, $\FC\langle S| R\rangle$ a {\it presented forest-skein category}, $S$ the {\it set of colours}, and $R$ the {\it set of skein relations}.
We may drop the words ``forest'' and "skein" if the context is clear.
\item When the set of relations $R$ is empty we write $\FC\langle S\rangle$ or $\cUF\la S\ra$ rather than $\FC\langle S| \ \emptyset \rangle$ and call it the \textit{universal forest-skein category} or the \textit{free forest-skein category} over the set $S$ where the symbol $\cU$ stands for ``universal''.
\end{enumerate}
\end{definition}

It is sometime convenient to discuss about {\it abstract} forest-skein categories where a skein presentation has not been specified. Similarly, we may want to discuss about colours in the absence of a presentation.

\begin{definition}
\begin{enumerate}
\item An {\it abstract forest-skein category} is a monoidal small category $(\cC,\circ,\ot)$ with set of object $\N$ so that there exist a presented forest-skein category $\FC\langle S| R\rangle$ and a monoidal functor $\phi:\FC\langle S| R\rangle\to \cC$ that is the identity on object and is bijective on morphisms.
In that case we say that $(S,R)$ is \textit{a skein presentation} of the abstract forest-skein category $\cC$. 
\item An abstract forest-skein category is called \textit{free} or \textit{universal} if it admits a presentation $(S',R')$ where $R'=\emptyset$. 
\item An {\it abstract colour} of an abstract forest-skein category $\cF$ is a tree with two leaves.
\end{enumerate}
\end{definition}

\begin{remark}\label{rem:colour}
We may drop the term {\it abstract} if the context is clear.
Note that if $\cF=\FC\la S|R\ra$ is a presented forest-skein category, then $S\to\cF(2,1), a\mapsto Y_a$ realises a surjection from the set of colours $S$ to the set of abstract colours $\cF(2,1)$.
This justifies the last definition of abstract colours.
\end{remark}

\subsubsection{Forest-skein monoids}

{\bf Infinite forests.}
An \textit{infinite forest} $f$ (over $\cF$) is a sequence of trees $(f_n:\ n\geq 1)$.
Pictorially, we imagine $f$ has an infinite planar diagram contained in a horizontal strip of the plane with $f_{n}$ a tree placed to the left of $f_{n+1}$.
We define and order the leaves of $f$ from left to right just as we did for forests. 
We compose two infinite forests using vertical concatenations.
This provides a well-defined associative binary operation that is defined for {\it all} pairs of infinite forests.
We are interested in a smaller set of infinite forests.

{\bf Monoid of finitely supported forests.}
Let $f$ be an infinite forest over $\cF$.
The {\it support} of $f$ is the set of $n\geq 1$ satisfying that $f_n$ is not the trivial tree (the tree with one leaf).
Define $\cF_\infty$ to be the set of all {\it finitely supported} infinite forests over $\cF$.
Note that the composition of two finitely supported infinite forests is still finitely supported. 
Moreover, the trivial infinite forest is obviously in $\cF_\infty$ giving a unit for the composition.
We deduce that $(\cF_\infty,\circ)$ is a monoid.
We call $\cF_\infty$ the \textit{forest-skein monoid} associated to $\cF$.
If $P=(S,R)$ is a skein presentation of $\cF$, then we write 
$$\cF_\infty:=\FM\langle S| R\rangle$$
and say that $P$ is a {\it skein presentation} of the monoid $\cF_\infty$.
We may also call $\cF_\infty$ the \textit{forest-skein monoid over the presentation $P$}.

{\bf Notation.}
Note the distinction of symbols: $\cF=\FC\langle S|  R\rangle$ and $\cF_\infty=\FM\langle S|  R\rangle$ where $\FC$ and $\FM$ stand for forest-skein category and forest-skein monoid, respectively, and the symbol $\infty$ stands for forests with infinitely many trees.
We may write $I^{\ot\infty}$ for the unit of $\cF_\infty$.
If $f\in\cF_\infty$ is supported on $\{1,\cdots,n\}$, then we may write $f_1\ot f_2\cdots \ot f_n\ot I^{\ot \infty}$ for $f$ where $f_j$ is the $j$th tree of $f$. 

{\bf Elementary infinite forests and generators.}
If $a\in S$ and $j\geq 1$, then we write $a_j\in \cF_\infty$ for the infinite forest $I^{\ot j-1}\ot Y_a\ot I^{\ot \infty}$ having for $j$th tree $Y_a$ and all other trees trivial.
We say that $a_j$ is an \textit{elementary forest} of $\cF_\infty$ and write $\cE(S)$ or $\cE(S)_\infty$ or $S_\infty$ the set of all of them.
Note that we have a bijection
$$S\times \N_{>0}\to S_\infty, \ (a,j)\mapsto a_j$$
and $S_\infty$ generates the monoid $\cF_\infty$.
We extend the notation $a_j$ to all tree $t$ and index $j$ by writing $t_j$ for the infinite forest $I^{\ot j-1}\ot t\ot I^{\ot \infty}$ having a its $j$th tree equal to $t$ and all other trivial.

{\bf Partial order.}
If $\cF_\infty$ is a forest-skein monoid, then we can define a partial order $\leq$ such as:
$f\leq f'$ if there exists $p\in \cF_\infty$ satisfying $f'=f\circ p$.
This is indeed a partial order analogous to the partial order defined on forest-skein categories.

\begin{remark}\label{rk:forest-monoid}
(Forest-skein monoids obtained from directed systems)
Define the map 
$$\phi:\cF\to \cF_\infty,\ (f_1,\cdots,f_n)\mapsto (f_1,\cdots,f_n, I, I,\cdots)$$ where $(f_1,\cdots,f_n)$ is a finite list of trees.
Intuitively, $\phi(f)$ is obtained by adding infinitely many trivial trees to the right of $f$ and thus we may simply write $\phi(f)=f\ot I^{\ot \infty}.$
Observe that $\phi$ is preserving the composition: $\phi(f\circ g)=\phi(f)\circ \phi(g)$ for $f,g\in\cF$. 
Hence, $\phi$ is a covariant functor from $\cF$ to $\cF_\infty$.
Moreover, $\phi$ preserves the partial orders: if $f\leq f'$, then $\phi(f)\leq \phi(f')$.
It is easy to see that, as a function, $\phi$ is surjective but not injective.
Indeed, $\phi(f\ot I) = \phi(f)$ for all $f\in\cF$.
Although, if we consider the system of inclusion maps 
$$(\iota_{m,n}^k:\cF(m,n)\to \cF(m+k,n+k), f\mapsto f\ot I^{\ot k} \text{ for } k\geq 0 \text{ and } 1\leq n\leq m),$$
then the inductive limit of the directed system $(\cF(m,n), \iota_{m,n}^k:\ k\geq 0 \text{ and } 1\leq n\leq m)$ can be identified in an obvious manner with $\cF_\infty.$

(Exterior law)
Unlike the composition and the order we cannot extend the tensor product of $\cF$ to $\cF_\infty$.
However, one can tensor an element of $\cF$ with an element of $\cF_\infty$ obtaining a map:
$$\cF\times \cF_\infty\to \cF_\infty,\ (f, g)\mapsto f\ot g.$$
This justifies the notation $f\mapsto f\ot I^{\ot\infty}$ defining the map $\phi$ of above.
\end{remark}

\subsection{Usual presentations, universal properties, and Jones' technology}\label{sec:universal-property}

\subsubsection{Category presentation}\label{sec:cat-pres}
Consider a skein presentation $(S,R)$ with associated categories 
$$\cUF=\FC\la S\ra, \cUF_\infty=\FM\la S\ra, \cF=\FC\la S|R\ra, \cF_\infty=\FM\la S|R\ra.$$
Note that a {\it skein} presentation $(S,R)$ is not a presentation in the usual sense for $\cF$ or $\cF_\infty$
Our plan is to deduce from $(S,R)$ a category presentation of $\cF$ and a monoid presentation of $\cF_\infty$.
From there we will deduce universal properties of $\cF$ (and $\cF_\infty$) with respect to \textit{any} category (not only a forest-skein category).
{\bf This universal property combined with Jones' technology will permit us to construct actions of forest-skein groups. It is one of the main motivations of our work which will be extensively investigate in future articles.}

In order to not create confusions we systematically say in this section that $S,R$ are, respectively, sets of \textit{forest generators}, \textit{skein relations}, and $(S,R)$ is a \textit{skein presentation} of $\cF$.
For a (classical) category or monoid $\cC$ we use the terminology \textit{category generators} or {\it monoid generators}, etc., for the usual notions as defined in \cite[Section 1.4]{Dehornoy-book}.

{\bf Thompson-like relations.}
Observe that the diagrammatic structures provide the following category relations that are inherent to all forest-skein categories and forest-skein monoids coloured by $S$:
\begin{align}\label{eq:Thompson-relations}
\TR(S):=&\{(b_{q,n}\circ a_{j,n+1}\ , \ a_{j,n}\circ b_{q+1,n+1}):\ a,b\in S \text{ and } 1\leq j <q\leq n\};\\
\TR(S)_\infty:=&\{(b_{q}\circ a_{j}\ , \ a_{j}\circ b_{q+1}):\ a,b\in S \text{ and } 1\leq j <q\}.
\end{align}
We call them \textit{Thompson-like relations} (over the set $S$).
The first appears in $\cF$ while the second in $\cF_\infty$.

{\bf Skein relations give category relations.}
Consider a skein relation $(u,u')\in R$ which is a pair of trees with the same number of leaves.
The tree $u$ can be written (uniquely up to the Thompson-like relations) as a word of elementary forests so that its $k$th letter is of the form $y_{i_k,k}$ where $y\in S$ and $1\leq i_k\leq k$.
Similarly, write $u'$ as a word with letters in $\cE(S)$.
This provides a relation that we write $R(u,u',1,1)$.
Now, by shifting the $k$th letter $y_{i_k,k}$ to $y_{i_k+j-1, k+n-1}$ we obtain a new category relation $R(u,u',j,n)$ corresponding to the pair of forests $(u_{j,n},u'_{j,n})$ (here $u_{j,n}=I^{\ot j-1} \ot u\ot I^{\ot n-j}$).
Similarly, we obtain a one parameter family of monoid relations $R(u,u',j)$ by erasing the second index $n$.

The important point is that there are no additional category relations in $\cF$ and $\cF_\infty$ as stated in the following proposition.

\begin{proposition}\label{prop:universal-category}
If $\cF:=\FC\langle S| R\rangle$ is a presented forest-skein category, then 
\begin{align*}
&(\cE(S), \{ R(u,u',j,n):\ (u,u')\in R, 1\leq j\leq n\}\cup \TR(S)) \text{ and }\\
&(S_\infty, \{ R(u,u',j):\ (u,u')\in R, 1\leq j\}\cup \TR(S)_\infty)
\end{align*}
are category presentation of $\cF$ and monoid presentation of $\cF_\infty$, respectively.
\end{proposition}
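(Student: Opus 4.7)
The plan is to reduce the claim to the universal case $R = \emptyset$ and then bootstrap to the general quotient. Let $\cC$ denote the category freely generated by $\cE(S)$ modulo the listed relations, and let $\pi: \cC \to \cF$ be the evident functor that is the identity on objects $\N$ and sends each generator $a_{j,n}$ to its class. Since the elementary forests generate $\cF$ as a category (noted earlier in the text), $\pi$ is surjective on hom-sets, so the whole content is the injectivity of $\pi$: any two words in $\cE(S)$ representing the same element of $\cF$ must be provably equal in $\cC$.

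\textbf{Step 1, the universal case.} First take $R = \emptyset$, so $\cF = \cUF$. A forest $f \in \cUF$ is determined by its finite set of carets together with a colouring, and its carets carry a natural partial order given by the ``is a descendant of'' relation within $f$. A decomposition of $f$ as a word $e_1 \circ \cdots \circ e_k$ of elementary forests is precisely a linear extension of this poset, where $e_i$ records the colour and horizontal position of the $i$th caret read off. Two linear extensions of the same poset differ by a sequence of swaps of adjacent incomparable elements, and each such swap corresponds exactly to an instance of a Thompson-like relation $b_{q,n+1} \circ a_{j,n} = a_{j,n} \circ b_{q+1,n+1}$ with $j < q$. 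Hence in $\cC$ all word representations of $f$ coincide, proving the universal case.

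\textbf{Step 2, the quotient case.} In general, two words $w, w'$ in $\cE(S)$ represent the same element of $\cF = \cUF/\ov R$ iff their underlying forests $f, f' \in \cUF$ satisfy $f \sim_R f'$. Unpacking the definition of $\ov R$ as the smallest congruence containing $R$, there is a finite chain $f = f_0, \ldots, f_m = f'$ such that $f_{i+1}$ is obtained from $f_i$ by a single local replacement $g \circ (p \ot u \ot q) \circ h \leadsto g \circ (p \ot u' \ot q) \circ h$ for some $(u, u') \in R$ and forests $g, p, q, h$. Setting $j = |\Leaf(p)| + 1$ and $n$ equal to the number of leaves of $p \ot u \ot q$, the inner replacement $p \ot u \ot q \leadsto p \ot u' \ot q$ is exactly the relation $R(u, u', j, n)$. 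Outer composition with $g$ and $h$ preserves equality in $\cC$, and Step 1 allows us to use any word representatives of $g$ and $h$ freely. Hence all $f_i$ become equal in $\cC$, giving $w = w'$ in $\cC$.

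\textbf{Step 3, the monoid case.} The argument for $\cF_\infty$ is entirely analogous. The stabilising map $\cF \to \cF_\infty, f \mapsto f \ot I^{\ot\infty}$ is a functor that kills the dependence on the second index $n$, converting $\TR(S)$ into $\TR(S)_\infty$ and $R(u,u',j,n)$ into $R(u,u',j)$; viewing $\cF_\infty$ as the inductive limit of the $\cF(m,n)$ under tensoring with $I$ on the right (as in the remark above) yields the monoid presentation. The main obstacle is Step 1: formalising the correspondence between linear extensions of the caret poset and word decompositions, and verifying that adjacent swaps are accounted for by Thompson-like relations alone. The classical monochromatic case is well-known but needs slight care here to track colours; a clean formalisation is to exhibit a canonical normal form, for instance the word obtained by iteratively extracting the unique leftmost lowest caret, and to show by induction on caret count that every word rewrites to it using only $\TR(S)$.
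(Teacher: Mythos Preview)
Your proof is correct and follows the same two-step strategy as the paper: establish the free case $R=\emptyset$ first (the paper simply cites Belk's thesis for the monochromatic version and calls the coloured adaptation easy, while you spell out the linear-extension argument explicitly), then pass to the quotient by unwinding the definition of $\ov R$. Two minor index slips to fix: the Thompson-like relation should read $b_{q,n}\circ a_{j,n+1}=a_{j,n}\circ b_{q+1,n+1}$ (your $b_{q,n+1}\circ a_{j,n}$ does not compose), and in Step~2 the parameters $(j,n)$ for $R(u,u',j,n)$ should count roots rather than leaves in the factorisation isolating $u$.
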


\begin{proof}
First, consider the free forest-skein monoid case with colour set $S$.
An easy adaptation of the monochromatic case (which is a well-known fact, see for instance  \cite[Proposition 2.4.5]{Belk-PhD}) gives that $(S_\infty,\TR(S)_\infty)$ is a monoid presentation of $\cUF_\infty$.
From there we deduce the category presentation $(\cE(S),\TR(S))$ for the free forest-skein category $\cUF$.
Now, going from $\cUF$ to $\cF$ (resp.~$\cUF_\infty$ to $\cF_\infty$) is obvious by the definition of $\cF$ (resp.~$\cF_\infty$) which is the quotient of $\cUF$ (resp.~$\cUF_\infty$) by the equivalence relation $\ov R$ equal to the closure of $R$ by compositions and tensor products.
\end{proof}

\begin{example}A category presentation of the free forest-skein category $\cUF:=\FC\langle S\rangle$ is $(\cE(S), \TR(S)).$
The Thompson-like relation $(b_{3,3}\circ a_{2,4}, a_{2,3}\circ b_{4,4})$ is expressed diagrammatically as follows:
\[\includegraphics{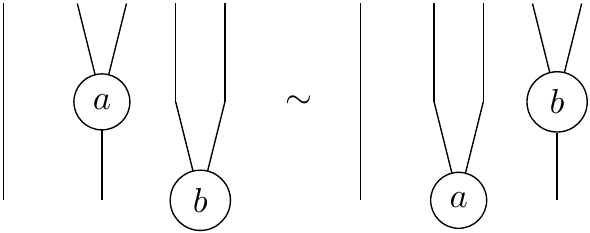}\]
Consider $\cF:=\FC\la a,b| a_1a_2=b_1b_1\ra.$
A category presentation of $\cF$ is given by
the category generator set 
$$\cE(a,b):=\{ a_{j,n}, b_{j,n}:\ 1\leq j\leq n\}$$
and the category relations
$$\begin{cases}(a_{j,n}a_{j+1,n+1} \ , \  b_{j,n} b_{j,n+1}) \text{ for all } 1\leq j\leq n\\
(x_{q,n}\circ y_{j,n+1}\ , \ y_{j,n}\circ x_{q+1,n+1}):\ x,y\in \{a,b\} \text{ and } 1\leq j <q\leq n\end{cases}.$$

Here is a diagrammatic description of one of the relations:
\[\includegraphics{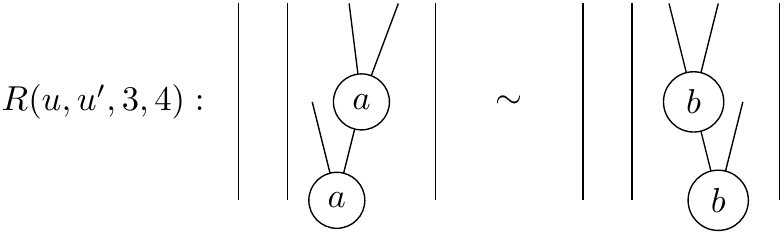}\]

A monoid presentation of $\cF_\infty$ is obtained by removing the second index of the generators, e.g.~$a_{j,n}\leftrightarrow a_j$.
\end{example}
 
\subsubsection{Universal property and Jones' technology}
The next corollary is a direct consequence of the previous proposition and the definition of category presentation.

\begin{corollary}
A presented forest-skein category $\cF:=\FC\langle S| R\rangle$ satisfies the following universal property.
Consider a category $\cC$ containing a set of morphisms $X$. 
Assume that there is a map $\phi:\cE(S)\to X$. 
Extend $\phi$ to finite paths of morphisms of $\cE(S)$ as
 $$f_1\circ \cdots\circ f_n \mapsto \phi(f_1)\circ \cdots \circ \phi(f_n).$$
Assume that, if $(f,f')$ is a category relation of $\cF$, then $\phi(f)=\phi(f')$.
Then there exists a unique covariant functor $\Phi:\cF\to \cC$ 
whose restriction to $\cE(S)$ is $\phi$.
\end{corollary}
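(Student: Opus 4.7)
The plan is to reduce this to Proposition \ref{prop:universal-category}, which furnishes an explicit category presentation of $\cF$ with generating set $\cE(S)$ and relation set $\TR(S)\cup\{R(u,u',j,n):(u,u')\in R,\ 1\leq j\leq n\}$, and then apply the standard universal property of a category presentation: a map of generators into an arbitrary category that respects the given relations extends uniquely to a functor on the presented category.

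First I would fix the object part of the prospective functor $\Phi$. The statement is silent on how $\Phi$ should act on $\ob(\cF)=\N$, but there is essentially no choice: requiring that $\phi(a_{j,n})$ be a morphism of $\cC$ and that composable paths in $\cE(S)$ map to composable paths in $\cC$ pins down an assignment $\Phi\colon\N\to\ob(\cC)$, read off inductively from the sources and targets of the $\phi(a_{j,n})$. Coherence of this object map across the different colours of $S$ is a consequence of $\phi$ respecting the Thompson-like relations $\TR(S)$, which relate carets of every pair of colours.

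Second, on morphisms I would define $\Phi$ on a representative word by $\Phi(f_1\circ\cdots\circ f_n):=\phi(f_1)\circ\cdots\circ\phi(f_n)$. The only substantive verification is well-definedness: by Proposition \ref{prop:universal-category} two composable words of elementary forests represent the same element of $\cF$ if and only if one can be obtained from the other by a finite sequence of substitutions coming from $\TR(S)$ or from $\{R(u,u',j,n)\}$. Both families are category relations of $\cF$ by construction, so the hypothesis on $\phi$ directly delivers equality of their images in $\cC$. Functoriality is then immediate from the definition, and uniqueness follows from the fact that $\cE(S)$ generates $\cF$ under composition.

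The main obstacle is really the object bookkeeping, which the corollary leaves implicit; once the correct object assignment is committed to, the rest is a direct, mechanical unfolding of the universal property of a category presentation and requires no input beyond Proposition \ref{prop:universal-category}.
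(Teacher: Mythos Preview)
Your approach is essentially the same as the paper's: the paper states that the corollary ``is a direct consequence of the previous proposition and the definition of category presentation,'' and you carry out precisely that reduction, invoking Proposition~\ref{prop:universal-category} for the explicit presentation and then the universal property of a category presentation for existence and uniqueness of $\Phi$. Your additional remarks on the object assignment are a reasonable clarification of a point the paper leaves implicit.
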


\begin{remark}
In practice, we will be considering {\it contravariant} functors for constructing unitary representation or actions on groups via Jones' technology. Although, the covariant case is interesting as well. For instance, it produces actions of Thompson's groups on inverse limits such as totally disconnect compact spaces, profinite groups, and other.
\end{remark}

We will be mainly considering \textit{monoidal} functors and the following corollary. 
In that case we only need to consider a {\it skein} presentation rather than a {\it category} presentation. 
So far, it has been the most common way for producing explicit actions of groups via Jones' technology.

\begin{corollary}
Let $\cF:=\FC\langle S| R\rangle$ be a presented forest-skein category and $(\cC,\circ,\ot)$ a monoidal category.
Assume there exists an object $e$ in the collection of objects of $\cC$ and some morphism $\phi(a):e\to e \ot e$ for all $a\in S$.
Define $\phi$ on elementary forests such that 
$$\phi(a_{j,n}):= \id_e^{\ot j-1}\ot \phi(a)\ot \id_e^{\ot n-j} \text{ for all } a\in S, 1\leq j\leq n,$$
where $\id_e$ denotes the identity automorphism of the object $e$.

\begin{enumerate}
\item There exists a unique contravariant monoidal functor $\Phi: \FC\langle S\rangle\to C$ satisfying that $\Phi$ restricts to $\phi$ on the set of elementary forests $\cE(S)$.
\item If moreover, for each skein relation $(u,u')\in R$ we have that $\Phi(u)=\Phi(u')$, then $\Phi$ factorises uniquely into a contravariant monoidal functor $\ov\Phi:\cF\to \cC$.
\end{enumerate}
\end{corollary}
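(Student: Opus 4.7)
The plan is to leverage Proposition \ref{prop:universal-category}, which presents the free forest-skein category $\cUF = \FC\la S\ra$ by the generating set $\cE(S)$ modulo the Thompson-like relations $\TR(S)$, and to recover the monoidal structure from the tensor form in which $\phi$ is defined on elementary forests.

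For part (1), I would first produce a contravariant functor $\Phi : \cUF \to \cC$ by applying Proposition \ref{prop:universal-category} (with empty relation set) to the opposite category $\cC^{op}$. This reduces the task to verifying that $\phi$ respects each Thompson-like relation contravariantly: for $a, b \in S$ and $1 \leq j < q \leq n$, I must check
$$\phi(a_{j,n+1}) \circ \phi(b_{q,n}) = \phi(b_{q+1,n+1}) \circ \phi(a_{j,n}).$$
Expanding with $\phi(a_{j,n}) = \id_e^{\ot j-1} \ot \phi(a) \ot \id_e^{\ot n-j}$, the condition $j < q$ ensures that $\phi(a)$ and $\phi(b)$ act on disjoint tensor positions; the interchange law of $\cC$ then collapses both sides to the common expression
$$\id_e^{\ot j-1} \ot \phi(a) \ot \id_e^{\ot q-j-1} \ot \phi(b) \ot \id_e^{\ot n-q}.$$

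To promote $\Phi$ to a monoidal functor, I set $\Phi(n) := e^{\ot n}$ on objects, which is clearly compatible with tensor on objects, and check $\Phi(f \ot g) = \Phi(f) \ot \Phi(g)$ on morphisms. Since any tensor product of elementary forests rewrites (via the Thompson-like relations encoding commutation of disjoint carets) as a composition of elementary forests, a short induction on composition length reduces this to the trivial verification on single generators, where it holds by the very form of the defining formula. Uniqueness follows because $\cUF$ is generated as a monoidal category under $\circ$ and $\ot$ by $\{Y_a : a \in S\}$, on which $\phi$ is prescribed.

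For part (2), monoidality of $\Phi$ propagates the hypothesis $\Phi(u) = \Phi(u')$ to $\Phi(u_{j,n}) = \Phi(u'_{j,n})$ for all $1 \leq j \leq n$, because $u_{j,n} = I^{\ot j-1} \ot u \ot I^{\ot n-j}$. By Proposition \ref{prop:universal-category}, the category relations added in passing from $\cUF$ to $\cF = \FC\la S|R\ra$ are exactly the $R(u, u', j, n)$, i.e.~the pairs $(u_{j,n}, u'_{j,n})$. Hence $\Phi$ factors uniquely through the canonical quotient $\cUF \onto \cF$, yielding the desired contravariant monoidal functor $\ov\Phi : \cF \to \cC$. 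No deep difficulty is expected: the whole argument is bookkeeping of tensor positions under composition, uniformly handled by the interchange law of the monoidal category.
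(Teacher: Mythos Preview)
Your proposal is correct and is precisely the argument the paper leaves implicit: the corollary is stated without proof, as an immediate consequence of Proposition~\ref{prop:universal-category} and the preceding corollary, and your verification (checking the Thompson-like relations via the interchange law, then propagating the skein relations through monoidality) is exactly how one unpacks that implication.
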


\begin{example}
Consider the presented forest-skein category $\cF:=\FC \langle a,b | a_1a_{2}=b_1b_{1}\rangle.$
Let $\Hilb$ be the category of (complex) Hilbert spaces having isometries for morphisms and monoidal structure given by the classical tensor product of Hilbert spaces. 
For any Hilbert space $H$ and isometries $A,B:H\to H\ot H$ satisfying
 \begin{equation}\label{eq:ABone}(\id_H\ot A)\circ A = (B\ot \id_H)\circ B.\end{equation}
 there exists a unique contravariant monoidal functor $\Phi:\cF\to \Hilb$ satisfying that $\Phi(Y_a)=A, \Phi(Y_b)=B.$
We will see in Section \ref{sec:example} that $\cF$ is a Ore category whose fraction group $G=\Frac(\cF,1)$ isomorphic to the Cleary irrational-slope Thompson's group \cite{Cleary00}.
Using Jones' technology (which we don't develop in this article) we can build explicitly from $\Phi$ a unitary representation of $G^V$ (the $V$-version of $G$ defined in \cite{Burillo-Nucinkis-Reeves22}).
Hence, any pairs of isometries $(A,B)$ satisfying Equation \ref{eq:ABone} provides a unitary representation of the three irrational-slope Thompson groups $G,G^T,$ and $G^V$.
\end{example}

\subsection{The collection of all forest-skein categories}

%Preambule: a proper category
Our main object of study is a forest-skein category considered as an algebraic structure like a monoid or a group.
We will now define what are the right notion of maps between them. 
This leads us to introduce the category of forest-skein categories. This is a category that is not captured by classical set theory and we treat it in that way. 
The notion of morphism between forest-skein categories may seem restrictive at a first glance. However, it is the correct notion assuring that range of morphisms are forest-skein categories (leading to the notions of quotient and embedding).
We could spend more time defining various operations and constructions in this category (like free product, direct product, etc.) but leave it to future articles when we will derive applications of these operations.

\subsubsection{The category of forest-skein categories}

{\bf Collection of objects.}
Let $\Forest$ be the collection of all forest-skein categories. 
This is not a set since the class of all nonempty sets embeds in it via $S\mapsto \cUF\la S\ra.$

{\bf Morphisms.}
%Morphism
Consider two forest-skein categories $\cF,\ti\cF$.
A {\it morphism} $\phi$ from $\cF$ to $\ti\cF$ is a covariant monoidal functor from $\cF$ to $\ti\cF$ satisfying that $\phi(1)=1$ for the object $1$.
In other terms, $\phi$ is a map from $\cF$ to $\ti\cF$ such that if $f$ is a forest with $n$ roots and $m$ leaves, then $\phi(f)$ is a forest of $\ti\cF$ with $n$ roots and $m$ leaves.
Moreover, $\phi(f\circ g) = \phi(f)\circ \phi(g)$ and $\phi(f\ot h)=\phi(f)\ot \phi(h)$ for all $f,g,h\in\cF$.
We write $\Hom(\cF,\ti\cF)$ for the class of morphisms form $\cF$ to $\ti\cF$. 
Elements of $\Hom(\cF,\ti\cF)$ are typically written using the symbols $\phi,\varphi,\psi,\chi.$
Note, since there is only one tree with one leaf in each forest-skein category we have $\phi(I)=I$ and thus $\phi$ sends trivial forests to trivial forests. 
Similarly, it maps abstract colours to abstract colours.

\begin{proposition}
The collection of all forest-skein categories together with the collection of morphisms defines a category that we call the \textit{category of forest-skein categories} denoted $\Forest$.
\end{proposition}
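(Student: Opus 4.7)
The plan is to verify the four axioms of a category directly by unwinding the definition of morphism given just above the proposition. Since the collection of objects is a proper class (as the author already notes), there are no size issues to resolve beyond accepting the framework of a large category; only the axioms concerning identities and composition need checking.

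First I would exhibit identities. Given a forest-skein category $\cF$, the set-theoretic identity map $\id_\cF:\cF\to\cF$ is plainly a covariant monoidal functor: it preserves both binary operations $\circ$ and $\ot$ trivially and fixes every object, in particular the object $1$. Hence $\id_\cF\in\Hom(\cF,\cF)$, giving the required identity morphism.

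Next I would check that composition of morphisms is well defined. Given $\phi\in\Hom(\cF,\ti\cF)$ and $\psi\in\Hom(\ti\cF,\cF')$, define $\psi\circ\phi$ as ordinary composition of functions from $\cF$ to $\cF'$. This composite sends a forest with $n$ roots and $m$ leaves to a forest with the same numbers of roots and leaves, since each factor does. For $f,g,h\in\cF$ with $(f,g)$ composable I would compute
\[
(\psi\circ\phi)(f\circ g)=\psi(\phi(f)\circ\phi(g))=\psi(\phi(f))\circ\psi(\phi(g)),
\]
and similarly $(\psi\circ\phi)(f\ot h)=(\psi\circ\phi)(f)\ot(\psi\circ\phi)(h)$, using at each step that $\phi$, then $\psi$, preserves the relevant operation. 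The condition on the object $1$ is equally immediate: $(\psi\circ\phi)(1)=\psi(\phi(1))=\psi(1)=1$. Thus $\psi\circ\phi\in\Hom(\cF,\cF')$.

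Finally, associativity of composition and the fact that identities are neutral follow from the corresponding properties of set-theoretic function composition, since morphisms are in particular maps between the underlying sets. None of these steps present any real obstacle; the only conceptual point is to make sure that one is working with the restricted notion of morphism required by the definition (covariant monoidal functor fixing the object $1$), and I have verified above that this property is closed under composition and satisfied by the identity. This suffices to conclude that $\Forest$ is a category.
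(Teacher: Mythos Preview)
Your proof is correct and takes essentially the same approach as the paper's own proof, which likewise observes that a composition of covariant monoidal functors is a covariant monoidal functor, that associativity follows because morphisms are functions, and that the identity map furnishes the identity morphism. You have simply spelled out these verifications in more detail than the paper does.
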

\begin{proof}
A composition of covariant monoidal functor is a covariant monoidal functor implying that a composition of morphisms is a morphism.
Since morphisms are functions it is obvious that the composition is associative. 
For each forest-skein category we can consider the identity map which defines an identity functor.
All together we deduce that $\Forest$ is a category.
\end{proof}

{\bf Terminology.}
We will adopt a set-theoretical terminology for qualifying properties of morphisms.
Hence, we will say that $\phi:\cF\to\ti\cF$ is injective or surjective rather than $\phi$ is a monomorphism or an epimorphism, respectively.
It is not hard to see that if a morphism $\phi:\cF\to\ti\cF$ is bijective, then the inverse map is a morphism.
Hence, an isomorphism from $\cF$ to $\ti\cF$ is a bijective morphism $\phi:\cF\to\ti\cF$ with inverse written $\phi^{-1}.$
Similarly, an automorphism of $\cF$ is a bijective endomorphism of $\cF$.

\begin{proposition}\label{prop:colour-morphism}
Consider two presented forest-skein categories $\cF=\FC\langle S|  R \rangle$ and $\ti\cF=\FC\langle \ti S|  \ti R\rangle$. 
A morphism $\phi:\cF\to\ti\cF$ is characterised by its restriction to $\cF(2,1)$ (the set of trees of $\cF$ with 2 leaves). 
In particular, $\Hom(\cF,\ti\cF)$ is a set which is finite when $S$ is and there is an injective map:
$$r:\Hom(\cF,\ti\cF)\to \{S\to \ti S\}, \ \phi\mapsto r(\phi)$$ satisfying
$$\phi(Y_a) = Y_{r(\phi)(a)} \text{ for all } \phi\in \Hom(\cF,\ti\cF), a\in S.$$
\end{proposition}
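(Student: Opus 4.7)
The plan is to observe that every forest in $\cF$ is manufactured, via $\circ$ and $\ot$ with the trivial tree $I$, from the two-leaf trees $Y_a$ with $a\in S$, so a morphism is forced by its values on $\cF(2,1)$. Concretely, any $f\in\cF$ is (by Proposition \ref{prop:universal-category}) a composition of elementary forests $a_{j,n}=I^{\ot j-1}\ot Y_a\ot I^{\ot n-j}$. Since $\phi$ is a monoidal functor with $\phi(1)=1$, it sends the identity morphism of $1$ to itself, i.e.\ $\phi(I)=I$, hence $\phi(I^{\ot k})=I^{\ot k}$ for every $k$. Combined with $\phi(f\circ g)=\phi(f)\circ\phi(g)$ and $\phi(f\ot g)=\phi(f)\ot\phi(g)$, this gives $\phi(a_{j,n})=I^{\ot j-1}\ot\phi(Y_a)\ot I^{\ot n-j}$, so $\phi$ is determined on all elementary forests by $\phi|_{\cF(2,1)}$, and then on all of $\cF$ by associativity of composition.

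This immediately shows that $\phi\mapsto\phi|_{\cF(2,1)}$ is an injection from $\Hom(\cF,\ti\cF)$ into the set of functions $\cF(2,1)\to\ti\cF(2,1)$, which is a set, and is finite whenever both $\cF(2,1)$ and $\ti\cF(2,1)$ are (and the former is finite when $S$ is, via the surjection $S\twoheadrightarrow\cF(2,1)$, $a\mapsto Y_a$, of Remark \ref{rem:colour}).

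To build $r$, I would first choose, once and for all, a section $\sigma:\ti\cF(2,1)\to\ti S$ of the surjection $\ti S\twoheadrightarrow\ti\cF(2,1)$, $b\mapsto Y_b$, and then set $r(\phi)(a):=\sigma(\phi(Y_a))$ for $a\in S$. By construction $Y_{r(\phi)(a)}=Y_{\sigma(\phi(Y_a))}=\phi(Y_a)$. If $r(\phi)=r(\psi)$ then $\phi(Y_a)=Y_{r(\phi)(a)}=Y_{r(\psi)(a)}=\psi(Y_a)$ for every $a\in S$; since $\{Y_a:a\in S\}=\cF(2,1)$ (again by Remark \ref{rem:colour}), $\phi$ and $\psi$ coincide on $\cF(2,1)$, hence agree everywhere by the first paragraph, proving that $r$ is injective.

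The only subtle point is that the surjection $\ti S\twoheadrightarrow\ti\cF(2,1)$ can fail to be injective (distinct colours of $\ti S$ may become identified in $\ti\cF$ through skein relations among two-leaf trees), so the lift $r$ is not canonical and depends on the chosen section $\sigma$. This is harmless: the statement only asks for existence of an injective $r$ with the property $\phi(Y_a)=Y_{r(\phi)(a)}$, and the argument above supplies one. No step requires genuine work beyond this bookkeeping, so I do not expect a serious obstacle.
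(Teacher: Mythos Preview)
Your proof is correct and follows essentially the same approach as the paper, which simply asserts that the proposition ``is a direct consequence of the universal property satisfied by a forest-skein category among monoidal categories, see Proposition~\ref{prop:universal-category}.'' You have merely unpacked that universal-property argument explicitly: forests decompose as compositions of elementary forests $a_{j,n}=I^{\ot j-1}\ot Y_a\ot I^{\ot n-j}$, and a monoidal functor preserving the object $1$ is therefore determined by its values on the $Y_a$. Your explicit construction of $r$ via a chosen section $\sigma:\ti\cF(2,1)\to\ti S$, together with the remark that this choice is non-canonical when distinct colours of $\ti S$ collapse in $\ti\cF(2,1)$, spells out a point the paper's one-line proof leaves implicit; this is a welcome clarification rather than a different method.
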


\begin{proof}
This is a direct consequence of the universal property satisfied by a forest-skein category among monoidal categories, see Proposition \ref{prop:universal-category}.
\end{proof}

Here is a diagrammatic example. Consider a morphism $\phi:\cF\to\ti\cF$ and write $r$ the map $r(\phi)$ from the colour set of $\cF$ to the one of $\ti\cF$.
We have:
\[\includegraphics{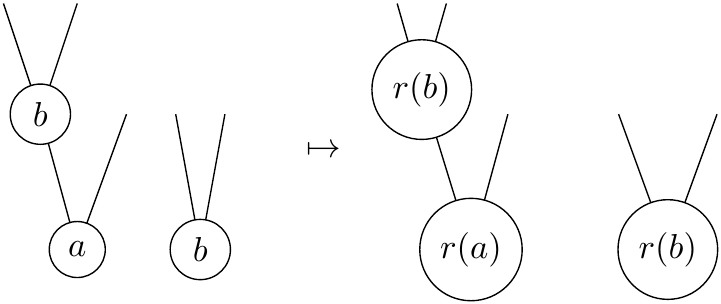}\]

\begin{remark}
Note that there exist covariant monoidal functors between forest-skein categories that are not {\it morphisms} in our sense. 
This means that there exists maps $\phi:\cF\to\cG$ that preserve compositions and tensor products but not the number of roots and leaves.
It rarely happens but it does. We give below an example where $\cF$ and $\cG$ are rather pathological. Note that in many cases a covariant and monoidal functor between two specific forest-skein categories will be automatically a morphism in our sense.
Consider the monochromatic forest-skein category $\cF$ that has only one tree $t_n$ with $n$ leaves for each $n\geq 1.$
This can be obtained using the skein presentation $(S,R)$ where $S$ is a singleton and where $R$ is the set of {\it all} pairs of trees $(t,s)$ with the same number of leaves.
If $\cT\subset \cF$, is the subset of trees, then we set:
$$\phi:\cT\to\cF, t_n\mapsto  t_{2n-1}\ot I.$$
Now, extend $\phi$ on $\cF$ as follows:
$$\phi:\cF\to\cF, f_1\ot\cdots\ot f_r\mapsto \phi(f_1)\ot \cdots \ot \phi(f_r)$$
for all forest $f=f_1\ot\cdots \ot f_r$ where each $f_j$ is a tree.
The map $\phi$ preserves composition implying that it is a covariant functor. Moreover, it is monoidal by construction. 
However, it is not a morphism of forest-skein categories because it doubles the number of roots and leaves.
\end{remark}

\subsubsection{Kernel, quotient, range, factorisation, and forest subcategory}\label{sec:forest-sub}
Consider two forest-skein categories $\cF,\cG$ and a morphism $\phi:\cF\to\cG$.

{\bf Kernel.}
The \textit{kernel} $\ker(\phi)$ of $\phi$ is the set of pairs $(f,f')\in\cF\times \cF$ satisfying $\phi(f)=\phi(f').$
Note that $\ker(\phi)$ is closed under taking composition, tensor product, rooted subforests, and is the graph of an equivalence relation denoted $\sim_\phi.$
If $R_\phi\subset\ker(f)$ is the subset of pairs of {\it trees}, then it is a set of skein relations that generates $\ker(f)$ (i.e.~the closure of this set for composition and tensor product is equal to $\ker(f)$).

{\bf Quotient and factorisation.}
Let $\chi_\phi:\cF\to \cF/\sim_\phi$ be the canonical quotient map.
Observe that $\cF/\sim_\phi$ has an obvious structure of forest-skein category. 
Equip with this structure $\chi_\phi$ is a surjective morphism of forest-skein categories.
Moreover, if $(S,R)$ is a skein presentation of $\cF$, then $(S,R\cup R_\phi)$ is a skein presentation of $\cF/\sim_\phi$.
Finally, the morphism $\phi:\cF\to\cG$ factorises uniquely into an injective morphism $\ov\phi:\cF/\sim_\phi\into\cG$.

{\bf Forest subcategory.}
The range $\phi(\cF)$ of $\phi$ has an obvious structure of forest-skein category and is isomorphic to $\cF/\sim_\phi$ via $\ov\phi$.
We say that $\phi(\cF)$ is a {\it forest subcategory} of $\cF$.
Note that $\phi(\cF)$ is generated by $\phi(\cF)\cap \cG(2,1)$ (its trees with two leaves) where {\it generated} means closed under composition, tensor product, and by adding the trivial tree.
Moreover, this characterises forest subcategories: $\{\cG(X):\ \emptyset\neq X\subset\cG(2,1)\}$ is the set of all forest subcategories of $\cG$.
Note, if $a$ is a colour of $\cG$, then $\cG(\{Y_a\})=\cG(a)$ is the forest subcategory of $\cG$ equals to all monochromatic forests coloured by $a$. 

{\bf Quasi-forest subcategories.}
We slightly extend the definition of above by saying that $\cG(X)$ is a {\it forest quasi-subcategory} of $\cG$ where $X$ is a nonempty subset of {\it trees} of $\cG$ (rather than a nonempty subset of trees with exactly two leaves).
In particular, if $t\in\cG$ is a nontrivial tree, then it defines a forest quasi-subcategory $\cG(\{t\})=\cG(t)$ of trees made exclusively with the ``caret'' $t$.

All these definitions and facts can be adapted to forest-skein monoids in the obvious manner. 

\subsection{Generalisations of forest-skein categories}
We briefly generalise our setting of forest-skein categories adding permutations and braids. This will be used to produce groups similar to Thompson's groups $T,V$, and $BV$.
These constructions are obvious extensions of the classical cases of Thompson's groups $T,V$ exposed in Cannon-Floyd-Parry and by Brin \cite{Cannon-Floyd-Parry96,Brin-BV1}, see also the PhD thesis of Belk \cite[Section 7.4]{Belk-PhD}.
Moreover, they appear as particular cases of the constructions of Thumann of symmetric and braided operads corresponding to the $V$ and $BV$ cases \cite[Section 3.1]{Thumann17}.

\subsubsection{Diagrams for groups}
We start by describing the symmetric, braid, and cyclic groups using diagrams.

{\it Symmetric group.}
Let $\Sigma_n$ be the symmetric group of order $n$: the group of bijections of $\{1,\cdots,n\}$.
Given $\sigma\in \Sigma_n$ we consider $[\sigma]$ to be the isotopy class of the diagram drawn in $\R^2$ equal to $\cup_{1\leq k\leq n} [ (k,1) , (\sigma(k),0)]$ where $[ (k,1) , (\sigma(k),0)]$ is the segment going from $(k,1)$ to $(\sigma(k),0)$ for $1\leq k\leq n.$
We interpret and call the points $(j,0)$ and $(k,1)$ as the $j$th root and $k$th leaf of $[\sigma]$, respectively.
Write $[\sigma]\circ[\tau]$ for the (isotopy class of the) horizontal concatenation of $\tau$ on top of $\sigma$ and note that $[\sigma]\circ[\tau]=[\sigma\circ \tau].$
Here is an example of two permutations of $\Sigma_4$ and their composition:
\[\includegraphics{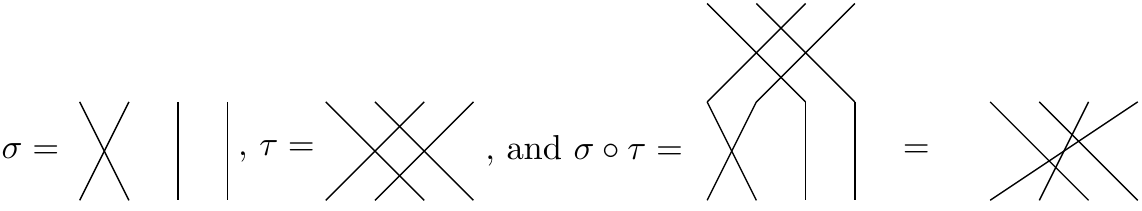}\]

{\it Braid group.}
By considering two types of crossing, over-crossing and under-crossing, we obtain the classical graphical description given by Artin of the braid groups $B_n$ over $n$ strands \cite{Artin47}.
Note, the usual group morphism $B_n\onto \Sigma_n$ corresponds in forgetting the distinction between over and under-crossings.

{\it Cyclic group.}
Consider the cyclic permutation $g:j\mapsto j+1$ of $\Sigma_n$ with $j$ written modulo $n$.
This provides an embedding $\Z/n\Z\to \Sigma_n, 1\mapsto g$. 
We identify $\Z/n\Z$ with this image in $\Sigma_n$ and consider the diagrammatic description of $\Z/n\Z$ deduced from the one of $\Sigma_n$.

\subsubsection{Free $X$-forest-skein categories}
{\it The $V$-case.}
Consider a free forest-skein category $\cUF$ over a set of colours $S$.
If $f\in\cUF$ is a forest with $n$ leaves and $\sigma\in \Sigma_n$, then we write $f\circ \sigma$ for the horizontal concatenation of $f$ with $[\sigma]$ on top of it where the $j$th leaf of $f$ is lined up with the $j$th root of $\sigma.$
Similarly, if $f$ has $r$ roots and $\tau\in \Sigma_r$, then $\tau\circ f$ stands for the diagram obtained by concatenating $f$ on top of $[\tau]$.
Now, $\tau\circ f = f^\tau\circ \tau^f$ for uniquely determined forest $f^\tau$ and permutation $\tau^f$ as explained in \cite[Section 7.4]{Belk-PhD} (up to adding colours).
Here is an example with $\tau\in \Sigma_3$ and $f$ a forest with three roots:
\[\includegraphics{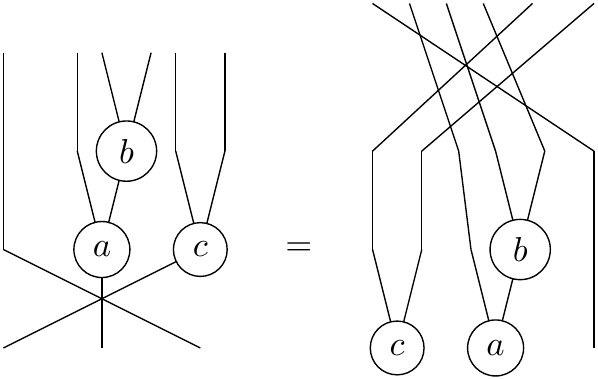}\]
This defines a composition and a small category $\cUF^V$ generated by a copy of $\cUF$ and a copy of $\Sigma_n$ for each $n$.
(We may interpret $\cUF^V$ as a Brin-Zappa-Sz\'ep product of the category $\cUF$ and the groupoid of all finite permutations.)
All elements of $\cUF^V$ (except the empty diagram) can uniquely be written in the form $f\circ \sigma$. 

{\it The $T$ and $BV$-cases.}
We observe that if $\tau$ is cyclic and $f$ is a forest, then $\tau^f$ is again a cyclic permutation. 
This allows us to define  $\cUF^T$ as the subcategory of $\cUF^V$ generated by $\cUF$ and all the cyclic groups $\Z/n\Z.$ 
Similarly, using the graphical composition explained in \cite{Brin-BV1} we define $\cUF^{BV}$ where now $\Sigma_n$ is replaced by $B_n$.

We say that $\cUF^X$ is the free $X$-forest-skein category over the set of colours $S$ for $X=F,T,V,BV$ where $\cUF^F:=\cUF$.

\subsubsection{$X$-forest-skein categories}

Consider a presented forest-skein category $\cF=\FC\la S|R\ra$ and write $\chi:\cUF\to\cF$ for the canonical morphism.
Observe that if $\chi(f)=\chi(f')$, then $f$ and $f'$ have the same number of roots and leaves. 
Moreover, if the $j$th tree of $f$ has $n_j$ leaves, then so does the $j$th tree of $f'$.
This implies that $\tau\mapsto \tau^f$ only depends on the class of $f$ inside $\cF$ where $\tau$ is either a permutation or a braid.
Moreover, $\chi(f)=\chi(f')$ implies that $\chi(f^\tau)=\chi(f'^\tau).$
This permits to define $\cF^X$ from $\cUF^X$ in the obvious manner. 
We call $\cF^X$ a {\it $X$-forest-skein category} or the {\it $X$-version} of the forest-skein category $\cF$.

\subsubsection{$Y$-forest-skein monoids}

A similar construction can be applied to a forest-skein monoid $\cF_\infty$.
Consider the group $\Sigma_{(\N)}$ of {\it finitely supported} permutations of $\N_{>0}.$
Elements of $\Sigma_{(\N)}$ can be represented as diagrams similar to above and together with $\cF_\infty$ forms a monoid $\cF_\infty^V$.
Replacing permutations by braids (that are finitely supported) we obtain a monoid $\cF_\infty^{BV}$.
Now, this construction does not apply to the $T$-case since there are no directed structures for the sequence of finite cyclic groups. 
If $Y=F,V,BV$ (hence missing $T$), then we call $\cF_\infty^Y$ the {\it $Y$-forest-skein monoid} or the {\it $Y$-version} of the monoid $\cF_\infty$ setting by convention $\cF_\infty^F=\cF_\infty$.
%%%%%%%%%%%%%%%%%%%%%%%%%%%%%%%%%%%%%%%%%%%%%%%
%%%%%%%%%%%%%%%DEFINITION-END%%%%%%%%%%%%%%%%%%%%%%%

%%%%%%%%%%%%%%%%%%%%%%%%%%%%%%%%%%%%%%%%%%%%%%%
%%%%%%%%%%%%ORE%%%%%%%%%%%%%%%%%%%%%%%%%%
\section{Ore forest-skein categories and monoids}\label{sec:Dehornoy}
Our main objective is to construct groups from forest-skein categories using a left-cancellative {\it calculus of fractions}. 
This is equivalent for a category to be left-cancellative and to satisfy a property due to Ore, see \cite[Chapter I]{Gabriel-Zisman67} or \cite[Chapter 3]{Dehornoy-book} for details.
A category with these two properties will be called a \textit{Ore category}. 

In this section we investigate left-cancellativity and Ore's property.
First, we define and rephrase them via obvious but useful characterisations. 
Second, we provide more advanced techniques to decide if a forest-skein category is left-cancellative and in some fewer cases satisfies Ore's property. 
This second analysis is an adaptation of the work of Dehornoy on complete presentations of monoids \cite{Dehornoy03}.
It will provide very useful and powerful criteria that can be, in some cases, directly read from the skein presentation.

\subsection{Definitions and obvious characterisations}

\subsubsection{Cancellative}

A category $\cC$ is {\it left-cancellative} if $f\circ g=f\circ h$ implies $g=h$. 
Right-cancellativity is defined analogously and {\it cancellative} means left and right cancellative.
By choice we will only consider the property of being {\it left}-cancellative.
Here are some straightforward but useful observations.

\begin{observation}\label{obs:LC}
Consider a forest-skein category $\cF$ and its associated forest-skein monoid $\cF_\infty$.
For each $f\in\cF$ we consider the (partially defined) multiplication map:
$L_f: g\mapsto f\circ g.$
The following assertions are equivalent.
\begin{enumerate}
\item The forest-skein category $\cF$ is left-cancellative;
\item The forest-skein monoid $\cF_\infty$ is left-cancellative;
\item The map $L_t$ is injective for all tree $t$ of $\cF$;
\item The map $L_s$ is injective for all tree $s$ of $\cF$ with two leaves;
\item Any forest subcategory of $\cF$ (including $\cF$ itself) is left-cancellative.
\end{enumerate}
\end{observation}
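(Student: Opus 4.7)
The plan is to prove the chain $(4) \Rightarrow (3) \Rightarrow (1) \Rightarrow (2) \Rightarrow (1)$ together with $(1) \Leftrightarrow (5)$; the implications $(1) \Rightarrow (3) \Rightarrow (4)$ are trivial since trees with two leaves are themselves trees, and $(1) \Leftrightarrow (5)$ is immediate since a forest subcategory inherits the composition of $\cF$ and $\cF$ itself is the forest subcategory generated by all of $\cF(2,1)$. All substantive arguments will rely on two basic decompositions in any forest-skein category: every forest factors as a tensor product of trees $f = t_1 \otimes \cdots \otimes t_n$, and every tree with more than two leaves factors as $t = Y_a \circ (t_L \otimes t_R)$ where $Y_a$ is its root caret (of some colour $a$) and $t_L, t_R$ are the left and right subtrees.

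For $(3) \Rightarrow (1)$, decompose $f = t_1 \otimes \cdots \otimes t_n$ with $t_i$ having $m_i$ leaves; then any composable $g$ splits uniquely as $g = g_1 \otimes \cdots \otimes g_n$ with $g_i$ having $m_i$ roots, and similarly $h$. By the interchange law of the monoidal structure, $L_f(g) = (t_1 \circ g_1) \otimes \cdots \otimes (t_n \circ g_n)$, so the equation $L_f(g) = L_f(h)$ unpacks componentwise and each component is resolved by (3). For $(4) \Rightarrow (3)$ I induct on the leaf count of $t$: the base case is exactly (4), and for the inductive step I write $t = Y_a \circ (t_L \otimes t_R)$, yielding $L_t = L_{Y_a} \circ L_{t_L \otimes t_R}$; the first factor is injective by (4) and the second by the inductive hypothesis applied to $t_L, t_R$ combined with the tensor-splitting argument just used.

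For $(1) \Leftrightarrow (2)$ the essential technical ingredient is that the canonical map $\phi : \cF \to \cF_\infty$, $f \mapsto f \otimes I^{\otimes \infty}$, of Remark \ref{rk:forest-monoid} restricts to an injection on each homset $\cF(m,n)$. This holds because $\sim_R$ is generated by local subtree rewrites that cannot interact with the trivial trees padded on the right. Granting this, $(1) \Rightarrow (2)$ proceeds by choosing $N$ large enough that all three forests $f, g, h$ appearing in an equation $f \circ g = f \circ h$ of $\cF_\infty$ are of the form $\tilde f \otimes I^{\otimes \infty}$, $\tilde g \otimes I^{\otimes \infty}$, $\tilde h \otimes I^{\otimes \infty}$ for some $\tilde f, \tilde g, \tilde h \in \cF$; the equation reduces to $\tilde f \circ \tilde g = \tilde f \circ \tilde h$ in $\cF$, to which we apply (1). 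Conversely, $(2) \Rightarrow (1)$ proceeds by pushing any equation $f \circ g = f \circ h$ of $\cF$ into $\cF_\infty$ via $\phi$, cancelling there, then pulling back via the injectivity of $\phi$ on $\cF(m,n)$.

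The main obstacle I anticipate is the bookkeeping to guarantee that all these decompositions descend cleanly to $\cF$ and $\cF_\infty$: the tensor splitting of $g$ into constituent trees, the caret extraction $t = Y_a \circ (t_L \otimes t_R)$, and the truncation of an infinite forest to a finite representative must each be well-defined modulo $\sim_R$. These all reduce to two structural facts about $\sim_R$ flowing directly from Section \ref{sec:ER}: it preserves the number of roots and leaves of forests, and it acts only on subtrees, never reaching across the tensor decomposition. Once these are invoked the individual arguments become routine.
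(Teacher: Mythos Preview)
Your proof is correct. The paper does not actually give a proof of this observation; it is stated as a ``straightforward but useful observation'' and left to the reader, so there is no argument to compare against. Your chain of implications and the structural facts you invoke (that skein relations, being pairs of \emph{trees}, never cross the tensor decomposition of a forest into its constituent trees, and that $\phi$ restricts to an injection on each homset $\cF(m,n)$) are exactly the right ingredients. One minor omission: in the induction for $(4)\Rightarrow(3)$, the base case should also include the trivial tree $I$ (one leaf), since in the decomposition $t=Y_a\circ(t_L\otimes t_R)$ either $t_L$ or $t_R$ may be trivial; but $L_I$ is the identity, so this is harmless.
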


\begin{example}\label{ex:LC}
\begin{enumerate}
\item A free forest-skein category is cancellative. 
\item 
The forest-skein category $\FC\langle a,b| a_1b_1=b_1a_1, \ a_1a_2=b_1b_2\rangle$ is not left-cancellative. Indeed, $Y_a(Y_a\ot Y_b)=Y_a(Y_b\ot Y_a)$ while $Y_a\ot Y_b\neq Y_b\ot Y_a.$
\item A monochromatic forest-skein category is left-cancellative if and only if it is free.
Here is a proof. 
Let $\chi:\cUF\to\cF$ be the quotient morphism of the monochromatic free forest-skein category onto a monochromatic forest-skein category.
Assume $\chi$ has a nontrivial kernel. Consider a pair of trees $(t,s)$ such that $t\neq s$ and $\chi(t)=\chi(s)$. 
Moreover, choose this pair $(t,s)$ so that the number of leaves of $t$ is minimal.
Such a pair exists by assumption.
Since there is only one monochromatic tree $Y$ with two leaves we necessarily have $Y\leq t, Y\leq s$ and they both decomposes as follows:
$$t=Y\circ(h\ot k) \text{ and } s=Y\circ(h'\ot k')$$
with $h,k,h',k'$ some monochromatic trees not all trivial.
Since $\chi(t)=\chi(s)$ and $\chi$ is monoidal we deduce that $\chi(h)=\chi(h')$ and $\chi(k)=\chi(k')$.
Therefore, $(h,h'),(k,k')$ are elements of the kernel of $\chi$ and are trees with stritcly less leaves than $t$.
This implies that $h=h'$ and $k=k'$ implying $t=s$, a contradiction.
\item The forest-skein category $\FC\la a,b| a_1^n=b_1b_2\cdots b_n\ra$ is left-cancellative for any $n\geq 1$.
In fact, we will see that all presented forest-skein categories with two colours and one relations of the form $a_1\cdots=b_1\cdots$ are left-cancellative.
\end{enumerate}
\end{example}

\subsubsection{Ore's property}
A category $\cC$ satisfies {\it Ore's property} if for all $f,g\in\cC$ with same target there exists $p,q\in\cC$ satisfying $f\circ p=g\circ q.$

\begin{remark}
Recall that a forest-skein category $\cF$ is equipped with a partial order $\leq$ defined as: $f\leq f'$ if there exists $p\in\cF$ satisfying $f'=f\circ p.$
Observe that $\cF$ satisfies Ore's property if and only if given $f,g\in\cF$ with the same number of roots there exists $h\in\cF$ satisfying that $f\leq h$ and $g\leq h$.
We may witness this property by only looking at the set $\cT$ of trees. We deduce that the poset of $(\cT,\leq)$ of trees is {\it directed} if and only if $\cF$ satisfies Ore's property.
\end{remark}

\begin{definition}
We say that a sequence of trees $(t_n)_{n\geq 1}$ is \textit{cofinal} if it is increasing (i.e.~$n\leq m$ implies $t_n\leq t_m$) and if for any $t\in\cT$ there exists $k\geq 1$ so that $t\leq t_k$. 
More generally, we consider cofinal {\it nets} of trees.
\end{definition}

Here are useful observations and characterisations of Ore's property. 

\begin{observation}\label{obs:Ore}
Let $\cF$ be a forest-skein category, $\cF_\infty$ its associated forest-skein monoid, and $\cT\subset\cF$ the subset of trees equipped with the usual partial order $\leq$.
The following assertions are equivalent.
\begin{enumerate}
\item The forest-skein category $\cF$ satisfies Ore's property;
\item The forest-skein monoid $\cF_\infty$ satisfies Ore's property;
\item The poset $(\cT,\leq)$ is directed;
\item The poset $(\cT,\leq)$ admits a cofinal net;
\item Every quotient of $\cF$ (including $\cF$ itself) satisfies Ore's property.
\end{enumerate}
\end{observation}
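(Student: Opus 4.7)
The plan is to establish the equivalences by a short sequence of implications that leverages the monoidal structure: I will show (1)$\iff$(3)$\iff$(4), then (1)$\iff$(5), and finally (1)$\iff$(2). At each step the monotonicity of $\circ$ and $\ot$ with respect to the partial order $\leq$, already noted in Section \ref{sec:formalism}, will do most of the work. Note that since the empty forest has no partners and $\cF(m,n)$ has a unique source/target pair, two elements share a target exactly when they have the same number of roots.

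For (1)$\iff$(3), the direction (1)$\Rightarrow$(3) is immediate since any two trees share the common target $1$. Conversely, given forests $f, g$ of $n$ common roots, decompose them as $f=f_1\ot\cdots\ot f_n$ and $g=g_1\ot\cdots\ot g_n$ with each $f_i,g_i$ a tree. Apply (3) pointwise to obtain trees $h_i$ with $f_i,g_i\leq h_i$ and set $h:=h_1\ot\cdots\ot h_n$; the compatibility of $\leq$ with $\ot$ yields $f,g\leq h$. The equivalence (3)$\iff$(4) is a standard fact about directed posets: on a directed set the identity net is cofinal, and conversely an increasing cofinal net, being indexed by a directed set, forces directedness of its range.

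The equivalence (1)$\iff$(5) uses only the surjectivity and monotonicity of forest-skein morphisms. (5)$\Rightarrow$(1) is trivial because $\cF$ is its own quotient. For (1)$\Rightarrow$(5), given a surjective morphism $\chi:\cF\onto\cF'$ and two elements of $\cF'$ with common target $n$, lift them to forests in $\cF$ with $n$ roots (possible because $\chi$ preserves root count), apply (1) inside $\cF$, and push the resulting common upper bound forward through $\chi$.

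The most delicate step is (1)$\iff$(2), which relies on the inductive-limit description of $\cF_\infty$ recalled in Remark \ref{rk:forest-monoid}. The order-preserving map $\phi:\cF\to\cF_\infty, f\mapsto f\ot I^{\ot\infty}$ makes (1)$\Rightarrow$(2) essentially automatic: two finitely supported infinite forests both sit inside $\phi(\cF(N,\cdot))$ for some large $N$ obtained by padding with trivial trees, any common upper bound produced in $\cF$ via (1) embeds back into $\cF_\infty$ through $\phi$, and the composition/order preserved by $\phi$ guarantees it dominates the original pair. Conversely, a common upper bound $h\in\cF_\infty$ of $\phi(f)$ and $\phi(g)$ is finitely supported, so after truncating (removing all but finitely many trailing trivial trees) one obtains a forest of $\cF$ dominating $f$ and $g$. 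The main obstacle is the bookkeeping required to align root counts on the two sides after trivial-tree padding; however, because the inductive system witnessing $\cF_\infty$ consists precisely of the rightward tensor maps $\iota_{m,n}^k$, this truncation procedure is unambiguous and introduces no loss.
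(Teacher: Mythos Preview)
Your argument is correct, and the paper itself offers no proof of this observation (it is stated as self-evident), so there is nothing to compare against. One small imprecision worth flagging in your $(2)\Rightarrow(1)$ step: the truncation of a common upper bound $h\in\cF_\infty$ should be to its first $n$ trees $(h_1,\dots,h_n)$ (matching the root count of $f$ and $g$), not merely to the support of $h$; the equation $h=\phi(f)\circ p$ restricted to the first $n$ roots gives exactly $(h_1,\dots,h_n)=f\circ(p_1,\dots,p_{|\Leaf(f)|})$, and likewise for $g$. You clearly anticipate this in your final sentence about aligning root counts, so this is a matter of phrasing rather than a gap. A slightly slicker route, once you have $(1)\iff(3)$ in hand, is to prove $(2)\Rightarrow(3)$ directly: for trees $t,s$ apply Ore's property in $\cF_\infty$ to $\phi(t),\phi(s)$ and take the first tree $h_1$ of any common upper bound.
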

If $\cF(2,1)$ is countable (which implies that $\cF$ is countable), then the fourth item can be replaced by ``$(\cT,\leq)$ admits a cofinal {\it sequence}".

\begin{example}
\begin{enumerate}
\item Consider a nonempty set $S$ and the free forest-skein category $\cUF$ over $S$.
We have that $\cUF$ does not satisfies Ore's property unless $S$ has only one point.
Indeed, if $a,b\in S, a\neq b$, then there are no tree dominating both $Y_a$ and $Y_b$ inside $\cUF$.
\item The forest-skein category $\cF=\FC\la a,b| a_1b_1=b_1a_1\ra$ satisfies Ore's property and thus so does its quotient $\FC\la a,b| a_1b_1=b_1a_1 , \ a_1a_2=b_1b_2\ra$.
Note that the first is left-cancellative while the second is not.
\item The forest-skein category $\FC\la a,b| a_1^n=b_1b_2\cdots b_n\ra$ satisfies Ore's property for any $n\geq 1$.
\end{enumerate}
\end{example}

\begin{definition}
A small category $\cC$ that is left-cancellative and satisfies Ore's property is called a \textit{Ore category}.
In that case we say that $\cC$ admits a (left-cancellative) {\it calculus of fractions}.
\end{definition}

We will see in Section \ref{sec:forest-groups} how to construct (fraction) groupoids and groups from Ore forest-skein categories.

{\bf Warning.} In the literature, several different meanings are given to the term {\it Ore category}. It usually translates the existence of a calculus of fractions but requiring weaker or stronger conditions than ours.

\subsection{Dehornoy's criteria}

We present criteria due to Dehornoy which given a presentation provides tools to decide if the associated algebraic structure is left-cancellative or satisfies Ore's property.
We adapt and specialise these criteria to our situation of skein presentations.
Most details about these notions can be found in \cite{Dehornoy03}. Although, we will need several statements from other articles of Dehornoy that we will precisely cite.

{\bf Notations.} In all this section $\sigma$ is a nonempty set of \textit{letters} or \textit{generators}, $\word(\sigma)$ denotes the finite words in $\sigma$ including the trivial one denoted by $e$.
The \textit{length} $|u|=\length(u)$ of a word $u$ is the number of letters that composed $u$ taking the convention $|e|=0.$
We write $\rho$ for a set of pairs $(u,v)\in \word(\sigma)^2$ satisfying $|u|=|v|$.
Elements of $\rho$ are called \textit{relations}.
We consider the pair $\pi:=(\sigma,\rho)$ that we interpret as the presentation of a monoid $M$ that we denote by $\Mon\la \sigma|\rho\ra$ where $\Mon$ stands for {\it monoid}.
We continue to write $u\in\word(\sigma)$ for its image in the monoid $M$ and may write $u=_\rho v$ to express that two words $u,v\in \word(\sigma)$ are equal in the monoid $M$, i.e.~$(u,v)$ belongs to the smallest equivalence relation generated by $\rho.$

{\bf Terminology.}
We say that $\pi$ is a {\it homogeneous monoid presentation}. 
This means that all relations of $\rho$ are pairs of words $(u,u')$ in the letter set $\sigma$ such that $|u|=|u'|$. 
In particular, $u$ does not contain any inverse of letters of $\sigma$ (it is a monoid presentation). 
This last property is also called {\it positive}.

As usual we write $\sigma^{-1}$ for the set of formal inverses of elements of $\sigma$.
We consider the group $G=\Gr\la \sigma|\rho\ra$ with presentation $(\sigma,\rho)$ where $\Gr$ stands for {\it group}.

\subsubsection{Complete presentation}

{\bf Reversibility \cite[Definition 1.1]{Dehornoy03}.}

Consider words $w,w'\in \word(\sigma\cup \sigma^{-1})$ (hence composed of letters and formal inverse of letters) and say that $w\act^0 w'$ is true if $w'$ is obtained from $w$
\begin{enumerate}
\item by deleting an occurrence of $u^{-1}u$ for $u \in \word(\sigma)$ or;
\item by replacing an occurrence of $u^{-1} v$ by $v'u'^{-1}$ such that $(uv',vu')\in \rho$ and $u,v,u',v'\in \word(\sigma)$.
\end{enumerate}

We say that $w$ is (right-)reversible to $w''$ (or $w$ \textit{reverses} to $w''$) if we can go from $w$ to $w''$ with finitely many transformations as above.
We then write $w\act w''$ removing the superscript 0.
When there are several presentations involved we may add ``w.r.t.~the presentation $\pi$'' to express which kind of relations we are allowed to use in the reversing process.

{\bf Dehornoy's strong cube condition \cite[Definition 3.1]{Dehornoy03}.}

The presentation $(\sigma,\rho)$ satisfies the strong cube condition (in short SCC) at  $(u,v,w)\in \word(\sigma)^3$ if the following holds:

$$[u^{-1} w w^{-1} v \act v' u'^{-1} \text{ for some } u',v'\in \word(\sigma)]\Rightarrow (uv')^{-1}(vu')\act e.$$
Given $X\subset \word(\sigma)$ we say that $(\sigma,\rho)$ satisfies the SCC on $X$ if the cube condition holds for all $(u,v,w)\in X^3.$ 
Note that the order of $u,v,w$ matters this is why we use triples rather than sets.

\begin{observation}
If $(\sigma,\rho)$ is a presentation and $u\in \word(\sigma)$, then the SCC is satisfied at $(u,u,u).$
Moreover, note that if $u^{-1} w w^{-1} v \act v' u'^{-1}$, then necessarily $uv'=_\rho vu'$ (see \cite[Lemma 1.10]{Dehornoy03}). 
\end{observation}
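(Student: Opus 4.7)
I would address the two assertions of the Observation separately.

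For the SCC at $(u,u,u)$, I would start by noting that the word $w_0 := u^{-1}uu^{-1}u$ admits the canonical reversing to the empty word via two applications of rule~(1), cancelling the two $u^{-1}u$ subwords successively; this produces the witness pair $v' = u' = e$, and the SCC conclusion $(uv')^{-1}(vu') = u^{-1}u \act e$ holds by a single further cancellation. For other reversings using rule~(2) en route and producing non-trivial terminal pairs $(v',u')$, I would invoke the second assertion to obtain the monoid equality $uv' =_\rho uu'$, and then verify that $(uv')^{-1}(uu') = v'^{-1}u^{-1}uu'$ reverses to $e$ by first cancelling the central $u^{-1}u$ via rule~(1) to get $v'^{-1}u'$, and then using the chain of relations from $\rho$ witnessing $uv' =_\rho uu'$ to reverse $v'^{-1}u'$ to $e$.

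The second assertion is the content of Lemma~1.10 of \cite{Dehornoy03}; I would reprove it by induction on the length of a reversing sequence from $u^{-1}ww^{-1}v$ to $v'u'^{-1}$. Each elementary reversing step is either rule~(1), a cancellation $p^{-1}p \to e$ that is tautologous in the monoid, or rule~(2), a replacement of $p^{-1}q$ by $q'p'^{-1}$ coming from a relation $(pq',qp') \in \rho$, whose two sides are already equal in the monoid by construction. Reading intermediate words as ``fractions'' of a positive numerator by a positive denominator and tracking how the monoid equation relating these two parts evolves under each elementary step, I would check that the invariant $uv' =_\rho vu'$ is maintained throughout; at termination, this yields the claimed equality.

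The main obstacle lies in the delicate interplay between the local rewriting dynamics of $\act^0$ and the global monoid equality $=_\rho$. In the second part, the bookkeeping must track how the positive/negative alternation structure of a given intermediate word decomposes as a fraction and how each local reversing step updates that decomposition compatibly with $\rho$. In the first part, the non-trivial reversings via rule~(2) can produce genuinely different witness pairs $(v',u')$, and showing $(uv')^{-1}(vu') \act e$ for all of them requires translating the monoid-level equality $uv' =_\rho uu'$ into a concrete reversing sequence—precisely where the strength of Lemma~1.10 is needed.
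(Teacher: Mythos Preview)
Your plan for the second assertion is fine: maintaining the monoid identity $uv' =_\rho vu'$ as an invariant under each elementary reversing step is exactly the content of Lemma~1.10 in \cite{Dehornoy03}, and your induction sketch captures this.

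There is a genuine gap in your treatment of the first assertion. For a nontrivial reversing $u^{-1}uu^{-1}u \act v'u'^{-1}$ you reduce $(uv')^{-1}(uu') = v'^{-1}u^{-1}uu'$ to $v'^{-1}u'$ via rule~(1), and then claim $v'^{-1}u' \act e$ ``using the chain of relations from $\rho$ witnessing $uv' =_\rho uu'$''. But a monoid equality does not in general produce a reversing: that implication is precisely what \emph{completeness} of the presentation would give, and completeness is not assumed here. Concretely, take $\sigma=\{a,b,c\}$, $\rho=\{(ab,ac)\}$, $u=a$. Applying rule~(2) with $(ab,ac)$ to the first $a^{-1}a$ and then rule~(1) to the remaining one yields $a^{-1}aa^{-1}a \act bc^{-1}$, so $v'=b$, $u'=c$, and indeed $ab =_\rho ac$. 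Yet $v'^{-1}u' = b^{-1}c$ admits no reversing step whatsoever: rule~(1) needs a $p^{-1}p$ pattern, and rule~(2) would need a relation of the shape $(b\cdots,c\cdots)$, which $\rho$ does not contain. So your route through $v'^{-1}u'$ is a dead end. In this instance the SCC conclusion $(ab)^{-1}(ac) \act e$ \emph{does} hold, but by a different route: one applies rule~(2) to the full word $b^{-1}a^{-1}ac$ with $u=ab$, $v=ac$ and empty complements $v'=u'=e$, reversing directly to $e$ without ever visiting $b^{-1}c$. The paper records the Observation without proof; whatever argument one gives cannot proceed by first collapsing to $v'^{-1}u'$ and then invoking the monoid equality, since the example shows that step simply fails.
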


{\bf Complete presentation.}
Dehornoy defined the very useful concept of \textit{completeness} for presentations. 
We only consider homogeneous monoid presentations. For those, there is a way to characterise completeness using the notion of SCC.
We use this characterisation as our definition.

\begin{definition}\cite[Proposition 4.4]{Dehornoy03}
A homogeneous monoid presentation $(\sigma,\rho)$ is called \textit{complete} if it satisfies the SCC at $\sigma$. 
\end{definition}

The following proposition is the main strength of the concept of completeness.

\begin{proposition}\label{prop:DehornoyRCF}\cite[Propositions 6.1 and 6.7]{Dehornoy03}
Let $(\sigma,\rho)$ be a complete presentation with associated monoid $M$.
The monoid $M$ 
\begin{enumerate}
\item is left-cancellative if and only if $u^{-1}v\act e$ for any relation $(au,av)\in \rho$ with $a\in \sigma, u,v\in \word(\sigma)$;
\item has Ore's property if and only if there exists $\sigma\subset \sigma'\subset \word(\sigma)$ so that for any $u,v\in \sigma'$ there exists $u',v'\in \sigma'$ satisfying $(uv')^{-1} (vu')\act e.$
\end{enumerate}
\end{proposition}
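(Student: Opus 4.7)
The plan is to defer to Dehornoy's reversing calculus but outline the main ideas driving both equivalences. The central lemma is that for a complete homogeneous presentation $(\sigma,\rho)$, the relation $\act$ is confluent modulo the monoid congruence: whenever $u^{-1}v \act v'u'^{-1}$ with $u',v' \in \word(\sigma)$, one has $uv' =_\rho vu'$ in $M$, and in particular $u^{-1}v \act e$ detects equality $u =_\rho v$ in $M$. This lemma is the payoff of the strong cube condition: SCC at triples of generators propagates by induction on word length to SCC at triples of arbitrary words, which is precisely the assertion that reversing is well-defined up to the monoid congruence. Once this lemma is at hand, both equivalences become bookkeeping.

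For item (1), the forward direction is a one-line consequence of the central lemma: if $M$ is left-cancellative and $(au,av)\in\rho$, then $u =_\rho v$ in $M$, so the lemma gives $u^{-1}v \act e$. For the converse I would argue by induction on the derivation length witnessing a generic equality $ap =_\rho aq$: each elementary rewriting step consists in applying a relation of $\rho$, and the only way to ``cancel the leading letter $a$'' along such a derivation is controlled by relations of the form $(au,av)\in\rho$; the hypothesis then lets one rewrite $u^{-1}v$ as $e$ via reversing, and the induction is powered by the SCC at triples of generators.

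For item (2), I would observe that Ore's property is equivalent to the existence of a subset $\sigma' \subset \word(\sigma)$ containing $\sigma$ that is closed (up to monoid congruence) under the operation ``take a common right-multiple'', and cofinal in $M$ in that sense. The condition $(uv')^{-1}(vu') \act e$ is the precise reversing translation of $uv' =_\rho vu'$, i.e.~of $u',v'$ being right-complements realising a common multiple of $u$ and $v$. One then checks that from such a $\sigma'$ one can build common multiples of arbitrary words in $\sigma$ by iterated reversing letter-by-letter, and conversely that genuine Ore's property in $M$ lets us construct such a $\sigma'$ by closing $\sigma$ under an arbitrary choice of common multiples.

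The main technical obstacle is formalising the propagation of SCC from generators to words, which requires careful bookkeeping of how reversing interacts with concatenation of words. Rather than reproving this from scratch, my plan is to invoke the structural results of \cite{Dehornoy03} on the reversing grammar associated to a complete presentation, and then specialise them to the two characterisations stated. In our subsequent applications to forest-skein categories, only the statement of this proposition will be used: the delicate internal combinatorics of reversing stays hidden.
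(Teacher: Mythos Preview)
Your proposal is correct in spirit and matches the paper's treatment: the paper does not prove this proposition at all but simply cites it as \cite[Propositions 6.1 and 6.7]{Dehornoy03}, and your sketch likewise ultimately defers to Dehornoy's structural results. The explanatory outline you give of the reversing machinery (confluence from SCC, propagation from generators to words, the interpretation of $(uv')^{-1}(vu')\act e$ as witnessing a common multiple) is accurate and helpful context, but it goes beyond what the paper itself provides --- the paper treats this as a black-box import from the literature.
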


\subsubsection{The special case of complemented presentation}

We consider the specific case of {\it complemented} presentations introduced in \cite{Dehornoy97}.
Complemented presentations are easy to work with as there is at most one reversing process per pairs of words. 

{\bf Warning.} We warn the reader that {\it complete} and {\it complemented} are not the same notions. 
The first was defined above and stands for presentations that have no {\it hidden} relations in an intuitive sense while the second stands for presentations that have {\it few} relations as we are about to define.

\begin{definition}
A presentation $(\sigma,\rho)$ is {\it complemented} if for each $a,b\in \sigma$ there is at most one relation of the form $a\cdots=b\cdots$ in $\rho$.
When they exists we write $(a\bs b)$ and $(b\bs a)$ the unique elements of $\word(\sigma)$ satisfying $(a(a\bs b) \ , \ b(b\bs a))\in \rho.$ 
\end{definition}

If $(\sigma,\rho)$ is complemented, then for any pair of \textit{words} (not only letters) $(u,v)$ there is at most one pair of words $(u',v')$ satisfying $u^{-1} v\act v'u'^{-1}.$
Similarly, we write $u'=(v\bs u)$ and $v'=(u\bs v)$ when they exist so that $$u^{-1} v \act (u\bs v) (v\bs u)^{-1} \text{ and thus } u (u\bs v) =_\rho v(v\bs u).$$

The SCC translates as follows in this special case.

\begin{observation}
Let $(\sigma,\rho)$ be a complemented presentation.
The SCC at $(u,v,w)\in \word(\sigma)^3$ is satisfied if and only if either one of the $(x\bs y)$ is undefined for $x,y\in\{u,v,w\}$ or
$$[(u\bs v)\bs (u\bs w)] \bs [(v\bs u) \bs (v\bs w)]=e.$$
\end{observation}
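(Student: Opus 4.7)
The plan is to verify the equivalence by unraveling the definitions in the complemented case, where reversings are deterministic and complements are unique.

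First, I would handle the trivial case. The reversing of $u^{-1}ww^{-1}v$ requires the complements $(u\bs w), (w\bs u), (w\bs v), (v\bs w)$ together with $((w\bs u)\bs(w\bs v))$ to be defined. If any one of these fails to exist, then $u^{-1}ww^{-1}v$ does not reverse to a form $v'u'^{-1}$, so the hypothesis of SCC is vacuous and SCC holds. This accounts for the first clause of the observation.

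Next, assume all of the above, as well as $(u\bs v)$ and $(v\bs u)$, are defined. Decomposing $u^{-1}ww^{-1}v = (u^{-1}w)(w^{-1}v)$ and reversing each factor yields $(u\bs w)(w\bs u)^{-1}(w\bs v)(v\bs w)^{-1}$; reversing the central inverse-positive pair then produces $v'u'^{-1}$ with
\begin{equation*}
v' = (u\bs w)\cdot((w\bs u)\bs(w\bs v)), \qquad u' = (v\bs w)\cdot((w\bs v)\bs(w\bs u)),
\end{equation*}
via the standard identities $(AB)\bs C = B\bs(A\bs C)$ and $A\bs(BC) = (A\bs B)((B\bs A)\bs C)$ that govern complemented reversing. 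I would then reverse $(uv')^{-1}(vu') = v'^{-1}u^{-1}vu'$: first reverse $u^{-1}v \to (u\bs v)(v\bs u)^{-1}$, then the two flanking inverse-positive pairs, and finally the central one. The outcome is a normal form $W^{+}(W^{-})^{-1}$ whose factors are iterated complements of $(u\bs v), (v\bs u), (u\bs w), (v\bs w)$. Reversing reaches $e$ if and only if both $W^{+}$ and $W^{-}$ are empty.

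The main technical step is to show, in the complemented case, that the vanishing of both $W^{+}$ and $W^{-}$ collapses to the single equation $[(u\bs v)\bs(u\bs w)]\bs[(v\bs u)\bs(v\bs w)]=e$. Writing $p=u\bs v$, $q=v\bs u$, $x=u\bs w$, $m=v\bs w$, this equation is $(p\bs x)\bs(q\bs m)=e$; interpreted in the monoid, it says that the two natural ways of extending $(u,v)$ to a common right-multiple---directly via $(p,q)$, or through $w$---are compatible in the reversing sense. This equation appears explicitly as one factor of $W^{+}$ after a repeated application of the identities above, so its vanishing is necessary. Conversely, the uniqueness of complements forces every other factor of $W^{+}$ and $W^{-}$ to collapse once $(p\bs x)\bs(q\bs m)=e$. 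The main obstacle is therefore a somewhat tedious but mechanical bookkeeping of the iterated complements, which I would carry out by tabulating each intermediate reversing step and verifying that the left- and right-divisibility relations propagated by the two cited identities all funnel through the single residue $(p\bs x)\bs(q\bs m)$.
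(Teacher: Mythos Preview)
The paper gives no proof for this Observation; it is stated as a direct specialization of Dehornoy's SCC to the complemented setting, the computation being left implicit. There is therefore nothing in the paper to compare your argument against.

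Your outline is the natural approach and the expressions for $v'$ and $u'$ are correct. The genuine gap is that the heart of the argument---showing that the single equation $[(u\bs v)\bs(u\bs w)]\bs[(v\bs u)\bs(v\bs w)]=e$ forces the entire reversing of $(uv')^{-1}(vu')$ to collapse to $e$---is announced but not carried out. Saying ``uniqueness of complements forces every other factor to collapse'' is not a mechanism: the full reversing produces several nested iterated complements involving $w\bs u$, $w\bs v$ and their mutual complements, and one must verify termwise that each reduces. That verification \emph{is} the content of the Observation, and your proposal stops exactly where it begins. A secondary imprecision: your ``trivial case'' only treats undefinedness of complements entering the hypothesis of SCC (those involving $w$). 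When $(u\bs v)$ alone is undefined while the $w$-complements exist, the hypothesis of SCC is not vacuous, yet the reversing of $(uv')^{-1}(vu')$ gets stuck at the block $u^{-1}v$ and SCC fails---so this edge case is not covered by your argument and in fact sits in tension with the Observation as literally phrased.
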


We obtain the following characterisation of completeness for complemented presentations.

\begin{proposition}
A complemented presentation $(\sigma,\rho)$ is complete if and only if $$[(a\bs b)\bs (a\bs c)]\bs [(b\bs a)\bs (b\bs c)] \text{ is either undefined or equal to } e \text{ for each } a,b,c\in \sigma.$$
\end{proposition}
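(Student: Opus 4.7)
The plan is to deduce the proposition as a direct specialisation of the preceding observation to the case where the triple of words consists of single letters. Recall that by definition a homogeneous monoid presentation is complete precisely when the SCC is satisfied at $\sigma$, i.e., at every triple $(a,b,c)\in\sigma^3$. So the task reduces to rewriting the observation's criterion in this case and matching it with the unified ``undefined or equal to $e$'' wording of the statement.

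First I would instantiate the observation with $(u,v,w)=(a,b,c)\in\sigma^3$. This yields the equivalence
\[
\text{SCC at }(a,b,c)\ \Longleftrightarrow\ \bigl[\exists x,y\in\{a,b,c\}:\ (x\bs y)\text{ undefined}\bigr]\ \text{or}\ [(a\bs b)\bs(a\bs c)]\bs[(b\bs a)\bs(b\bs c)]=e.
\]
Combining this with the definition of completeness gives: $(\sigma,\rho)$ is complete iff for every $(a,b,c)\in\sigma^3$ this disjunction holds.

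The only remaining point is to verify that the above disjunction can be compactly rephrased as ``$[(a\bs b)\bs(a\bs c)]\bs[(b\bs a)\bs(b\bs c)]$ is undefined or equal to $e$''. In one direction, if some basic complement $(x\bs y)$ with $x,y\in\{a,b,c\}$ is undefined, then the iterated complement $[(a\bs b)\bs(a\bs c)]\bs[(b\bs a)\bs(b\bs c)]$ is itself undefined, since its formation requires each of $(a\bs b)$, $(a\bs c)$, $(b\bs a)$, $(b\bs c)$ (and hence each $(x\bs y)$ involved) to exist. Conversely, if the iterated complement is undefined while all four basic terms $(a\bs b),(a\bs c),(b\bs a),(b\bs c)$ exist, then the failure occurs at an intermediate reversing step; in that case the reversing process of $a^{-1}cc^{-1}b$ cannot reach a form $v'u'^{-1}$, so the hypothesis of the SCC is vacuous and SCC holds trivially. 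Thus ``undefined'' on either side corresponds to SCC being automatic, and when everything is defined the only remaining condition is that the expression equals $e$.

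I expect the main (and essentially only) point requiring care is this last bookkeeping step identifying ``undefined on the basic letters'' with ``undefined on the iterated expression''; the rest is a direct unwrapping of the definition of completeness through the observation. No new computation with the reversing relation $\act$ is needed, as all the heavy lifting has already been carried out in establishing the observation.
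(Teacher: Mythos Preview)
Your proposal is correct and follows essentially the same route as the paper, which in fact states this proposition without proof as an immediate consequence of the preceding observation together with the definition of completeness (SCC at all letter triples). Your careful bookkeeping identifying ``some basic $(x\bs y)$ undefined'' with ``the iterated expression undefined'' is the only point requiring attention, and you handle it correctly using that in a complemented presentation $(x\bs y)$ and $(y\bs x)$ exist simultaneously and that failure at an intermediate reversing step makes the SCC hypothesis vacuous.
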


Here is a way to slightly reduce the number of SCC to check.
\begin{observation}\label{obs:complemented}
If $(\sigma,\rho)$ is a complemented presentation and $(u,v,w)\in\word(\sigma)^3$ are not all distinct, then the SCC is satisfied at $(u,v,w).$ 
\end{observation}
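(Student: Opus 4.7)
The plan is a straightforward case analysis on which pair among $u,v,w$ coincides, using the characterisation of the SCC for complemented presentations recalled in the observation immediately preceding the statement. By hypothesis at least two of $u,v,w$ are equal, so up to relabelling I need only treat the cases $u=v$, $u=w$, and $v=w$ (the degenerate case $u=v=w$ is already noted right after the definition of the SCC).

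The key inputs are two elementary identities for the complement operation in a complemented presentation. First, for every word $x\in\word(\sigma)$, the reversing $x^{-1}x\act e$ via rule (1) of reversibility gives $(x\bs x)=e$. Second, since $e^{-1}x=x=x\cdot e^{-1}$ is already in reversed form and no rule applies to $x^{-1}e=x^{-1}$, we obtain $(e\bs x)=x$ and $(x\bs e)=e$. These two identities are the only non-trivial ingredients needed.

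I would then substitute into the criterion $[(u\bs v)\bs(u\bs w)]\bs[(v\bs u)\bs(v\bs w)]$, assuming for the moment that every complement appearing is defined (otherwise the SCC holds vacuously by the complemented form of the criterion). If $u=v$ then $(u\bs v)=(v\bs u)=e$, so the expression collapses to $(u\bs w)\bs(u\bs w)=e$. If $u=w$ then $(u\bs w)=e$ and $(v\bs w)=(v\bs u)$, and the expression becomes $[(u\bs v)\bs e]\bs[(v\bs u)\bs(v\bs u)]=e\bs e=e$. If $v=w$, symmetrically $(u\bs w)=(u\bs v)$ and $(v\bs w)=e$, giving $[(u\bs v)\bs(u\bs v)]\bs[(v\bs u)\bs e]=e\bs e=e$. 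In every case the expression equals $e$, which is the condition for SCC.

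There is really no hard step here; the argument is routine. The only mild care needed is to correctly handle the boundary behaviour of the complement operation involving the empty word, and to remember that if some $(x\bs y)$ in the criterion happens to be undefined, the SCC is automatically satisfied, so one only needs to verify the equation when all the relevant complements exist.
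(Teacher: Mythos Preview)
Your proof is correct and is exactly the natural argument the paper leaves implicit by calling this an ``observation'' without proof: use the complemented characterisation of the SCC together with the identities $(x\bs x)=e$, $(e\bs x)=x$, $(x\bs e)=e$, and run the three cases $u=v$, $u=w$, $v=w$. There is nothing to add.
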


Note that the observation of above is no longer true in general for {\it non-complemented} presentations. 
The monoid presentation $\Mon\langle a,b| a^2=b^2, a^2=ba\rangle$ gives such an example at $(a,a,b)$.

\subsection{Applications of Dehornoy's techniques to forest-skein categories}
We now come back to our class of forest-skein categories. Our aim is to use Dehornoy's result for proving that certain forest-skein categories are Ore categories. We start by introducing and recalling notations and then outline the general strategy.

{\bf Notation.}
From now on $(S,R)$ is the skein presentation of a forest-skein category $\cF$ (implying by convention that $S\neq\emptyset$).
To $\cF$ is associated a monoid $\cF_\infty$ having the monoid presentation $P_\infty:=(S_\infty,R_\infty)$. 
Recall from Section \ref{sec:cat-pres} that 
$$S_\infty=\cE(S)_\infty=\{b_j:\ b\in S, j\geq 1\}$$ is the set of elementary forests of $\cF_\infty$
and 
\begin{align*}
& R_\infty := \cup_{n\in\N} R_n \cup \TR(S) \\
& R_n:=\{ (u_n,v_n):\ n\geq 1, (u,v)\in R\}\\
& \TR(S)=\TR(S)_\infty:= \{ y_q x_j = x_j y_{q+1}:\ x,y\in S, 1\leq j<q\}.
\end{align*}

By inspection we deduce the following.

\begin{observation}
The presentation $P_\infty$ is homogeneous and positive, i.e.~$R_\infty$ is a set of pairs of words (the letter set being $S_\infty$) of same length.
\end{observation}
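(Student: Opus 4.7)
The statement asks me to verify two properties of the relation set $R_\infty = \bigl(\bigcup_{n\geq 1} R_n\bigr) \cup \TR(S)_\infty$: first, every relation is a pair of words in the alphabet $S_\infty$ containing no formal inverses (positivity); second, inside each pair, the two words have the same length (homogeneity). Since different pairs may have different lengths, the check is local to each pair, and the plan is to treat the two families of relations making up $R_\infty$ separately. I do not expect any serious obstacle, as both checks amount to reading the definition of $R_\infty$ given just before the statement.

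For the Thompson-like family $\TR(S)_\infty$, a generic relation has the shape $(y_q x_j,\ x_j y_{q+1})$ with $x, y \in S$ and $1 \leq j < q$. Both sides are words of length exactly $2$ in the alphabet $S_\infty$, and neither side contains any inverse letter, so positivity and homogeneity hold trivially for every element of $\TR(S)_\infty$.

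For a relation $(u_n, u'_n) \in R_n$ arising from a skein relation $(u, u') \in R$, I would invoke the fact that by definition of $R$ the trees $u$ and $u'$ have the same number of leaves, say $m$. A binary (coloured) tree with $m$ leaves has exactly $m-1$ interior vertices, and when written as a composition of elementary forests (as discussed in Section~\ref{sec:cat-pres} and made precise by Proposition~\ref{prop:universal-category}, restricted to the monoid setting by dropping the second index), each interior vertex contributes exactly one letter of $S_\infty$. Hence both $u$ and $u'$ express as positive words of length $m-1$. Passing to $u_n, u'_n$ only shifts the lower index of each elementary forest letter (from $b_j$ to $b_{j+n-1}$) and therefore preserves word length while keeping the word in $S_\infty$. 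The only minor subtlety worth a sentence is that the decomposition of a tree as a word in elementary forests is unique only up to the Thompson-like relations themselves, but since those relations are length-preserving, the length is an unambiguous invariant of the tree, which is all that is needed. Combining the two cases, every pair in $R_\infty$ consists of two positive words of equal length, establishing the observation.
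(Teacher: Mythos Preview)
Your proof is correct and follows the same approach as the paper, which simply states ``By inspection we deduce the following'' without further elaboration. Your write-up is essentially a careful unpacking of that inspection: verifying that Thompson-like relations are length-$2$ pairs and that skein relations yield pairs of length $m-1$ via the caret count of binary trees with $m$ leaves.
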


Our general strategy for deducing properties of $\cF$ from Dehornoy's results on monoids is the following.

\begin{enumerate}
\item Check that $P_\infty$ is complete using the SCC at $S_\infty$;
\item Deduce properties of the monoid $\cF_\infty$ using Dehornoy's results;
\item Deduce properties for the category $\cF$ using Observations \ref{obs:LC} and \ref{obs:Ore}.
\end{enumerate}

We will often consider the case where $P_\infty$ is a {\it complemented} presentation. From there it is easy to check if $P_\infty$ is complete and left-cancellative. In some cases we can even establish Ore's property but this later property will be more likely proved by constructing a cofinal sequence of trees in $\cF$.

The first item is checking the SCC at each triple $(x_i,y_j,z_k)$ where $x,y,z\in S$ are colours and $i,j,k\geq 1$ indices.
It is not hard to see that the SCC is satisfied at such a triple if and only if it is satisfied at $(x_{i+1}, y_{j+1},z_{i+1})$. Indeed, if $(u,v)$ is a relation, then so is its shift ($I\ot u,I\ot v)$ and this shift corresponds in shifting by 1 all indices.
Therefore, when checking the SCC at $(x_i,y_j,z_k)$ we can always assume that $\min(i,j,k)=1$.
We start by considering an easier case where there are few relations.

\subsection{Complemented skein presentations}
We define complemented skein presentations and derive criteria of completeness for their associated monoid presentations.

\subsubsection{Definition and first observations}

\begin{definition}\label{def:complemented}
Let $P=(S,R)$ be a skein presentation.
We say that $P$ is {\it complemented} if for any two colours $a,b\in S$ there is at most one relation of $R$ of the form $a_1\cdots=b_1\cdots$ and moreover if $a=b$ then there are no such relations.
\end{definition}

Note that here we ask that there are no relations of $R$ where both words start by the same letter. This will give shorter statements and will cover all the cases we wish to study.
We have the following unsurprising fact.

\begin{proposition}
If $(S,R)$ is a complemented skein presentation, then the associated monoid presentation $(S_\infty,R_\infty)$ is complemented.
\end{proposition}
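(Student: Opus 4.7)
The plan is a direct case analysis on the first letters of each relation in $R_\infty$. Recall that
$R_\infty = \bigcup_{n \geq 1} R_n \cup \TR(S)$, where each $R_n$ consists of the shifted skein relations $R(u,u',n)$ obtained from $R$, and $\TR(S)$ collects the Thompson-like relations $(y_q x_j,\ x_j y_{q+1})$ with $j<q$. First I would observe that a skein relation $(u,u') \in R$ with $u$ rooted at colour $a$ and $u'$ rooted at colour $b$ corresponds, by Proposition~\ref{prop:universal-category}, to a pair of words in $S_\infty$ whose first letters are $a_1$ and $b_1$ respectively; the $n$-shifted version in $R_n$ then starts with the pair $(a_n, b_n)$. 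Thus every shifted skein relation begins with a pair of generators having \emph{equal} indices, whereas every Thompson-like relation begins with a pair $(y_q, x_j)$ having strictly \emph{unequal} indices $j < q$. In particular, no relation in $R_\infty$ can simultaneously be of both types.

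Now fix a pair of generators $(a_i, b_j) \in S_\infty^2$ and count the relations in $R_\infty$ of the form $a_i \cdots = b_j \cdots$ (read as an unordered pair). If $i \neq j$, no shifted skein relation qualifies, and among the Thompson-like relations exactly one matches: taking WLOG $i > j$, it is $(a_i b_j,\ b_j a_{i+1})$, which is uniquely determined by the quadruple $(a,b,i,j)$. If $i = j$, no Thompson-like relation qualifies, and the shifted skein relations in $R_i$ starting with $(a_i, b_i)$ correspond bijectively to the skein relations $(u,u') \in R$ whose roots carry colours $a$ and $b$; by the complemented assumption on $(S,R)$, there is at most one such relation, and none when $a = b$. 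Hence in every case there is at most one relation of the form $a_i\cdots = b_j\cdots$ in $R_\infty$, and none when $a_i = b_j$, which is precisely the complemented condition for $(S_\infty, R_\infty)$. There is no real obstacle: the argument is a bookkeeping exercise, and the only point requiring care is the observation that the index patterns of the two families of relations (equal indices versus strictly unequal indices on the leading letters) prevent any cross-collision.
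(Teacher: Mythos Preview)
Your proof is correct and follows essentially the same approach as the paper: a case split on whether the indices $i,j$ of the two generators agree, using that Thompson-like relations always begin with letters of distinct indices while shifted skein relations always begin with letters of equal index, so the two families cannot collide. The paper's argument is terser but structurally identical; your explicit remark about the index patterns is a helpful clarification but not a different idea.
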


\begin{proof}
Consider a complemented skein presentation $(S,R)$ and two generators $x_i,y_j\in S_\infty$ where $i,j\geq 1$ and $x,y\in S$.
If $i\neq j$, then there is a unique Thompson-like relation of the form $x_iy_j=y_j x_{i+1}$ if $i>j$ and $y_jx_i=x_i y_{i+1}$ if $i<j$.
Moreover, there are no other relations in $R_\infty$ starting from these letters.
If $i=j$, then by assumption there is at most one relation of the form $x_i\cdots=y_i\cdots$ which belongs to $R_i$.
We conclude that $(S_\infty,R_\infty)$ is complemented.
\end{proof}

{\bf A few identities for complemented presentations.}
We recall and establish few elementary computations for complemented presentations in the special case of forest-skein categories.
These computations will allow us to easily check a number of SCC.
Let $(S,R)$ be a complemented skein presentation and $(S_\infty,R_\infty)$ the associated monoid presentation.
This later presentation is complemented in the usual sense and thus we have a partially defined map 
$$\word(S_\infty)^2\to \word(S_\infty), (u,v)\mapsto (u\bs v)$$
so that $$u^{-1} v\act (u\bs v)\cdot (v\bs u)^{-1}.$$
By plugging $e$ for $u$ we deduce
$$(e\bs v) = v \text{ and } (v\bs e) = e \text{ for all } v\in \word(S_\infty).$$

{\it Relations coming from the skein presentation.}
Assume that $(a_1u_1, b_1v_1)\in R$.
This produces a one parameter family of relations $((a_ju_j,b_jv_j) :\ j\geq 1)$ in $R_\infty$.
Here we have 
$$u_j=(a_j\bs b_j), v_j=(b_j\bs a_j) \text{ so that } (a_j\cdot (a_j\bs b_j), b_j\cdot (b_j\bs a_j))\in R_\infty.$$
Observe that both $u_j,v_j$ have same length $n$ (which does not depend on $j\geq 1$) and moreover $u_j$ is a word whose $k$th letter is a certain $c_m$ with $1\leq m\leq k+1$ for all $1\leq k\leq n.$

{\it Thompson-like relations.}
If $1\leq j<q$ and $a,b\in S$, then $a_q b_j = b_j a_{q+1}$ is a relation of $R_\infty$ implying that 
$$(a_q\bs b_j) = b_j \text{ and } (b_j\bs a_q)=a_{q+1} \text{ for all } 1\leq j<q, a,b\in S.$$
More generally, consider $w_1\in \word(S_\infty)$ which corresponds to a tree $t$ rooted at the first node of a certain length $n$ and write  $w_j$ for the word corresponding to $I^{\ot j-1}\ot t\ot I^{\ot \infty}$ obtained by shifting all indices by $j-1$.
We have the following identities:
\begin{align*}
& (a_q\bs w_j) = w_j \text{ and } (w_j\bs a_q) = a_{q+n}\\
& (a_j\bs w_q) = w_{q+1} \text{ and } (w_q\bs a_j) = a_j \text{ for all } a\in S, 1\leq j<q.
\end{align*}
This is obtained by observing that the first letter of $w_j$ has for index $j$, the second $j$ or $j+1$, etc., and by applying $n$ Thompson-like relations.
We obtain a similar statement for forests described by words $w$ rather than trees.

{\it Mix of both types of relations.}
Consider $x\in S$, some indices $1\leq j<q$, and a skein relation $(a_1 u_1,b_1 v_1)$.
This provides a relation $(a_ju_j,b_jv_j)$ where $u_j=(a_j\bs b_j)$ and $v_j=(b_j\bs a_j)$.
Now, $u_j$ is a word of a certain length $n\geq 0$ with $k$th letter of the form $y_{i_k}$ with $y\in S$ and $j\leq i_k\leq j+k+1.$
By applying $n$ Thompson-like relations we deduce:
$$x_{q+1} u_j = u_j x_{q+n+1}$$ giving the identity
$$[x_{q+1}\bs ( a_j\bs b_j)] =(a_j\bs b_j) \text{ and } [(a_j\bs b_j)\bs x_{q+1}] = x_{q+n+1}.$$
Now, if we exchange the indices we obtain that:
$$x_j u_{q} = u_{q+1} x_j$$ giving:
$$[x_j\bs (a_q\bs b_q)] = (a_{q+1}\bs b_{q+1}) \text{ and } [(a_q\bs b_q)\bs x_j] = x_j.$$

\subsubsection{Characterisation of completeness}

Here is a very useful characterisation of completeness which automatically provides left-cancellativity.

\begin{proposition}\label{prop:LC-complemented}
Let $(S,R)$ be a complemented skein presentation.
The associated monoid presentation $P_\infty=(S_\infty,R_\infty)$ is complete if and only if 
for all triple of {\it distinct} colours $(x,y,z)$ of $S$ we have that
\begin{equation*}[(x_1\bs y_1)\bs (x_1\bs z_1)]\bs [(y_1\bs x_1)\bs (y_1\bs z_1)]\end{equation*}
is either undefined or equal to $e$.
In that case the forest-skein category $\cF=\FC\la S|R\ra$ is left-cancellative.
\end{proposition}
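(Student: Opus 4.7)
The plan is to apply the characterisation of completeness via the SCC at the generator set, specialised to complemented presentations: since $(S_\infty,R_\infty)$ is complemented, the SCC at a triple of generators $(x_i,y_j,z_k)$ amounts to verifying that the iterated complement
\[ [(x_i\bs y_j)\bs (x_i\bs z_k)]\bs [(y_j\bs x_i)\bs (y_j\bs z_k)] \]
is either undefined or equal to $e$. My aim is to reduce the infinite family of conditions indexed by $(x_i,y_j,z_k)\in S_\infty^3$ to the restricted family indexed by triples of distinct colours with all indices equal to $1$, which is precisely the hypothesis.

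First I invoke Observation~\ref{obs:complemented}: any triple with a repeated generator automatically satisfies SCC, so I may assume $x_i,y_j,z_k$ are pairwise distinct. Next I exploit that $R_\infty$ is invariant under positive simultaneous index shifts (a Thompson-like relation $y_q x_j=x_j y_{q+1}$ remains Thompson-like after shifting, and each $R_n$ shifts to $R_{n+1}$), so the SCC at $(x_{i+1},y_{j+1},z_{k+1})$ is equivalent to the SCC at $(x_i,y_j,z_k)$. This lets me assume $\min(i,j,k)=1$. I then split into two cases. In Case~1, $i=j=k=1$, and pairwise distinctness forces the colours $x,y,z$ to be pairwise distinct, which is exactly the hypothesis. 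In Case~2, the indices are not all equal, so at least one of the three pairs of generators satisfies a Thompson-like commutation; using the explicit formulas computed just before the statement for $(a_q\bs w_j), (w_j\bs a_q), (a_j\bs w_q), (w_q\bs a_j)$ (together with the mixed identities $[x_{q+1}\bs(a_j\bs b_j)]=(a_j\bs b_j)$ and its analogues), every constituent complement is expressible as a single Thompson-like shift of one of the $x,y,z$ letters or of a complement among them, and the iterated complement collapses to $e$ or becomes undefined. The main obstacle is the bookkeeping for this second case: there are several sub-cases depending on the ordering of $i,j,k$, but each is mechanical given the catalogue of identities above.

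Finally, for the left-cancellativity assertion, once completeness of $P_\infty$ is established I apply Proposition~\ref{prop:DehornoyRCF}(1) to the monoid $\cF_\infty$. The condition requires $u^{-1}v\act e$ for every relation $(au,av)\in R_\infty$ whose two sides begin with the \emph{same} letter. But no such relation exists: Definition~\ref{def:complemented} forbids skein relations of the form $a_1\cdots=a_1\cdots$ in $R$, so all shifted skein relations in $\bigcup_n R_n$ have distinct first letters $a_j,b_j$ with $a\neq b$; and Thompson-like relations $a_qb_j=b_j a_{q+1}$ with $j<q$ have first letters $a_q\neq b_j$ as elements of $S_\infty$ since $j\neq q$. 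The hypothesis of Proposition~\ref{prop:DehornoyRCF}(1) is therefore vacuous, so $\cF_\infty$ is left-cancellative, and Observation~\ref{obs:LC} transfers this to $\cF$.
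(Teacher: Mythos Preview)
Your proposal is correct and follows essentially the same route as the paper: reduce to $\min(i,j,k)=1$ via shift invariance, dispose of the case where the indices are not all equal using the catalogued Thompson-like identities, and then at $i=j=k=1$ invoke Observation~\ref{obs:complemented} to restrict to distinct colours; the left-cancellativity argument is identical. The only cosmetic difference is that you invoke Observation~\ref{obs:complemented} at the outset rather than only in the equal-index case, and you leave the Case~2 bookkeeping entirely to the reader whereas the paper works out three of the six sub-cases explicitly (e.g.\ $i>j,k$; $i<j,k$; $k>i,j$) before declaring the remainder analogous.
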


Note that if $S$ has one or two colours and $(S,R)$ is complemented, then automatically $P_\infty$ is complete and $\cF$ is left-cancellative.

\begin{proof}
Since $(S,R)$ is complemented, then so is $(S_\infty,R_\infty)$ and the function $(u,v)\mapsto (u\bs v)$ is well-defined on a certain domain of $\word(S_\infty)^2.$
Now, $P_\infty$ is complete if and only if it satisfies the SCC at each triple of generators $(x_i,y_j,z_k)$ with $x,y,z\in S$ and $i,j,k\geq 1$.
Since the presentation is complemented the SCC at $(x_i,j_j,z_k)$ is satisfied if and only if the expression
\begin{equation}\label{eq:SCCprop}E:=[(x_i\bs y_j)\bs (x_i\bs z_k)]\bs [(y_j\bs x_i)\bs (y_j\bs z_k)]\end{equation}
is either undefined or equal to $e$.
Fix three colours $x,y,z\in S$ and some indices $i,j,k\geq 1$.

{\bf Claim: If $i,j,k$ are not all equal, then \eqref{eq:SCCprop} is undefined or equal to $e$.}

We inspect several SCC using identities established before the proposition.
\begin{enumerate}
\item Case 1: $i>j,k$.
We obtain
$$E=[ y_j\bs z_k]\bs [x_{i+1} \bs (y_j\bs z_k)].$$
Assume $(y_j\bs z_k)$ is defined since if not we are done.
Now, $i+1$ is strictly larger than the index of the first letter of the word $(y_j\bs z_k)$.
We deduce that $[x_{i+1} \bs (y_j\bs z_k)]= (y_j\bs z_k)$ and thus $E=e$.
\item Case 2: $i<j,k$.
We have
$$E=[y_{j+1}\bs z_{k+1}]\bs [ x_i \bs (y_j\bs z_k)].$$
Assume $(y_j\bs z_k)$ exists.
Since all the letters of $(y_j\bs z_k)$ have indices strictly larger than $i$ we deduce that $x_i \bs (y_j\bs z_k)= (y_{j+1}\bs z_{k+1})$ implying that $E=e$.
\item Case 3: $k>i,j$.
We have 
$$E=[(x_i\bs y_j)\bs z_{k+1}]\bs [(y_j\bs x_i) \bs z_{k+1}].$$
If $(x_i\bs y_j)$ is well-defined then so is $(y_j\bs x_i).$
Moreover, these two words have same length, say $m$.
We deduce that $(x_i\bs y_j)\bs z_{k+1}=z_{k+m+1}$ and $(y_j\bs x_i) \bs z_{k+1}=z_{k+m+1}$ implying $E=e$.
\end{enumerate}
The remaining three cases can be treated likewise using similar identities and are left to the reader. This establishes the claim.

Consider now that $i=j=k$. By symmetry of the problem we may assume $i=1.$
Moreover, by Observation \ref{obs:complemented} the SCC is satisfied whenever $x,y,z$ are not all distinct.
By characterisation of completeness we deduce that $P_\infty$ is complete if and only if the SCC is satisfied at $(x_1,y_1,z_1)$ for each triple $(x,y,z)$ of distinct colours.
If this is the case, we have that $\cF_\infty$ is left-cancellative if there are no relations in $R_\infty$ with both words starting with the same letter. 
This is indeed the case by the definitions of $R_\infty$ and of complemented skein presentation.
This implies that $\cF$ is left-cancellative since $\cF_\infty$ is.
\end{proof}

We deduce a criteria for having Ore forest-skein categories when there are few relations of small length.

\begin{corollary}\label{cor:Ore-FC}
Let $(S,R)$ be a skein presentation, $\cF$ its associated forest-skein category, and assume that all relations of $R$ are of the form $a_1x=b_1y$ where $a,b\in S$, $a\neq b$, and $x,y$ are letters. Moreover, if $a,b\in S, a\neq b$, then there exists one relation $a_1x=b_1y$. 

If $$[(x_1\bs y_1)\bs (x_1\bs z_1)]\bs [(y_1\bs x_1)\bs (y_1\bs z_1)]=e$$ for all triple $(x,y,z)$ of distinct colours, then $\cF$ is a Ore forest-skein category.
\end{corollary}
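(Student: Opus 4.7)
The plan is to derive the corollary from the machinery of Proposition \ref{prop:LC-complemented} together with Dehornoy's criterion for Ore's property in Proposition \ref{prop:DehornoyRCF}(2).

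First, I would observe that the hypotheses imply that $(S,R)$ is a complemented skein presentation in the sense of Definition \ref{def:complemented}: for any distinct $a,b\in S$ there is exactly one relation of the form $a_1\cdots=b_1\cdots$, namely $a_1 x=b_1 y$; and for $a=b$ no such relation exists since all relations of $R$ have first letters in different colours. Applying Proposition \ref{prop:LC-complemented}, the strengthened hypothesis $[(x_1\bs y_1)\bs (x_1\bs z_1)]\bs [(y_1\bs x_1)\bs (y_1\bs z_1)]=e$ for every triple of distinct colours implies that the associated monoid presentation $P_\infty=(S_\infty,R_\infty)$ is complete and that $\cF$ is left-cancellative. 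It therefore remains only to establish Ore's property.

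By Observation \ref{obs:Ore}, Ore's property for $\cF$ is equivalent to that of the monoid $\cF_\infty$. To get the latter, the plan is to invoke Proposition \ref{prop:DehornoyRCF}(2) with the choice $\sigma':=\word(S_\infty)$; the statement reduces to showing that for every pair of words $u,v\in\word(S_\infty)$ the complements $(u\bs v)$ and $(v\bs u)$ are defined and satisfy $(u\cdot (u\bs v))^{-1}(v\cdot(v\bs u))\act e$, which automatically holds once the complements exist. I would first verify this at the level of generators: for $x_i,y_j\in S_\infty$, either $i\neq j$ and the Thompson-like relation $(x_q y_j, y_j x_{q+1})$ (or its symmetric) directly provides the complements; or $i=j$ with $x=y$ so that the complement is trivially empty; or $i=j$ with $x\neq y$ so that the unique skein relation $a_1 x'=b_1 y'$ associated to the pair, shifted by $i-1$, supplies the complements. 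Thus generator complements always exist.

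To extend this to arbitrary words I would run the standard reversing grid of Dehornoy: given $u=\alpha_1\cdots\alpha_n$ and $v=\beta_1\cdots\beta_m$ with $\alpha_i,\beta_j\in S_\infty$, fill out the $n\times m$ rectangle cell by cell using the generator complements at the corners and the inductively computed complements of suffixes at interior vertices; the words $(u\bs v)$ and $(v\bs u)$ are then read along the boundary opposite to $u$ and $v$. The main obstacle to overcome is justifying that the grid closes up, i.e.~that no interior complement becomes undefined. This is precisely where the strict equality in the hypothesis (rather than the weaker ``undefined or $e$'' of completeness) is used: it guarantees that at every interior triple of the form $(x_1,y_1,z_1)$, and by the case analysis performed inside the proof of Proposition \ref{prop:LC-complemented} also at every other triple of generators, the corresponding complement computation actually terminates with a defined word; an induction on the size of the subgrid then propagates this to all interior cells. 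Once existence of $(u\bs v)$ and $(v\bs u)$ is secured for all words, Proposition \ref{prop:DehornoyRCF}(2) yields Ore's property for $\cF_\infty$, hence for $\cF$, and combined with left-cancellativity this proves that $\cF$ is a Ore forest-skein category.
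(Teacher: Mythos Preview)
Your proposal reaches for the right tools (Proposition~\ref{prop:LC-complemented} for left-cancellativity, Proposition~\ref{prop:DehornoyRCF}(2) for Ore's property), but the Ore part is both overcomplicated and contains a misattributed step. The paper's argument is much shorter: since every skein relation is $a_1x=b_1y$ with $x,y$ \emph{letters}, and every Thompson-like relation is also a pair of length-two words, \emph{all} relations in $R_\infty$ have length two. Hence for any pair of generators $u,v\in S_\infty$ the complement $(u\bs v)$ is either empty or a single generator; in other words $S_\infty$ is already closed under reversing. One can therefore take $\sigma'=S_\infty$ in Proposition~\ref{prop:DehornoyRCF}(2) and conclude Ore's property immediately, with no grid induction needed.

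Your choice $\sigma'=\word(S_\infty)$ forces you to show that reversing terminates for arbitrary pairs of words, and the justification you give for this is not correct. You write that the strict equality in the SCC hypothesis ``guarantees that at every interior triple \ldots the corresponding complement computation actually terminates with a defined word''. But the SCC equation $[(x_1\bs y_1)\bs (x_1\bs z_1)]\bs [(y_1\bs x_1)\bs (y_1\bs z_1)]=e$ controls a specific cube configuration; it is used to certify completeness (and hence left-cancellativity), not to force arbitrary reversing grids to close. What actually makes your grid fill in is the observation you nearly made but did not state: every generator complement is again a \emph{single letter}, so each cell of the $n\times m$ grid has single-letter edges and the process terminates after exactly $nm$ steps. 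Once you record that, your argument becomes correct\,---\,and collapses to the paper's: the closure of $S_\infty$ under complement is precisely what lets you take $\sigma'=S_\infty$.
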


\begin{proof}
Consider $S,R,\cF$ as above.
By assumption $(S,R)$ is complemented. 
The associated monoid presentation $P_\infty=(S_\infty,R_\infty)$ is complete by the last proposition.
Moreover, there are no relations in $R_\infty$ starting by the same letter implying that $\cF_\infty$ is left-cancellative by Proposition \ref{prop:DehornoyRCF}.
Finally, all relations of $R_\infty$ are pairs of words of length two. This implies that the set of letters $S_\infty$ is closed under reversing and thus satisfies the second item of Proposition \ref{prop:DehornoyRCF}. 
Therefore, $\cF_\infty$ is a Ore monoid and thus $\cF$ is a Ore category.
\end{proof}

We deduce that skein presentations with zero or one relation are complemented and complete.

\begin{corollary}\label{cor:presentation-free}
Consider the skein presentation $(S,R)$ where $R$ is either empty or equal to a single relation of the form $a_1\cdots=b_1\cdots$ with $a\neq b$. 
Then the associated monoid presentation $(S_\infty,R_\infty)$ is complemented,  complete, and the forest-skein category $\FC\la S|R\ra$ is left-cancellative.
\end{corollary}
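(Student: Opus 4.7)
The plan is to reduce everything to Proposition \ref{prop:LC-complemented} by observing that when there is at most one skein relation, nearly all the complement operations in the strong cube condition are simply undefined, which makes the required SCC inequality trivially satisfied.

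First, I would verify that $(S,R)$ is complemented in the sense of Definition \ref{def:complemented}. If $R=\emptyset$ this is automatic. Otherwise, $R$ consists of a single relation $(a_1 u, b_1 v)$ with $a\neq b$, so for every ordered pair of colours $(x,y)$ there is at most one relation of the form $x_1\cdots=y_1\cdots$ in $R$, and none when $x=y$. Hence $(S,R)$ is complemented, which by the first proposition of the subsection on complemented presentations forces the associated monoid presentation $(S_\infty, R_\infty)$ to be complemented as well.

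Next, I would check completeness via Proposition \ref{prop:LC-complemented}. The proposition says it suffices to verify, for each triple $(x,y,z)$ of \emph{distinct} colours, that the expression
\[
E(x,y,z):=[(x_1\bs y_1)\bs (x_1\bs z_1)]\bs [(y_1\bs x_1)\bs (y_1\bs z_1)]
\]
is either undefined or equal to $e$. If $|S|\leq 2$, there are no such triples and completeness follows vacuously. If $|S|\geq 3$, I would use that the only pair of indexed colours at level $1$ admitting a reversing step via a skein relation is $\{a_1,b_1\}$ (all other level-$1$ pairs admit no relation, and Thompson-like relations require distinct indices). Hence for any three distinct colours $x,y,z$, at least one of them lies outside $\{a,b\}$; whichever colour that is, the two $(\cdot\bs\cdot)$ operations involving it at level $1$ are undefined, so $E(x,y,z)$ is undefined. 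Thus the SCC holds at $S_\infty$ and $(S_\infty,R_\infty)$ is complete.

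Finally, for left-cancellativity I would invoke the last clause of Proposition \ref{prop:LC-complemented}: completeness together with the fact that no relation in $R_\infty$ has both sides starting with the same letter (again immediate from the hypothesis on $R$) yields left-cancellativity of $\cF_\infty$, and then of $\cF$ by Observation \ref{obs:LC}. I do not expect any serious obstacle here; the entire content of the corollary is that when there is essentially only one skein relation, the completeness test in Proposition \ref{prop:LC-complemented} becomes vacuous because one cannot ``connect'' three distinct colours through a single binary relation.
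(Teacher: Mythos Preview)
Your proposal is correct and follows the same route as the paper. Both arguments invoke Proposition \ref{prop:LC-complemented} and observe that for a triple $(x,y,z)$ of distinct colours the expression $E(x,y,z)$ is necessarily undefined: you argue that at least one of $x,y,z$ lies outside $\{a,b\}$, while the paper argues directly that well-definedness of $E$ would force two distinct skein relations starting with $x_1$; these are the same observation.
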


\begin{proof}
By definition $(S,R)$ is complemented and so is $(S_\infty,R_\infty)$
We can thus use the last proposition to check if $(S_\infty,R_\infty)$ is complete which will automatically imply that $\FC\la S|R\ra$ left-cancellative.
Consider a triple of distinct letters $(x,y,z)$ and let us prove that the SCC is satisfied at $(x_1,y_1,z_1)$.
This is the case when 
$$E:=[(x_1\bs y_1)\bs (x_1\bs z_1)]\bs [(y_1\bs x_1)\bs (y_1\bs z_1)]$$
is either undefined or equal to $e$.
If $E$ is well-defined, then there is one relation of the form $x_1\cdots=y_1\cdots$ and another of the form $x_1\cdots=z_1\cdots$ contradicting our assumption.
\end{proof}

\subsection{Beyond the complemented case}
We may consider skein presentations $(S,R)$ that are not necessarily complemented (and thus may have more relations than unordered pairs of generators).
We make the assumption that all the relations of $R$ are of the form $a_{1}\cdots=b_{1}\cdots$ where $a,b\in S$ are {\it distinct} colours.
We provide a simplified criterion for checking completeness by removing a number of SCC cases.

\begin{proposition}
Consider a skein presentation $(S,R)$ of a forest-skein category $\cF$ with no relations starting from the same letter.
As usual we write $\cF_\infty$ for the associated forest-skein monoid with associated monoid presentation $P_\infty=(S_\infty,R_\infty)$.

The presentation $P_\infty$ is complete if and only if the SCC is satisfied at $(x_1,y_1,z_1)$ for all triple of colours $(x,y,z)$ where $x\neq z\neq y$.
In that case, $\cF$ is left-cancellative.
\end{proposition}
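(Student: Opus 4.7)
The plan is to follow the structure of Proposition~\ref{prop:LC-complemented}, but without the benefit of unique reversing paths. The ``only if'' direction is immediate, since the stated hypothesis is a special case of completeness. For the ``if'' direction, I must prove that the SCC at every triple $(x_i,y_j,z_k)\in (S_\infty)^3$ is a consequence of the SCC at the triples $(x_1,y_1,z_1)$ with $x\neq z\neq y$. This splits into (a) showing that SCC is automatic whenever $i,j,k$ are not all equal, and (b) showing that SCC at $(x_1,y_1,z_1)$ is automatic whenever $x=z$ or $y=z$. After (a), shift-invariance of $R_\infty$ under $j\mapsto j+1$ reduces the remaining cases to $i=j=k=1$.

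For (a), I would mimic the case analysis from the proof of Proposition~\ref{prop:LC-complemented}. The six cases (one for each choice of which of $i,j,k$ is strictly largest or smallest) are handled by exploiting the Thompson-like relations, which let a letter of small index commute past any word built from letters of strictly larger index, and vice versa. The main change with respect to the complemented setting is that the reversing process may branch, as several skein relations between two given colours are allowed; this is harmless because the SCC only asks that $(uv')^{-1}(vu')$ admit \emph{some} reversing to $e$, so we are free to reuse whichever skein relation produced the target $v'u'^{-1}$.

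For (b), take $(x_1,y_1,z_1)$ with $x=z$; the case $y=z$ is symmetric and uses the same kind of argument at the rightmost occurrence of the repeated letter. The word to reverse is $x_1^{-1}x_1x_1^{-1}y_1$. Since no relation of $R_\infty$ has both sides beginning with $x_1$, the only move available on any $x_1^{-1}x_1$ subword is deletion; hence every complete reversing to a form $v'u'^{-1}$ factors through a deletion of $x_1^{-1}x_1$ and a reversing $x_1^{-1}y_1\act AB^{-1}$ coming from some skein relation $(x_1A,y_1B)\in R_\infty$, so $v'=A$ and $u'=B$. Then $(uv')^{-1}(vu')=(x_1A)^{-1}(y_1B)=A^{-1}x_1^{-1}y_1B$, and reusing the same relation followed by deletions of $A^{-1}A$ and $B^{-1}B$ gives a reversing to $e$, confirming the SCC. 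Once the characterisation is in hand, the assertion that $\cF$ is left-cancellative follows from Proposition~\ref{prop:DehornoyRCF}(1): the hypothesis on $R$ ensures that no relation of $R_\infty$ has both sides beginning with the same letter, so the criterion there is vacuously satisfied by $\cF_\infty$, whence $\cF$ is left-cancellative by Observation~\ref{obs:LC}.

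The main obstacle will be the bookkeeping in (a): each individual case is morally the same as in the complemented version, but one must check carefully that every terminating reversing of $(x_i)^{-1}(z_k)(z_k)^{-1}(y_j)$ allows a successful verification of the cube condition. Non-uniqueness of reversing is the conceptual novelty, and the key observation that keeps it tractable is the same one used in (b): because the hypothesis forbids skein relations sharing their first letter, any intermediate $a^{-1}a$ pattern produced during reversing admits only a deletion, which prevents the branching from exploding and lets us always close the diagram with a matching ``same-relation'' reversing.
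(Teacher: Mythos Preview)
Your approach is correct and reaches the same endpoint as the paper, but part (a) is handled by a genuinely different mechanism. For (b)---the cases $z=x$ and $z=y$---your argument and the paper's are essentially identical: both observe that $x_1^{-1}x_1$ (resp.\ $y_1^{-1}y_1$) admits only the deletion move, so the word collapses to $x_1^{-1}y_1$, and then the same skein relation closes the cube.

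For (a), however, the paper does \emph{not} redo the six-case computation. Instead it argues as follows: fix a specific reversing $x_i^{-1}z_kz_k^{-1}y_j\act fg^{-1}$ with $i,j,k$ not all equal. When $i,j,k$ are pairwise distinct, only Thompson-like relations can possibly be used, so the entire reversing lives in the sub-presentation $(S_\infty,\TR(S))$. When exactly two indices coincide, at most one skein relation $(x_nu_n,z_nv_n)$ is invoked along the reversing, so it lives in $(S_\infty,\TR(S)\cup\{(x_nu_n,z_nv_n):n\geq1\})$. Either sub-presentation is complemented and complete by Corollary~\ref{cor:presentation-free}, hence the SCC holds there, giving $(x_ig)^{-1}(y_jf)\act e$ in the sub-presentation and a fortiori in $P_\infty$. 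This avoids all explicit bookkeeping: rather than tracking where each letter goes, one simply identifies which relations were used and appeals to a result already proved.

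Your direct approach would work, but it requires carrying out each case by hand and, in the two-equal-indices situation, carefully interleaving one skein relation with Thompson moves---your sketch conflates this with the all-distinct case, which uses only Thompson relations. The paper's sub-presentation trick is both shorter and more robust: it scales immediately if one later wants to loosen the hypotheses, since the only ingredient is that each individual reversing touches few enough relations to sit inside a complemented sub-presentation.
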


\begin{proof}
Fix a skein presentation $(S,R)$ as above.
We have that $P_\infty$ is complete if and only if the SCC is satisfied at each triple $(x_i,y_j,z_k)$.
We are going to show that many such SCC are automatically satisfied whatever $(S,R)$ is.
Note, that when $P_\infty$ is complete, then there are no relations in $R_\infty$ starting from the same letter. 
Hence, we deduce that $\cF_\infty$ (and thus $\cF$) is left-cancellative.
Therefore, we only need to prove the first statement.

The strategy of the proof is to use completeness of certain sub-presentations of $P_\infty$ of the form $(S_\infty, Q_\infty)$ with $Q_\infty\subset R_\infty$ satisfying the assumption of Corollary \ref{cor:presentation-free}.
More specifically, $Q_\infty$ will be either equal to $\TR(S)$ or to $\TR(S)\cup \bigcup_n Q_n$ that is deduced from a {\it single} skein relation of the type $a_{1}u=b_{1} v$ (so that $Q_n=(a_nu_n,b_nv_n)$).
We will then use that the SCC is satisfied at any triple of generators for this sub-presentation deducing SCC inside the larger presentation $P_\infty$ in some cases.

Consider a triple of generators $(x_i,y_j,z_k)$.

{\bf Claim 1: The SCC is satisfied at $(x_i,y_j,z_k)$ when $i,j,k$ are all distinct.}

Assume that $i,j,k$ are all distinct.
Now, if $$x_i^{-1} z_k z_k^{-1} y_j\act fg^{-1}$$ with respect to the presentation $P_\infty$, then observe that only Thompson-like relations have been used for performing this reversing.
Indeed, $x_i^{-1}z_k\act z_{k'} x_{i'}^{-1}$ and $z_k^{-1} y_j\act y_{j'} z_{k'}^{-1}$ where the dash indices are equal to the original one plus 0 or 1, i.e.~$j'\in\{j,j+1\}.$
Moreover, no other reversings are allowed since there is only one relation of the form $x_i\cdots=z_k\cdots$ when $i\neq k$.
Hence, $x_i^{-1} z_k z_k^{-1} y_j\act z_{k'} x_{i'}^{-1} y_{j'} z_{k'}^{-1}.$
By inspection of the various cases ($i>j>k$ or $j>k>i$, etc) we observe that $i'\neq j'$. 
Hence, $x_{i'}^{-1} y_{j'}\act y_{j''} x_{i''}^{-1}$. 
We have only used relations of $\TR(S)$ for performing the reversings.
Since the presentation $(S_\infty,\TR(S))$ is complete (by Corollary \ref{cor:presentation-free}) the SCC is satisfied at $(x_i,y_j,z_k)$ and thus $(x_i g)^{-1} (y_j f)$ reverses to $e$ with respect to the presentation $(S_\infty,\TR(S)).$
Hence, $(x_i g)^{-1} (y_j f)$ reverses to $e$ for the larger presentation $P_\infty$ and thus the SCC is satisfied at $(x_i,y_j,z_k)$ for $P_\infty$ proving the claim.

{\bf Claim 2: The SCC is satisfied at $(x_i,y_j,z_k)$ when $i,j,k$ are not all equal.}

Assume now that $|\{i,j,k\}|=2$ and $x_i^{-1} z_k z_k^{-1} y_j\act fg^{-1}$ w.r.t.~$P_\infty$.
We proceed as in the previous claim but possibly allowing one extra relation not in $\TR(S)$.
Indeed, at most one skein relation is used in the reversing process.
For instance, if $i=k<j$, then
$$x_i^{-1} z_i z_i^{-1} y_j\act x_i^{-1} z_i y_{j+1} z_i^{-1}.$$
Now, $x_i^{-1}z_i$ may reverse to $u_iv_i^{-1}$ using a relation $(x_iu_i,z_iv_i)$ where $u_i,v_i$ are words with first letter having index $i$ or $i+1$.
Hence, $x_i^{-1} z_i z_i^{-1} y_j\act u_iv_i^{-1}y_{j+1} z_i^{-1}.$
Now, $j+1$ is strictly larger than the index of the first letter of $v_i$.
Hence, we only have relations from $\TR(S)$ for the remaining reversing operations.
We deduce that $$x_i^{-1} z_i z_i^{-1} y_j\act u_iy_{j+1+m} v_i^{-1} z_i^{-1}=:fg^{-1}$$
where $m$ is the length of the word $v_i$.
Observe that we only used one single relation $(x_iu_i,z_iv_i)$ that was not in $\TR(S)$.
Now, the sub-presentation 
$$(S_\infty, \TR(S)\cup \{ (x_nu_n,z_nv_n):\ n\geq 1\})$$
of $P_\infty$ is complemented and complete by Corollary \ref{cor:presentation-free}.
Therefore, the SCC is satisfied at $(x_i,y_i,z_k)$ for this presentation and thus $(x_ig)^{-1}(y_i f)$ reverses to $e$ for this presentation.
This implies that $(x_ig)^{-1}(y_i f)$ reverses to $e$ for $P_\infty$ and thus the SCC is satisfied at $(x_i,y_i,z_k)$.
The other cases where the three indices are not all equal can be treated similarly.

We have shown that the SCC is satisfied for all triple $(x_i,y_j,z_k)$ where $i,j,k$ are not all equal.
Assume now that $i=j=k$. From our previous observation we know that the SCC is satisfied at the triple $(x_i,y_i,z_i)$ if and only if it is satisfied for the index $i=1$ so we may assume $i=1$.
To conclude the proof it is sufficient to prove that the SCC is satisfied at $(x_1,y_1,x_1)$ and at $(x_1,y_1,y_1)$.
For the first note that:
$$x_1^{-1} x_1 x_1^{-1}y_1 \act x_1^{-1} y_1$$
uniquely and from there we use one single relation that is not in $\TR(S)$.
We can then conclude as in the previous claims.
Similarly, in the second case we have
$$x_1^{-1}y_1 y_1^{-1}y_1\act x_1^{-1} y_1$$
uniquely and can conclude similarly as above.
\end{proof}
%%%%%%%%%%%%ORE-END%%%%%%%%%%%%%%%%%%%%%%%%%%%%
%%%%%%%%%%%%%%%%%%%%%%%%%%%%%%%%%%%%%%%%%%%%%%%

%%%%%%%%%%%%%%%%%%%%%%%%%%%%%%%%%%%%%%%%%%%%%%%
%%%%%%%%%%%%FRACTION-GROUPS%%%%%%%%%%%%%%%%%%%%%%%%%%%
\section{Forest-skein groups}\label{sec:forest-groups}

Fix a Ore forest-skein category $\cF$.
We will present the \textit{calculus of fractions} of $\cF$. 
This will allow us to define a groupoid $\Frac(\cF)$ and in particular its isotropy groups $\Frac(\cF,n)$ that are all (equivalence classes of) pairs of forests with the same number of leaves and a fixed number of roots $n$. 
We will be particularly interested in $\Frac(\cF,1)$ defined from pairs of {\it trees}.

\subsection{Fraction groupoids of forest-skein categories}

We recall the construction of the \textit{fraction groupoid} of $\cF$. 
Extensive details can be found in \cite{Gabriel-Zisman67}. All statements below are standard and easy exercises.

{\bf Pairs of forests.}
Let $\cP_\cF$ be the set of pairs $(f,g)\in\cF\times \cF$ where $f$ and $g$ have the same number of leaves.
Consider the smallest equivalence relation $\sim$ of $\cP_\cF$ generated by 
$$(f,g)\sim (f\circ h, g\circ h) \text{ for all } (f,g)\in \cP_\cF , \ h\in\cF$$ and write $\Frac(\cF)$ for the quotient $\cP_\cF/\sim$.
We write $[f,g]$ for the equivalence class of $(f,g)$ inside $\Frac(\cF)$.

{\bf A binary operation: fraction groupoids.}
We define a partially define binary operation on $\Frac(\cF)$.
Consider $(f,g), (f',g')\in \cP_\cF$ such that $|\Root(g)|=|\Root(f')|$. 
By Ore's property there exists $p,p'\in\cF$ satisfying $gp=f'p'$.
We define a binary operation as follows:
$$[f,g]\circ [f',g']:= [fp, g'p'].$$
We recall well-known facts on calculus of fractions that we specialize to our context.

\begin{proposition}
The following assertions are true.
\begin{enumerate}
\item The binary operation $\circ$ of above is indeed well-defined and associative (it does not depend on the choices of $p,p'$ nor on the representatives $(f,g), (f',g')$);
\item The algebraic structure $(\Frac(\cF),\circ)$ is a groupoid;
\item The  formula $f\mapsto [f,I^{\ot n}]$, where $n=|\Leaf(f)|$, defines a covariant functor from $\cF$ to $\Frac(\cF)$ (it preserves compositions).
\item We have the identities $[f,g]\circ [g,h] = [f,h]$ and $[f,g]^{-1}=[g,f]$ for all $[f,g],[g,h]\in\Frac(\cF).$
\end{enumerate}
\end{proposition}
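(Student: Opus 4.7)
The plan is to first replace the defining generating relation by a cleaner symmetric description, and then to reduce every item to Ore's property and left-cancellativity. Concretely, I claim that $(f,g) \sim (f',g')$ if and only if there exist $h,h' \in \cF$ with $fh = f'h'$ and $gh = g'h'$. This ``span'' relation manifestly contains the generating relation (take $h' = I$ and observe that $(f,g) \sim (fh,gh)$ is witnessed by $f \cdot h = fh \cdot I$, $g \cdot h = gh \cdot I$). It is reflexive and symmetric by inspection, and transitive: given two spans through $(f_2,g_2)$ with top legs $h_2$ and $k_2$ (which have the same number of roots since they both compose with $f_2$), Ore's property yields $u,v$ with $h_2 u = k_2 v$, and a direct substitution produces a span between $(f_1,g_1)$ and $(f_3,g_3)$. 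Since the span relation is an equivalence relation that contains the generator, and conversely is contained in it by applying the generating relation twice, the two coincide.

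For item (1), well-definedness under changing $(p,p')$ into another choice $(q,q')$ satisfying $gq = f'q'$: use Ore on $p,q$ to get $r,s$ with $pr = qs$, so $f'p'r = gpr = gqs = f'q's$; left-cancellativity of $f'$ gives $p'r = q's$, and hence $fp \cdot r = fq \cdot s$ and $g'p' \cdot r = g'q' \cdot s$, providing a span between $(fp, g'p')$ and $(fq, g'q')$. Independence from representatives is handled via the span description: replacing $(f,g)$ by $(fh,gh)$ amounts to composing the top legs by $h$ after relocating the Ore-witnesses, and the result is again a span of the two candidate composites. Associativity follows by triply applying Ore's property to stack three forests and comparing the two resulting expressions using left-cancellativity.

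For item (2), the units at object $n$ (which is $|\Root(g)|$ for a morphism $[f,g]$) are given by $[I^{\ot n}, I^{\ot n}]$, and inverses by $[f,g]^{-1} = [g,f]$: the composite $[f,g] \circ [g,f]$ is computed with $p = p' = I$ (since $g \cdot I = g \cdot I$ trivially realises Ore for the pair $(g,g)$), yielding $[f,f]$, which is span-equivalent to $[I^{\ot n}, I^{\ot n}]$ via the trivial span $f \cdot I = I^{\ot n} \cdot f$. Item (3) is the observation that for $f_1, f_2 \in \cF$ composable, the composite $[f_1, I] \circ [f_2, I]$ is computed by taking $p = f_2$ and $p' = I$ (which satisfies $I \cdot f_2 = f_2 \cdot I$), returning $[f_1 f_2, I]$. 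Item (4) for $[f,g]\circ [g,h]$ is the same trick with $p = p' = I$.

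The main obstacle is the first part of item (1): carefully handling the interplay between changes of representative and changes of Ore-witness. Both steps require an application of Ore's property followed by left-cancellativity in a non-obvious position (to cancel $f'$ or $g$ respectively), and one must be methodical to avoid circularity. Once the span characterisation of $\sim$ is in hand, all the remaining verifications become short diagram chases.
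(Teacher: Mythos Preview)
Your argument is correct and follows the standard route for establishing that a left-cancellative category with Ore's property admits a calculus of fractions. The key step---replacing the generating relation $(f,g)\sim(fh,gh)$ by the symmetric ``span'' description $(f,g)\sim(f',g')$ iff $fh=f'h'$ and $gh=g'h'$ for some $h,h'$---is exactly the characterisation one finds in the classical references, and once it is in hand the remaining verifications are indeed short. Your treatment of the change of Ore-witness is clean; the change-of-representative step is stated somewhat tersely (you only spell out the left factor, and ``relocating the Ore-witnesses'' hides a second application of Ore's property followed by a cancellation), but the mechanism is the same and goes through.

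The paper itself does not prove this proposition: it declares the statements ``standard and easy exercises'' and refers the reader to Gabriel--Zisman and Dehornoy's book for details. So there is no alternative approach to compare against; you have simply supplied the omitted argument, and done so correctly.
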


\begin{definition}
We call $\Frac(\cF)$ the \textit{fraction groupoid} of $\cF$ and its elements {\it fractions}.
A {\it forest-skein groupoid} is a groupoid $\cG$ for which there exists a forest-skein category $\cF$ satisfying that $\cG\simeq \Frac(\cF)$.
\end{definition}

\begin{remark}
Note that the fraction groupoid $\Frac(\cF)$ is obtained by formally inverting elements of $\cF$ and is sometimes denoted by $\cF[\cF^{-1}].$ 
Formally, $[f,g]$ corresponds to $f \circ g^{-1}$ and we shall often use this notation which makes item 3 and 4 obvious.
It is common to denote $[f,g]$ as a fraction $\frac{f}{g}$ by analogy with fractions of real numbers or more generally elements of localized rings. This explains the terminology.
Every small category admits an enveloping groupoid made of signed-path of morphisms. What is exceptional here is that being a Ore category assures that every element (or morphism) of this enveloping groupoid (which is the fraction groupoid) is a word of at most length two: $f\circ g^{-1}.$

Note that the map $\cF\to \Frac(\cF),f\mapsto [f,I^{\ot n}]$ where $n=|\Leaf(f)|$ is not injective in general. Indeed, for a Ore forest-skein category it is injective if and only if $\cF$ is also right-cancellative (that is $\cF$ is left and right-cancellatives not only left-cancellative), see Lemma 3.8 and Proposition 3.11 in \cite{Dehornoy-book}.
Even weaker conditions can be considered for having a right-calculus of fractions (for instance by requiring a weaker form of left-cancellativity) but we will not consider them in our study, see \cite{Gabriel-Zisman67}.
\end{remark}

{\bf Diagrammatic description of elements of $\Frac(\cF)$.}
We may extend the diagrammatic description of $\cF$ to its fraction groupoid $\Frac(\cF)$.
Elements of $\cF$ view inside $\Frac(\cF)$ are represented by the same diagrams than before.
Now, an element of the form $[I^{\ot n},g]=g^{-1}$, where $n= |\Leaf(g)|$, is represented by the diagram of $g$ but drawn upside down with now leaves on the bottom and roots on top.
The multiplication is still the vertical stacking.
Hence, we represent an element $[f,g]=f\circ g^{-1}$ as the diagram of $f$ and stack on top of it the upside down diagram of $g$ as in the following example:
\[\includegraphics{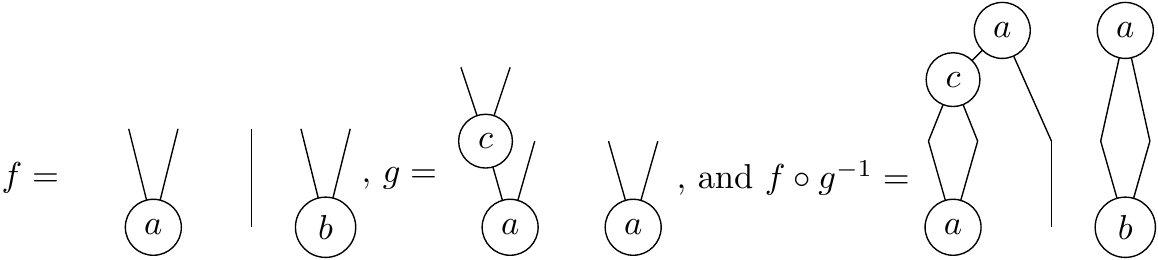}\]
Now, we know how to diagrammatically multiply $f$ with $g^{-1}$ but also $f$ with $f'$ and $g^{-1}$ with $(g')^{-1}$.
It remains to understand the diagrammatic multiplication of $g^{-1}$ with a forest $h$.
Choose some forests $p,q$ satisfying $gp=hq$. 
We obtain that $$g^{-1}\circ h = p\circ (gp)^{-1}\circ hq\circ q^{-1} = p\circ q^{-1}.$$
This can be expressed diagrammatically by stacking $h$ on top of $g^{-1}$ and performing various {\it universal} skein relations of the form 
\[\includegraphics{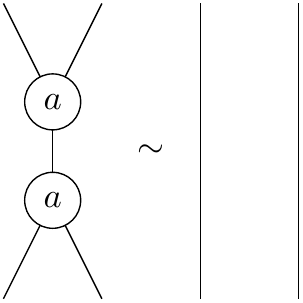}\]
 inherent to all Ore forest-skein categories combined with specific skein relations until obtaining the diagram of $p\circ q^{-1}$. 
We will not use substantially diagrammatic multiplication in $\Frac(\cF)$.
We thus skip a detailed description of it that can be derived without much trouble from the classical monochromatic case of the Thompson groups.
For this later we refer the reader to the Chapter 7 of the PhD thesis of Belk and the article of Jones where many such (monochromatic) diagrammatic computations are performed \cite{Belk-PhD,Jones21}.

\subsection{Fraction groups associated to forest-skein categories.}

We are interested in constructing and studying groups rather than groupoids.
Given $\cF$ as above and its fraction groupoid $\Frac(\cF)$ we can consider for each $r\geq 1$ the set of pairs $(f,g)$ where $f,g$ have $r$ roots and have the same number of leaves.
The set of such classes $[f,g]$ is a group that we write $\Frac(\cF,r)$.
If we consider $\Frac(\cF)$ using categorical terminology, then $\Frac(\cF,r)$ is nothing else than the automorphism group of the object $r$ inside the groupoid $\Frac(\cF)$ also called the \textit{isotropy group} of the object $r$.

\begin{definition}
Let $\cF$ be a Ore forest-skein category.
The \textit{fraction group} of $\cF$ is $\Frac(\cF,1)$.
It is the set of fractions $[f,g]=f\circ g^{-1}$ where $f,g$ are \textit{trees} with the same number of leaves equipped with the restriction of the multiplication of $\Frac(\cF)$.
A {\it forest-skein group} is a group $G$ isomorphic to the fraction group of a forest-skein category. We may also say that $\Frac(\cF,1)$ is {\it the} forest-skein group associated to $\cF$.
\end{definition}

\begin{example}
Let $\cF$ be the monochromatic free forest-skein category.
It is left-cancellative and satisfies Ore's property.
Its fraction group $\Frac(\cF,1)$ is (isomorphic to) Thompson's group $F$ via the Brown diagrammatic description of \cite{Brown87}.
Moreover, $\Frac(\cF,r)$ for $r\geq 1$ is the Higman-Thompson's group usually denoted by $F_{2,r}$ in the literature. Note that $F_{2,r}$ is the ``$F$-version'' of the Higman-Thompson's group $V_{2,r}$ that was first defined by Brown in \cite{Brown87}.
More examples can be found in Section \ref{sec:example} and Section \ref{sec:class-example}.
\end{example}

\begin{remark}
It is worth noting that if $\cF$ is a Ore forest-skein category, then $\Frac(\cF,r)$ is isomorphic to $\Frac(\cF,1)$ for all $r\geq 1$. 
Indeed, since we are considering \textit{binary} forests there exists a tree $t$ with $r$ leaves.
Now, the conjugation by $t$ provides a group isomorphism from $\Frac(\cF,r)$ to $\Frac(\cF,1)$. 
In categorical language, the groupoid $\Frac(\cF)$ is path-connected implying that all its isotropy groups are isomorphic. 
(If we add the empty diagram and the object $0$ to $\cF$ in the purpose of having a tensor unit and fulfilling scrupulously all the axioms of a monoidal category, we obtain one additional isolated path-connected component associated to $0$ giving the trivial isotropy group $\Frac(\cF,0)$.)
If we were considering {\it ternary} forests rather than {\it binary}, then there would be no morphisms between $1$ and $2$ for instance. Although, for the classical case of ternary forests with one colour and no quotients the Higman-Thompson's groups $F_{3,r},r\geq 1$ are all isomorphic but for a different reason, see \cite[Proposition 4.1]{Brown87}.
\end{remark}

\subsection{Functoriality}

The collection of all Ore forest-skein categories together with morphisms between them forms a subcategory of $\Forest$ that we write $\Forest_{Ore}$ and call the \textit{category of Ore forest-skein categories}.
We collect straightforward facts concerning morphisms between forest-skein categories, fraction groupoids, and fraction groups. It shows that all our constructions are functorial.
They are all easy to prove and well-known in greater generality.

\begin{proposition}\label{prop:morphisms}
Let $\cF,\cG$ be Ore forest-skein categories.
The following assertions are true.
\begin{enumerate}
\item If $\theta\in\Hom(\cF,\cG)$, then $\theta$ uniquely extends into groupoid and group morphisms
$$\Frac(\theta):\Frac(\cF)\to\Frac(\cG) \text{ and } \Frac(\theta,1):\Frac(\cF,1)\to\Frac(\cG,1)$$ 
via the formula
$$[f,g]\mapsto [\theta(f),\theta(g)].$$
We often write $\theta$ for $\Frac(\theta)$ or $\Frac(\theta,1)$ if the context is clear.
\item The assignments $\cF\mapsto \Frac(\cF), \theta\mapsto\Frac(\theta)$ and $\cF\mapsto \Frac(\cF,1), \theta\mapsto\Frac(\theta,1)$ define covariant functors that we write:
$$\Frac:\Forest_{Ore}\to \Groupoid \text{ and } \Frac(-,1):\Forest_{Ore}\to \Gr$$
where $\Groupoid$ is the category of groupoids and $\Gr$ the category of groups.
\item If a morphism $\theta:\cF\to\cG$ is injective (resp.~surjective), then $\Frac(\theta)$ and $\Frac(\theta,1)$ are both injective (resp.~surjective).
\end{enumerate}
\end{proposition}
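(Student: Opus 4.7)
The plan is to dispatch items (1)--(3) in order, with the first two being essentially formal and the third containing the main content.

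For item (1), I would first check that the formula $[f,g]\mapsto [\theta(f),\theta(g)]$ is well-defined on equivalence classes: since the equivalence $\sim$ on $\cP_{\cF}$ is generated by $(f,g)\sim(f\circ h, g\circ h)$ and $\theta$ preserves composition, each generating pair in $\cF\times\cF$ is sent to a generating pair in $\cG\times\cG$, so the map descends to the quotients. Preservation of groupoid composition is then direct: given $[f,g]\circ[f',g']=[f\circ p, g'\circ p']$ with $g\circ p=f'\circ p'$ in $\cF$, applying $\theta$ preserves this common-refinement relation, so one computes $\Frac(\theta)([f,g]\circ[f',g'])=[\theta(f)\circ\theta(p),\theta(g')\circ\theta(p')]=\Frac(\theta)([f,g])\circ\Frac(\theta)([f',g'])$. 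Uniqueness follows because $\cF$ together with the formal inverses of its morphisms generates $\Frac(\cF)$ and any extension must agree with $\theta$ on $\cF$ and send inverses to inverses. Since $\theta$ preserves the number of roots and leaves it maps trees to trees of the same root-count, so $\Frac(\theta)$ restricts to the group morphism $\Frac(\theta,1)$.

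For item (2), the identity morphism on $\cF$ trivially extends to the identity on $\Frac(\cF)$, and $\Frac(\psi\circ\theta)=\Frac(\psi)\circ\Frac(\theta)$ follows either by direct computation on representatives or by appealing to the uniqueness in item (1). This yields the two functors $\Frac$ and $\Frac(-,1)$.

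For item (3), surjectivity is immediate: given $[a,b]\in\Frac(\cG)$ one lifts $a,b$ to $f,g\in\cF$ with $\theta(f)=a,\theta(g)=b$, which is possible since $\theta$ is surjective and preserves root/leaf counts, and then $\Frac(\theta)([f,g])=[a,b]$. Injectivity is more delicate. Assume $\Frac(\theta)([f,g])=\Frac(\theta)([f',g'])$. Using Ore's property in $\cF$ applied to $f,f'$, I would produce $h,h'\in\cF$ with $f\circ h=f'\circ h'$, whose images under $\theta$ give a common refinement of $\theta(f),\theta(f')$ in $\cG$. Combining this with the common refinement in $\cG$ provided by the equality $[\theta(f),\theta(g)]=[\theta(f'),\theta(g')]$, and invoking Ore's property plus left-cancellativity in $\cG$, one lines up the two refinements to deduce that $\theta(g\circ h)$ and $\theta(g'\circ h')$ become equal after post-composition with a suitable element of $\cG$; a final application of Ore's property in $\cF$ to $g\circ h, g'\circ h'$, together with left-cancellativity in $\cG$ and injectivity of $\theta$, yields a pair $(k,k')\in\cF\times\cF$ realising a common refinement of $(f,g)$ and $(f',g')$ in $\cF$.

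The main obstacle is the injectivity step: one must carefully convert a zigzag of elementary refinements in $\cG$ into a single common refinement lying in the image of $\theta$, so that injectivity of $\theta$ can finally be used. The crucial tool is that groupoid multiplication is cancellative, which forces various equalities of second components once the first components have been identified in $\cG$.
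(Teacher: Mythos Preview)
The paper does not actually supply a proof of this proposition; it simply records that these facts are ``easy to prove and well-known in greater generality.'' So there is no detailed argument to compare against, and I assess your sketch on its own merits.

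Your treatment of items (1), (2), and the surjectivity half of (3) is correct and entirely standard.

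The injectivity half of (3), however, has a real gap at exactly the spot you flag as the main obstacle. Following your outline one arrives, after the first two reductions, at a relation of the shape
\[
\theta(gh)\circ B \;=\; \theta(g'h')\circ B \qquad\text{for some }B\in\cG,
\]
together with $fh=f'h'$ in $\cF$. What you now need is a single $C\in\cF$ with $(gh)\circ C=(g'h')\circ C$, since $fh=f'h'$ forces the two growing forests in any common refinement to coincide. But the ``final application of Ore's property in $\cF$'' only produces $k,k'\in\cF$ with $(gh)k=(g'h')k'$; feeding this back through left-cancellativity in $\cG$ and the relation above yields $\theta(k)\circ v=\theta(k')\circ v$ for some $v\in\cG$ --- the same shape of problem with $k,k'$ in place of $gh,g'h'$. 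Nothing in your listed toolkit terminates this regress. The ``crucial tool'' you cite, cancellativity of groupoid multiplication in $\Frac(\cG)$, is no help here: it only says that $\theta(gh)$ and $\theta(g'h')$ coincide \emph{as elements of $\Frac(\cG)$}, which is precisely the hypothesis $\theta(gh)B=\theta(g'h')B$ restated. Injectivity of $\theta$ is a statement about $\cF\to\cG$, not about the induced map on fraction groupoids, and since $\cG$ is only assumed left-cancellative the canonical map $\cG\to\Frac(\cG)$ need not be injective (the paper remarks on this explicitly just after the proposition). That non-injectivity is exactly the obstruction: you must explain why the right-multiple $B\in\cG$ witnessing the identification can be replaced by one lying in the image $\theta(\cF)$, and your sketch does not do this.
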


\begin{remark}
The converse of item 3 of the last proposition is false in general: there exist morphisms of Ore categories that are not injective or surjective but extend to injective or surjective morphisms between the corresponding groupoids. 
Such examples appear for Ore categories that are not right-cancellative.
Consider the presented monoid
$$M:=\Mon\la a,b| aa=ba\ra$$
and the morphisms
$$M\to \N, a,b\mapsto 1 ; \ \N\to M, 1\mapsto a.$$
The first is not injective and the second not surjective. However, they extend into isomorphism of groups between $\Frac(M)$ and $\Frac(\N)=\Z.$
We can adapt these examples into the context of forest-skein categories by considering the presented forest-skein category $$\cF:=\FC\la a,b| a_1a_1=b_1a_1\ra$$ and free monochromatic forest-skein category $\cG$. They are both Ore categories admitting fraction groupoids and groups.
The morphisms of above define maps between the colour sets of $\cF$ and $\cG$ which define morphisms of forest-skein categories. 
These two morphisms are not isomorphisms of forest-skein categories but extend into isomorphisms of groupoids and groups.
\end{remark}

From the observations of above we deduce an interesting corollary. 

\begin{corollary}\label{cor:Finside}
Any forest-skein group $G$ contains a copy of Thompson's group $F$.
More precisely, if $G$ is the fraction group of $\cG=\FC\la S|R\ra$ and $\cUF$ is the monochromatic free forest-skein category, then each colour $a\in S$ defines an injective morphism $C_a:\cUF\into\cG,Y\mapsto Y_a$ called the colouring map that extends into an injective group morphism $F\into G$.
\end{corollary}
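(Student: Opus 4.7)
The plan is to construct the colouring map $C_a$ via the universal property of forest-skein categories, identify its range with the forest subcategory of monochromatic $a$-coloured forests, use left-cancellativity to show $C_a$ is injective, and then invoke the functoriality of the fraction group construction.

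First, I would define $C_a:\cUF\to\cG$. The free monochromatic forest-skein category is universal among forest-skein categories generated by a single tree with two leaves, so sending the unique abstract colour $Y$ to the tree $Y_a\in\cG(2,1)$ extends uniquely to a morphism of forest-skein categories $C_a:\cUF\to\cG$ (alternatively, apply Proposition \ref{prop:colour-morphism} to the map from the singleton colour set $\{*\}$ of $\cUF$ to $S$ sending $*$ to $a$). By construction, every element of the image is a forest whose interior vertices are all coloured by $a$, so the range of $C_a$ is contained in the forest subcategory $\cG(a)\subseteq\cG$ of monochromatic $a$-coloured forests (see Section \ref{sec:forest-sub}), and in fact equals $\cG(a)$ since the latter is generated by $Y_a$.

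Next, I would show $C_a$ is injective. The key observation is that $\cG(a)$ is left-cancellative: indeed, $\cG$ is left-cancellative (as a Ore forest-skein category), and by Observation \ref{obs:LC} item 5 every forest subcategory of a left-cancellative forest-skein category is itself left-cancellative. Now $\cG(a)$ is a monochromatic forest-skein category, and by the argument given in Example \ref{ex:LC} item 3, any left-cancellative monochromatic forest-skein category is free in the sense that the canonical surjection from the free monochromatic category onto it has trivial kernel. The canonical surjection $\cUF\onto\cG(a)$ is exactly $C_a$ (viewed as its range restriction), hence $C_a$ is injective as a map $\cUF\into\cG$.

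Finally, I would apply Proposition \ref{prop:morphisms} item 3 to the injective morphism $C_a:\cUF\into\cG$ to deduce that $\Frac(C_a,1):\Frac(\cUF,1)\into\Frac(\cG,1)$ is an injective group morphism, explicitly given by $[f,g]\mapsto [C_a(f),C_a(g)]$. Identifying $\Frac(\cUF,1)$ with Thompson's group $F$ via Brown's diagrammatic description yields the desired embedding $F\into G$. The only nontrivial step is establishing injectivity of $C_a$, and the main point there is that left-cancellativity is automatically inherited by the forest subcategory $\cG(a)$; everything else is a direct application of the universal property and functoriality of $\Frac(-,1)$.
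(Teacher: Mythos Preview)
Your proof is correct and follows essentially the same approach as the paper. The only difference is packaging: where you factor through the forest subcategory $\cG(a)$, invoke Observation \ref{obs:LC} item 5 to inherit left-cancellativity, and then cite Example \ref{ex:LC} item 3 to conclude that the monochromatic $\cG(a)$ must be free (hence $C_a$ injective), the paper instead reproves the minimal-counterexample argument of Example \ref{ex:LC} item 3 directly for $C_a$, implicitly using left-cancellativity of $\cG$ to cancel the root caret.
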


\begin{proof}
Let $G$ be a forest-skein group. 
There exists a presented Ore forest-skein category $\cG=\FC\langle S|R\rangle$ satisfying that $G$ is isomorphic to $\Frac(\cG,1)$. 
Fix a colour $a\in S$ and define the colouring map
$$C_a:\cUF\to\cG, Y\mapsto Y_a$$
from the monochromatic free forest-skein category $\cUF$ to $\cG$.
Such a morphism exists by universal property of $\cUF$.

%Injectivity
Let us show that $C_a$ is injective. We assume the contrary and follow a similar argument given in Example \ref{ex:LC} item 3.
Assume $C_a$ has a nontrivial kernel and consider a pair of trees $(t,s)\in\ker(C_a)$ satisfying $t\neq s$ with minimal number of leaves.
Decompose $t$ and $s$ as $$t=Y \circ (h\ot k),\ s= Y \circ (h'\ot k')$$
where $h,k,h',k'$ are trees.
Since $C_a$ preserves composition and tensor product we have that $(h,h')$ and $(k,k')$ are in $\ker(C_a)$.
By minimality of the number of leaves we conclude that $h=h', k=k'$ implying $t=s$, a contradiction.
Therefore, $C_a$ is injective and extends into an injective {\it group} morphism $\Frac(C_a,1): \Frac(\cUF,1)\to \Frac(\cG,1)$ by the last proposition.
\end{proof}

\begin{remark}
Note that the proof of the last proposition shows that if $\cG$ is a left-cancellative forest-skein category, then it contains a copy of the monochromatic free forest-skein category.
\end{remark}

We deduce that all forest-skein groups share common properties since they all contain the group $F$. We list few of those in the corollary below. Although, this list is far from being exhaustive.

\begin{corollary}\label{cor:geometric-dimension}
A forest-skein group is infinite, contains free Abelian groups of arbitrary rank, has infinite geometric dimension (see Section \ref{sec:def-finiteness}), is not elementary amenable, and has exponential growth.
\end{corollary}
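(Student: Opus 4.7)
The plan is to reduce every assertion to the analogous property of Thompson's group $F$ by invoking Corollary \ref{cor:Finside}, which furnishes an injective group morphism $F \hookrightarrow G$ for each forest-skein group $G$. All five properties are monotone with respect to this embedding: being infinite, containing $\Z^n$ for every $n$, and having exponential growth pass from a subgroup to an overgroup, while having infinite geometric dimension and being non elementary amenable are negations of properties that pass from a group to its subgroups, so they follow by contrapositive.

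Concretely, I would collect the classical facts about $F$: it is infinite (visible already from $F/[F,F] \simeq \Z^2$); it contains $\Z^n$ for every $n$, obtained by taking $n$ elements of infinite order supported on pairwise disjoint subintervals in the standard action of $F$ on $[0,1]$, which automatically commute; it has exponential growth by an explicit estimate of Burillo; it has infinite geometric dimension by Brown--Geoghegan; and it is not elementary amenable (a classical observation, see for instance Cannon--Floyd--Parry). Transferring each fact through the embedding $F \hookrightarrow G$ is then immediate: the class of elementary amenable groups is closed under taking subgroups by its very inductive definition, and if $G$ admitted a finite dimensional classifying space $X$, then the covering of $X$ associated to the image of $F$ would be a finite dimensional classifying space for $F$, contradicting Brown--Geoghegan. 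The positive properties are even more direct, since infiniteness and containment of $\Z^n$ are manifestly preserved when passing to an overgroup.

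There is no genuine obstacle: the corollary is essentially a dictionary entry between $F$ and the larger world of forest-skein groups. The only minor point deserving attention is the meaning of ``exponential growth'' when $G$ need not be finitely generated; the natural and correct reading is that any finitely generated subgroup of $G$ containing the image of $F$ has exponential growth, which is immediate since any finite generating set enlarging one of $F$ produces a growth function pointwise bounded below by that of $F$.
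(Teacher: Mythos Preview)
Your proposal is correct and follows essentially the same approach as the paper: the corollary is stated immediately after Corollary~\ref{cor:Finside} with the remark that all forest-skein groups inherit properties from the embedded copy of $F$, and no further proof is given. Your version is in fact more detailed than the paper's treatment, spelling out the relevant closure properties and the classical references for each fact about $F$; the only minor difference is that the paper derives infinite geometric dimension from the containment $\Z^r \subset F \subset G$ together with $\dim_g(\Z^r)=r$ (stated in Section~\ref{sec:def-finiteness}), whereas you cite Brown--Geoghegan directly for $F$.
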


\begin{remark}
A number of properties of $F$ are not shared by all $F$-forest-skein groups.
For instance, there exist forest-skein groups with torsion, with their abelianisation with torsion, some forest-skein groups contain a copy of the free group of rank two, some forest-skein groups decompose as a nontrivial direct product, can have nontrivial centre, etc. 
\end{remark}

\subsection{Fraction groups of forest-skein monoids and the CGP}\label{sec:CGP}

Let $\cF$ be a Ore forest-skein category which implies that its forest-skein monoid $\cF_\infty$ is a Ore monoid.
We can define a calculus of fractions on $\cF_\infty$ just as we did for $\cF$. 
We briefly recall the construction that is rather identical to the one performed for $\cF$.
Although, the construction is slightly simpler since we do not need to care about numbers of leaves and roots.

Consider the product $\cF_\infty\times\cF_\infty$ and define the equivalence relation $\sim$ on it that is generated by $(f,g)\sim (f\circ h, g\circ h)$ with $f,g,h\in\cF_\infty.$
Write $\Frac(\cF_\infty)$ for the quotient of $\cF_\infty\times\cF_\infty$ by $\sim$ and write $[f,g]=\frac{f}{g}=f\circ g^{-1}$ for the equivalence class of $(f,g)$ inside $\Frac(\cF_\infty).$
Given $[f,g],[f',g']\in \Frac(\cF_\infty)$ we define
$$[f,g]\circ [f',g']:=[fp,g'p'] \text{ for some } p,p' \text{ satisfying } gp=f'p'.$$
This is a well-defined associative binary operation on $\Frac(\cF_\infty)$.

\begin{definition}
The algebraic structure $(\Frac(\cF_\infty),\circ)$ is a group called the \textit{fraction group of the forest-skein monoid $\cF_\infty$}. 
\end{definition}

We prove that the two groups $\Frac(\cF,1)$ and $\Frac(\cF_\infty)$ embed in each other. 

\begin{proposition}\label{prop:GHinclusion}
Let $\cF$ be a Ore forest-skein category and consider the associated groups $G:=\Frac(\cF,1)$ and $H:=\Frac(\cF_\infty).$
For each colour $a$ of $\cF$ there exists an injective group morphism $\ga_a:H\to G$.
For each $k\geq 1$ there exists an injective group morphism $\eta_k:G\to H$.

In particular, any property of groups that is closed under taking subgroup is either satisfied for $G$ and $H$ or for none of them. This applies for instance for analytic properties like Haagerup property, Cowling-Haagerup weak amenability, or growth properties, etc.
\end{proposition}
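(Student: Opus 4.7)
The plan is to construct both morphisms via the natural finite-to-infinite correspondence of Remark \ref{rk:forest-monoid}, and to verify well-definedness, multiplicativity, and injectivity by tracking information back and forth between $\cF$ and $\cF_\infty$.

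For $\eta_k$ the construction uses the covariant functor $\phi:\cF\to\cF_\infty$, $f\mapsto f\ot I^{\ot\infty}$. For $k=1$ set $\eta_1([f,g]):=[\phi(f),\phi(g)]$; well-definedness and multiplicativity follow because $\phi$ preserves composition (hence the elementary equivalence $(f,g)\sim(f\circ h,g\circ h)$ and the Ore reconciliation used to compose fractions). For general $k$, apply the same construction after identifying $G$ with $\Frac(\cF,k)$ via conjugation by a tree with $k$ leaves, and interpret $G$ as pairs of forests with $k$ roots. For injectivity, a triviality $\eta_1([f,g])=1$ in $H$ yields some $p\in\cF_\infty$ with $\phi(f)\circ p=\phi(g)\circ p$; after enlarging $p$ if necessary to a finitely supported form $\hat p\ot I^{\ot\infty}$, one recovers $f\circ\hat p=g\circ\hat p$ in $\cF$, so $[f,g]=1$ in $G$.

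For $\gamma_a$ the construction fixes a colour $a\in S$ and a cofinal sequence of monochromatic $a$-trees $(T_n)_{n\geq 1}$ in $\cF(n,1)$, chosen so that $T_{n+1}$ extends $T_n$ by attaching a $Y_a$-caret in a coherent way. Given $[f,g]\in H$, choose $n$ large enough that both $f$ and $g$ have support in $\{1,\ldots,n\}$ and write $f=\hat f\ot I^{\ot\infty}$, $g=\hat g\ot I^{\ot\infty}$ with $\hat f,\hat g\in\cF$ having $n$ roots. Compose with $T_n$ at the bottom to obtain trees $T_n\circ\hat f$ and $T_n\circ\hat g$; when the leaf counts of $\hat f$ and $\hat g$ disagree, apply compensating compositions by $a$-monochromatic forests on the shorter side so that both resulting trees have equal leaf counts, and declare $\gamma_a([f,g])$ to be the class in $G$ of this pair. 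The core step is well-definedness: independence from the choice of $n$ follows from the recursive relation between $T_n$ and $T_{n+1}$ combined with the Thompson-like relations of $\cF$, and independence from the compensating-carets choice follows because different choices are interconvertible up to a common right-composition. Multiplicativity transfers from the Ore reconciliation in $H$ to the one in $G$ via the capping; injectivity uses left-cancellativity of $\cF$ to cancel $T_n$ and the compensating carets from a triviality $\gamma_a([f,g])=1$, producing $\hat f\circ h=\hat g\circ h$ in $\cF$, which tensored with $I^{\ot\infty}$ shows $[f,g]=1$ in $H$.

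The main obstacle is handling the intrinsic imbalance (mismatch of leaf counts after truncation) that an element of $H$ may carry but which elements of $G$ do not display: the cofinal sequence $(T_n)$ and the compensating $a$-monochromatic carets must be chosen so that all coherence diagrams commute at the level of $G$, which is where the delicate bookkeeping of the construction concentrates.
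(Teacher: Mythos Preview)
Your treatment of $\eta_k$ is correct and matches the paper's (which writes $\eta_k$ directly as $[f,g]\mapsto[I^{\ot k-1}\ot f\ot I^{\ot\infty},\,I^{\ot k-1}\ot g\ot I^{\ot\infty}]$ rather than passing through $\Frac(\cF,k)$, but this is cosmetic).

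Your construction of $\gamma_a$, however, has a genuine gap: the claim that the output is independent of the compensating $a$-monochromatic forest is not justified, and it fails in general. Suppose $\hat f$ has fewer leaves than $\hat g$ and you compensate with two different $a$-monochromatic forests $h,h'$ of the required shape. For $[T_n\hat f h,\,T_n\hat g]=[T_n\hat f h',\,T_n\hat g]$ in $G$ you need $[T_n\hat f h,\,T_n\hat f h']=e$, i.e.\ a forest $p$ with $hp=h'p$ in $\cF$; when $\cF$ is right-cancellative (e.g.\ Cleary's category $\FC\la a,b\mid a_1a_2=b_1b_1\ra$) this forces $h=h'$. Your proposed fix---that $h,h'$ admit a common right-multiple $hp=h'p'$---does not close the argument either, since growing the two fractions by $p$ and $p'$ respectively leaves the second coordinates $T_n\hat g\,p$ and $T_n\hat g\,p'$ unequal.

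The paper sidesteps all of this by defining $\gamma_a$ on the \emph{monoid} $\cF_\infty$ first, not on fractions. Using the specific right $a$-vine $t_{n+1}=a_{1,1}a_{2,2}\cdots a_{n,n}$, it sends $f\in\cF_\infty$ to the eventually-constant value of $[t_n\circ f|_n,\;t_{n+m}]$, where $m$ is the number of carets of the truncation $f|_n$. The second entry is always another right $a$-vine, so no choice is ever made and the ``imbalance'' you worried about simply does not arise: every $f$ is compared against a canonical tree rather than against an arbitrary partner $g$. One checks this is a monoid homomorphism $\cF_\infty\to G$, which then extends to $H=\Frac(\cF_\infty)\to G$ by the universal property of fractions; injectivity reduces to a single left-cancellation. (Incidentally, your sequence $(T_n)$ need not be cofinal in $\cF$---monochromatic $a$-trees rarely are when there are several colours---but cofinality is not used; only the recursion $t_{n+1}=t_n\circ a_{n,n}$ matters.)
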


\begin{proof}
Consider $\cF,G,H$ as above.
Fix a colour $a$ of $\cF$ (that is an element of $S$ if $(S,R)$ is a skein presentation of $\cF$ or a tree with two leaves) and a natural number $k\geq 1.$
The map $$\cF\to\cF_\infty, f\mapsto I^{\ot k-1} \ot f \ot I^{\ot \infty}$$ is a functor from the category $\cF$ to the monoid $\cF_\infty$.
It extends into a groupoid morphism from $\Frac(\cF)$ to $\Frac(\cF_\infty)$ and restricts into a group morphism $\eta_k:\Frac(\cF,1)\to \Frac(\cF_\infty)$ that is injective since the original map $\cF\to\cF_\infty$ is.
The range of $\eta_k$ corresponds to pairs of trees with roots at the $k$th spot.
Here is an example of the mapping $\eta_3$:
\[\includegraphics{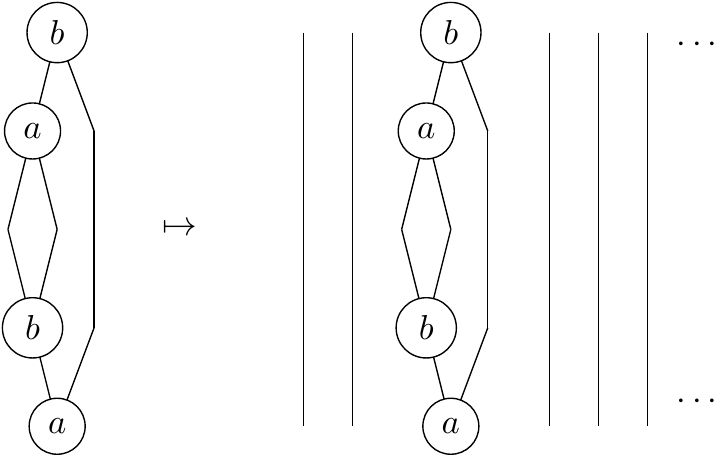}\]

Define the following sequence of trees of $\cF$:
$$t_{n+1}:= a_{1,1}a_{2,2}\cdots a_{n,n} \text{ for all } n\geq 1$$
so that $t_{n+1}$ is a monochromatic tree of colour $a$ with $n+1$ leaves and a long right-branch. 
We say that $t_{n+1}$ is a {\it right $a$-vine} (with $n$ interior vertices).
Here is the diagram of $t_4$:
\[\includegraphics{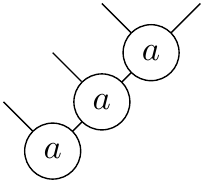}\]
Observe that for any forest $f\in\cF_\infty$ the sequence 
$$n\mapsto [t_n\circ f|_n, t_{n+m}] = t_n \circ f|_n \circ t_{n+m}^{-1}$$ is eventually constant where $f|_n$ is obtained by truncating the infinite forest $f$ to its $n$th first trees and $m$ is the number of carets of $f|_n$ (which is eventually constant too).
This defines a map from $\cF_\infty$ to $G$ which extends into a group embedding $\ga_a:H\to G$.
\end{proof}

\begin{remark}
We observed in Remark \ref{rk:forest-monoid} that $\cF_\infty$ is obtained as a direct limit of set of forests with finitely many roots. 
Similarly, $\Frac(\cF_\infty)$ is the direct limit of the system of groups $(\Frac(\cF,r):\ r\geq 1)$ for the (injective) connecting morphisms
$$\iota_r^{r+n}:\Frac(\cF,r)\to \Frac(\cF,r+n), \ [f,g]\mapsto [f\ot I^{\ot n}, g\ot I^{\ot n}].$$
This explains the notation where $\Frac(\cF_\infty)$ is interpreted as $\varinjlim_{r} \Frac(\cF,r)$: the limit of the fraction groups $\Frac(\cF,r)$ for $r$ tending to infinity.
Now, the morphism $\eta_1:\Frac(\cF,1)\to \Frac(\cF_\infty)$ of the last proposition corresponds to the limit morphism $\iota_1^\infty:=\varinjlim_{n}\iota_1^{1+n}$.
\\
The group $\Frac(\cF,1)$ is often interpreted as a subgroup of $\Frac(\cF_\infty)$ in the literature. Although, we do the opposite and consider $\Frac(\cF_\infty)$ as a subgroup of $\Frac(\cF,1)$ using the morphism $\gamma_a$ of the last proposition. 
With this viewpoint we will provide in Section \ref{sec:presentation} presentations of $\Frac(\cF,1)$ and $\Frac(\cF_\infty)$ where one is the truncation of the other. 
\end{remark}

\begin{definition}\label{def:CGP}
We write $G(a)$ for the range of $\ga_a$ which is a subgroup of $G$ isomorphic to $H$.
If $G=G(a)$, then we say that $G$ has the {\it colouring generating property} at $a$ (in short the CGP at $a$ or simply the CGP).
In that case, $\ga_a$ defines an isomorphism between $H$ and $G$.
\end{definition}

Note that elements of $G(a)$ consists of elements $g=t\circ s^{-1}$ which admits representatives $t,s$ so that all vertices between the root and the right-most leaf of $t$ and $s$ are coloured by $a$.
We will see that in some cases $G(a)=G$. This happens when $\cF$ is the monochromatic free forest-skein category and $G=F$ as pointed out by Brown but there are many other examples, see Section \ref{sec:example}.

\begin{remark}
The isomorphism $$\ga_a:H\to G(a),f\mapsto [t_n\circ f|_n, t_{n+m}]$$  consists in taking a finitely supported forest $f$ and to add on top and bottom a right $a$-vine.
This cannot be generalised in the obvious way to right-vines that are not {\it monochromatic}.
Indeed, let $(a^{(n)},n\geq 1)$ be a sequence of colours and put $\ti t_{n+1}:=a^{(1)}_{1,1}\circ\cdots\circ a^{(n)}_{n,n}$ the right-vine with $j$th vertex coloured by $a^{(j)}$ for $1\leq j\leq n.$
Now, given $f\in\cF_\infty$ we consider 
$$\ti\ga(f,n):=[\ti t_{n}\circ f|_{n}, \ti t_{n+m}]= \ti t_n\circ f|_{n} \circ \ti t_{n+m}^{-1}$$ where $m$ is the number of vertices of $f|_{n}$.
In order to have a monoid morphism we need to have that this fraction is eventually constant in $n$.
But $\ti\ga(f,n+1)$ corresponds to $\ti\ga(f,n)$ to which we add a $a^{(n+1)}$-vertex and a $a^{(n+p+1)}$-vertex. 
Hence, $\ti\ga(f,n)=\ti\ga(f,n+1)$ implies that $a^{(n+1)}=a^{(n+p+1)}.$
\end{remark}

\subsection{Forest-skein groups similar to $T,V,BV$}\label{sec:X-version}

Consider a Ore forest-skein category $\cF$ and let $\cF^X, \cF^Y_\infty$ be the associated $X$-forest-skein category and $Y$-forest-skein monoid, respectively, where $X=F,T,V,BV$ and $Y=F,V,BV$.
It is rather obvious that if $\cF$ is a Ore category, then so are $\cF^X$ and $\cF^Y_\infty$.
Hence, we can define the fraction groupoids  $\Frac(\cF^X)$ and the fraction groups $\Frac(\cF^X,1)$, $\Frac(\cF_\infty^Y).$
We often denote the groups as follows:
$$G^X=\Frac(\cF^X,1) \text{ and } H^Y=\Frac(\cF_\infty^Y)$$
removing the superscript when $X=F$ or $Y=F$.
A group isomorphic to $G^X$ is called a {\it $X$-forest-skein group}.
We also say that $G^X$ and $H^Y$ are the {\it $X$-version} of $G$ and {\it $Y$-version} of $H$, respectively.
An element of $G$ is of the form $t\circ s^{-1}$ with $t,s$ trees.
Now, an element of $\cF^V$ is of the form $f\circ \pi$ where $f$ is a forest and $\pi$ a permutation.
Hence, an element of $G^V$ is of the form $t\circ \pi \circ \sigma^{-1}\circ s$ with $t,s$ trees and $\pi,\sigma$ permutations.
Since $\tau:=\pi\circ\sigma^{-1}$ is itself a permutation we deduce that all elements of $G^V$ are described by {\it triples} rather than {\it quadruples}.
We may write $[t,\tau,s]$ or $[t\circ \tau,s]$ for $t\circ \tau\circ s^{-1}$.
Similarly, elements of $G^T$ and $G^{BV}$ can be described by an equivalence class of a triple $(t,\tau,s)$ with $\tau$ a cyclic permutation or a braid, respectively.
An easy adaptation of the proofs of Corollary \ref{cor:Finside} and Propostion \ref{prop:GHinclusion} provides the following.

\begin{corollary}
Consider $X\in\{F,T,V,BV\}, Y\in\{F,V,BV\},$ a Ore forest-skein category $\cF$ and the associated groups $G^X,H^Y$.
The group $G^X$ contains a copy of the group $X$.
Moreover, $G^Y$ and $H^Y$ embed in each other.
\end{corollary}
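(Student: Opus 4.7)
The plan is to adapt Corollary~\ref{cor:Finside} and Proposition~\ref{prop:GHinclusion} by carrying the permutation or braid decoration alongside the forest data throughout. Two observations make this routine. First, the $X$-version of a Ore forest-skein category remains Ore, since decorating leaves with invertible symmetries does not destroy left-cancellativity or Ore's property. Second, every element of $\cF^X$ (resp.~$\cF^X_\infty$) admits a unique factorization $f\circ \tau$ into a forest followed by a symmetry on its leaves, so tracking the forest part and the symmetry part separately is legitimate.

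For the first claim, I fix any colour $a$ of $\cF$. The colouring morphism $C_a\colon \cUF\to \cF$, $Y\mapsto Y_a$, of Corollary~\ref{cor:Finside} extends uniquely to a morphism $C_a^X\colon \cUF^X\to \cF^X$ by acting identically on the symmetry part of the decomposition $f\circ\tau$. Injectivity of $C_a$ together with uniqueness of the decomposition yields injectivity of $C_a^X$. By functoriality of the fraction construction applied to Ore $X$-forest-skein categories (Proposition~\ref{prop:morphisms} adapted to the $X$-setting), $C_a^X$ descends to an injective group morphism $X=\Frac(\cUF^X,1)\into \Frac(\cF^X,1)=G^X$.

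For the second claim, I mimic the two morphisms constructed in Proposition~\ref{prop:GHinclusion}. The inclusion $\eta_k^Y\colon G^Y\into H^Y$ sends a representative $[t,\tau,s]$ to $[I^{\ot k-1}\ot t\ot I^{\ot\infty},\ \tilde\tau,\ I^{\ot k-1}\ot s\ot I^{\ot\infty}]$, where $\tilde\tau$ is the trivial extension of $\tau$ to a finitely supported permutation or braid on $\N_{>0}$. The restriction $Y\neq T$ is essential here: finite cyclic groups do not form a directed system, while the $\Sigma_n$ and $B_n$ embed naturally into their finitely supported analogues on $\N_{>0}$. Conversely, $\gamma_a^Y\colon H^Y\into G^Y$ takes a representative $(x,y)$ whose forest and symmetry data are both supported in $\{1,\dots,n\}$ for $n$ sufficiently large and returns $[t_n\circ x|_n,\ t_{n+m}\circ y|_n]$, where $t_n$ denotes the monochromatic right $a$-vine from Proposition~\ref{prop:GHinclusion} and $m$ is the number of carets added. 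Independence of the choice of $n$ follows because increasing $n$ by one corresponds to tensoring on the right by $I$ and extending the symmetry by a fixed point (or an identity strand in the braided case), which is precisely the equivalence relation on $H^Y$.

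The main obstacle is verifying that $\gamma_a^Y$ is a group homomorphism: composition in $H^Y$ invokes the Brin--Zappa--Sz\'ep rewriting $\tau\circ f=f^\tau\circ \tau^f$, and one must check that this rewriting is respected by the truncation-and-vine operation. This reduces to the observation that $t_n$ is monochromatic and sits entirely on the rightmost branch, so it interacts trivially with the symmetry data; thus after composing representatives and rewriting, everything reduces to the $Y=F$ case already settled in Proposition~\ref{prop:GHinclusion}.
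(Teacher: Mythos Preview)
Your approach is exactly what the paper intends: it states only that the result follows from ``an easy adaptation of the proofs of Corollary~\ref{cor:Finside} and Proposition~\ref{prop:GHinclusion}'', and you have carried out precisely that adaptation by extending $C_a$, $\eta_k$, and $\gamma_a$ to act identically on the symmetry component of the factorization $f\circ\tau$.

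One notational issue: your explicit formula $[t_n\circ x|_n,\ t_{n+m}\circ y|_n]$ for $\gamma_a^Y$ does not parse, since $t_{n+m}$ has $n+m$ leaves while $y|_n$ has $n$ roots, and in any case a representative pair $(x,y)$ need not have the same caret count. It is cleaner to follow Proposition~\ref{prop:GHinclusion} literally and define $\gamma_a^Y$ first on the monoid $\cF_\infty^Y$ via $f\sigma\mapsto [t_n\circ f|_n\circ\sigma|_{n+m},\, t_{n+m}]$ (with $m$ the number of carets of $f$), then extend to $H^Y$ by the universal property of fractions. With this correction your verification of the Brin--Zappa--Sz\'ep compatibility goes through: the vine $t_n$ sits below the forest and symmetry data, so the rewriting $\sigma|_n\circ f|_n = (f^\sigma)|_n\circ(\sigma^f)|_{n+m}$ happening above it is undisturbed.
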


\subsection{Examples of forest-skein groups}\label{sec:example}

We provide examples of skein presentations that define Ore forest-skein categories and thus forest-skein groups. 
We anticipate definitions and results from the next sections and the next article. 
In particular, explicit presentations of forest-skein groups given in Theorem \ref{theo:groupG-presentation}. %We use often the notion of CGP introduced in Section \ref{sec:CGP}.
If $\cF$ is a Ore forest-skein category, then we use the notations $G^X:=\Frac(\cF^X,1)$ and $H^Y:=\Frac(\cF_\infty^Y)$ for $X=F,T,V,BV, Y=F,V,BV$.
We write $G^{ab}$ for the abelianisation $G/[G,G]=G/G'$ of the group $G$.

\subsubsection{Monochromatic forest-skein categories}
Consider a monochromatic forest-skein category $\cF$. It admits a skein presentation of the form $(S,R)$ where $S=\{x\}$ is a singleton and $R$ is a set of pairs of trees.
Now, $\cF$ is left-cancellative if and only if $R$ is empty. 
In that case $\cF$ is the monochromatic free forest-skein category that is a Ore category.
We obtain that $G\simeq H\simeq F$ and moreover $G^X\simeq X$ for $X=T,V,BV$.
In particular, $G^{BV}$ is Brin's braided Thompson group $BV$ and $H^{BV}$ corresponds to the group $\wh{BV}$ considered by Brin in \cite{Brin-BV1}.

\subsubsection{Ternary Thompson group $F_{3,1}$}
Consider the presented forest-skein category
$$\cF=\FC\la a,b| a_1b_2=b_1a_1\ra.$$
Graphically we may use the following diagrams for the $a$-caret and $b$-caret which provides an intuitive skein relation:
\[\includegraphics{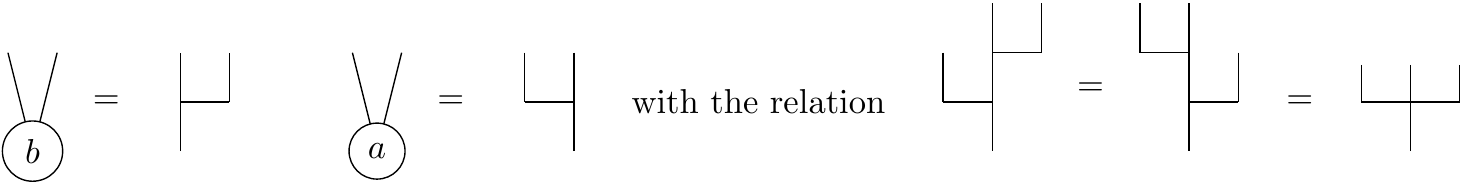}\]
Since there are two colour and a single relation with words of length two we automatically obtain that it is left-cancellative by Corollary \ref{cor:presentation-free} and satisfies Ore's property by Corollary \ref{cor:Ore-FC}. Moreover, one can show it satisfies the CGP.
Hence, $G$ and $H$ exist and are isomorphic.
Moreover, they admit the following infinite group presentation with generators:
$$\{a_j,b_j:\ j\geq 1\}$$
and relations
\begin{itemize}
\item $x_q y_j = y_j x_{q+1}$ for all $x,y=a,b$ and $1\leq j<q$;
\item $a_j b_{j+1} = b_j a_j$ for all $j\geq 1.$
\end{itemize}
We are going to show that $H$ and $G$ are in fact isomorphic to the ternary Higman-Thompson group $F_3=F_{3,1}$ obtained from the monochromatic free {\it ternary} forest-skein category $\cF_3$. 
We do it by renaming the generators $a_j,b_j$ of $H$ and observing that they satisfy the relation of a well-known presentation of $F_{3,\infty}:=\Frac((\cF_3)_\infty)$ itself isomorphic to $F_3:=\Frac(\cF_3,1)$.
Indeed, define 
$$\begin{cases} z_{2n}=b_n\\
z_{2n-1} = a_n
\end{cases} \text{ for all } n\geq 1.$$
Now, substitute $z$ in the presentation of $H$. 
For instance, the Thompson-like relation $a_q b_j=b_j a_{q+1}$ becomes $z_{2q-1} z_{2j} = z_{2j} z_{2q+1}$ and the translated skein relation $b_j a_j=a_j b_{j+1}$ becomes $z_{2j} z_{2j-1}=z_{2j-1} z_{2j+2}$ for $1\leq j<q.$
We deduce the following new presentation of $H$:
$$\Gr\langle z_n, n\geq 1 | z_q z_j = z_j z_{q+2}, \ 1\leq j<q\rangle.$$
Consider now the forest-skein monoid $F_{3,\infty}^+:=(\cF_3)_\infty$ of ternary forests. 
Observe that if $t_j$ is the elementary forest of $F_{3,\infty}^+$ that has a single ternary-caret at the $j$th root, then we have the relations $t_qt_j=t_j t_{q+2}$ for all $1\leq j<q$ and in fact this provides a monoid presentation of $F_{3,\infty}^+$ and thus a group presentation of its fraction group denoted $F_{3,\infty}$ by Brown.
Now, sending $z_j$ to $t_j$ provides a monoid isomorphism from $\cF_\infty$ to $F_{3,\infty}^+$ inducing a group isomorphism from $H=\Frac(\cF_\infty)$ to $F_{3,\infty}.$

Note that $G\simeq H\simeq F_{3}$ admits two embedding of $F$ given by the two colours $a$ and $b$, see Corollary \ref{cor:Finside}.
Moreover, by sending a monochromatic ternary caret to $a_1b_2$ we obtain an embedding of $F_3$ in itself which is not the identity.
Finally, the $T,V,$ and $BV$-versions of $G$ are not (or at least not isomorphic in an obvious way) the usual Higman-Thompson groups $T_{3,1}, V_{3,1}$  nor the ternary Brin braided Thompson group $BV_{3}$.

\subsubsection{Cleary irrational-slope Thompson group}
Consider the presented forest-skein category
$$\cF:=\FC\la a,b| a_1a_1=b_1b_2\ra$$
with two generators $a,b$ and one relation with words of length two.
Using Corollaries \ref{cor:presentation-free} and \ref{cor:Ore-FC} or Theorem \ref{theo:class-example} we have that $\cF$ is a Ore category.
Moreover, it satisfies the CGP (see Section \ref{sec:CGP}).
Hence, $G\simeq H$ and is isomorphic to the Cleary irrational-slope Thompson group further studied by Burillo, Nucinkis, and Reeves \cite{Cleary00,Burillo-Nucinkis-Reeves21}.
It is the group of piecewise affine homeomorphisms of the unit interval with finitely many breakpoints and all slopes powers of the golden number $(\sqrt 5 -1 )/2.$
By Theorem \ref{theo:groupG-presentation} it admits the infinite group presentation
$$\Gr\la a_j,b_j, j\geq 1| x_qy_j=y_j x_{q+1}, \ a_ia_i = b_ib_{i+1}, 1\leq j<q, i\geq 1\ra$$
and the finite group presentation with generators
$$\{a_1,b_1,a_2,b_2\}$$
and relations
\begin{itemize}
\item $[a_1^{-1}x_i, a_1^{-j} y_2 a_1^j]=e$ for $x,y\in\{a,b\}, i,j\in\{1,2\}$ with $(x,i)\neq (a,1)$;
\item  $a_1a_1=b_1b_2$;
\item $a_2a_2=b_2a_1^{-1}b_2a_1$.
\end{itemize}
From there we deduce that $G^{ab}\simeq \Z\oplus\Z\oplus\Z/2\Z.$
Similarly, the larger groups $G^T$ and $G^V$ are the $T$ and $V$ versions of Cleary's group introduced by Burillo-Nucinkis-Reeves in \cite{Burillo-Nucinkis-Reeves22}.
Cleary proved that $G$ is of type $F_\infty$ and his proof can be extended without troubles to prove that $G^T$ and $G^V$ are of type $F_\infty$. 
The forest-skein category $\cF$ is an example of a Ore forest-skein category with a finite spine equal to $\{a_1,b_1, a_1a_1\}$, see Definition \ref{def:spine}.
Hence, by Theorem \ref{theo:Finfty} the group $G$ is of type $F_\infty$ and so are $G^T,G^V$ but also $G^{BV}.$
Hence, we obtain the result that the braided version of $G$ is of type $F_\infty$ as well.

%Generalised Cleary
The nice thing of our formalism is that we can easily generalise these results to similar forest-skein groups.
For instance, consider 
$$\cF_n=\FC\la a,b| a_1^n=b_1b_2\cdots b_n\ra \text{ for } n\geq 1$$
obtained by considering a left $a$-vine and right $b$-vine both having $n$ interior vertices.
Here is the skein relation of $\cF_3$:
\[\includegraphics{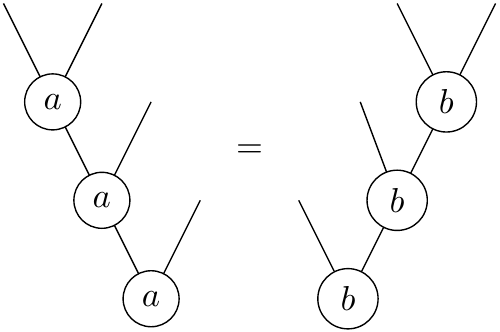}\]
By Theorems \ref{theo:class-example} this is a Ore forest-skein category producing the forest-skein group $G_n:=\Frac(\cF_n,1)$ that is of type $F_\infty$ (the spine is equal to $\{a_1,b_1, a_1^n\}$).
Moreover, it satisfies the CGP and thus $G_n\simeq H_n:=\Frac((\cF_n)_\infty).$
Similar results hold for its $T,V,BV$ versions.
The group $G_n$ admits the finite group presentation  with generators
$$\{a_1,b_1,a_2,b_2\}$$
and relations
\begin{itemize}
\item $[a_1^{-1}x_i, a_1^{-j} y_2 a_1^j]=e$ for $x,y\in\{a,b\}, i,j\in\{1,2\}$ with $(x,i)\neq (a,1)$;
\item  $a_1^n=b_1b_2(a_1^{-1}b_2a_1) (a_1^{-2} b_2 a_1^2)\cdots (a_1^{3-n} b_2 a_1^{n-3})$;
\item $a_2^n=b_2(a_1^{-1}b_2a_1) (a_1^{-2} b_2 a_1^2)\cdots (a_1^{2-n} b_2 a_1^{n-2})$.
\end{itemize}
Moreover, $$G_n^{ab}\simeq \Z\oplus\Z\oplus\Z/n\Z$$ (with generators the images of $a_1,b_1, a_2b_2^{-1}$)
implying in particular that the $G_n$ are pairwise non-isomorphic.

One can generalise in many ways this construction.
Using again Theorem \ref{theo:class-example} we can replace the two vines $a_1^n$ and $b_1\cdots b_n$ by {\it any} pair of {\it monochromatic} trees obtaining the group $G_{(t,s)}$ considered in Section \ref{sec:two-colours}.
Furthermore, we can consider any number of colours and even infinitely many.

\subsubsection{Brin's higher dimensional Thompson groups}
Brin's groups $dV$ with $d\geq 1$ are higher dimensional generalisation of Thompson's group $V$ so that $1V=V$ and $dV$ acts by piecewise affine maps on the hypercube $[0,1]^d$.
An element of $dV$ is characterised by two standard dyadic partitions of $[0,1]^d$: $P=\{R_1,\cdots,R_k\}$ and $P'=\{R_1',\cdots,R_k'\}$ having the same number of pieces and one bijection $\beta$ between the elements of $P$ and $P'$.
The pieces $R_j$ are higher dimensional rectangle obtained by iteratively cutting the unit hypercube into two equal pieces along one axis.
We now describe a description of elements of $dV$ using trees given by Brin and further exploit by Burillo and Cleary \cite{Brin-dV1,Burillo-Cleary10}.
We can encode the data of $P,P'$ as a pair of trees with $d$ possible colours of carets, each colour corresponding to one axis. 
Now, if $a,b$ are two colours, then the two operations $Y_a(Y_b\ot Y_b)$ and $Y_b(Y_a\ot Y_a)$ provide the same result of cutting the hypercube into four using two different axis.
We define the forest-skein category $\cF$ with $d$ colours and relations $a_1(b_1b_3)=b_1(a_1a_3)$ for all pairs of colours $a,b$.
We obtain a Ore forest-skein category. Although, its fraction group is not Brin's group $dV$ but share similar metric properties. 
What makes it different is that the skein relations are not compatible with bijections $\beta$, see \cite[Section 1]{Burillo-Cleary10} for details.

Note that the {\it decolouring map} $$D:\cF\to \cUF, Y_a\mapsto Y$$ consisting in removing all colours is a well-defined and surjective morphism from $\cF$ to the monochromatic free forest-skein category $\cUF$.
It induces a surjective group morphism $G\onto F$ which admits a section. 
Hence, the group $G$ is a nontrivial split extension of $F$ when $d\geq 2$ (and similarly for the $T,V,BV$-versions).
Since all Brin's groups $dV$ are simple (as proved by Brin in \cite{Brin-dV1} for $d=2$ and in the general case in \cite{Brin-dV3}) we deduce that they are not isomorphic to our forest-skein groups.

\subsubsection{An example not satisfying the CGP}
Consider the presented forest-skein category
$$\cF=\FC\la a,b| b_1a_1b_3=a_1a_2a_3\ra.$$
The skein presentation has two colours and one relation implying it is complemented with associated monoid presentation complete.
In particular, $\cF$ is left-cancellative.
Moreover, one can prove it satisfies Ore's property. It is proved by showing that every tree can be grown into a monochromatic tree of colour $a$.
Hence, $\cF$ is a Ore category producing the groups $G$ and $H$.
The group $G$ admits the following infinite presentation: generating set $\{a_j,b_j,\hat b_j:\ j\geq 1\}$ and relations:
$$\begin{cases}
 x_{q} y_{j} = y_{j} x_{q+1} \\
\hat b_{q} y_{j} = y_{j} \hat b_{q+1} \\
b_j b_{j+1} a_j = a_j a_{j+1} a_{j+2}\\
\hat b_j \hat b_{j+1} a_j = e
\end{cases}$$
for all $1\leq j<q, x,y\in\{a,b\}$.
Note that this presentation is not homogeneous.
Using the two first relations we can observe that $G$ is generated by $\{a_1,a_2, b_1,b_2, \hat b_1,\hat b_2\}$.
Moreover, the last relation shows that we may remove $\hat b_1$ for the set of above and still generates $G$.

A presentation of $H$ is deduced by removing all generators with a hat and relations containing them.
We obtain the presentation of $H$ with generating set 
$\{a_j,b_j:\ j\geq 1\}$ and relations:
$$\begin{cases}
x_{q} y_{j} = y_{j} x_{q+1} \text{ for all } 1\leq j<q , x,y\in\{a,b\} \\
b_j b_{j+1} a_j = a_j a_{j+1} a_{j+2}
\end{cases}.$$
This forest-skein category does not satisfy the CGP and thus the embedding $\ga_a:H\to G$ of Proposition \ref{prop:GHinclusion} is not surjective.

\subsubsection{A second example not satisfying the CGP}\label{sec:example-no-CGP}
We consider one example that is a member of the family of forest-skein categories studied in Section \ref{sec:two-colours}.
The presented forest-skein category is:
$$\cF:=\FC\la a,b | a_1a_3a_1 = b_1b_3b_1\ra.$$
By Theorem \ref{theo:class-example} it is a Ore category and thus admits a forest-skein group $G=\Frac(\cF,1)$.
If we follow the notation of Section \ref{sec:two-colours} we may write it $G_{(t,t)}$ where $t$ is the complete (monochromatic) binary tree with 4 leaves all at distance 2 from the root.
The group $G$ admits the following infinite presentation with generator set $\{a_j,b_j,\hat b_j:\ j\geq 1\}$ and relations:
$$\begin{cases}
 x_{q} y_{j} = y_{j} x_{q+1} \\
\hat b_{q} y_{j} = y_{j} \hat b_{q+1} \\
b_j b_{j+2} b_j = a_ja_{j+2} a_j\\
\hat b_j \hat b_{j+2} b_j = a_j
\end{cases}$$
for all $1\leq j<q, x,y\in\{a,b\}$.
Once again this forest-skein category does not satisfy the CGP. 
We observe that $G$ is generated by the set $\{a_1,a_2,b_1,b_2, \hat b_3\}$.

\subsubsection{Monoids give forest-skein groups}
In the future article \cite{Brothier22-HPM} we will explain how certain monoid gives forest-skein groups and study them.
We provide here two examples of those that have interesting finiteness properties.

Consider 
$$\cF=\FC\la a,b| a_1a_1=b_1b_1, a_1b_1=b_1a_1\ra.$$
This is a Ore forest-skein category that is an extension of $F$. 
It has infinite spine but nevertheless is of type $F_\infty$. 
Moreover, one can prove that $G$ is isomorphic to the wreath product $\Z/2\Z\wr_Q F=\oplus_{Q}\Z/2\Z\rtimes F$ associated to the action $F\act Q$ that is the classical action of $F$ on the set of dyadic rationals of the unit torus.
This group and its abelianisation have torsion and moreover have a nontrivial center isomorphic to $\Z/2\Z.$
Similarly, $G^T$ and $G^V$ are isomorphic to $\Z/2\Z\wr_Q T$ and $\Z/2\Z\wr_Q V$ that are not simple nor perfect.

Consider the skein presentation with colours $\{x_i:\ i\geq 1\}$ and relations
$$(x_k)_j (x_i)_j = (x_i)_j(x_{k+1})_j \text{ for all } 1\leq i<k \text{ and } i\geq 1.$$
It is constructed from the Thompson monoid.
Its associated forest-skein category $\cF$ is a Ore category. Since $\cF(2,1)$ is infinite we have that $\cF$ does not admit any skein presentation with finitely many colours. 
Nevertheless, its forest-skein group is finitely presented and in fact of type $F_\infty$ as we will prove it in \cite{Brothier22-HPM}.
%%%%%%%%%%%%FRACTION-GROUPS-END%%%%%%%%%%%%%%%%%%%%%%%%%%%
%%%%%%%%%%%%%%%%%%%%%%%%%%%%%%%%%%%%%%%%%%%%%%%

%%%%%%%%%%%%%%%%%%%%%%%%%%%%%%%%%%%%%%%%%%%%%%%
%%%%%%%%%%%%PRESENTATION%%%%%%%%%%%%%%%%%%%%%%%%%%%
\section{Presentations of forest-skein groups}\label{sec:presentation}

%Notations+assumptions
In all this section we consider $P:=(S,R)$ a skein presentation of a forest-skein category $\cF$.
Let $\cF_\infty$ be the associated forest-skein monoid.
Assume that $\cF$ is left-cancellative and satisfies Ore's property. 
This implies that both $\cF$ and $\cF_\infty$ admit a right-calculus of fractions giving a fraction groupoid $\Frac(\cF)$ and a fraction group $\Frac(\cF_\infty)$.
Consider the fraction groups $G:=\Frac(\cF,1)$ and $H:=\Frac(\cF_\infty)$.

%Aim and plan
In this section we introduce a contractible simplicial complex $E\cF$ admitting a free simplicial action $G\act E\cF$.
In particular, the space of orbits $B\cF:=G\bs E\cF$ is a classifying space of $G$.
From $B\cF$ we deduce an explicit infinite group presentation of $G$ in terms $P$ and described a reduce one
$\Gr\la \sigma|\rho\ra$ of $G$.
By identifying $H$ as a subgroup of $G$, we obtain a presentation $\Gr\la \sigma_0|\rho_0\ra$ of $H$ by taking specific subsets $\sigma_0\subset \sigma$ and $\rho_0\subset \rho.$
We prove that when $\cF$ admits a skein relation with finitely many colours (resp.~finitely many colours and skein relations), then the groups $G$ and $H$ are finitely generated (resp.~presented) and we give upper bounds for the number of their generators and relations.

\subsection{A complex associated to each forest-skein category}\label{sec:complex}

%{\bf Associated structure.}
Recall that $\cF$ is a category with object $\N$ and a forest $f$ is a morphism with origin $\omega(f)=|\Leaf(f)|$ and target $\tau(f)=|\Root(f)|$.
We extend these origin and target maps to the groupoid $\Frac(\cF)$.
Hence, $[f,g]=f\circ g^{-1}$ has for origin the number of roots of $g$ and target the number of roots of $f$.

{\bf A $G$-space.}
Since $G\subset \Frac(\cF)$ we can consider the restriction of the multiplication of $\Frac(\cF)$ to composable pairs of $G\times \Frac(\cF)$.
This provides an action 
$$G\times Q\to Q, (g,x)\mapsto g\circ x$$
where $$Q=\{[t,f]:\ t \text{ tree } f \text{ forest }, \ |\Leaf(t)|=|\Leaf(f)|\} = \{ x\in \Frac(\cF):\ \tau(x)=1\}.$$

{\bf A $G$-directed set.}
Now, an element of $Q$ can be multiplied to the right by an element of $\Frac(\cF)$ but if we want to preserve $Q$ we must multiply  by elements of $\cF$.
Hence, we consider
$$\Frac(\cF,1) \act Q \curvearrowleft \cF.$$
The right action $Q\curvearrowleft \cF$ defines a partial order:
$$x\leq x\circ f,\ x\in Q, f\in\cF.$$
We obtain a partially ordered set (in short poset) $(Q,\leq)$. 
Since the left and right actions of any subsets of $\Frac(\cF)$ mutually commute we have that $\leq$ is invariant under the action of $G$. 
Moreover, the poset $(Q,\leq)$ is directed (i.e.~two elements of $Q$ admits an upper bound) since $\cF$ satisfies Ore's property.

{\bf A contractible $G$-free simplicial complex.}
Let $\Delta(Q)$ be the ordered complex deduced from $(Q,\leq)$.
It is the abstract simplicial complex with vertices $Q$ and $k$-simplices (for $k\geq 1$) the strict chain $x_0<\cdots<x_k$ whose faces are the sub-chains.
Write $E\cF:=|\Delta(Q)|$ for the geometric realisation of $\Delta(Q)$ which is a simplicial complex with $k$-simplices $|x_0<\cdots<x_k|$ that we may identify with $\Delta(Q)$ if the context is clear.

Since $G\act Q$ is free and order-preserving it induces a free simplicial action $G\act E\cF$.
Moreover, since $Q$ is directed we deduce that $E\cF$ is contractible.
Therefore, the quotient $B\cF:= G\backslash E\cF$ is a classifying space for $G$, i.e.~$B\cF$ is a path-connected CW-complex, $\pi_1(B\cF,p)\simeq G$, and $\pi_n(B\cF,p)=\{e\}$ for all $n\geq 2$ where $p$ is a point of $B\cF$.
The projection map $E\cF\to B\cF$ is a fibre bundle and since $E\cF$ is simply connected (since it is contractible) $E\cF$ is a universal cover of $B\cF$.
A $k$-cell of $B\cF$ is an orbit of the form $G\cdot |x_0<\cdots<x_k|.$

\begin{remark}
Note that the constructions of $E\cF$ and $B\cF$ are canonical and thus do not depend on any choice of skein presentation of $\cF$. 
Moreover, these constructions are functorial.
\end{remark}

\subsection{An infinite group presentation}

Since $G\act E\cF$ is a free action on a contractible space we have that the Poincar\'e group $\pi_1(B\cF,p)$ at any point of $B\cF:=G\bs E\cF$ is isomorphic to $G$.
Moreover, $G\simeq \pi_1(B\cF,p)=\pi_1(B\cF^{(2)},p)=\pi_1(G\bs E\cF^{(2)},p)$ where $B\cF^{(2)}$ stands for the 2-skeleton of $B\cF$ (the subcomplex of $B\cF$ obtained by removing all $k$-cells for $k\geq 3$).
From there we are able to extract informations on group presentations of $G$ in terms of the simplicial structure of $E\cF^{(2)}.$
We start by providing a group presentation of $G$ in terms of the skein presentation $P=(S,R)$ of $\cF$.

{\bf Orientation.} 
Equip the simplicial complex $E\cF=|\Delta(Q)|$ with the orientation deduced from the partial order $\leq$ of $Q$.
Hence, a 1-simplex $|x<y|$ of $E\cF$ is now interpreted as a directed edge starting at $x$ and ending at $y$.

{\bf A subtree of $E\cF$.}
We now construct a subtree of $E\cF$ whose image in $B\cF$ is a maximal subtree.
Here, a tree in $E\cF$ means a path-connect subcomplex contained in the 1-skeleton of $E\cF$ that does not contain any cycle.
The reader should not confuse such a tree in $E\cF$ and a tree of the forest-skein category $\cF$.

{\it Vertices.}
Fix a colour $a\in S$ and define by induction a sequence of trees $t_n\in\cF$ with $n$ leaves and monochromatic in $a$ such that 
$$t_1=I \text{ and } t_{n+1}= t_n\circ a_{n,n} \text{ for } n\geq 1$$
so that $t_{n+1}=a_{1,1}a_{2,2}\cdots a_{n,n}.$
The tree $t_n$ corresponds graphically to a long right branch that we call a {\it right vine of colour $a$} or a {\it right $a$-vine} following the terminology of Belk \cite[Section 1.3]{Belk-PhD}.
They previously appear in the proof of Proposition \ref{prop:GHinclusion}.
Note that $t_n$ is an element of $Q$ and thus a vertex of the complex $E\cF:=|\Delta(Q)|.$

{\it Edges.}
For each $n\geq 1$ note that $t_n<t_{n+1}$ and thus we have a directed edge $|t_n<t_{n+1}|$ of $E\cF$ going from $t_n$ to $t_{n+1}$. 
Let $T_a$ be the directed graph inside $E\cF$ with vertex set 
$$V(T_a):=\{t_n:\ n\geq 1\}$$ and edge set $$E(T_a)=\{ |t_n<t_{n+1}|:\ n\geq 1\}.$$
Graphically $T_a$ is a single infinite ray.

{\bf A maximal subtree of $B\cF$.}
Consider the quotient map $$q:E\cF\to B\cF,\ \sigma\mapsto G\cdot\sigma.$$
Recall that we have the origin-map 
$$\omega:\Frac(\cF)\to\N, \ [f,g]\mapsto |\Root(g)|$$ 
and observe that $\omega$ is $G$-invariant.
Moreover, if $x,y\in Q$ and $\omega(x)=\omega(y)$, then there exists $g\in G$ satisfying $y=g\cdot x.$
Indeed, one can simply take $g:= y\circ x^{-1}$ which is in $\Frac(\cF,1)=G$.
This two facts implies that $\omega$ factorises into a bijection:
$$\omega:G\bs P\to \N_{>0}.$$
Since $\omega(t_n)=n$ for all $n\geq 1$ we deduce that $\{t_n:\ n\geq 1\}$ is a set of representatives of the vertices of $B\cF$, i.e.~the quotient map $E\cF\to B\cF$ restricts into a bijection from $V(T_a)$ to $B\cF^{(0)}$.
This implies that $q(T_a)$ is a maximal subtree of $B\cF$ containing all vertices of $B\cF$.

\begin{remark}
Note that $(t_n:\ n\geq 1)$ is an increasing sequence of trees such that $t_n$ has $n$ leaves.
We could make the same construction with any such sequence which will provide a different presentation of the group $G$.
However, the specific choice of $t_{n+1}=a_{1,1}\cdots a_{n,n}$ seems to be the most practical to work with. 
\end{remark}

{\bf Group presentation of $G$.}
We have found a classifying space $B\cF$ of $G$ and thus its Poincar\'e group is isomorphic to $G$. 
Moreover, $q(T_a)$ is a maximal subtree of $B\cF$. 
By classical theory (see for instance \cite[Chapter 4]{Stillwell-book} or \cite[Theorem 3.1.16]{Geoghegan-book}) a presentation of $G$ is obtained by taking the free group over the oriented edges of $B\cF$ that we mod out by the boundary of each 2-cell of $B\cF$ and by the edges of $q(T_a)$.
Let us describe precisely this presentation in our context.

{\bf Naive description.}
Consider a nontrivial forest $f\in\cF$ and write $\ell,r$ for its number of leaves and roots, respectively.
We have an edge $|t_r<t_rf|$ in $E\cF$ whose image by the quotient map $q:E\cF\to B\cF$ connects $q(t_r)$ and $q(t_\ell)$.
Therefore, any nontrivial forest $f$ defines an edge in $B\cF$ and conversely any edge of $B\cF$ is of this form.
We obtain a surjective map
$$\cF^*\onto \text{Edge}(B\cF),\ f\mapsto q(|t_r<t_rf|)$$
where $\cF^*$ is the set of nontrivial forests.

Let $\Gr\langle\cF\rangle$ be the free group over the set of forests $\cF$ writing $\bar f$ the generator of $\Gr\la \cF\ra$ associated to $f\in\cF$.
Let $\pi$ be the quotient of $\Gr\la\cF\ra$ by the relations: 
\begin{enumerate}
\item $\bar f =e$ if $f$ is a trivial forest (i.e.~$f$ has the same number of leaves and roots);\\
\item $\bar f =e$ if the edge $q(|t_r<t_rf|)$ associated to $f$ is in $q(T_a)$;\\
\item $\bar f \cdot \bar f'=\bar f''$ if $f,f',f''$ correspond to the edges equal to the boundary of a 2-cell of $B\cF$ so that $\bar f,\bar f''$ have same start and $\bar f',\bar f''$ have same end.
\end{enumerate}
From the discussion of above we obtain that $\pi\simeq \pi_1(B\cF,p)\simeq G$.

{\bf More refined description.}
Let us better interpret the relations of above using the structure of $\cF$. 
The second item is equivalent to $G\cdot |t_r\leq t_r f| = G\cdot | t_r\leq t_\ell |$. 
Which means $f=t_r^{-1}\circ t_\ell$ that is a product of some $a_{m,m}$.
Item two is thus equivalent to
$$\bar a_{n,n}=e \text{ for all } n\geq 1.$$
For the third item: consider a 2-cell of $B\cF$. It is of the form $G\cdot |t_r<t_r f<t_rff'|$ for some $r\geq 1$ and forests $f,f'.$
Now, $G\cdot |t_r<t_rf|$ corresponds to $\bar f$ and $G\cdot |t_rf< t_rff'|$ to $\bar{f'}.$
The third item is equivalent to
$$\bar f \cdot \bar f' = \ov{f\circ f'} \text{ for $(f,f')$ a pair of composable nontrivial forests.}$$
Finally, if we allow $f$ or $f'$ to be the trivial forests in the expression of above we deduce the first item.
We deduce a group presentation of $G$ and relate it with the morphisms $\ga_a:H\to G(a)\subset G$ of Proposition \ref{prop:GHinclusion}.

\begin{proposition}\label{prop:presentation-G}
Let $\cF$ be a Ore forest-skein category, $a$ in the colour set of $\cF$, $G:=\Frac(\cF,1)$ the fraction group of $\cF$, $B\cF$ the classifying space of $\cF$ constructed as above, and $p$ a point of $B\cF$.
We have the following isomorphisms:
\begin{equation}\label{eq:presentation-pi}G\simeq \pi:=\pi_1(B\cF,p)\simeq \Gr\langle \cF | \ \bar{a_{n,n}}=e,\ \bar f \cdot \bar f' = \ov{f\circ f'}\rangle\end{equation}
where $f\mapsto \bar f$ denote the canonical embedding from $\cF$ into the free group over the set $\cF$, and where the relations of above stand for each $n\geq 1$, and $(f,f')$ composable pairs of forests.
The group morphism
$$\theta_0:\Gr\la\cF\ra\to G,\ \ov f\mapsto t_r \circ f \circ t_\ell^{-1} \text{ where } f\in\cF(r,\ell)$$
factorises into a group isomorphism $\theta$ from the presented group of above and $G$. 
Moreover, the set $\theta(\cF\ot I)$ of all $\theta(\bar f)$ with $f$ a forest with last tree trivial generates the group $G(a)$.
\end{proposition}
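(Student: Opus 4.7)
The first isomorphism $G \simeq \pi_1(B\cF, p)$ is immediate from Section \ref{sec:complex}: $E\cF$ is contractible and $G$ acts freely simplicially on it, so $B\cF = G\bs E\cF$ is a $K(G,1)$. For the second isomorphism I would apply the standard presentation of the fundamental group of a CW-complex from its 2-skeleton together with a maximal subtree: generators come from oriented edges, and relators from boundaries of 2-cells plus the identification of subtree edges with the identity. The informal derivation preceding the statement already identifies the edges of $B\cF$ with nontrivial forests $f \in \cF$ via $f \mapsto q(|t_r < t_r f|)$, identifies the edges of the subtree $q(T_a)$ with the forests $a_{n,n}$, and identifies 2-cells of $B\cF$ with triples of composable forests $(f, f', f\circ f')$. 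Assembling these observations yields the presentation \eqref{eq:presentation-pi}.

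\smallskip

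Next I would verify that $\theta_0$ kills the defining relations, hence factorises through $\pi$. The relation $\ov{a_{n,n}} = e$ disappears because $t_{n+1} = t_n \circ a_{n,n}$ by construction of the right $a$-vine, and the multiplication relation is a telescoping computation
\[
\theta_0(\bar f)\,\theta_0(\bar{f'}) = (t_r \circ f\circ t_\ell^{-1})(t_\ell \circ f'\circ t_{\ell'}^{-1}) = t_r \circ (f\circ f')\circ t_{\ell'}^{-1}.
\]
This yields the map $\theta: \pi \to G$. To conclude that $\theta$ is an isomorphism, I would identify it with the canonical isomorphism $\pi_1(B\cF, p) \to G$ obtained by lifting loops to the universal cover $E\cF$: the class of $\bar f$ is represented by the loop running up $q(T_a)$ to $q(t_r)$, crossing $q(|t_r < t_r f|)$ to $q(t_\ell)$, then returning down $q(T_a)$, whose unique lift starting at $t_1 \in E\cF$ ends at the translate $(t_r \circ f \circ t_\ell^{-1}) \cdot t_1$, matching the formula defining $\theta_0$.

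\smallskip

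For the final claim I would use that by Definition \ref{def:CGP} we have $G(a) = \gamma_a(H)$, together with the fact that $H$ is generated as a monoid (and thus as a group) by the elementary infinite forests $b_j$ for $b \in S$ and $j \geq 1$. Choosing any $n > j$, a direct computation from Proposition \ref{prop:GHinclusion} gives
\[
\gamma_a(b_j) = t_n \circ \bigl(I^{\ot j-1} \ot Y_b \ot I^{\ot n-j}\bigr) \circ t_{n+1}^{-1} = \theta\bigl(\ov{I^{\ot j-1}\ot Y_b\ot I^{\ot n-j}}\bigr),
\]
and since $n > j$ this forest has trivial last tree, placing $\gamma_a(b_j)$ inside $\theta(\cF\ot I)$. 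Conversely, for any $f\in\cF(r,\ell)$ with last tree trivial, extending $f$ by trivial trees to an infinite forest $\tilde f \in \cF_\infty$ supported in $\{1,\dots,r-1\}$ gives $\gamma_a(\tilde f) = t_r \circ f \circ t_\ell^{-1} = \theta(\bar f) \in G(a)$. The two inclusions then show that the subgroup generated by $\theta(\cF\ot I)$ is exactly $G(a)$. The main point of care in the whole argument is the rigorous translation of the informal identification of 2-cells of $B\cF$ into the precise multiplication relation of \eqref{eq:presentation-pi}; once that is in hand everything else is routine bookkeeping.
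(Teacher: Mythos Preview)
Your proposal is correct and follows essentially the same route as the paper: the proof of this proposition is given in the discussion immediately preceding its statement, and you have reconstructed that derivation faithfully (the CW-presentation of $\pi_1$ via edges and 2-cells relative to the maximal subtree $q(T_a)$, the identification of edges with nontrivial forests and of subtree edges with the $a_{n,n}$, and the refinement of the three relation types into the two displayed ones). Your additional verifications --- that $\theta_0$ kills the relations by the telescoping computation, that $\theta$ agrees with the deck-transformation isomorphism via path lifting, and the explicit two-way inclusion for the $G(a)$ claim via $\gamma_a$ --- are natural fleshings-out of steps the paper leaves implicit, and they are all correct.
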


\subsection{Practical infinite and reduced group presentations}\label{sec:practical-presentation}
We now fix a skein presentation $(S,R)$ of $\cF$ and express presentations of $G$ and $H$ using the elementary forests.
Note that we freely identify $G$ and $\pi$ via the isomorphism $\theta$ of the last proposition.
Using Propositions \ref{prop:universal-category} and \ref{prop:presentation-G} we deduce that $G$ admits the presentation with generating set 
$$\{ \ov{b_{j,n}}:\ b\in S, 1\leq j\leq n\}$$
and set of relations:
\begin{enumerate}
\item $\ov{a_{n,n}}=e$ for all $n\geq 1$;
\item $\ov{b_{q,n}}\circ \ov{a_{j,n+1}} =  \ov{a_{j,n}}\circ \ov{b_{q+1,n+1}}$ for all  $a,b\in S$ and $1\leq j <q\leq n$;
\item $\ov{U_{j,n}}=\ov{U'_{j,n}}$ for all skein relations $(u,u')\in R$ and $1\leq j\leq n$
\end{enumerate}
where $U_{j,n}$ is a word in the elementary forest expressing the forest $u_{j,n}:= I^{\ot j-1}\ot u\ot I^{\ot n-j}$ and where $\ov{U_{j,n}}$ denotes its image in the free group $\Gr\la \cF\ra$.
The third kind of relation is written $R(u,u',j,n)$.

First, observe that $\ov{b_{j,n}} = \ov{b_{j,j+1}}$ for all $n>j$. 
Hence, the generator set of $G$ can be written as
$$\{\ov{b_{j,j+1}}, \ \ov{b_{j,j}} :\ j\geq 1\}.$$
Second, the generators of the first kind generates the subgroup $G(a)$ which is isomorphic to the fraction group $\Frac(\cF_\infty).$
An isomorphism is given by $$\ov{b_{j,j+1}}=[t_n \circ b_{j,n}, t_{n+1}]\mapsto b_j \text{ for all } b\in S, 1\leq j <n.$$
Third, the conjugation by $\ov{a_{1,2}}$ corresponds in shifting both indices, i.e.~
$$\ov{a_{1,2}}^{-1} \circ \ov{b_{k,r}} \circ \ov{a_{1,2}} = \ov{b_{k+1,r+1}}.$$
This implies that we only need to retain the relations $R(u,u',1,1)$, $R(u,u',1,2)$, $R(u,u',2,2)$, and  
$R(u,u',2,3)$.
Indeed, $R(u,u',j,n)=R(u,u',j,j+1)$ for all $j<n$ by the first observation and $\ad(\ov{a_{1,2}}^{-1})(R(u,u',i,m))=R(u,u',i+1,m+1)$ by the third for all $i\leq m$.
Similarly, we may only retain the indices $j=1,2$ for the generating set. 
Moreover, using the relation $\ov{a_{j,j}}=e$ for all $j$ we may remove the generators $\ov{a_{1,1}},\ov{a_{2,2}}.$
We deduce the following.

\begin{observation} 
The set
$$\sigma:=\{ \ov{b_{1,2}}, \ov{b_{2,3}}, \ov{c_{1,1}}, \ov{c_{2,2}}:\ b\in S, c\in S\setminus\{a\}\}$$ generates the group $G.$
The subset $\sigma_0:=\{\ov{b_{1,2}}, \ov{b_{2,3}}: \ b\in S\}$ generates the subgroup $G(a)$ isomorphic to $H$.
In particular, if $\cF$ has finitely many colours, then $G$ and $H$ are finitely generated.
\end{observation}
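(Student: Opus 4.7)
The plan is to formalise the three bullet points that appear in the discussion just above the statement, turning each into a precise reduction of the infinite generating family $\{\overline{b_{j,n}} : b\in S, 1\le j\le n\}$ coming from Proposition \ref{prop:presentation-G} (combined with the category presentation of Proposition \ref{prop:universal-category}). Throughout, I will work inside the group $G$ presented by these generators and the three families of relations recalled at the start of Section \ref{sec:practical-presentation}.

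First I would eliminate the dependence on the ``number of roots'' parameter $n$. Using the relations $\overline{a_{n,n}}=e$ together with the Thompson-like relation $\overline{b_{q,n}}\circ\overline{a_{j,n+1}}=\overline{a_{j,n}}\circ\overline{b_{q+1,n+1}}$ specialised to $q=j=n$, a short induction on $n-j$ shows that $\overline{b_{j,n}}=\overline{b_{j,j+1}}$ for all $n>j$. This already cuts the generating set down to $\{\overline{b_{j,j+1}},\overline{b_{j,j}}:b\in S,j\ge 1\}$, as recorded in the text.

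Next I would exploit the ``shift'' identity $\overline{a_{1,2}}^{-1}\circ\overline{b_{k,r}}\circ\overline{a_{1,2}}=\overline{b_{k+1,r+1}}$, which is itself a direct consequence of the Thompson-like relations applied to the pair $(a_{1,2},b_{k,r})$. Conjugating by powers of $\overline{a_{1,2}}$ therefore reduces every generator of index $j\ge 3$ to one with $j\in\{1,2\}$. Since $\overline{a_{1,2}}$ is itself in $\sigma$, this conjugation is performed inside the subgroup $\langle\sigma\rangle$. Combined with the previous step, the remaining generators are $\overline{b_{1,2}},\overline{b_{2,3}},\overline{b_{1,1}},\overline{b_{2,2}}$ for $b\in S$. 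Finally, the two relations $\overline{a_{1,1}}=e$ and $\overline{a_{2,2}}=e$ eliminate the last two $a$-generators of ``trivial'' type, leaving exactly $\sigma$. This proves the first assertion.

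For $\sigma_0$, I would use the explicit isomorphism $\gamma_a:H\xrightarrow{\sim}G(a)$ from Proposition \ref{prop:GHinclusion} and Definition \ref{def:CGP}, under which $\overline{b_{j,j+1}}$ corresponds to the elementary infinite forest $b_j\in\cF_\infty$. The monoid presentation of $\cF_\infty$ given by Proposition \ref{prop:universal-category} shows that $\{b_j:b\in S,j\ge 1\}$ generates $H$, hence $\{\overline{b_{j,j+1}}:b\in S,j\ge 1\}$ generates $G(a)$; applying once more the shift identity (with conjugation by $\overline{a_{1,2}}\in G(a)$) reduces this to $\sigma_0$. The finite generation statement in the last sentence is then immediate, since $|\sigma|\le 4|S|-2$ and $|\sigma_0|\le 2|S|$. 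The only mild subtlety I expect is bookkeeping: I must check that conjugating by $\overline{a_{1,2}}$ remains a legitimate move inside $\langle\sigma_0\rangle$ when establishing generation of $G(a)$, which is clear because $\overline{a_{1,2}}=\gamma_a^{-1}(a_1)\in G(a)$.
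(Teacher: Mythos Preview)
Your proposal is correct and follows exactly the three reductions the paper records just before the Observation; the paper itself gives no further proof beyond those bullet points, so you have faithfully expanded them. One small slip: the Thompson-like relation requires $j<q$, so ``specialised to $q=j=n$'' is not a valid instance --- what you actually need (with the roles of $a$ and $b$ swapped, which is allowed since the relations hold for all colours) is $a_{n,n}\circ b_{j,n+1}=b_{j,n}\circ a_{n+1,n+1}$ for $j<n$, which together with $\overline{a_{n,n}}=\overline{a_{n+1,n+1}}=e$ gives $\overline{b_{j,n+1}}=\overline{b_{j,n}}$ and drives your induction.
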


If $U_{j,n}$ has letters $\ov{b_{k,r}}$ with $k$ larger than $2$, then we replace it by $\ov{a_{1,2}}^{2-k} \circ \ov{b_{2,r-k+2}}\circ \ov{a_{1,2}}^{k-2}$ so that the relations $R(u,u',j,n)$ are all expressed using the letter of the smaller generating set $\sigma$.
An easy adaptation of the proof of \cite[Section 3]{Cannon-Floyd-Parry96} permits to reduce the Thompson-like relations to a few commutators equal to the identity.
To have more compact notation we give a presentation with the following correspondence of symbols:
$$b_j\leftrightarrow \ov{b_{j,j+1}},\ \wh b_j\leftrightarrow \ov{b_{j,j}}
, R(u,u',j)\leftrightarrow (\ov{U_{j,j+1} },\ov{U'_{j,j+1} }), \text{ and } \wh R(u,u',j)\leftrightarrow (\ov{U_{j,j} },\ov{U'_{j,j} }).$$

\begin{theorem}\label{theo:groupG-presentation}
Let $\cF=\FC\langle S|R\rangle$ be a presented Ore forest-skein category with associated forest-skein group $G:=\Frac(\cF,1)$.
Fix a colour $a\in S$.
The group $G$ admits the group presentation with generating set
$$\{ \wh b_j, b_j:\ b\in S, j\in\{1,2\}\}$$
and set of relations
\begin{enumerate}
\item $[ a_1^{-1} x_i \ ,\ a_1^{-j}\wh y_2a_1^j]=e$ for all $i,j\in\{1,2\}$ and $x,y\in S$ with $(x,i)\neq (a,1)$;
\item $[ a_1^{-1} x_i \ ,\ a_1^{-j}y_2a_1^j]=e$ for all $i,j\in\{1,2\}$ and $x,y\in S$ with $(x,i)\neq (a,1)$;
\item $\wh a_1= \wh a_2 =e;$
\item $\widehat{R}(u,u',i)=R(u,u',i)=e \text{ for all } (u,u')\in R, i=1,2.$
\end{enumerate}
In particular, if $S$ is finite, then $G$ is finitely generated admitting a generating set of cardinal $4|S|-2$ and if both $S,R$ are finite, then $G$ is finitely presented admitting a presentation with $4|R|+8|S|^2-4|S|+2$ relations.

Moreover, the subgroup of $G$ generated by $\{b_j:\ b\in S, j\in\{1,2\}\}$ is isomorphic to $H:=\Frac(\cF_\infty).$
A presentation of $H$ is obtained by considering the relations (2) and (4) excluding $\wh R(u,u',i)$.

An infinite presentation of $G$ is given by the generating set
$$\{ \wh b_j, b_j:\ b\in S, j\geq 1\}$$
and set of relations
\begin{enumerate}
\item $x_q\circ y_j = y_j \circ x_{q+1} \text{ for all } 1\leq j <q \leq m, x,y\in S;$ 
\item $\wh x_q\circ y_j = y_j \circ \wh x_{q+1} \text{ for all } 1\leq j <q \leq m, x,y\in S;$
\item $\wh a_n=e$ for all $n\geq 1$;
\item $\wh{R}(u,u',i)=R(u,u',i)=e \text{ for all } (u,u')\in R, i=1,2.$
\end{enumerate}
An infinite presentation of $H$ is obtained by considering the generating set $\{b_j:\ b\in S, j\geq 1 \}$ and the relations (2), (4) excluding $\wh R(u,u',i)$.
\end{theorem}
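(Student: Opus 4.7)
The plan is to start from the presentation of $G$ as $\pi_1(B\cF,p)$ given by Proposition \ref{prop:presentation-G} (generators $\bar f$ for all forests $f\in\cF$, and relations coming from trivial forests, from the edges of the maximal subtree $q(T_a)$, and from $2$-cells). Combining this with the category presentation of $\cF$ given by Proposition \ref{prop:universal-category}, $G$ becomes presented by the elementary forests $\{\overline{b_{j,n}}:\ b\in S,\ 1\leq j\leq n\}$ subject to three families of relations: (i) $\overline{a_{n,n}}=e$ for all $n\geq 1$, encoding the right $a$-vine lying in the maximal subtree; (ii) the lifted Thompson-like relations $\overline{b_{q,n}}\circ\overline{a_{j,n+1}}=\overline{a_{j,n}}\circ\overline{b_{q+1,n+1}}$ for $1\leq j<q\leq n$ and all colours; and (iii) the lifted skein relations $R(u,u',j,n)$ for $(u,u')\in R$ and $1\leq j\leq n$.

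I then combine (i) with the specific Thompson-like relation $a_{n,n}\circ b_{j,n+1}=b_{j,n}\circ a_{n+1,n+1}$ (valid for $j<n$) to deduce $\overline{b_{j,n+1}}=\overline{b_{j,n}}$; iterating gives $\overline{b_{j,n}}=\overline{b_{j,j+1}}$ for all $n>j$. Only two families of generators per colour survive, namely $b_j:=\overline{b_{j,j+1}}$ (interior carets) and $\hat b_j:=\overline{b_{j,j}}$ (top carets). Under this identification the surviving Thompson-like relations reduce to the stated commutations $x_qy_j=y_jx_{q+1}$ and $\hat x_qy_j=y_j\hat x_{q+1}$, and the relations $R(u,u',j,n)$ collapse into two one-parameter families $R(u,u',j)$ and $\widehat R(u,u',j)$ according to whether the caret at the root of $u_{j,n}$ is at an interior position or at the top. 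This produces the infinite presentation of $G$.

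To pass to the finite presentation, I first observe that conjugation by $a_1:=\overline{a_{1,2}}$ shifts indices by one: the Thompson-like commutations directly give $a_1^{-1}b_{k+1}a_1=b_{k+2}$ and $a_1^{-1}\hat b_{k+1}a_1=\hat b_{k+2}$ for every $k\geq 1$. Hence $\{b_1,b_2,\hat b_1,\hat b_2:\ b\in S\}\setminus\{\hat a_1,\hat a_2\}$ generates $G$; the infinite families $\hat a_n=e$ (for $n\geq 3$) and $R(u,u',j),\widehat R(u,u',j)$ (for $j\geq 3$) each follow by conjugating the $n\in\{1,2\}$ or $j\in\{1,2\}$ cases by suitable powers of $a_1$; and the Thompson-like commutators themselves reduce to the stated finite family by a direct adaptation of the Cannon--Floyd--Parry trick \cite[Section 3]{Cannon-Floyd-Parry96}. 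Note that both $\hat a_1=e$ and $\hat a_2=e$ need to be retained as initial conditions, since no Thompson commutator expresses $\hat a_2$ as a conjugate of $\hat a_1$. Counting the surviving generators and relations yields the announced bounds $4|S|-2$ and $4|R|+8|S|^2-4|S|+2$.

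For $H$, I use the identification of $H$ with the subgroup $G(a)\subset G$ via the morphism $\ga_a$ of Proposition \ref{prop:GHinclusion}: after the reductions above, $G(a)$ is exactly the subgroup generated by the hat-free generators, and a group presentation of $H$ is obtained by erasing all hatted generators and their associated relations. Equivalently, since $\cF_\infty$ is a Ore monoid embedding in its fraction group $H$, the monoid presentation of Proposition \ref{prop:universal-category} is automatically a group presentation of $H$, and the same reduction applies verbatim. The main technical obstacle is the Cannon--Floyd--Parry bookkeeping: verifying that the finitely many retained commutators together with the skein relations $R(u,u',i),\widehat R(u,u',i)$ for $i\in\{1,2\}$ really do imply every $R(u,u',j,n)$ once combined with the conjugation-shift identities. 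This requires a careful word-by-word tracking of how conjugation by $a_1$ acts on the words $U_{j,n}$ representing the forests $u_{j,n}$, which in turn rests on the fact that the letter $c_m$ at position $k$ of $U_{j,n}$ has the index shifted by one under the conjugation, consistently across all letters of the word.
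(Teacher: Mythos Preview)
Your proposal is correct and follows essentially the same route as the paper: start from the $\pi_1(B\cF,p)$ presentation of Proposition~\ref{prop:presentation-G}, feed in the category presentation of Proposition~\ref{prop:universal-category}, collapse $\overline{b_{j,n}}$ to $\overline{b_{j,j+1}}$ via the Thompson-like relation with $q=n$ and the edge relations $\overline{a_{n,n}}=e$, then use the conjugation-by-$a_1$ shift to cut down to indices $1,2$ and invoke the Cannon--Floyd--Parry reduction for the commutator relations. The identification of $H$ with $G(a)$ and the resulting truncated presentation is likewise handled the same way in the paper. One small remark: your sentence ``$\cF_\infty$ is a Ore monoid embedding in its fraction group $H$'' is not needed (and the embedding is only guaranteed under right-cancellativity, which is not assumed); what you actually use---and what suffices---is that a monoid presentation of a Ore monoid is automatically a group presentation of its fraction group.
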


\begin{remark}
\begin{enumerate}
\item The last theorem proved that if a Ore forest-skein category is finitely generated (resp.~presented) as a forest-skein category, then its fraction group is finitely generated (resp.~presented) as a group. 
However, the converse is not true. A counterexample is given in Section \ref{sec:example}.
\item Let $\cF$ be a Ore forest-skein category. 
We have constructed for the forest-skein monoid $\cF_\infty$ a homogeneous monoid presentation providing a positive homogeneous group presentation of $\Frac(\cF_\infty)$. 
In particular, if $\cF$ satisfies the CGP at a certain colour, then $\Frac(\cF,1)$ admits a positive homogeneous group presentation.
Although, the presentation of $\Frac(\cF,1)$ provided in the last theorem is not homogeneous in general, see Section \ref{sec:example}.
\end{enumerate}
\end{remark}
%%%%%%%%%%%%%PRESENTATION-END%%%%%%%%%%%%%%%%%%%%%%%%%%
%%%%%%%%%%%%%%%%%%%%%%%%%%%%%%%%%%%%%%%%%%%%%%%

%%%%%%%%%%%%%%%%%%%%%%%%%%%%%%%%%%%%%%%%%%%%%%%
%%%%%%%%%%%%%%BETTI%%%%%%%%%%%%%%%%%%%%%%%%%%
\section{Betti numbers}\label{sec:Betti}

In this short section we show that every countable forest-skein groups have trivial first $\ell^2$-Betti number.
This investigation comes after a discussion with Sri (Srivatsav Kunnawalkam Elayavalli) in Bonn at the Hausdorff Institute of Mathematics in September 2022. 
Sri strongly suspected that forest-skein groups have trivial first $\ell^2$-Betti number and moreover suggested a proof based on the notion of "good list of generators" as we name it below. We warmly thank Sri!

{\bf Convention.}
As in the rest of this article all groups appearing in this section are discrete but moreover are assumed to be {\bf countable}.

The notion of $\ell^2$-Betti numbers for groups is due to Atiyah, later generalised by Cheeger and Gromov, and extensively studied by L\"uck and Gaboriau \cite{Atiyah76,Cheeger-Gromov86,Luck02,Gaboriau02}.
Using L\"uck's work and in particular his generalisation of Murray-von Neumann's dimension we can define the $\ell^2$-Betti numbers of a countable discrete group $G$ as:
$$\beta_n^{(2)}(G) := \dim_{LG} H_n(G; LG) \text{ for } n\geq 0$$
where $LG$ stands for the group von Neumann algebra of $G$, $\dim_{LG}$ for the L\"uck dimension and $H_n(G;LG)$ for the $n$-th homology group over the $G$-module $LG$.
Note that $\beta_n^{(2)}(G)$ is a positive {\it real} number (so possibly not a natural number). 
Indeed, non-integers may appear as well as irrational and even transcendental, see \cite{Pichot-Schick-Zuk15} and the discussion on the Atiyah problem, see also \cite{Lehner-Wagner13,Grabowski14}.

Our strategy is based on the more recent work of Peterson and Thom where they rephrase this numbers to dimension of {\it cohomology} groups: 
$$\beta_n^{(2)}(G)=\dim_{LG} H^n(G,M)$$
where $M$ is the $G$-module either equal to the densely defined closed operators $\cU G$ affiliated to $LG$, the group von Neumann algebra $LG$, or the Hilbert space $\ell^2(G)$, see \cite[Section 2]{Peterson-Thom11}.
%First number
In particular, the first number $\betti(G)$ is the L\"uck dimension of the group of 1-cocycles valued in $M=\cU G, LG$, or $\ell^2(G)$ mod out by the inner ones.
It is known that this dimension is zero if and only if $H^1(G,\cU G)$ is trivial
%but for $M=LG$ or $\ell^2(G)$ the question is more delicate
, see \cite{Thom08} and \cite[Section 2]{Peterson-Thom11}.

Note that L\"uck proved that all the $\ell^2$-Betti numbers of Thompson's group $F$ are trivial and via a different proof Bader, Furman, and Sauer recovered the result of L\"uck that they extended to $T$, see \cite[Theorem 7.10]{Luck02} and \cite[Theorem 1.8]{Bader-Furman-Sauer14}.

\subsection{Setting and notations}
In this part we consider a Ore forest-skein category $\cF$ with a {\bf countable} set $S$ of colours. 
We fix one colour $a\in S$ and as usual write $H:=\Frac(\cF_\infty)$ for the fraction group associated to the monoid $\cF_\infty$, $G:=\Frac(\cF,1)$ for the forest-skein group, and $G^X$ for the $X$-version of $G$ where $X=T,V,BV$, see Sections \ref{sec:CGP} and \ref{sec:X-version}.
We identify $H$ as a subgroup of $G$ via the morphism $\gamma_a$ of Proposition \ref{prop:GHinclusion} and consider the presentations of $H$ and $G$ associated to the fixed colour $a$ of Theorem \ref{theo:groupG-presentation}.
We have that 
$$Y:=\{b_1,b_2:\ b\in S\}$$
is a generating set for $H$ and 
$$Z:=Y\cup \{ \hat c_1,\hat c_2:\ c\in S\setminus\{a\}\}$$
is a generating set for $G$.

Elements of $G$ can be alternatively be denoted as fractions of the form $[t,s]:=t\circ s^{-1}$ with $t,s$ trees. 
For $G^X$, elements are of the form $[t,\pi,s]:= t\circ \pi \circ s^{-1}$ with $t,s$ trees, and $\pi$ being a cyclic permutation, a permutation, or a braid depending if $X=T,V,BV$, respectively.
A key fact is that $G^X$ is generated by $G$ and finitely many elements of the form $[t,\pi,t]$. Moreover, $t$ can be chosen to be a fixed monochromatic tree with three leaves such as $t=a_{1,1}\circ a_{2,2}.$
This can be observed from the presentations given for $T$ and $V$ in \cite{Cannon-Floyd-Parry96} and the presentation of $BV$ given in \cite{Brin-BV2}.

\subsubsection{A key result}
We recall the definition of q-normal and wq-normal subgroups introduced in \cite[Section 5.1]{Peterson-Thom11}.
Note that the second first appeared in \cite{Popa06}.

\begin{definition}
A subgroup $\ti H\subset \ti G$ is 
\begin{itemize}
\item q-normal if there exists a generator set $\ti X$ of $\ti G$ satisfying that $g\ti Hg^{-1} \cap \ti H$ is infinite for all $g\in \ti X$;
\item wq-normal if there exists an increasing chain of subgroups $(\ti H_i:\ i\in I)$ of $\ti G$ satisfying that $\cap_i \ti H_i=\ti H, \cup_i \ti H_i=\ti G$, and 
$\cup_{i\in I: i<i_0} \ti H_i \subset \ti H_{i_0}$ is q-normal for each $i_0\in I.$
\end{itemize}
\end{definition}

Here is the key result of Peterson and Thom that we will be extensively using.
\begin{theorem}\label{theo:PT11}
If $H\subset G$ is an infinite wq-normal subgroup of (countable discrete) group, then $\beta^{(2)}_1(G)\leq \beta^{(2)}_1(H).$
\end{theorem}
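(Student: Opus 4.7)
The plan is to follow the Peterson--Thom argument, which has two main ingredients: a reduction from wq-normality to q-normality, and the treatment of the q-normal case via $\ell^2$-cohomology with coefficients in the algebra of affiliated operators. For the reduction, given a chain $(H_i)_{i \in I}$ witnessing wq-normality, set $K_i := \bigcup_{j < i} H_j \subset H_i$, which is q-normal by hypothesis. Granting the q-normal case yields $\betti(H_i) \leq \betti(K_i)$ at each step, and continuity of $\dim_{LG}$-cohomology under directed unions of subgroups then propagates $\betti(G) \leq \betti(H)$ along the whole chain by transfinite induction.

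For the q-normal case, I would use the characterisation $\betti(K) = \dim_{LK} H^1(K, \cU K)$ valid for any countable group $K$ and study the restriction map $\rho : H^1(G, \cU G) \to H^1(H, \cU G)$. The key assertion is that $\ker(\rho)$ has zero $\dim_{LG}$. Pick a cocycle $c : G \to \cU G$ with $c|_H$ inner and, after subtracting a coboundary, assume $c|_H = 0$. Let $X \subset G$ be a q-normality generating set; for $g \in X$ write $H_g := gHg^{-1} \cap H$, which is infinite by assumption. For $h \in H_g$ one has $g^{-1} h g \in H$, so expanding the cocycle identity on $h g = g \cdot (g^{-1} h g)$ yields
\[
h \cdot c(g) \;=\; c(hg) \;=\; c(g) + g \cdot c(g^{-1} h g) \;=\; c(g).
\]
Thus $c(g)$ is a vector in $\cU G$ fixed by the infinite group $H_g$ acting by left multiplication, and a standard tracial argument shows such fixed vectors are negligible in L\"uck dimension; hence $c(g) = 0$ for all $g \in X$ and $c = 0$. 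The image of $\rho$ is then controlled by a bimodule computation giving $\dim_{LG} \mathrm{Im}(\rho) \leq \dim_{LH} H^1(H, \cU G|_H) = \betti(H)$, using flatness of $\cU G$ over $\cU H$ together with a Shapiro-style identification.

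The main obstacle is the fixed-vector computation inside $\cU G$: one needs to transport rigorously, from the Hilbert module $\ell^2 G$ to the algebra of affiliated operators, the statement that a vector invariant under an infinite left-multiplication subgroup has zero L\"uck dimension support. A secondary technical point is verifying continuity of $\betti$ along the transfinite chain used in the wq-normal reduction, which relies on good behaviour of the L\"uck dimension function under directed colimits of modules. Both are standard but nontrivial ingredients, and the overall proof is why we simply invoke the Peterson--Thom theorem rather than reprove it.
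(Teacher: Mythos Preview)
The paper does not prove this theorem; it is quoted verbatim from Peterson--Thom \cite{Peterson-Thom11} and used as a black box. Your proposal goes further and sketches their actual argument, which is broadly faithful to the original: the q-normal case is handled by showing the restriction map $H^1(G,\cU G)\to H^1(H,\cU G)$ is injective via exactly the invariant-vector computation you wrote (your cocycle identity $h\cdot c(g)=c(g)$ for $h\in gHg^{-1}\cap H$ is correct), followed by a base-change inequality identifying $\dim_{LG}H^1(H,\cU G)$ with $\betti(H)$; the wq-normal case is then obtained by transfinite induction along the witnessing chain, using continuity of the first $\ell^2$-Betti number under directed unions. The two points you flag as obstacles---that a vector in $\cU G$ fixed by an infinite subgroup must vanish, and the behaviour of $\betti$ along directed unions---are precisely the technical lemmas Peterson and Thom establish. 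Since the present paper only invokes the result, there is no proof here to compare against; your sketch is an accurate summary of why the citation is justified.
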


To show that $\betti(\ti G)=0$ for a group $\ti G$, it is then sufficient to show that $\ti H\subset \ti G$ is wq-normal and $\betti(\ti H)=0$.

\subsection{A good list of generators}
We introduce the notion of {\it good list of generators} for a group, show that $H$ admits such a list, and deduce that $\betti(H)=0$.
We write $\ti G,\ti H$ for some arbitrary groups to not interfere with the notation of our forest-skein groups $G,H$ of above.

For a group $\ti G$ we define a {\it good list of generator} $L$ to be either a nonempty finite list $(g_1,\cdots,g_k)$ with $k\geq 2$ or an infinite list $(g_j:\ j\geq 1)$ of elements of $\ti G$ satisfying
\begin{itemize}
\item the elements of $L$ generate $\ti G$;
\item two consecutive elements of $L$ commutes;
\item each element of the list (except possibly the last one) is required to have infinite order.
\end{itemize}

\begin{proposition}
If $\ti G$ admits a good list of generators, then its first $\ell^2$-Betti number is equal to zero.
\end{proposition}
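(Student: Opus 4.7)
The plan is to exhibit the infinite cyclic subgroup $\la g_1\ra \simeq \Z$ as a wq-normal subgroup of $\ti G$ and then invoke Theorem \ref{theo:PT11}, combined with the classical vanishing $\betti(\Z)=0$ (which follows for instance from amenability and infiniteness of $\Z$).

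The chain witnessing wq-normality is the natural one built from the list: set $\ti H_i:=\la g_1,\ldots,g_i\ra$ for each index $i$ of the good list $L=(g_j)_j$. By the generation hypothesis, $\cup_i \ti H_i=\ti G$, and since the chain is nested, $\cap_i \ti H_i=\ti H_1=\la g_1\ra$. The requirement that $g_1$ has infinite order guarantees that $\ti H_1\simeq \Z$ is infinite, which is the base subgroup we want to recognise inside $\ti G$.

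The core verification is that $\ti H_{i_0-1}$ is q-normal in $\ti H_{i_0}$ for each $i_0\geq 2$ in the index set of $L$. I will take $X:=\ti H_{i_0-1}\cup\{g_{i_0}\}$ as a generating set of $\ti H_{i_0}$. For $h\in \ti H_{i_0-1}$, one trivially has $h\ti H_{i_0-1}h^{-1}=\ti H_{i_0-1}$, which is infinite since it contains $\la g_1\ra$. For $h=g_{i_0}$, the commutation $g_{i_0-1}g_{i_0}=g_{i_0}g_{i_0-1}$ of consecutive elements gives $\la g_{i_0-1}\ra\subset g_{i_0}\ti H_{i_0-1}g_{i_0}^{-1}\cap \ti H_{i_0-1}$; since $g_{i_0-1}$ is not the last element of $L$, it has infinite order and the intersection is infinite. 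This is the only step that simultaneously uses all three axioms of a good list, and it is where I expect the main—though mild—bookkeeping obstacle: one must check that the range $i_0=2,\ldots$ over which q-normality is required always produces a $g_{i_0-1}$ of infinite order, which is automatic since $i_0-1$ is strictly less than the maximum index (equal to $k$ in the finite case, or absent in the infinite case).

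Once q-normality of every step $\ti H_{i_0-1}\subset \ti H_{i_0}$ is established, the chain $(\ti H_i)_i$ realises $\la g_1\ra$ as a wq-normal subgroup of $\ti G$. Applying Theorem \ref{theo:PT11} then yields $\betti(\ti G)\leq \betti(\Z)=0$, which is the desired conclusion. The proof proposal for the subsequent corollary on $\betti(G^X)=0$ will then be to iterate this principle: construct good lists inside $H$ and show successively that $H\subset G$ and $G\subset G^X$ are wq-normal via similar commuting-generator arguments, invoking Theorem \ref{theo:PT11} twice more.
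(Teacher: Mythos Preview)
Your proof is correct and follows essentially the same approach as the paper: build the increasing chain $\ti H_i=\la g_1,\ldots,g_i\ra$, verify that each step $\ti H_{i-1}\subset \ti H_i$ is q-normal using the commutation of $g_{i-1}$ with $g_i$ and the infinite order of $g_{i-1}$, and conclude that $\la g_1\ra\simeq\Z$ is wq-normal in $\ti G$, whence $\betti(\ti G)\leq\betti(\Z)=0$ by Theorem~\ref{theo:PT11}. Your bookkeeping regarding which generators are guaranteed to have infinite order is slightly more explicit than the paper's, but the argument is the same.
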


\begin{proof}
Consider a group $\ti G$ with a good list of generators $L=(g_i:\ i\in J)$ where $J$ is either $\{1,\cdots,k\}$ for some $k\geq 1$ or is the set of nonzero natural numbers.
For each $j\in J$ satisfying that $j+1\in J$ we define $H_j:=\la g_1,\cdots,g_j\ra \subset \ti G$ the subgroup generated by the first $j$ elements of $L$.
Note that $H_j$ is always infinite since it contains at least one element of infinite order. 
We claim that $H_j\subset H_{j+1}$ is a $q$-normal subgroup.
Indeed, consider the generating set $\ti X:=\{g_1,\cdots,g_{j+1}\}$ of $H_{j+1}$.
Since the first $j$ elements of $\ti X$ are in $H_j$ we only need to consider the element $g_{j+1}$. 
Now, $g_{j+1}$ commutes with $g_j$ implying that $g_{j+1} H_j g_{j+1}^{-1}\cap H_j$ contains the subgroup generated by $g_j$.
Since $g_j$ has infinite order we deduce that this intersection is infinite.
This proves the claim.

We deduce that the inclusion $H_1\subset \ti G$ is wq-normal since $\ti G=\cup_{j\in J} H_j$.

Using Theorem \ref{theo:PT11} we have 
$$\betti(\ti G) \leq \betti(H_1) =\betti(\Z)=0$$
and thus $\betti(\ti G)=0.$
\end{proof}

We now provide a good list of generator for $H$.
Given any colour $x\in S$ we consider the following finite list 
$$L_x:=(x_3x_4^{-1}, x_1x_2^{-1}, x_4x_5^{-1}, x_2x_3^{-1}, x_5).$$
List the colours of $S$ as $(a,b,c,\cdots)$ and form the (possibly infinite) list $L$ that is $L_a$, followed by $L_b$, followed by $L_c$, etc.

\begin{proposition}
The family $L$ is a good list of generators of $H$. In particular, $\betti(H)=0$.
\end{proposition}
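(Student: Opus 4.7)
The plan is to verify the three conditions defining a good list of generators---generation, consecutive commutation, and infinite order---and then invoke the preceding proposition to conclude $\betti(H) = 0$.

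For generation, by Theorem \ref{theo:groupG-presentation} it suffices to recover each $b_1, b_2$ with $b \in S$ from $L$. Fixing a colour $x \in S$ and reading $L_x = (x_3 x_4^{-1}, x_1 x_2^{-1}, x_4 x_5^{-1}, x_2 x_3^{-1}, x_5)$ from right to left, we successively write $x_4 = (x_4 x_5^{-1}) x_5$, $x_3 = (x_3 x_4^{-1}) x_4$, $x_2 = (x_2 x_3^{-1}) x_3$, and $x_1 = (x_1 x_2^{-1}) x_2$, so $\{x_1, x_2\} \subset \langle L_x \rangle$. Taking the union over all colours yields $Y \subset \langle L \rangle$, hence $\langle L \rangle = H$. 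For infinite order, each element of $L_x$ lies in the subgroup $H(x) \subset H$ generated by $\{x_j : j \geq 1\}$. Exactly as in the proof of Corollary \ref{cor:Finside}, the monochromatic colouring $C_x : \cUF_\infty \to \cF_\infty$ is injective and, by Proposition \ref{prop:morphisms}, induces an injective group morphism $\Frac(\cUF_\infty) \to H$ whose image contains $H(x)$. Since $\Frac(\cUF_\infty) \simeq F$ is torsion-free, every nontrivial element of $H(x)$---in particular each entry of $L$---has infinite order.

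The main work is verifying that consecutive entries of $L$ commute. The basic tool is the Thompson-like relation $x_q y_j = y_j x_{q+1}$ for all $x, y \in S$ and $j < q$, together with the derived identity $x_q y_j^{-1} = y_j^{-1} x_{q-1}$ valid whenever $j + 1 < q$ (obtained by relabelling $y_j^{-1} x_q = x_{q+1} y_j^{-1}$). Within each $L_x$ this yields four short calculations; for instance, $(x_3 x_4^{-1})(x_1 x_2^{-1})$ reduces via $x_4^{-1} x_1 = x_1 x_5^{-1}$ and $x_3 x_1 = x_1 x_4$ to $x_1 x_4 x_5^{-1} x_2^{-1}$, and a mirrored manipulation shows $(x_1 x_2^{-1})(x_3 x_4^{-1})$ equals the same word. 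Between two successive blocks $L_x, L_y$ one only needs $x_5$ to commute with $y_3 y_4^{-1}$: expanding gives $x_5 \cdot y_3 y_4^{-1} = y_3 x_6 y_4^{-1} = y_3 y_4^{-1} \cdot x_5$ using $x_5 y_3 = y_3 x_6$ and $x_6 y_4^{-1} = y_4^{-1} x_5$ (the latter applicable since $4 + 1 < 6$). The ordering of the five entries of $L_x$ is chosen precisely so that each consecutive pair of indices admits such a normalisation; the main obstacle is simply the careful index bookkeeping needed to complete all five commutation checks without error. Once done, the preceding proposition delivers $\betti(H) = 0$.
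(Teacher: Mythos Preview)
Your proof is correct and follows essentially the same approach as the paper. The paper's version is more terse---it simply asserts that the Thompson-like relation $x_q y_j = y_j x_{q+1}$ implies all consecutive commutations and leaves the verification to the reader, whereas you spell out the index bookkeeping explicitly; and for infinite order the paper invokes the subgroup $\langle x_1, x_2\rangle \simeq F$ directly rather than passing through the monoid colouring map $C_x$, but this is the same subgroup and the same underlying torsion-freeness fact.
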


\begin{proof}
Using the Thompson-like relation 
$$x_q y_j= y_j x_{q+1} \text{ for all } 1\leq j<q \text{ and all colours } x,y \in S$$
we deduce that two consecutive elements of $L$ commute.
Now, an element of $L_x$ is contained in the subgroup of $H$ generated by $\{x_1,x_2\}$ which is isomorphic to $F$. Since $F$ is torsion-free we deduce that any element of $L_x$ is of infinite order.
Finally, $x_1$ and $x_2$ are contained in the subgroup of $G$ generated by $L_x$.
Since $X:=\{x_1,x_2:\ x\in S\}$ generates $H$ we deduce that $L$ is a good list of generators for $H$.
Using the previous proposition we conclude that $\betti(H)=0$.
\end{proof}

\subsection{All countable forest-skein group have trivial first $\ell^2$-Betti number}
We now show that $G,G^T,G^V$, and $G^{BV}$ have trivial first $\ell^2$-Betti number.
Our strategy is to show that $H\subset G^X$ is a q-normal subgroup and to use Theorem \ref{theo:PT11} whatever $X=F,T,V,$ or $BV$.
In order to do so we complete the generating set $Y$ of $H$ into a larger set so that all additional element $g$ satisfies that $gHg^{-1}\cap H$ is infinite.
We start with the $F$-case.

{\bf Claim: The set 
\begin{equation}\label{eq:generating-set}
Z:=Y\cup \{\hat c_3:\ c\in S\setminus\{a\}\}
\end{equation}
generates the group $G$.}

Write $K$ for the group generated by $Z$.
First note that $a_1\hat c_3 a_1^{-1} = \hat c_2$ for all $c\in S$ implying that $K$ contains all $\hat c_2.$
Second, $a_1^{-j} \hat c_3 a_1^j = \hat c_{3+j}$ for all $j\geq 0$ and $c\in S$ implying that $K$ contains 
$$\wh Z:= Y\cup \{\hat c_j:\ j\geq 2, c\in S\setminus\{a\}\}.$$
To conclude, it is sufficient to show that $\hat b_1\in K$ for all $b\in S$.
Fix a colour $b\in S$.
Since $\cF$ is a Ore category there exists two forest $f,h$ satisfying $b_{1,1}\circ f = a_{1,1}\circ h$.
The forest $f$ can be written with letters of the form $c_{i,j}$ with $c\in S,i\leq j$ and $j\geq 2.$
This produces a relation in $G$ of the form:
$$\hat b_1 w= u$$
where $w,u$ are words in $Y\cup\{ \hat c_j:\ j\geq 2, c\in S, c\neq a\}$.
This proves that $\hat b_1\in K$ and thus $K=G$.
The claim is proven.

Consider the generating set $\wh Z$ of $G$ given in \eqref{eq:generating-set}.
To show that $H\subset G$ is q-normal it is sufficient to show that $\hat b_3 H (\hat b_3)^{-1} \cap H$ is infinite for all $b\in S$ with $b\neq a$.
Observe that $a_2a_1^{-1}$ and $\hat b_3$ commute implying that the group $\hat b_3 H (\hat b_3)^{-1} \cap H$ contains the element $a_2a_1^{-1}$. 
Since this element has infinite order we deduce that $H\subset G$ is q-normal and thus $\betti(G)\leq \betti(H)=0$ giving $\betti(G)=0$.

We now prove the remaining cases.
Consider either $G^T,G^V$ or $G^{BV}$ and a cyclic permutation, a permutation, or a braid $\alpha$ over $n$ strands.
Let $t$ be a tree with $n$ leaves and the element 
$$\alpha_t:=[t,\alpha,t] = t\circ \alpha\circ t^{-1}$$ 
living in $G^T,G^V$ or $G^{BV}$ depending on the nature of $\alpha$.
Consider now some trees $p_i,q_i$ for $1\leq i\leq n$ requiring that $p_i$ has the same number of leaves than $q_i$ for each $i$. 
We associate the following element 
$$g_{p,q}:= [ t\circ (p_1\ot \cdots \ot p_n), t\circ (q_1\ot \cdots\ot q_n)]$$ which is in $G$.
We write $K_t$ for the subset of $G$ equal to all such $g_{p,q}$.

\begin{proposition}
The set $K_t$ is a subgroup of $G$ isomorphic to $G^n$.
An isomorphism is given by:
$$G^n\to G, ([p_i,q_i]:\ 1\leq i\leq n)\mapsto g_{p,q}.$$
Moreover, we have that $\alpha_t K_t \alpha_t^{-1} = K_t$.
\end{proposition}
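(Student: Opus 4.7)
The plan is to verify in turn that the assignment $\Phi \colon G^n \to G$, $([p_i, q_i])_{i=1}^n \mapsto g_{p,q}$, is well defined, is a group homomorphism with image $K_t$, and is injective; this will establish the first statement. Write $P := p_1 \otimes \cdots \otimes p_n$ and $Q := q_1 \otimes \cdots \otimes q_n$, and let $\ell_i := |\Leaf(p_i)| = |\Leaf(q_i)|$. For well-definedness, I use the description of equivalence of fractions (calculus of right fractions): $[p_i, q_i] = [p_i', q_i']$ in $G$ for each $i$ means there exist forests $k_i, k_i'$ with $p_i \circ k_i = p_i' \circ k_i'$ and $q_i \circ k_i = q_i' \circ k_i'$. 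Setting $k := k_1 \otimes \cdots \otimes k_n$ and $k' := k_1' \otimes \cdots \otimes k_n'$ and using the interchange law between $\circ$ and $\otimes$ gives $(t \circ P) \circ k = (t \circ P') \circ k'$ and the analogous identity for $Q$, hence $g_{p,q} = g_{p',q'}$.

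Next I check that $\Phi$ is a homomorphism by multiplying $g_{p,q}$ and $g_{p',q'}$ inside the groupoid $\Frac(\cF)$: Ore's property applied componentwise furnishes forests $r_i, s_i$ with $q_i \circ r_i = p_i' \circ s_i$, and $r := \bigotimes_i r_i$, $s := \bigotimes_i s_i$ give $(t \circ Q) \circ r = (t \circ P') \circ s$, so that $g_{p,q} \circ g_{p',q'} = [\, t \circ \bigotimes_i (p_i \circ r_i),\; t \circ \bigotimes_i (q_i' \circ s_i)\,]$, which is exactly $\Phi\bigl(([p_i, q_i] \circ [p_i', q_i'])_i\bigr)$. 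For injectivity I suppose $g_{p,q}$ is trivial in $G$; then there is a forest $k$ with $t \circ P \circ k = t \circ Q \circ k$, and left-cancellativity of $\cF$ (Observation~\ref{obs:LC}) removes the prefix $t$ to give $P \circ k = Q \circ k$. Any forest $k$ with $\sum_i \ell_i$ roots decomposes uniquely as $k = k_1 \otimes \cdots \otimes k_n$ with $k_i$ having $\ell_i$ roots, and the interchange law forces $p_i \circ k_i = q_i \circ k_i$ for every $i$, whence $[p_i, q_i] = e$. Thus $\Phi$ is an injective homomorphism, and its image is $K_t$ by definition, which gives $K_t \simeq G^n$.

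For the conjugation I will invoke the Brin--Zappa--Sz\'ep identity $\tau \circ f = f^\tau \circ \tau^f$ introduced in Section~\ref{sec:formalism}. Applied to $f = P$ it reads $\alpha \circ P = P^\alpha \circ \beta_P$ with $P^\alpha = p_{\alpha(1)} \otimes \cdots \otimes p_{\alpha(n)}$ and $\beta_P := \alpha^P$ a lifted ``block'' version of $\alpha$ acting on $\sum_i \ell_i$ strands. The key point is that $\beta_P$ depends only on $\alpha$ and on the arity vector $(\ell_1, \ldots, \ell_n)$; since $p_i$ and $q_i$ share the same number of leaves, $\beta_P = \beta_Q =: \beta$. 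Taking inverses in $\alpha \circ Q = Q^\alpha \circ \beta$ yields $\beta \circ Q^{-1} \circ \alpha^{-1} = (Q^\alpha)^{-1}$, and therefore
\[
\alpha_t \cdot g_{p,q} \cdot \alpha_t^{-1} = t \circ \alpha \circ P \circ Q^{-1} \circ \alpha^{-1} \circ t^{-1} = t \circ P^\alpha \circ (Q^\alpha)^{-1} \circ t^{-1} = g_{p^\alpha, q^\alpha} \in K_t,
\]
with $p^\alpha := (p_{\alpha(1)}, \ldots, p_{\alpha(n)})$ and $q^\alpha := (q_{\alpha(1)}, \ldots, q_{\alpha(n)})$. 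Applying the same computation to $\alpha^{-1}$ yields the reverse inclusion, so $\alpha_t K_t \alpha_t^{-1} = K_t$. The one point requiring extra care, and the main potential obstacle in this last step, is the equality $\beta_P = \beta_Q$ in the braided case $X = BV$: one must observe that $\alpha^P$ encodes how each strand of the braid $\alpha$ is fattened into $\ell_i$ parallel strands while preserving every over/under crossing, a datum depending only on $\alpha$ and on the arity vector, and not on the specific trees $p_i$ or $q_i$.
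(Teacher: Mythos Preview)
Your proof is correct and carries out precisely the ``easy computation'' that the paper leaves to the reader. The well-definedness, homomorphism, and injectivity checks all go through as written (the key facts being the interchange law, left-cancellativity, and that a forest in $\cF$ decomposes uniquely as a tensor of trees since skein relations preserve root partitions); the conjugation step correctly uses the Brin--Zappa--Sz\'ep identity together with the observation that $\alpha^P$ depends only on $\alpha$ and the arity vector $(\ell_1,\dots,\ell_n)$, hence coincides with $\alpha^Q$.
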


The proof of this proposition is an easy computation.
In particular, $K_t\subset G$ is an infinite subgroup which permits to deduce the following.

\begin{corollary}
The inclusions $G\subset G^T$, $G\subset G^V$, and $G\subset G^{BV}$ are all q-normal subgroups.
The first $\ell^2$-Betti number of each of the groups $G^T,G^V,G^{BV}$ is equal to zero.
\end{corollary}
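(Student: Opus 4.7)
The plan is to directly combine the preceding Proposition with Theorem \ref{theo:PT11}. Recall the claim (stated in the setup) that $G^X$ is generated by $G$ together with finitely many elements of the form $\alpha_t=[t,\pi,t]$ where $t$ can be taken to be the fixed monochromatic three-leaf tree $t=a_{1,1}\circ a_{2,2}$ and $\pi$ ranges over a finite list of cyclic permutations, permutations, or braids on three strands, depending on whether $X=T,V$, or $BV$. So first I would write down an explicit generating set $\widetilde X$ of $G^X$ of the form $\widetilde X = Z \cup \{\alpha_t^{(1)},\dots,\alpha_t^{(k)}\}$, where $Z$ is the generating set of $G$ from \eqref{eq:generating-set}.

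Next I would verify the q-normality condition on each element of $\widetilde X$. For every $g\in Z\subset G$ we trivially have $gGg^{-1}\cap G=G$, which is infinite since $G$ contains $F$. For every $\alpha_t^{(i)}$ I would invoke the preceding Proposition: since $\alpha_t K_t \alpha_t^{-1}=K_t$, we get
\[
\alpha_t^{(i)}\, G\, (\alpha_t^{(i)})^{-1}\cap G \;\supseteq\; \alpha_t^{(i)}\, K_t\, (\alpha_t^{(i)})^{-1} \;=\; K_t,
\]
and $K_t\simeq G^3$ is infinite (again because $G\supset F$ is infinite). Hence $G\subset G^X$ is q-normal for each $X\in\{T,V,BV\}$.

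Finally I would close by applying Theorem \ref{theo:PT11}: since $G\subset G^X$ is an infinite q-normal (hence wq-normal) subgroup, we obtain $\betti(G^X)\leq \betti(G)$, and the previous subsection has already established $\betti(G)=0$, so $\betti(G^X)=0$ for $X=T,V,BV$.

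There is no real obstacle here: all the hard work has been done in the preceding Proposition (which packages the commutation $\alpha_t K_t\alpha_t^{-1}=K_t$), in the identification of a finite set of three-strand ``permutation-type'' generators completing $G$ to $G^X$ (read off from the Cannon--Floyd--Parry and Brin presentations of $T,V,BV$), and in the vanishing $\betti(G)=0$. The only thing to be slightly careful about is to make sure the $\alpha_t$ used to enlarge the generating set are all built from the same tree $t$, so that a single $K_t$ works uniformly for all of them.
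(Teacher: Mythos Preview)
Your proposal is correct and follows essentially the same approach as the paper's proof: both identify a generating set for $G^X$ consisting of $G$ together with finitely many elements $\alpha_t$ built from the fixed tree $t=a_{1,1}a_{2,2}$, use the preceding Proposition to see that $\alpha_t G\alpha_t^{-1}\cap G\supseteq K_t\simeq G^3$ is infinite, and then apply Theorem~\ref{theo:PT11} with the already-established $\betti(G)=0$. The only cosmetic difference is that you spell out the trivial verification for generators lying in $G$, which the paper leaves implicit.
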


\begin{proof}
We use the classical notation $(123)$ and $(12)$ to express permutations acting on $\{1,2,3\}$ where the first is the 3-cycle $i\mapsto i+1 \mod 3$ and the second exchanges $1$ with $2$.
Observe that 
\begin{itemize}
\item $G^T$ is generated by $G$ and $[a_{1,1}a_{2,2}, (123), a_{1,1}a_{2,2}]$;\\
\item $G^V$ is generated by $G$, $[a_{1,1}a_{2,2}, (123), a_{1,1}a_{2,2}]$, and $[a_{1,1}a_{2,2}, (23), a_{1,1}a_{2,2}]$;\\
\item $G^{BV}$ is generated by $G$, $[a_{1,1}a_{2,2}, \beta, a_{1,1}a_{2,2}]$, and $[a_{1,1}a_{2,2},\beta',a_{1,1}a_{2,2}]$ where $\beta,\beta'$ are braids over three strands: $\beta$ is the composition of the undercrossing of 1 and 2 with the undercrossing of 2 and 3 and $\beta'$ is the undercrossing of 2 and 3.
\end{itemize}
In particular, in each case the group $G^X$ is generated by $G$ and finitely many elements of the form $\alpha_t$ (where here $t=a_{1,1}a_{2,2}$) and $\alpha$ ranges over: either one permutation, two permutations, or two braids.
The previous proposition applied to any such $\alpha$ implies that $\alpha_t G \alpha_t^{-1}\cap G$ contains $K_t\simeq G^3$.
Hence, $\alpha_t G \alpha_t^{-1}\cap G$ is infinite implying that $G\subset G^X$ is q-normal for $X=T,V,BV$.
We have previously proven that $\betti(G)=0$.
Using Theorem \ref{theo:PT11} we deduce that $0\leq \betti(G^X)\leq \betti(G)=0$ for $X=T,V,BV$.
\end{proof}

Observe that a forest-skein group $\Frac(\cF,1)$ is countable if and only if $\cF$ has countably many colours (i.e.~$\cF(2,1)$ is countable).
We can now conclude this section with the following general result.

\begin{theorem}\label{theo:Betti}
If $G$ is a countable forest-skein group, then $\betti(G)=0$. 
Moreover, $\betti(G^X)=0$ for $X=T,V,BV$.
\end{theorem}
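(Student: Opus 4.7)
The plan is to observe that this theorem is essentially a repackaging of the work already done in the preceding subsections, once one recognises that any countable forest-skein group arises as $\Frac(\cF,1)$ for some Ore forest-skein category $\cF$ whose set of abstract colours $\cF(2,1)$ is countable. Indeed, by definition a forest-skein group is isomorphic to $\Frac(\cF,1)$ for some Ore $\cF$, and the cardinality bound $|\Frac(\cF,1)| \leq \aleph_0$ forces $\cF(2,1)$ to be countable (since distinct trees with two leaves yield distinct elements of $\Frac(\cF,1)$ via $Y_a \mapsto [Y_a, Y_b] = Y_a \circ Y_b^{-1}$ for any fixed $b$). Hence we can always choose a skein presentation $(S,R)$ with $S$ countable and apply the preceding construction.

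With this observation in place, the first statement $\betti(G) = 0$ follows immediately: the previous propositions establish that $H := \Frac(\cF_\infty)$ admits the good list of generators $L$ obtained by concatenating the blocks $L_x$ as $x$ ranges over the countable colour set $S$, which gives $\betti(H) = 0$; and the argument immediately following shows that the subgroup $H \subset G$ (viewed via $\ga_a$) is wq-normal because $H \subset G$ is in fact q-normal — witnessed by the explicit generating set $Z = Y \cup \{\hat c_3 : c \in S \setminus\{a\}\}$ together with the commutation $[\hat b_3, a_2 a_1^{-1}] = e$ and the infinite order of $a_2 a_1^{-1} \in H$. Then Peterson–Thom's result (Theorem \ref{theo:PT11}) yields $\betti(G) \leq \betti(H) = 0$.

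For the second statement, the proof proceeds identically to the corollary just established: for each $X \in \{T,V,BV\}$, the group $G^X$ is generated by $G$ together with finitely many elements of the form $\alpha_t = [t,\alpha,t]$ with $t = a_{1,1} a_{2,2}$ and $\alpha$ a cyclic permutation, permutation, or braid on three strands. The preceding proposition shows that $\alpha_t G \alpha_t^{-1} \cap G$ contains the infinite subgroup $K_t \simeq G^3$, so $G \subset G^X$ is q-normal. Invoking Theorem \ref{theo:PT11} a second time gives $\betti(G^X) \leq \betti(G) = 0$.

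The genuinely new ideas — constructing the good list of generators for $H$, producing the enlarged generating set $Z$ to establish q-normality of $H \subset G$, and identifying the subgroup $K_t$ to handle the $T, V, BV$ cases — have all been carried out in the preceding subsections, so the only remaining task in the proof of the final theorem is the cardinality observation above and a one-line invocation of the corollary. The true obstacle, already overcome, was finding elements of $G^X$ with large centralisers inside $G$; the choice of $K_t$ (exploiting that a braid or permutation on $n$ strands commutes with any element supported in tensor products of sub-trees beneath $t$) is exactly what makes the q-normality argument work uniformly across the three decorations.
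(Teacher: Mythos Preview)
Your proposal is correct and follows essentially the same approach as the paper: the theorem is indeed the culmination of the preceding subsections, and you have accurately identified each ingredient (the good list of generators giving $\betti(H)=0$, the q-normality of $H\subset G$ via the generating set $Z$ and the commutation with $a_2a_1^{-1}$, and the q-normality of $G\subset G^X$ via the subgroup $K_t\simeq G^3$). The paper likewise records the cardinality observation (that $\Frac(\cF,1)$ is countable if and only if $\cF(2,1)$ is countable) just before stating the theorem, and then concludes; your justification of this observation via the map $Y_a\mapsto[Y_a,Y_b]$ is slightly more explicit than the paper's bare assertion, though strictly speaking injectivity of that map uses right-cancellativity which is not assumed---the paper simply states the equivalence without proof, so your level of rigor matches.
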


%%%%%%%%%%%%%%%%%%%%%%%%%%%%%%%%%%%%%%%%%%%%%%%
%%%%%%%%%%%%%%Q-SPACE%%%%%%%%%%%%%%%%%%%%%%%%%%
\section{A replacement of the dyadic rationals for forest-skein groups}\label{sec:Q-space}

Consider a forest-skein category $\cF$ and let $G^X=\Frac(\cF^X,1)$ be its $X$-forest-skein group for $X=F,T,V$.
The classical Thompson groups $F,T,V$ act on the set of dyadic rationals $\Q_2:=\Z[1/2]/\Z$ of the torus in an obvious manner by restricting the classical action on the unit torus. 
This section is about constructing an analogous space $Q$ for the forest-skein groups $G,G^T,G^V$. 
We start by building the action $G^T\act Q$ that we describe in three different ways. 
The last of it uses Jones' technology and permit to be extended to $G^V$ in a canonical way.
We derive key transitive properties of it. 
From there we deduce a finiteness theorem by adapting an elegant argument of Brown and Geoghegan.

\subsection{Three descriptions of a $G^T$-space}

{\it First description.} We consider the homogeneous space action $G^T\act G^T/G$ given by $$g\cdot hG:=ghG,\ g\in G^T, hG\in G^T/G.$$

{\it Second description.}
Consider the two commuting actions $$G^T\act \Frac(\cF^T)\curvearrowleft \cF$$ on the fraction groupoid $\Frac(\cF^T)$ obtained by restriction of the composition.
Note, this is similar to the way we constructed a classifying space of $G$ in Section \ref{sec:complex}.
This gives $G^T\act Q^T$ where $$Q^T=\{ t\circ \pi\circ  f^{-1}:\ t, \pi ,f \},$$
where $t$ is a tree with $n$ leaves, $\pi\in\Z/n\Z$ a cyclic permutation, and $f$ is a forest with $n$ leaves.
In other words, $Q^T$ is the subset of $\Frac(\cF^T)$ of elements of target $1$.
Consider now the equivalence relation generated by the right action of $\cF$ on $Q^T$ that is
$$x\sim y \text{ if } x\cF\cap y\cF\neq\emptyset$$
and denote by $Q^T/\cF$ the space $Q^T$ quotiented by this equivalence relation.
The action $G^T\act Q^T$ factorises into an action $G^T\act Q^T/\cF$.
Note that any element of $Q^T/\cF$ is obtained as a class of the form $t\pi\cF$ with $t$ a tree and $\pi$ a cyclic permutation.
In fact a set of representatives of $Q^T/\cF$ is given by all the $t\pi$ satisfying the property:
$$[t\pi\cF\cap t'\pi'\cF\neq\emptyset] \Rightarrow [t\pi\cF\supset t'\pi'\cF].$$
In that case, $t'\pi'=t\pi h = (th^\pi) \pi^h$ for a certain forest $h$ and where $(\pi,h)\mapsto (\pi^h, h^\pi)$ denotes the Brin-Zappa-Sz\'ep product (or bicrossed product) of the groupoid of cyclic permutations and the forests category $\cF$.
Here is a diagrammatic example when on the left we have a product $\pi h$ and on the right a product $h^\pi \pi^h$. 
\[\includegraphics{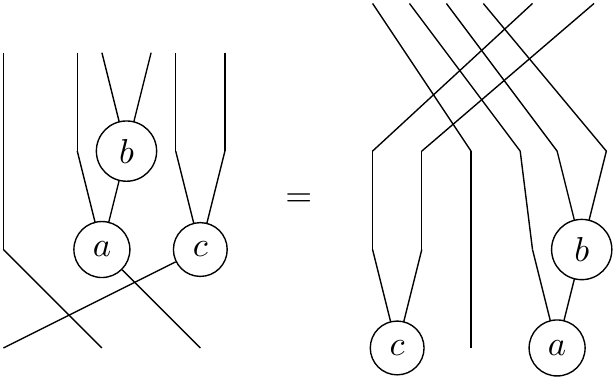}\]
Note that $\pi$ and $\pi^h$ are both cyclic and $h, h^\pi$ have the same number of roots and leaves.
The second action is conjugate to the first via the map
$$Q^T/\cF\to G^T/G, \ t \pi\cF\mapsto t\pi t^{-1}G.$$
To see that it is well-defined note that if $f$ is a forest then $t\pi \cdot f = t f^\pi \pi^f$. 
Moreover, a key point is that $f$ and $f^\pi$ have the same number of leaves (and roots) implying that $t f^\pi (tf)^{-1}\in G$.
We deduce: 
$$t f^\pi \pi^f (tf^\pi)^{-1} G = tf^\pi \pi^f (tf)^{-1} G = t\pi t^{-1} G.$$
The $G^T$-equivariance of the map is also obvious.

{\it Third description.}
We adapt to {\it any} Ore forest-skein category $\cF$ an example of actions constructed in \cite{Brothier-Jones19bis} just before Exercise 2.2.
This is done using Jones' technology.
Let $(\Set,\sqcup)$ be the monoidal category of sets equipped with disjoint union for the monoidal structure.
Let $\Phi:\cF\to \Set$ be the unique monoidal contravariant functor satisfying $\Phi(n)=\{1,\cdots,n\}$ and $\Phi(t)(1)=1$ for all tree $t$.
Hence, if $f=(f_1,\cdots,f_r)$ is a forest with $r$ roots and $n$ leaves, then $\Phi(f)$ is a map from $\{1,\cdots,r\}$ to $\{1,\cdots,n\}$ sending $j$ to 
$$j^f:=|\Leaf(f_1,\cdots,f_{j-1})|+1.$$
The number $j^f$ corresponds to the first leaf of the $j$th tree of $f$.
It is remarkable that $\Phi$ indeed exists whatever the forest-skein category $\cF$ is. 
It is a consequence of the universal property of a forest-skein category among categories given in Section \ref{sec:universal-property}.
Following Jones' technology we define the set $X_0$ equal to all pairs $(t,j)$ where $t$ is a tree and $j$ a leaf of $t$ (identified with an element of $\{1,\cdots,r\}$ where $r=|\Leaf(t)|$). 
The space $X$ is obtained by quotienting $X_0$ by the equivalence relation generated by
\begin{equation}\label{eq:bicross}(t,j)\sim (tf, j^f) \text{ where } j^f = |\Leaf(f_1,\cdots,f_{j-1})|+1.\end{equation}
Here is an example of two elements of $X_0$ in the same class where the distinguish leaf is a black dot:
\[\includegraphics{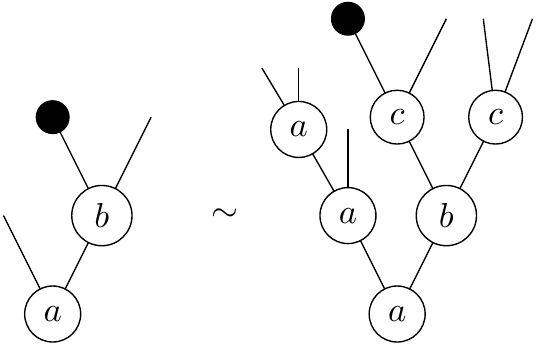}\]
We write $[t,j]$ for the class of $(t,j)$ inside $X:=X_0/\sim$.
Hence, elements of $X$ can be thought as a tree $t$ with a distinguish leaf $\ell$ of it.
Now, we can grow this tree into $tf$ getting for distinguish leaf $\ell^f$: the unique leaf of $tf$ which is a descendant of $\ell$ satisfying that the geodesic from $\ell$ to $\ell^f$ only contains left edges (i.e.~is a left vine).

A key point is that Jones' technology provides an action of the larger group $G^V$.
We have the Jones action 
$$G^V\act X,\ g \cdot [t,j]:= [sp' , \pi(j^p)]$$
where $G^V\ni g=s\circ \pi \circ t'^{-1}$ with $s,t',t$ trees, $\pi$ a permutation, and $p,p'$ some forests satisfying $t'p'=tp$.
In particular, $$s\circ \pi\circ t^{-1} \cdot [t,j]=[s, \pi(j)]$$
when we have equality of $t$ with $t'$.
The restriction of this action to $G^T$ is conjugate to the two actions described above via the map
$$Q^T/\cF\to X,\ t\pi \cF \mapsto [t, \pi^{-1}(1)].$$

{\bf Classical case.}
When $\cF$ is the monochromatic free forest-skein category, then $G=F, G^T=T,$ and $G^V=V$.
Moreover, if $\Q_2=\Z[1/2]/\Z$ is the set of dyadic rationals of the unit torus, then the evaluation map
$$T\to \Q_2, g\mapsto g(0)$$
factorises into a bijection
$$T/F\to\Q_2.$$
The usual action $T\act \Q_2$ and the homogeneous action $T\act T/F$ are conjugated by this bijection. 
The extension to an action of $V$ is obtained by restricting the classical action of $V$ on infinite binary sequences and by identifying $\Q_2$ with finitely supported ones.

\subsection{Total order}
We now mostly work with the third description of the action that we extend to $G^V$.
This is the Jones action $G^V\act X$ where $X$ is the set of classes $[t,j]$ where $t$ is a tree of $\cF$ and $j$ a leaf of $t$.
Moreover, if $a_k$ is the elementary forest with one $a$-caret at the $k$th root, then $[t,j]=[ta_k,j']$ where $j'=j+1$ if $k<j$ and $j'=j$ if $k\geq j.$

We define a total order $\preceq$ on $X$ (a partial order for which any two elements are comparable).
Consider two elements $[t,j]$ and $[t',j']$ of $X$. 
Here, $t,t'$ are trees with $n$ and $n'$ leaves, respectively, and $1\leq j\leq n, 1\leq j'\leq n'$.
Since $\cF$ satisfies Ore's property there exists some forests $f,f'$ so that $tf=t'f'=:z$.
Hence, $[t,j]=[tf,j^f]=[z,j^f]$ and $[t',j']=[t'f',j'^{f'}]=[z,j'^{f'}]$.
We define the order $\preceq$ by setting
$$[t,j]\preceq [t',j'] \text{ if } j^f\leq j'^{f'}.$$
We claim that the formula of above defines a partial order.

\begin{proof}[Proof of the claim]
Consider $[t,j],[t',j']\in X$ and assume there exists a pair of forests $(f,f')$ satisfying 
$$tf=t'f' \text{ and } j^f\leq j'^{f'}.$$
The claim resides in proving that if $(h,h')$ is another pair of forests satisfying $th=t'h'$, then we must have $j^h\leq j'^{h'}.$

First, note that given any forest $h$ with $n$ roots and $m$ leaves we have a map 
$$\{1,\cdots,n\}\to \{1,\cdots,m\}, \ j\mapsto j^h$$ 
(defined in \eqref{eq:bicross}) that is increasing in $j$.
Second, observe that $j^{hk}=(j^h)^k\geq j^h$ and in particular $h \mapsto j^h$ is increasing in $h$ for the usual partial order on the set of forests.

From these observations it is easy to deduce that $\preceq$ is a well-defined partial order.
Indeed, choose $(h,h')$ so that $th=t'h'$.
By Ore's property there exists a pair of forests $(u,v)$ so that $tfu=thv=:z.$
Now, 
\begin{align*}
& [t,j]=[tf,j^f]=[z,j^{fu}] =[z,j^{hv}];\\
& [t',j']=[t'f',j'^{f'}]= [tf, j'^{f'}] = [tfu,j'^{f'u}] = [z, j'^{f'u}]  \text{ and similarly } \\
& [t',j'] = [t'h',j'^{h'}]= [th, j'^{h'}]= [thv, j'^{h'v}] = [z,j'^{h'v}].
\end{align*}

We obtain
$$j^{hv}=j^{fu} = (j^{f})^u \leq (j'^{f'})^u = j'^{f'u} = j'^{h'v}.$$
Hence, $(j^h)^v\leq (j'^{h'})^v$ implying that $j^h\leq j'^{h'}.$
\end{proof}

We have proven that $\preceq$ is indeed a partial order. 
Moreover, Ore's property assures that any two elements of $X$ are comparable.
Therefore, $\preceq$ is a total order on $X$.
We write $\prec$ for the strict order associated to $\preceq$.
The following proposition proves that $G$ and $G^T$ can be defined using $G^V\act X$ and the partial order $\preceq$.

\begin{proposition}\label{prop:order-preserving}
Consider the action $\al:G^V\act X$ and the total order $\preceq$ on $X$.
We have that $G$ is equal to the set of elements of $G^V$ that is preserving the order $\preceq$, i.e.~
$$G=\{ g\in G^V:\ \al_g(x)\preceq \al_g(y) \text{ for all } x,y\in X, x\preceq y\}.$$
The group $G^T$ is the subset of $G^V$ of elements preserving the order $\preceq$ up to cyclic permutation.
That is $G^T$ is the set of $g\in G^V$ satisfying that for all chain $x_1\preceq x_2\preceq \cdots \preceq x_k$ in $X$ there exists a cyclic permutation $\pi\in\Z/k\Z$ so that
$$\al_g(x_{\pi(1)}) \preceq \al_g(x_{\pi(2)})\preceq \cdots \preceq \al_g(x_{\pi(k)}).$$
\end{proposition}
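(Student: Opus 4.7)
The plan is to reduce all order computations to a common tree representative. Any finite subset $\{x_1, \dots, x_k\} \subset X$ with representatives $x_i = [u_i, j_i]$ admits, by iterated applications of Ore's property in $\cF$, forests $f_i$ with $z := u_1 f_1 = \cdots = u_k f_k$; consequently $x_i = [z, j_i^{f_i}]$ in $X$ by \eqref{eq:bicross}. The well-definedness argument already established for $\preceq$ shows that $x_i \preceq x_{i+1}$ if and only if $j_i^{f_i} \leq j_{i+1}^{f_{i+1}}$. Therefore every chain in $X$ may be written as $[z, \ell_1] \preceq \cdots \preceq [z, \ell_k]$ for a single tree $z$ and integers $\ell_1 \leq \cdots \leq \ell_k$ in $\{1, \dots, |\Leaf(z)|\}$.

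For the $G$ case, first let $g = s \circ t^{-1} \in G$ and consider a chain $[z, \ell_1] \preceq \cdots \preceq [z, \ell_k]$. By Ore's property pick forests $p, q$ with $tp = zq$; the action formula then gives $g \cdot [z, \ell_i] = [sp, \ell_i^q]$ for every $i$, and monotonicity of the map $j \mapsto j^q$ yields $\ell_1^q \leq \cdots \leq \ell_k^q$, so $g$ preserves $\preceq$. Conversely, if $g = s \circ \pi \circ t^{-1} \in G^V$ preserves $\preceq$, apply $g$ to the chain $[t, 1] \prec \cdots \prec [t, n]$ where $n = |\Leaf(t)|$: the image equals $[s, \pi(1)], \dots, [s, \pi(n)]$, and order preservation forces $\pi(1) < \cdots < \pi(n)$, i.e.\ $\pi = \id$, so $g \in G$.

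The $G^T$ case proceeds by the same template, using the key fact already recorded in Section \ref{sec:formalism} that cyclicity of a permutation is preserved under the Brin--Zappa--Sz\'ep product with any forest. Given $g = s \circ \pi \circ t^{-1}$ with $\pi$ cyclic of order $n = |\Leaf(t)|$, rewriting $g = (sp^\pi) \circ \pi^p \circ (tp)^{-1}$ on a common refinement shows that $g$ acts on a chain $[z, \ell_1] \preceq \cdots \preceq [z, \ell_k]$ by applying the cyclic permutation $\pi^p$ on $\{1, \dots, |\Leaf(tp)|\}$ to the subsequence $(\ell_i^q)$. An elementary lemma---locating the unique index $m$ with $\ell_m^q + c \leq |\Leaf(tp)| < \ell_{m+1}^q + c$ where $c$ denotes the shift of $\pi^p$---shows that the resulting image sequence is a cyclic rearrangement of an increasing one, yielding the required $k$-cyclic permutation witnessing the hypothesis. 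Conversely, applying the cyclic hypothesis to the full chain $[t, 1] \prec \cdots \prec [t, n]$ forces the sequence $\pi(1), \dots, \pi(n)$ to be an $n$-cyclic rearrangement of an increasing permutation of $\{1, \dots, n\}$, which forces $\pi$ itself to be a cyclic permutation. The main technical point is tracking cyclicity correctly through the Brin--Zappa--Sz\'ep refinement; all remaining steps are routine bookkeeping.
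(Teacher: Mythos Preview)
Your proof is correct and follows essentially the same strategy as the paper: reduce a chain to a common tree representative, then read the action off as a permutation of leaf indices. The one efficiency you miss is that the paper grows the common tree for the chain \emph{and} the tree in the representative $g=s\circ\pi\circ t^{-1}$ simultaneously, so that the action literally becomes $[t,j_i]\mapsto[s,\pi(j_i)]$; this dispenses entirely with the Brin--Zappa--Sz\'ep rewriting $g=(sp^\pi)\circ\pi^p\circ(tp)^{-1}$ and the tracking of $\pi^p$ that occupies your $G^T$ forward direction. Your route through the BZS product is valid (and your elementary lemma about cyclic shifts of increasing sequences is exactly what is implicitly invoked in the paper's phrase ``it is now clear''), but the shortcut of aligning the two trees from the outset makes the argument a couple of lines rather than a paragraph.
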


\begin{proof}
Consider $k\geq 1$ and a chain $x_1\preceq \cdots\preceq x_k$ in $X$.
We have that $x_i=[t_i,j_i]$ for some tree $t_i$ and natural number $j_i$.
Up to using Ore's property and taking larger tree representatives we may assume that $t=t_1=\cdots=t_k$.
Hence, $x_i=[t,j_i]$ with $t$ a tree with $n$ leaves and $i\mapsto j_i$ is increasing from $\{1,\cdots,k\}$ to $\{1,\cdots,n\}.$
Every chain of $X$ is thus of this form for suitable $t$ and $j_i$.
Consider $g=[s\circ \pi ,t']\in G^V$.
Up to growing $t$ and $t'$ we may assume $t=t'$.
We obtain that $g\cdot x_i= [s, \pi(j_i)]$ for all $i$.
It is now clear that elements of $G$ preserves the order and elements of $G^T$ preserves the order up to cyclic permutations.

Conversely, if $\pi$ is nontrivial, then $g\notin G$ and there exists $i<j$ such that $\pi(j)<\pi(i)$. This implies that $\al_g([t,j])=[s,\pi(j)]\prec [s,\pi(i)]=\al_g([t,i])$ while $[t,i]\prec [t,j]$.
Hence, $g$ does not preserve the order $\preceq.$
Similarly, we prove that if $\pi$ is not cyclic, then $g$ does not preserve at least one chain up to cyclic permutations.
\end{proof}

{\bf Classical case.}
Assume that $\cF$ is the monochromatic free forest-skein category and thus $G,G^T,G^V$ are equal to $F,T,V$, respectively.
We have a bijection between $X$ and finitely supported sequences $\{0,1\}^{(\N)}$ but also with the dyadic rationals $[0,1)\cap \Z[1/2]$ of the half-open interval $[0,1)$.
A bijection from the second to the third space is given by the map
$$\{0,1\}^{(\N)}\to [0,1], x\mapsto \sum_{n=1}^\infty \frac{x_n}{2^n}.$$
Equip $\{0,1\}^{(\N)}$ with the lexicographical order and $[0,1)\cap \Z[1/2]$ with the usual order of the real line.
We obtain that $(X,\preceq)$ is isomorphic to these two totally ordered sets. Moreover, the bijections considered are isomorphisms of posets.
We recover that $F$ (resp.~$T$) corresponds to order-preserving (resp.~up to cyclic permutations) transformations.

\subsection{Transitivity properties}

\begin{proposition}\label{prop:Q-space}
Let $\cF,G$ as above and write $\al:G^T\act X$ the Jones action described above. 
Define $X_k$ to be all subsets of $X$ of cardinal $k$ and $\al_k:G^T\act X_k$ the action deduced from $\al$ for $k\geq 1.$
The following assertions are true.
\begin{enumerate}
\item For each $A\in X_k$ there exists a tree $t$ with $n$ leaves and some distinct natural numbers $1\leq j_1,\cdots,j_k\leq n$ satisfying $$A=\{ [t,j_i]:\ 1\leq i\leq k\}.$$
\item The action $\al_k$ is transitive for all $k\geq 1.$
\item If $A\in X_k$, then the fixed-point subgroup 
$$\Fix(A):=\{g\in G^T:\ \al_g(x)=x, \text{ for all } x\in A\}$$ 
is isomorphic to $G^k$ and the stabiliser subgroup 
$$\Stab(A)=\{g\in G^T:\ \al_g(A)=A\}$$ 
is isomorphic to $G^k\rtimes \Z/k\Z$ where $\Z/k\Z$ shifts indices modulo $k$.
\end{enumerate}
\end{proposition}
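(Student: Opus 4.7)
\emph{Part (1).} Write $A = \{[t_1,j_1],\ldots,[t_k,j_k]\}$ via chosen representatives. Applying Ore's property iteratively to $t_1,\ldots,t_k$ yields a common upper bound $t = t_l\circ f_l$ in $\cF$. Setting $j'_l := j_l^{f_l}$ gives $[t_l,j_l] = [t, j'_l]$, providing the common tree representation; the $j'_l$ are distinct because they index the distinct elements of $A \subset X$.

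\emph{Part (2).} Using (1), write $A = \{[t,j_1],\ldots,[t,j_k]\}$ and $B = \{[s, i_1],\ldots,[s, i_k]\}$ with $j_1 < \cdots < j_k$ and $i_1 < \cdots < i_k$. The marked leaves cut the leaves of $t$ and $s$ cyclically into $k$ arcs of sizes $(a_l)_{l=1}^k$ and $(b_l)_{l=1}^k$. The key observation is that grafting a single caret at a non-marked leaf of $t$ inside arc $l$ grows $t$ in $\cF$ while increasing only $a_l$ by $1$, and the same for $s$. Hence one can enlarge $t, s$ to $\tilde t, \tilde s$ with common leaf count $N$ and identical arc sequences, for instance by growing each arc to size $\max(a_l, b_l)$. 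Then $g := \tilde s \circ \tilde t^{-1} \in G \subseteq G^T$ satisfies $g \cdot A = B$.

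\emph{Part (3).} By Proposition \ref{prop:order-preserving}, elements of $G^T$ preserve the cyclic order on $A$ induced by $(X,\preceq)$, so the action of $\Stab(A)$ on $A$ factors through cyclic permutations, giving a homomorphism $\rho: \Stab(A) \to \Z/k\Z$ with $\ker\rho = \Fix(A)$. For a splitting, grow $t$ to a tree $u$ with all $k$ arcs of equal size $m$, let $\pi \in \Z/km\Z$ be the shift by $m$, and set $c := [u, \pi, u]$. Then $c^k = e$, and since distinct indices on the fixed tree $u$ give distinct $X$-classes (by left-cancellativity of $\cF$), the induced action of $c$ on $A$ is a nontrivial $k$-cycle; hence $c$ has order exactly $k$ and generates a complement $\la c\ra \cong \Z/k\Z$ inside $\Stab(A)$. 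For $\Fix(A)$ itself: any $g \in \Fix(A)$ admits a representative $[t', \pi, t]$ with the $j_l$ among the leaves of $t$ and $\pi(j_l) = j_l$ for all $l$; as a nontrivial cyclic permutation of $n$ objects has no fixed point, $\pi = \id$ when $k\geq 2$, so $\Fix(A)\subseteq G$. An arc-by-arc decomposition of such representatives, slicing each tree along its $k$ marked leaves and treating the resulting $k$ subtree pairs as independent elements, then yields an isomorphism $\Fix(A) \simeq G^k$, under which conjugation by $c$ cyclically permutes the $k$ factors. Combining these gives $\Stab(A) = \Fix(A) \rtimes \la c\ra \simeq G^k \rtimes \Z/k\Z$.

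The main obstacle is the identification $\Fix(A) \simeq G^k$ via the arc decomposition. Classically this reduces to the standard direct product description of point-stabilisers in Thompson's group $F$, with one factor per complementary interval. In the forest-skein setting one picks a common ``skeleton'' tree at the top displaying the marked leaves, so that representatives of $\Fix(A)$ are obtained by grafting independent subtrees below each marked leaf, and then uses Ore's property restricted to each arc to verify that the resulting coordinate map $\Fix(A) \to G^k$ is a well-defined group isomorphism.
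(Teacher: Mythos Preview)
Part (1) is fine and matches the paper.

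Part (2) has a real gap. Your element $g=\tilde s\,\tilde t^{-1}$ lies in $G$, but $G$ is \emph{not} transitive on $X$: it fixes the point $o:=[I,1]=[t,1]$ (for every tree $t$, since $1^f=1$ always). Concretely, matching the $k$ cyclic arc-lengths only pins down the differences $\tilde j_{l+1}-\tilde j_l=\tilde i_{l+1}-\tilde i_l$, so after your growth one still has a constant offset $\tilde j_l-\tilde i_l=\tilde j_1-\tilde i_1$, typically nonzero; then $g\cdot[\tilde t,\tilde j_l]=[\tilde s,\tilde j_l]\neq[\tilde s,\tilde i_l]$. The paper handles exactly this by first applying cyclic permutations $t\pi t^{-1}\in G^T$ to force $j_1=i_1=1$, and only then matches the remaining indices (your arc-growth idea would also work after that normalisation, but not before).

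Part (3) is right in spirit but the key step is not justified. From $g\cdot[t,j_l]=[t,j_l]$ and a representative $g=[t',\pi,t]$ you only get $[t',\pi(j_l)]=[t,j_l]$, which does not yield $\pi(j_l)=j_l$ unless $t'=t$. The paper sidesteps this by using transitivity to reduce to $A=\{[t,1],\dots,[t,k]\}$ with $t$ having \emph{exactly} $k$ leaves; then every $g\in G^T$ can be written $g=tf\circ\pi\circ(th)^{-1}$, and fixing $[t,p]$ reads as $\pi(p^h)=p^f$ on a common tree $tf$, whence $\pi(1)=1$ forces $\pi=\id$ and $|\Leaf(f_j)|=|\Leaf(h_j)|$ for all $j$. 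That immediately gives the isomorphism $G^k\to\Fix(A)$, $(f_jh_j^{-1})_j\mapsto tf(th)^{-1}$, and the splitting by $t\pi t^{-1}$. Your arc-by-arc picture is the same idea, but the reduction to the $k$-leaved tree is what makes the representative argument go through without extra bookkeeping.
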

 
\begin{proof}
The first statement was shown in the proof of Proposition \ref{prop:order-preserving}.

Proof of (2).
Consider $k\geq 1$ and $A,B\in X_k$.
From the first item of the proposition we may write $A$ and $B$ as sets of the form $\{[t,j_1],\cdots,[t,j_k]\}$ and $\{[s,i_1],\cdots, [s,i_k]\}$ where both $t$ and $s$ are trees with the same number of leaves, say $n$.
Moreover, we may assume $1\leq j_1<j_2<\cdots<j_k\leq n$, $1\leq i_1<i_2<\cdots<i_k\leq n$ up to re-indexing the elements of $A$ and $B$.
Up to applying $g=t\circ \pi \circ t^{-1}\in G^T$ to $A$ where $\pi(j_1)=1$ is a cyclic permutation we may assume $j_1=1$ and similarly we may assume $i_1=1$.

We claim that there exists trees $t'$ and $s'$ with $n'$ leaves and a sequence $1=l_1<l_2<\cdots<l_k \leq n'$ satisfying that 
$$(t,j_p)\sim (t', l_p) \text{ and } (s,i_p)\sim (s',l_p) \text{ for all } 1\leq p\leq k.$$
We proceed by induction on $k$.
If $k=1$, then we have nothing to do since $j_1=i_1=1.$
Assume $k\geq 2.$
Consider $j_2$ and $i_2$ and choose two trees $f_1$ and $h_1$ satisfying
$$|\Leaf(f_1)| + j_2 = |\Leaf(h_1)|+i_2=:l_2.$$
Now, choose again two trees $f_n$ and $h_n$ satisfying
$$|\Leaf(f_1)|+|\Leaf(f_n)|=|\Leaf(h_1)| + |\Leaf(h_n)|.$$
Finally, define the forests with $n$ roots $f=(f_1,\cdots,f_n)$ and $h=(h_1,\cdots,h_n)$ such that $f_j=h_j$ is trivial if $1\neq j\neq n$.
Observe that 
$$(t,j_2)\sim (tf, l_2) \text{ and } (s,i_2)\sim (sf,l_2).$$
Moreover, $(t,j_1)\sim (tf,j_1)$ and $(s,i_1)\sim (s,l_1)$ since $j_1=i_1=1$.
Finally, $tf$ and $sh$ are two trees with the same number of leaves.
We continue this process in adding trees on top of $t$ and $s$ until all the indices match.

Using the claim we may assume that 
$$A=\{ [t',l_p]:\ 1\leq p\leq k\} \text{ and } B=\{[s',l_p]:\ 1\leq p\leq k\}$$
where $t',s'$ are trees with the same number of leaves.
Taking $g:=s'\circ t'^{-1}\in G\subset G^T$ we deduce that $\al_k(g)(A)=B.$
We have proven that $\al_k$ is transitive.

Proof of (3).
Consider $k\geq 1$ and $A\in X_k$.
By (2) we may assume $A=\{ [t, p]:\ 1\leq p\leq k\}$ where $t$ is a tree with $k$ leaves.
Consider $g\in G^T$ and observe that it can always be decomposed as
$$g= tf \circ \pi \circ (th)^{-1}$$
with $f,h$ forests with $k$ roots and $\pi$ a cyclic permutation.
Assume that $g$ fixes each point of $A$.

We are going to show that the $j$th tree of $f$ and $h$ have the same number of leaves for each $j$.
Note that 
$$g\cdot [t,p] = g\cdot [th,p^h] = [tf, \pi(p^h)],$$
see \eqref{eq:bicross} for the notation $p^h$.
Since $g\cdot [t,p]=[t,p]$ and $[t,p]=[tf,p^f]$ we deduce that
$$\pi(p^h)=p^f \text{ for all } 1\leq p\leq k.$$
Note that $1^h=1=1^f$ implying that $\pi(1)=1$ and thus $\pi$ is the trivial permutation (since it is cyclic and fixes one point).
Hence, $p^h=p^f$ for all $1\leq p\leq k$.
This implies that $|\Leaf(f_j)|=|\Leaf(h_j)|$ for all $1\leq j\leq p$.
Conversely, this condition assures that $tf\circ (th)^{-1}$ fixes each point of $A$.
We deduce
$$\Fix(A)=\{ tf\circ (th)^{-1}:f,h\}$$
where $f=(f_1,\cdots,f_k), h=(h_1,\cdots,h_k)$ and $|\Leaf(f_j)|=|\Leaf(h_j)|$ for all $1\leq j\leq k.$ 
We obtain the first statement of (3) by observing that the map
$$G^k\to \Fix(A), (f_j\circ h_j^{-1},\ 1\leq j\leq k)\mapsto tf\circ (th)^{-1}$$
realises a group isomorphism.
Now $\Stab(A)$ is generated by $\Fix(A)$ and a group $C$ of permutations of the elements of $A$. 
Moreover, $\Fix(A)$ and $C$ forms a semidirect product $\Fix(A)\rtimes C$ where the action is deduced from $C\act A$.
The only possible permutations are necessarily of the form $[t,p]\mapsto [t,p+n]$ for a fixed $n$ and where the index $p$ is considered modulo $k$.
These permutation are realised by $t\circ \pi\circ t^{-1}$ where $\pi$ a cyclic permutation of the leaves.
After identification of $\Stab(A)$ with $G^k$ we deduce the last statement.
\end{proof}

\subsection{A finiteness result}\label{sec:def-finiteness}
We recall few definitions of topological finiteness properties of group introduced by Wall \cite{Wall65}. There exist homological analogous that we don't present here as we did not find any applications with them. We refer the reader to the book of Geoghegan \cite[Sections 7 and 8]{Geoghegan-book} for details.

{\bf Classifying space and universal cover.}
Let $G$ be a group. 
A classifying space $BG$ of $G$ (or $K(G,1)$-complex) is a (pointed) path-connected CW complex such that all its homotopy groups are trivial except the first one $\pi_1(BG,p)$ (the Poincar\'e group) which is isomorphic to $G$.
A universal cover $EG$ of $BG$ is necessarily contractible admitting a free (cellular) action $G\act EG$ whose quotient $G\bs EG$ is isomorphic to $BG$.
Conversely, every classifying space arise in that way.
All classifying spaces of a fixed group $G$ are pairwise homotopically equivalent but they may not be isomorphic as {\it complexes}.
Hence, we may define invariants of the group using properties of the family of its classifying spaces.

{\bf Topological finiteness properties.}
If $G$ is a group and $n\geq 0$, then we say that $G$ satisfies the {\it topological finiteness property} $F_n$ (or simply is of {\it type} $F_n$) if there exists a classifying space $BG$ with finite $n$-skeleton (i.e.~$BG$ has finitely many $k$-cells for all $k\leq n$).
Equivalently, $G$ is of type $F_n$ if there exists a contractible CW complex $EG$ on which $G$ acts freely and such that there are only finitely many $k$-cells modulo $G$ for each $k\leq n$.
The group $G$ is of type $F_\infty$ if it is of type $F_n$ for all $n\geq 1.$
Note that all groups are of type $F_0$ admitting a classifying space with one vertex.
Moreover, being of type $F_1$ (resp.~$F_2$) is equivalent to be finitely generated (resp.~finitely presented).

{\bf Geometric dimension.}
The {\it geometric dimension} $\dim_g(G)$ of $G$ is the minimal dimension (as a complex) of a classifying space of $G$ which is either a natural number or infinity $\infty$. 
It is increasing with respect to inclusion of groups and satisfies that $\dim_g(\Z^r)=r$ and $\dim_g(\Z/k\Z)=\infty$ for any $k\geq 2.$

{\bf Exceptional properties of Thompson-like groups.}
The group $F$ was the first example of a torsion-free group with infinite geometric dimension and of type $F_\infty$ as proved by Brown and Geoghegan \cite{Brown-Geoghegan84}.
Since the result of Brown-Geoghegan there have been many groups similar to $F$ that have been proved to be of type $F_\infty$. 
Here is a list that is very far from being exhaustive which includes the family of groups: $T,V$, Higman-Thompson's groups, Stein's groups, Brin's higher dimensional Thompson's groups $dV$, Brin and Dehornoy's braided Thompson's group $BV$, and large subclasses of Hughes' FSS groups, Guba-Sapir's diagram groups, Rover-Nekrashevych's groups, cloning system of groups, operad groups, and a number of other generalisations \cite{Brown87, Stein92,Kochloukova-Martinez-Perez-Nucinkis13, Fluch-Marschler-Witzel-Zaremsky13, Bux-Fluch-Marschler-Witzel-Zaremsky16,Farley-Hughes15,Witzel-Zaremsky18, Thumann17}.
We refer to a recent preprint of Skipper and Wu for additional examples and references \cite{Skipper-Wu21}.

For all these families of groups the scheme of the proof is based on two technical results: Brown's criterion and Bestvina-Brady's discrete Morse theory \cite{Brown87,Bestvina-Brady97}.
It relies on finding a suitable filtration of a CW complex (often a simplicial one) and computing connectivity of links rather than finding a CW complex with finite skeleton (which is rare to find in practice).
We refer the reader to the article of Zaremsky for a very clear and pedagogical explanation on this strategy \cite{Zaremsky21}.

Even though many families of Thompson-like groups have been proved to be either not finitely presented or of type $F_\infty$ there exist groups satisfying properties in between.
Indeed, for any $n\geq 1$ there exists a Thompson-like groups that is of type $F_n$ but not of type $F_{n+1}$ using a construction and result of Tanushevski \cite{Tanushevski16}.
This is obtained by considering diagrams of monochromatic binary forests and by decorating their leaves with elements of a group with the desirable finiteness property.
Although, all these examples are nontrivial split extension of Thompson group ($F,T$, or $V$) and are thus not simple.
Skipper, Witzel, and Zaremsky have constructed a sequence of Rover-Nekrashevych groups that are of type $F_n$ but not of type $F_{n+1}$ and moreover are simple (giving the first family of this kind), see \cite{Skipper-Witzel-Zaremsky19} and also the family of groups constructed by Belk and Zaremsky \cite{Belk-Zaremsky22}.

Note that all forest-skein groups have infinite geometric dimension since they contain a copy of $F$, see Corollary \ref{cor:Finside}. 
Although, it is a delicate question to decide if they have finiteness properties.
Indeed, finiteness properties (topological or homological) fail to be close under most permanence properties. 

{\bf A result.}
We adapt a clever strategy due to Brown and Geoghegan explained in \cite[Section 4B, Remark 2]{Brown87} and in the book \cite[Theorem 9.4.2]{Geoghegan-book} to deduce that $G^T$ is of type $F_n$ when $G$ is. 
To do this we use a criteria stated by Brown \cite[Proposition 1.1]{Brown87} on homological finiteness property which we simplify and translate to our need.

\begin{proposition}\label{prop:stab-F-infty}
Consider $n\geq 1$, a group $G$, and $C$ a contractible CW-complex.
If there exists a cellular action $G\act C$ such that the stabiliser of each cell is of type $F_n$ and $C$ has finitely many $k$-cells modulo $G$ for each $k\geq 0$, then $G$ is a group of type $F_n$.
\end{proposition}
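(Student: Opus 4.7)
The plan is to invoke Brown's topological finiteness criterion \cite[Proposition 1.1]{Brown87} (which is the topological version of the classical criterion for type $FP_n$). In its topological formulation, Brown's criterion states that if $G$ acts cellularly on an $(n-1)$-connected CW-complex $X$, if the stabiliser of every $p$-cell with $p\leq n$ is of type $F_{n-p}$, and if $X^{(n)}/G$ has finitely many cells, then $G$ is of type $F_n$. The hypotheses of our proposition are uniformly stronger than those required by Brown's criterion: contractibility of $C$ gives $(n-1)$-connectedness for every $n$; each stabiliser being of type $F_n$ automatically implies type $F_{n-p}$ for every $p\leq n$; and having finitely many $k$-cells modulo $G$ for every $k\geq 0$ certainly yields finitely many cells in $C^{(n)}/G$. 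Thus Brown's criterion applies and the conclusion follows directly.

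Constructively, one can produce an explicit classifying space $BG$ with finite $n$-skeleton by assembling classifying spaces of the cell stabilisers over the cellular structure of $C/G$. Concretely, choose for each orbit representative $\sigma$ of a cell of $C$ a classifying space $BG_\sigma$ with finite $n$-skeleton (which exists since $G_\sigma$ is of type $F_n$), and glue the pieces $BG_\sigma\times\bar\sigma$ along the face maps inherited from $C/G$, where $\bar\sigma$ denotes the image of $\sigma$ in $C/G$. Contractibility of $C$ ensures that the resulting total space is a $K(G,1)$, while the finiteness bookkeeping is transparent: in each dimension $d\leq n$ one gets only finitely many cells since there are finitely many orbits of cells of $C$ in dimensions $\leq d$ and each finite $n$-skeleton of $BG_\sigma$ contributes finitely many cells.

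The only genuine content of the argument is the verification that the gluing produces the correct homotopy type, which is classical and is handled either via a spectral sequence (as in Brown's original proof) or via direct skeletal analysis; a detailed account can be found in \cite[Theorem 7.3.1]{Geoghegan-book}. Since the hypotheses of our proposition are uniform across dimensions, no delicate interplay between the dimension of a cell and the finiteness property of its stabiliser enters the argument, so the result is immediate from the cited version of Brown's criterion. The main obstacle, if any, lies in correctly interpreting Brown's criterion in the topological (rather than homological) category, but this is precisely what the references provide.
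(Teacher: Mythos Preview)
Your proposal is correct and matches the paper's own treatment: the paper does not give an independent proof of this proposition but states it as a simplification and topological translation of Brown's criterion \cite[Proposition 1.1]{Brown87}, exactly as you do. Your added discussion of the explicit construction and the reference to Geoghegan's book for the passage from the homological to the topological setting is consistent with (and somewhat more detailed than) what the paper says.
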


\begin{theorem}\label{theo:T-F-infty}
Consider $n\geq 1$, a Ore forest-skein category $\cF$, and $G:=\Frac(\cF,1), G^T:=\Frac(\cF^T,1)$ the associated $F$ and $T$-forest-skein groups, respectively.
If $G$ is of type $F_n$, then so is $G^T.$
\end{theorem}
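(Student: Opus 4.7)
The plan is to apply Brown's criterion from Proposition \ref{prop:stab-F-infty} to a natural cellular action of $G^T$ built from the totally ordered $G^T$-set $X$ of Section \ref{sec:Q-space}. Let $\Delta$ be the abstract simplicial complex whose vertex set is $X$ and whose $k$-simplices are the $(k+1)$-element subsets of $X$, i.e.~the full simplex on $X$. Fixing any $x_0 \in X$ and coning off produces a contraction, so $\Delta$ is contractible. The action $G^T \act X$ extends to a cellular action $G^T \act \Delta$ by permuting vertices within simplices.

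By Proposition \ref{prop:Q-space}(2), the induced action $G^T \act X_{k+1}$ on $(k+1)$-element subsets is transitive for every $k\geq 0$, so $\Delta$ has exactly one $G^T$-orbit of $k$-simplices in each dimension. By Proposition \ref{prop:Q-space}(3), the setwise stabiliser of a $(k+1)$-subset $A \subset X$ is isomorphic to $G^{k+1} \rtimes \Z/(k+1)\Z$. The class of groups of type $F_n$ is closed under finite direct products and under finite extensions, so our assumption that $G$ is of type $F_n$ guarantees that every such stabiliser is of type $F_n$. Feeding these three facts (contractibility of $\Delta$, finitely many cells per dimension modulo $G^T$, stabilisers of type $F_n$) into Proposition \ref{prop:stab-F-infty} yields that $G^T$ is of type $F_n$.

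The main subtlety is that the action $G^T \act \Delta$ is not admissible: cyclic permutations act nontrivially on simplices, reflected in the $\Z/(k+1)\Z$-factor of the stabiliser. One therefore needs the form of Brown's criterion that uses setwise stabilisers, which is the standard formulation and the one encoded in Proposition \ref{prop:stab-F-infty}. Should one prefer an admissible action, it suffices to pass to the barycentric subdivision of $\Delta$: a $p$-cell of the subdivision is a chain $A_0 \subsetneq \cdots \subsetneq A_p$ of finite subsets of $X$, and by Proposition \ref{prop:Q-space}(2) the $G^T$-orbit of such a chain is determined entirely by the sequence of cardinalities $(|A_0|, \ldots, |A_p|)$, so there are still finitely many orbits in each dimension; the pointwise stabiliser of a chain is the intersection of the setwise stabilisers $\Stab(A_i)$, still of the form $G^m \rtimes F$ with $F$ a finite group, hence of type $F_n$.
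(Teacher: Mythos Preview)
Your proof is correct and follows essentially the same route as the paper: both build the full simplex on the set $X\simeq G^T/G$, observe it is contractible, invoke Proposition~\ref{prop:Q-space} to get one orbit of $k$-cells and stabilisers $G^{k+1}\rtimes\Z/(k+1)\Z$, and then apply Proposition~\ref{prop:stab-F-infty}. Your additional paragraph on admissibility and barycentric subdivision is extra care not present in the paper; one small imprecision there is that the stabiliser of a chain need not literally split as $G^m\rtimes F$, but it does contain $\Fix(A_p)\simeq G^{|A_p|}$ with finite index, which is all you need.
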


\begin{proof}
Consider $n,G,G^T$ as above and assume $G$ is of type $F_n$.
Let $\ell^2(G^T/G)$ be the real Hilbert space with standard orthonormal basis $B$ indexed by the set $G^T/G$. 
Define $C\subset \ell^2(G^T/G)$ to be the simplicial complex whose $k$-simplices are the convex hull of $k+1$ distinct elements of the basis $B$. 
The complex $C$ is contractible and moreover the action $G^T\act G^T/G$ induces a simplicial action $G^T\act C$.
Using item 2 of Proposition \ref{prop:Q-space}, we deduce that $C$ has one $k$-simplex modulo $G^T$ for each $k\geq 0$. 
Moreover, the stabiliser of a $k$-simplex is isomorphic to $G^{k+1}\rtimes \Z/(k+1)\Z$ by item 3 of the same proposition.
This later group is of type $F_n$ since $G$ is.
We deduce from Proposition \ref{prop:stab-F-infty} that $G^T$ is of type $F_n.$
\end{proof}

\begin{remark}\label{rem:FPm}
A {\it homological} version of this last theorem can be given by replacing $F_n$ by $FP_n$ where $FP_n$ stands for having a projective $\Z G$-resolution of $\Z$ that is finitely generated in dimension smaller than $n$, see \cite[Section 8]{Geoghegan-book} for details.
Note that $F_n$ always implies $FP_n$ for a group $G$ (take the augmented cellular chain complex of the universal cover of a classifying space of $G$ which is a {\it free} $\Z G$-resolution of $\Z$).
The converse holds for $n=1$ and for all $n\geq 2$ providing $G$ is finitely presented.
Hence, new applications in the homological setting would only happen for forest-skein groups $G$ that are not finitely presented but are of type $FP_n$ for a certain $n\geq 2$ (thus necessarily finitely generated). 
We have not witnessed any forest-skein groups with these properties so far.
\end{remark}
%%%%%%%%%%%%%%Q-SPACE-END%%%%%%%%%%%%%%%%%%%%%%%%%%%%%
%%%%%%%%%%%%%%%%%%%%%%%%%%%%%%%%%%%%%%%%%%%%%%%

%%%%%%%%%%%%%%%%%%%%%%%%%%%%%%%%%%%%%%%%%%%%%%%
%%%%%%%%%%%%%%%THUMANN%%%%%%%%%%%%%%%%%%%%%%%%%%
\section{Thumann's operad groups and a finiteness theorem}\label{sec:Thumann}
%Plan of the section
In his PhD thesis Thumann introduced the class of {\it operad groups} and proved that a large collection of them are of type $F_\infty$.
Thumann's theorem recovers at once a number of result of the literature by adapting and extending their techniques of proof in a unified categorical framework \cite{Thumann17}.

\subsection{Operad groups and forest-skein groups}

In this section we present Thumann's operad groups and explain that forest-skein groups are particular case of those. 
We do it for the ordinary $F$-case and briefly explain why the $V$ and $BV$-versions are operad groups as well. 
We use Thumann's notations and terminology and rephrase them in our formalism. 
This allows the reader to easily find additional details in Thumann's article and to present notions and results in the two languages (which are significantly different).

Consider a forest-skein category $\cF$ which we recall is a monoidal category $(\cF,\circ,\ot)$ with set of objects $\N$ and morphisms the forests that we compose by vertical stacking. 
A forest with $\ell$ leaves and $r$ roots is a morphism from $\ell$ to $r$ or an {\it arrow} $\ell\to r$.

\subsubsection{Operad, colours, and operations.}
In Thumann's framework \textit{colour} means something else than colours for vertices of forests in our framework.
Hence, we write \textit{Op-colour} when we refer to the colour in the operad group context. 

An {\it operad} $\cO$ is a set of \textit{Op-colours} together with sets of \textit{operations} $\cO(a_1,\cdots,a_n;b)$ with $a_i,b$ Op-colours so that $a_i$ are the input and $b$ the output Op-colours.
Such an operation is represented (like string diagrams in Jones' planar algebras or in tensor categories) by a triangle with $n$ horizontal lines on the left and one on the right corresponding to the $n$ inputs and the one output.
Here is an example when $n=3$:
\[\includegraphics{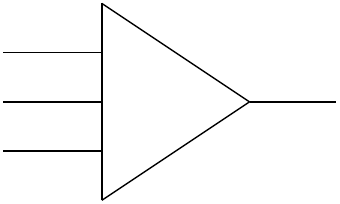}\]
They can be composed associatively by concatenating diagrams horizontally.
Moreover, there is an identity for each Op-colour $1_a\in \cO(a;a).$

{\bf Forest case.}
Define $\cO_\cF$ to be the operad having a single Op-colour $*$ (i.e.~it is {\it Op-monochromatic}) so that the set of operations $\cO_\cF(a_1,\cdots,a_n;b)$ with $a_1=\cdots=a_n=b=*$ is equal to the set of trees with $n$ leaves. 
Our composition of forests correspond to the composition of the operads.
The identity $1_*\in\cO_\cF(*,*)$ is the trivial tree $I$.
Therefore, $\cO_\cF$ corresponds to the set of trees of $\cF$ equipped with the composition of trees with elementary forests.

\subsubsection{Colour-tame.}
Thumann requires that operads are {\it colour-tame} in order to apply his machinery. 
He notes that this may not be necessary but for technical reasons we must assume it.
We will not define it but simply notice that if an operad is Op-monochromatic), then it is automatically colour-tame.
In particular, the operad $\cO_\cF$ associated to any forest-skein category is colour-tame.

\subsubsection{Monoidal category associated to an operad}
Given an operad $\cO$ we consider $\cS(\cO)$ the monoidal category with object finite lists of Op-colours and morphisms finite lists of operations. The tensor product is the juxtaposition.

{\bf Forest case.}
A finite list of Op-colours in $\{*\}$ is a natural number and a finite list of trees is a forest. Juxtaposition corresponds to the tensor product of forests.
We deduce that $\cS(\cO_\cF)$ and $\cF$ are isomorphic as monoidal categories. 

\subsubsection{Adding permutations or braids}
Thumann defines three different types of operads: {\it planar, symmetric, and braided} operads.
The first is the one we just defined, the second consists in adding permutation, and the third braids.
Graphically, we now consider the diagram of a braid drawn horizontally followed to the right by an operation where the strands of the braids are connected to the inputs of the operation. 
For permutations, we do the same except that under and over crossing are replace by a single type of crossing (corresponding to the usual maps $B_n\onto \Sigma_n$).
Formally, this can be achieved using a Brin-Zappa-Sz\'ep product of the category $\cS(\cO)$ with the groupoid of symmetries or braids. 
Hence, any planar operad can produce three different categories: planar, symmetric, and braided.

{\bf Forest case.} 
For forest-skein categories we have proceed in a very similar way. Given a forest-skein category $\cF$ we can add symmetries or braids obtaining $V$ and $BV$-forest-skein categories.
Hence, $F,V,BV$-forest-skein categories correspond to planar, symmetric, and braided operads. 
Note that Thumann did not consider the $T$-case corresponding in incorporating only cyclic permutations.
We are mainly focusing on the $F$-case in this article. Although, we will translate results for the $F,V,$ and $BV$-cases since it is easy to do from Thumann's formalism. 
Moreover, using Theorem \ref{theo:T-F-infty} we will deduce results for the remaining $T$-case.

\subsubsection{Operad groups}
Given a list of Op-colours $Y$ we define the {\it operad group}
$$\pi_1(\cO,Y):=\pi_1(\cS(\cO),Y)),$$
equal to all loops in $\cS(\cO)$ starting (and thus ending) at $Y$ that we equip with the composition of path.
This means formal compositions of morphisms and their inverses in $\cS(\cO)$. 
If $\cS(\cO)$ admits a {\it cancellative calculus of fractions}, then elements of $\pi_1(\cS(\cO),Y)$ are nothing else than $[f,g]=f\circ g^{-1}$ with $f,g$ list of operations ending at $Y$. 
The multiplication and inverse are the obvious ones: $[f,g]\circ [g,h]=[f,h]$ and $[f,g]^{-1}=[g,f].$

{\bf Forest case.} 
Cancellative calculus of fractions is the same notion of being a Ore category in our sense (i.e.~left-cancellative and satisfying Ore's property).
Assume we are in this case.
A list of Op-colours $Y$ is a natural number $n$ and the operad group $\pi_1(\cO_\cF,Y)$ corresponds to the group of fractions $[f,g]$ of forests $f,g$ having both $n$ roots.
We deduce that $\pi_1(\cO_\cF,Y)$ is the forest-skein group group $\Frac(\cF,n)$.

If we consider the symmetric or braided version of $\cO_\cF$ we obtain the $V$ and $BV$-forest-skein groups, respectively.

\subsubsection{Degree and transformation}

{\bf Degree}
An operation of {\it degree} $d$ is an operation with $d$ inputs.
The category $\cI(\cO)$ with object the Op-colours and morphism all the degree 1 operations is considered. 
For technical reasons it is assumed that $\cI(\cO)$ is a groupoid and later we will ask that it is of type $F_\infty^+$ (a groupoid where any subgroup of any of its isotropy group is of type $F_\infty$).

{\bf Transformation.}
A {\it transformation} of $\cS=\cS(\cO)$ is a list of morphisms of $\cI=\cI(\cO)$ forming a subcategory $\cT=\cT(\cO)$ of $\cS$. 
Since $\cI$ is assumed to be a groupoid we deduce that $\cT$ is a groupoid as well.
We consider $\cT\cC=\cT\cC(\cO)$ equal to the set of all operations of $\cO$ mod out by $\cT.$
Elements of $\cT\cC$ are thus classes of operations modulo the transformations.
Compositions defines a partial order for operations that passes through the quotient by transformations.
Moreover, the degree map $\deg$ defined as the number of inputs of an operation passes through the quotient too.
We obtain that $\deg:(\cT\cC,\leq)\to (\N,\leq)$ is strictly increasing, i.e.~$x<y$ implies $\deg(x)<\deg(y).$
We say that $(\cT\cC,\leq,\deg)$ is a {\it graded poset}.

{\bf Forest case.}
Now, a degree $d$ operation of $\cO_\cF$ is a tree with $d$ leaves.
Hence, $\cI=\cI(\cO_\cF)$ is the category with one object $*$ and one morphism: the trivial tree $I$. It is the trivial group and in particular is a groupoid of type $F_\infty^+$ whatever $\cF$ is and whatever we work with the $F,V$, or $BV$-version.
The groupoid $\cT$ is then equal to the collection of trivial forests $\{I^{\ot n}, n\geq 1\}$ that is the groupoid having for set of object $\N$, no morphisms between distinct natural numbers, and one single endomorphism for each $n\in \N$.
A transformation is thus a trivial forest.
Once again this does not change for the other versions $V$ and $BV$ (hence permutations and braids are {\it not} in $\cT$).
An operation of $\cO_\cF$ is a tree. Multiplying by a trivial forest (to the right for our conventions and to the left for the conventions of Thumann) does not change it.
Hence, $\cT\cC$ is in fact equal to the set of operations of $\cO_\cF$ that is the set of trees. 
The partial order is the usual one: $t\leq t\circ f$ that we have considered.
The degree map is the initial map or number of leaves map also considered as the origin map $\omega$:
$$\deg(t):=|\Leaf(t)|=\omega(t).$$
The graded poset $(\cT\cC,\leq,\deg)$ is the set of trees equipped with the same partial order $t\leq t\circ f$ we have defined and the number of leaves map $t\mapsto |\Leaf(t)|.$
For the other $V$ and $BV$-cases one should interpret tree by a tree with a permutation or a braid on top, respectively.

\subsubsection{Spine, elementary transformation classes, finitely generated, and finite type}

{\bf Spine of a graded poset.}
Consider a graded poset $(P,\leq,\deg)$ and its subset $M$ of minimal elements.
We say that a subset $Sp\subset P$ is the {\it spine} of $P$ if it is the smallest subset $Sp\subset P$ satisfying that $M\subset Sp$ and for all $v\in P$ the set $\{s\in Sp:\ s<v\}$ contains a greatest element, i.e.~comparable and larger with all other element in the set of above.

Note that the spine is unique if it exists.
Thumann proved that every graded poset admits a spine, see \cite[Section 3.4.1]{Thumann17}.
To prove it he constructs explicitly the spine providing a convenient equivalent definition.
The construction is as follows: let $Sp_0$ be the minimal elements of $P$.
Given $x,y\in P$ distinct define the set $M_0(x,y)$ of $z\in P$ larger than both $x$ and $y$. 
Now, define $M(x,y)$ to be the subset of all minimal elements of $M_0(x,y)$.
Note that $M(x,y)$ could be empty if the poset is not directed.
Let $Sp_1:= \cup_{(x,y)} M(x,y)$ be the union of all $M(x,y)$ where $x,y$ are in $Sp_0$ and $x\neq y$.
Define inductively $Sp_{n+1}:=\cup_{(x,y)} M(x,y)$ for $n\geq 1$ where the union is over the pairs $(x,y)\in Sp_n\times Sp_n$ so that $x\neq y$.
Finally, the spine of $P$ is equal to the union $\cup_{n\geq 0} Sp_n.$

{\bf Spine of the graded poset associated to an operad.}
Define $\cT\cC^*(\cO)$ to be the full subposet of $\cT\cC(\cO)$ spanned by the degree classes of at least 2.
Minimal elements of $(\cT\cC^*(\cO),\leq)$ form the collection of {\it very elementary transformation classes} denoted $VE$. 
The spine of the poset $(\cT\cC^*(\cO), \leq,\deg)$ forms the collection of {\it elementary transformation classes} denoted $E$.

{\bf Finitely generated operads and operads of finite type.}
The operad is \textit{finitely generated} if $VE$ is finite and of \textit{finite type} if $E$ is finite.

{\bf Forest case.}
{\it Spine.} 
Consider again a forest-skein category $\cF$ and the graded poset $(\cT\cC(\cO_\cF),\leq,\deg)$ of above.
We have that $\cT\cC^*(\cO_\cF)$ is the set of all nontrivial trees: the set of trees minus $I$.
Moreover, observe that if $x,y$ are trees, then $M(x,y)$ corresponds to the minimal common right-multiples of $x$ and $y$.
The subset of minimal elements of $\cT\cC^*(\cO_\cF)$ is the set $\cF(2,1)$ of all trees with two leaves that we call the set of abstract colours of $\cF$. 
Hence, the spine of the graded poset is obtained by taking minimial common right-multiples of trees starting from $\cF(2,1)$.
We rephrase this construction using composition rather than the poset structure and call {\it spine of $\cF$} the set equal to the spine of the graded poset $(\cT\cC(\cO_\cF),\leq,\deg)$.
Write $\cm(x,y)$ for the common (right-)multiples of both $x$ and $y$ and $\mcm(x,y)$ for the minimal elements of $\cm(x,y)$.

\begin{definition}\label{def:spine}
Let $\cF$ be a forest-skein category.
Define the following sequence of sets:
$$\Sp_0(\cF) =\cF(2,1) ,\ \Sp_{n+1}(\cF) := \bigcup_{f,g\in \Sp_n(\cF), f\neq g}\mcm(f,g) \text{ for all } n\geq 0.$$
The \textit{spine} $\Sp(\cF)$ of $\cF$ is the union 
$$\Sp(\cF):=\bigcup_{n\geq 0} \Sp_n(\cF).$$
\end{definition}

{\it Elementary transformation.}
The very elementary transformations are the set of trees with two leaves: the abstract colour set.
The elementary transformations are the elements of $\Sp(\cF)$.

{\it Finitely generated.}
The operad $\cO_\cF$ is finitely generated if and only if $\cF$ has finitely many abstract colours.

{\it Finite type.}
The operad $\cO_\cF$ is of finite type if and only if the spine of $\cF$ is finite.
This last property will be crucial in applying Thumann's theorem.

\subsection{Topological finiteness properties}

We state Thumann's theorem regarding finiteness properties of operad groups that we translate and specialise to forest-skein categories.
We explain the limit of this theorem by exhibiting one striking example.

\subsubsection{Thumann's theorem}
Here is Thumann's main theorem of \cite{Thumann17}.

\begin{theorem}\label{theo:Thumann}
Consider an operad $\cO$ (that is either planar, symmetric, or braided).
If the following five conditions are satisfied:
\begin{enumerate}
\item it has finitely many Op-colours;
\item it is colour-tame;
\item it admits a cancellative calculus of fractions;
\item it is of finite type;
\item the category $\cI(\cO)$ is a groupoid of type $F_\infty^+$,
\end{enumerate}
then for any object $Y$ of $\cS(\cO)$ the operad group $\pi_1(\cO,Y)$ is of type $F_\infty$.
\end{theorem}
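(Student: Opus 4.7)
The plan is to follow the classical Brown--Stein--Bestvina--Brady recipe that has been repeatedly and successfully adapted to Thompson-like groups. The idea is to build a contractible CW-complex $X$ on which $\pi_1(\cO,Y)$ acts, and then verify Brown's criterion for $F_\infty$: a filtration $X_0\subset X_1\subset\cdots$ of $G$-cocompact subcomplexes such that (a) the cell stabilisers are all of type $F_\infty$, and (b) the connectivity of the pairs $(X_{n+1},X_n)$ tends to infinity. Concretely, first I would form the poset $\cP$ of elements of $\cT\cC(\cO)$ above $Y$, ordered by $\leq$, and take its geometric realisation $|\cP|$; cancellativity of fractions gives a well-defined free action $\pi_1(\cO,Y)\act|\cP|$ essentially as in Section~\ref{sec:complex}, and Ore's property makes the poset directed, hence $|\cP|$ contractible.

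Next I would refine $|\cP|$ to a Stein--Farley style subcomplex by restricting to chains whose maximum is an \emph{elementary} multiple of its minimum (in the sense that the ``step'' lies in the spine $E$). This refinement remains contractible by the standard argument that the discarded simplices can be collapsed along the extra unused elements. The filtration is then given by the degree map $\deg$ on $\cT\cC(\cO)$, setting $X_n$ to be the subcomplex supported on classes of degree at most $n$. Hypothesis (1) of finitely many Op-colours together with colour-tameness (2) ensures that each $X_n$ is $G$-cocompact, because there are only finitely many $G$-orbits of simplices up to degree $n$. The stabiliser of a simplex is an extension of a product of isotropy groups of $\cI(\cO)$ by a finite group encoding the symmetry of the chain; by hypothesis (5) these isotropy groups are of type $F_\infty^+$, and $F_\infty^+$ is closed under finite direct products and extensions by finite groups, giving (a).

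The main obstacle is (b): proving that the \emph{descending links} in the passage from $X_n$ to $X_{n+1}$ become arbitrarily highly connected as $n\to\infty$. Here the hypothesis (4) of finite type (spine finite) is essential. In the planar case, the descending link at a vertex of degree $n$ can be identified with a simplicial complex of ``elementary non-trivial expansions,'' and one applies Bestvina--Brady discrete Morse theory together with nerve-type arguments to reduce its connectivity to a purely combinatorial statement about covers by subcomplexes indexed by elementary transformations; the finiteness of $E$ and a counting argument in $\deg$ provide the required connectivity bound. In the symmetric case, the descending link acquires the structure of a matching complex on a finite ground set, whose connectivity grows linearly by the classical Bj\"orner--Lov\'asz--Vre\'cica--\v{Z}ivaljevi\'c estimate. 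The braided case is the genuinely hard step: the descending link is modelled by an arc complex on a punctured disc, and one must invoke the intricate connectivity estimates for such complexes developed by Bux--Fluch--Marschler--Witzel--Zaremsky for $BV$ and generalised by Thumann. Once these three parallel connectivity results are in hand, Brown's criterion applies and yields $\pi_1(\cO,Y)$ of type $F_\infty$ simultaneously in the planar, symmetric, and braided settings.
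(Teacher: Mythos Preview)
This theorem is not proved in the paper: it is quoted as Thumann's main result from \cite{Thumann17}, and the paper only sketches his strategy in Section~\ref{sec:rebel}. Your outline is broadly correct and matches the paper's account of Thumann's argument in its large-scale architecture (contractible order complex, filtration by degree, Brown's criterion, Bestvina--Brady Morse theory, the three parallel connectivity analyses for planar/symmetric/braided).

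One genuine departure: you propose a Stein--Farley style refinement of the order complex, keeping only chains whose step lies in the spine. According to the paper's summary, Thumann does \emph{not} do this; he works with the full order complex $E\cF$, filters it by the degree map, and handles the descending links $L_j$ directly via Hurewicz and Mayer--Vietoris, decomposing them in terms of $L_k$ with $k<j$. The paper explicitly contrasts this with Witzel's approach \cite{Witzel19}, which \emph{does} pass to a smaller $G$-invariant contractible subcomplex (using a Garside family) before filtering. Your proposal is therefore closer in flavour to the Stein/Witzel line; both routes are viable, but the combinatorics of the connectivity bounds are organised differently, and it is the full-complex route that Thumann actually takes.

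A minor inconsistency in your write-up: you first claim the action on $|\cP|$ is free, then correctly analyse non-trivial cell stabilisers built from isotropy groups of $\cI(\cO)$. In the general operad setting the action is not free---this is exactly why hypothesis~(5) is present. Freeness holds only in the special case (such as forest-skein categories) where $\cI(\cO)$ is the trivial groupoid.
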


Consider now a Ore forest-skein category $\cF$ and its $X$-forest-skein group $\Frac(\cF^X,1)$ with $X$ being either $F,V,$ or $BV$.
Its associated operad $\cO_\cF$ is by definition Op-monochromatic and thus satisfies the first two items.
By assumption it is a Ore category and thus satisfies the third item.
The category $\cI(\cO_\cF)$ is equal to the trivial group and thus satisfies the last item.
Note that the forest-skein group $\Frac(\cF^X,1)$ is isomorphic to $\pi_1(\cO_\cF, Y)$ where $Y$ is the list with one element which is the unique Op-colour of $\cO_\cF$ and $X=F,V,BV$ when $\cO_\cF$ is planar, symmetric, or braided, respectively.
Finally, recall that $\cO_\cF$ is of finite type if and only if the spine of $\cF$ is finite. 
We obtain the following corollary for forest-skein groups.

\begin{corollary}\label{theo:Finfty-withoutT}
Let $\cF$ be a Ore forest-skein category.
If the spine of $\cF$ is finite, then the forest-skein group $\Frac(\cF^X,1)$ is of type $F_\infty$ for $X\in\{F,V,BV\}.$
\end{corollary}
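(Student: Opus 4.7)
The plan is to verify the five hypotheses of Thumann's Theorem \ref{theo:Thumann} for the operad $\cO_\cF$ (in its planar, symmetric, or braided incarnation, depending on whether $X=F,V,$ or $BV$) and then translate the conclusion back to forest-skein groups via the dictionary established in this section. Since the dictionary identifies $\Frac(\cF^X,1)$ with $\pi_1(\cO_\cF, Y)$ where $Y$ is the length-one list consisting of the unique Op-colour, it suffices to check that $\cO_\cF$ satisfies conditions (1)--(5) of Theorem \ref{theo:Thumann}.

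First I would check the four easy conditions. The operad $\cO_\cF$ has a single Op-colour $*$, which simultaneously handles (1) finitely many Op-colours and (2) colour-tameness (any Op-monochromatic operad is automatically colour-tame). Condition (5) is equally immediate: the category $\cI(\cO_\cF)$ of degree-one operations is the trivial group, which is trivially a groupoid of type $F_\infty^+$. For condition (3), the assumption that $\cF$ is a Ore forest-skein category means $\cS(\cO_\cF) \simeq \cF$ is a Ore category, i.e.~left-cancellative with Ore's property, so it admits a cancellative calculus of fractions in Thumann's sense. The same holds for the symmetric and braided versions, since attaching permutations or braids to a Ore forest-skein category yields a Ore category (as recorded in Section \ref{sec:X-version}).

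The only remaining condition, (4) finite type, is the one we must connect to the hypothesis on the spine. By the identification given just before the statement, very elementary transformations of $\cO_\cF$ correspond to elements of $\cF(2,1)$ and elementary transformations correspond to elements of $\Sp(\cF)$, with the inductive construction of Thumann's spine matching exactly the inductive definition of $\Sp(\cF)$ via iterated sets $\mcm(f,g)$ of minimal common multiples starting from the set of abstract colours. Hence $\cO_\cF$ is of finite type if and only if $\Sp(\cF)$ is finite, which is our hypothesis.

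Applying Theorem \ref{theo:Thumann} then gives that $\pi_1(\cO_\cF, Y)$ is of type $F_\infty$, and invoking the identification $\pi_1(\cO_\cF, Y) \simeq \Frac(\cF^X,1)$ for $X \in \{F,V,BV\}$ finishes the proof. There is no genuine obstacle beyond carefully matching the two formalisms; the entire work of constructing Morse-theoretic filtrations and computing connectivity of descending links is encapsulated in Thumann's theorem, which we are applying as a black box.
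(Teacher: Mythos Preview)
Your proposal is correct and follows essentially the same approach as the paper: the paragraph immediately preceding the corollary verifies the five hypotheses of Thumann's Theorem \ref{theo:Thumann} in exactly the order and manner you describe (Op-monochromatic gives (1) and (2), Ore gives (3), trivial $\cI(\cO_\cF)$ gives (5), and finite spine gives (4)), then invokes the identification $\Frac(\cF^X,1)\simeq\pi_1(\cO_\cF,Y)$. Your explicit remark that the $V$ and $BV$ versions remain Ore categories is a welcome clarification that the paper leaves implicit.
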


Using Theorem \ref{theo:T-F-infty} we deduce the following result encompassing the $T$-case.

\begin{theorem}\label{theo:Finfty}
Let $\cF$ be a Ore forest-skein category and let $X$ be $F,T,V,$ or $BV$.
If the spine of $\cF$ is finite, then the forest-skein group $\Frac(\cF^X,1)$ is of type $F_\infty$.
\end{theorem}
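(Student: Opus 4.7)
The plan is to split the statement into the three cases $X=F,V,BV$ already handled by Thumann's machinery and the remaining case $X=T$ which will be reduced to the $F$-case via the earlier transitivity result.

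First I would simply invoke Corollary \ref{theo:Finfty-withoutT}: since $\cF$ is a Ore forest-skein category with finite spine, the associated operad $\cO_\cF$ is colour-tame (it is Op-monochromatic by construction), Op-monochromatic hence finitely generated in Op-colours, admits a cancellative calculus of fractions (Ore assumption), has $\cI(\cO_\cF)$ equal to the trivial group (so of type $F_\infty^+$), and has finite spine so is of finite type. Theorem \ref{theo:Thumann} applies in its planar, symmetric and braided incarnations and yields that $\Frac(\cF^X,1)$ is of type $F_\infty$ for $X=F,V,BV$.

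It remains to treat $X=T$. The key observation is that $G:=\Frac(\cF,1)$ has just been shown to be of type $F_\infty$, that is, of type $F_n$ for every $n\geq 1$. Then Theorem \ref{theo:T-F-infty} applies for every $n\geq 1$ to give that $G^T:=\Frac(\cF^T,1)$ is of type $F_n$ for every $n\geq 1$, i.e.\ of type $F_\infty$. This completes the proof of all four cases.

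The proof is essentially a packaging argument and no real obstacle arises: the non-trivial content has already been done, namely Thumann's finiteness theorem (which covers the three operadic cases) and Theorem \ref{theo:T-F-infty} (whose proof rests on Proposition \ref{prop:Q-space} and the Brown–Geoghegan criterion \ref{prop:stab-F-infty}). The only point worth double-checking is that Thumann's five hypotheses are indeed fulfilled by $\cO_\cF$ uniformly in the variant $X=F,V,BV$, which reduces to the translation dictionary between forest-skein categories and operads laid out just before the statement.
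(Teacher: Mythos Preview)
Your proposal is correct and matches the paper's approach exactly: invoke Corollary \ref{theo:Finfty-withoutT} (Thumann's theorem) for $X=F,V,BV$, then apply Theorem \ref{theo:T-F-infty} to upgrade the $F$-case to the $T$-case. The paper's proof is in fact just the one-line remark that Theorem \ref{theo:T-F-infty} yields the missing $T$-case from the already-established Corollary \ref{theo:Finfty-withoutT}.
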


We now deduce that a skein presentation with two colours and one relation provides a forest-skein group (when it exists) of type $F_\infty$. This produces a very large class of examples for which the theorem above applies. However, it has the weakness to not prove {\it existence} of the forest-skein group. 
In the next section we will present a class of skein presentations for which forest-skein groups always exist.

\begin{corollary}
Consider a presented forest-skein category 
$$\cF=\FC\la a,b| t=s\ra$$ 
with two colours and one relation $t=s$ so that $t$ and $s$ have their roots coloured by $a$ and $b$, respectively.
If $\cF$ satisfies Ore's property, then it is a Ore category whose fraction group $\Frac(\cF,1)$ is of type $F_\infty$ and so are its $T,V$ and $BV$-versions.
\end{corollary}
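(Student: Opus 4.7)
The plan is to verify the two hypotheses of Theorem \ref{theo:Finfty}: that $\cF$ is a Ore category, and that its spine is finite. The first is immediate. Indeed, with two colours and at most one relation of the form $a_1 \cdots = b_1 \cdots$ with $a \neq b$, the skein presentation is complemented in the sense of Definition \ref{def:complemented}. By Corollary \ref{cor:presentation-free} the associated monoid presentation is complete and $\cF$ is automatically left-cancellative. Combined with the assumed Ore property, this gives that $\cF$ is a Ore category, producing forest-skein groups $\Frac(\cF^X,1)$ for $X \in \{F,T,V,BV\}$.

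The heart of the proof is to compute the spine explicitly. By definition $\Sp_0(\cF) = \cF(2,1)$, which contains exactly the two abstract colours $Y_a$ and $Y_b$. I would then show that $\Sp_1(\cF) = \mcm(Y_a, Y_b)$ reduces to the singleton $\{[t]\} = \{[s]\}$. Existence of a common multiple is immediate from Ore's property (and is already witnessed by $[t] = [s]$, which satisfies $Y_a \leq [t]$ and $Y_b \leq [s] = [t]$). For uniqueness and minimality, I would invoke the fact that in a left-cancellative Ore monoid with a complemented, complete presentation, the reversing process from the associated monoid $\cF_\infty$ produces a unique lcm for any pair of letters: concretely, $a_1 \cdot (a_1 \bs b_1) = b_1 \cdot (b_1 \bs a_1)$ and this is the unique minimal common right-multiple of $a_1$ and $b_1$. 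Translating back to $\cF$ (via the embedding of Proposition \ref{prop:GHinclusion}), this minimal common right-multiple corresponds exactly to the class $[t] = [s]$, giving $\Sp_1(\cF) = \{[t]\}$.

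Since $\Sp_1(\cF)$ has only one element, there are no distinct pairs in $\Sp_1(\cF) \times \Sp_1(\cF)$, hence $\Sp_2(\cF) = \emptyset$ and inductively $\Sp_n(\cF) = \emptyset$ for all $n \geq 2$. Therefore the spine
\[
\Sp(\cF) = \bigcup_{n \geq 0} \Sp_n(\cF) = \{Y_a, Y_b, [t]\}
\]
is finite (of cardinality at most $3$). Applying Theorem \ref{theo:Finfty} then yields that $\Frac(\cF^X,1)$ is of type $F_\infty$ for each $X = F,T,V,BV$.

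The main delicate point is confirming that the minimal common multiple of $Y_a$ and $Y_b$ in the poset $(\cF,\leq)$ is unique; everything else is bookkeeping. The argument rests on translating the uniqueness of reversing in the complemented complete monoid presentation of $\cF_\infty$ (from Corollary \ref{cor:presentation-free} and Dehornoy's Proposition \ref{prop:DehornoyRCF}) into a statement about minimal common right-multiples in the category $\cF$. Any alternative minimal common right-multiple $u$ would yield, by left-cancellativity, two distinct reversing paths for $a_1^{-1} b_1$ in $\cF_\infty$, contradicting the complemented structure.
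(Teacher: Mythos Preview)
Your proof is correct and follows essentially the same route as the paper: invoke Corollary \ref{cor:presentation-free} for left-cancellativity, compute the spine as $\{Y_a, Y_b, [t]\}$, and apply Theorem \ref{theo:Finfty}. You supply more justification than the paper does for why $\mcm(Y_a,Y_b)$ is a singleton (the paper simply asserts it), and your reasoning via uniqueness of reversing in the complemented complete presentation is sound; the only minor slip is that the translation between $\cF$ and $\cF_\infty$ is in Remark \ref{rk:forest-monoid} rather than Proposition \ref{prop:GHinclusion}.
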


\begin{proof}
This is a combination of Corollary \ref{cor:presentation-free} and Theorem \ref{theo:Finfty}.
Consider $\cF$ as above and moreover assume it satisfies Ore's property.
Having $t$ and $s$ with roots of different colours implies that the skein presentation $(a,b|t=s)$ is complemented and complete. We deduce that $\cF$ is left-cancellative.
Therefore, $\cF$ is a Ore category and thus the fraction group $\Frac(\cF,1)$ can be constructed.
Now, we show that the spine of $\cF$ has at most three elements.
Indeed, by definition $\Sp_0(\cF)=\{Y_a,Y_b\}$.
Now, $\Sp_1(\cF)=\mcm(Y_a,Y_b)=\{t\}$ is a singleton. 
Since $\Sp_1(\cF)$ is a single point we obtain that $\Sp_2(\cF)$ is empty and thus so are all the $\Sp_n(\cF)$ with $n\geq 3.$
Therefore, $\Sp(\cF)=\{Y_a,Y_b,t\}$ which is finite. 
We can now apply the last theorem which implies that $\Frac(\cF,1)$ is of type $F_\infty$ and so are its $T,V$, and $BV$-versions.
\end{proof}

\begin{remark}\label{rem:Stein-Finfty}
\begin{enumerate}
\item Since we work over binary forests there is always a morphism between $n$ and $m$ inside the fraction groupoid $\Frac(\cF)$.
This implies that all the forest-skein groups $(\Frac(\cF,n),\ n\geq 1)$ are pairwise isomorphic. 
This is why we state our results only for the specific forest-skein group $\Frac(\cF,1)$ with $n=1$.
\item
Observe that if $\cF=\FC\la a,b| t=s\ra$ with $t,s$ trees with {\it three} leaves and roots of different colours, then automatically $\cF$ is a Ore category.
By Corollary \ref{cor:presentation-free} it is left-cancellative and by Corollary \ref{cor:Ore-FC} it satisfies Ore's property.
This only provides very few examples (in fact four of them with two of them isomorphic to $F$) but has the merit to cover the Cleary irrational-slope Thompson group and the {\it bicoloured} description of the ternary (Brown-)Higman-Thompson $F_{3,1}$, see Section \ref{sec:example}.
\item
The discussion of this section can be generalised to forest-skein categories $\cF$ with interior vertices having various valencies (corresponding to the extension of Stein of the groups of Higman and Brown).
The only difference resides in the definition of the spine of $\cF$. 
Indeed, for this more general case consider $\Sp_0(\cF)$ to be the set of all  {\it minimal trees with  at least} two leaves rather than trees with {\it exactly} two leaves.
Then $\Sp_{n+1}(\cF)$ is defined as before as the union of the $\mcm(x,y)$ with $x\neq y$ in $\Sp_n(\cF)$ and $\Sp(\cF)=\cup_{n\geq 1} \Sp_n(\cF).$
If this spine is finite and $\cF$ is a Ore category, then $\Frac(\cF^X,n)$ is a group of type $F_\infty$ for $X=F,T,V,BV$ and any $n\geq 1$.
In this case it does make sense to consider various $n$ in contrary to the binary case as the groups indexed by $n$ may produce several non-isomorphic groups.
\end{enumerate}
\end{remark}

\subsubsection{Strategy and limits of Thumann's theorem}\label{sec:rebel}

{\bf Strategy of Thumann.}
The strategy of Thumann resides in considering the exact same complex than our: $E\cF$ the geometric realisation of the ordered complex of the directed poset $(Q,\leq)$ where $Q$ is the set of $[t,f]$ with $t$ a tree and $f$ a forest.
He considers the origin map $\omega:Q\to\N, [t,f]\mapsto |\Root(f)|$ that is extended as a Morse function $\omega:E\cF\to\R_+$ by making it affine on each simplex. 
This map provides a filtration $E\cF_j:=\omega^{-1}([0,j])$ that is compatible with the (left) action of $G:=\Frac(\cF,1)$.
Now, $G\bs E\cF_j$ is a finite complex and thus by Brown's criterion we have that $G$ is of type $F_\infty$ if the connectivity of the pair $(E\cF_{j+1},E\cF_j)$ tends to infinity in $j$.
Using Bestvina-Brady discrete Morse theory it is then sufficient to prove that the descending links $\linkd(v)$ in $E\cF$ of vertices $v\in E\cF^{(0)}=Q$ has its connectivity tending to infinity when $\omega(v)$ tends to infinity.
This last part is the technical core of the proof.
One can show that $\linkd(v)$ only depends on $\omega(v)=j$ and is isomorphic to some complex $L_j$. 
Now, one can decompose $L_j$ with respect to subcomplexes related to $L_k$ for $k<j$. 
The Hurewicz theorem and the Mayer-Vietoris sequence give some lower bound for the connectivity of $L_j$ with respect to the connectivity of some $L_k$ with $k<j$.
When good hypothesis on $\cF$ are added (in this case: its spine is finite) one can deduce that the connectivity of $L_j$ tends to infinity and thus $G$ is of type $F_\infty$.
Note, this proof is performed separately for the three cases of planar, symmetric, and braided operad groups corresponding to $\cF$ and its $V$ and $BV$-versions, respectively.

{\bf Garside family and Witzel's theorem.}
There is another important strategy that can be used for this kind of groups.
The idea is to find a smaller subcomplex of $X\subset E\cF$ that is stabilised by $G$ and still contractible (so a universal cover of a classifying space of $G$).
We then apply a similar strategy to $X$ using Brown's criterion and Bestvina-Brady discrete Morse theory.
If $X$ is significantly smaller than $E\cF$, then proving that the connectivity of descending links are getting large may be easier to do.

Dehornoy defined Garside family for monoids and more generally for certain small categories \cite{Dehornoy-book}.
Witzel proved (in greater generality) the key result that if we keep the same vertex set $Q$ but allow only simplices of the form $x<xf_1<\cdots<xf_n$ where all $f_j$ are in a fixed factor-closed Garside family (or equivalently $f_n$ is in this specified Garside family), then we obtain a subcomplex $X\subset E\cF$ that is $G$-closed and still {\it contractible} \cite{Witzel19}.
One can then apply the strategy of above to $X$ to deduce finiteness properties of $\cF$.
Of course, the smaller the Garside family is, the smaller $X$ is, and the better is the theorem of Witzel for doing computations.

Using Dehornoy's work we have that every Ore forest-skein category $\cF$ admits a {\it smaller} Garside family $\Gar(\cF)$: it is the smallest set containing the abstract colours of $\cF$ that is closed under taking mcm and right-factors. 
To fulfil Witzel's assumption we close it for left-factors as well and continue to write it $\Gar(\cF).$
If $\Ga(\cF)$ denotes the trees of $\Gar(\cF)$, then note that $\Gar(\cF)$ is the set of forest $f=(f_1,\cdots,f_n)$ such that $f_j\in \Ga(\cF)$ for each $1\leq j\leq n.$
%It is the closure of the trees with two leaves under the operation of taking minimal common multiples and right-divisors.
By definition, we observe that $\Sp(\cF)\subset \Ga(\cF)$. 
Hence, if the spine is infinite, then we cannot apply Thumann's result but we can still use Witzel strategy by taking the subcomplex $X$.
The complex $X$ will be still quite large since $\Ga(\cF)$ is infinite but we may still hope to simplify a number of computations.

{\bf One rebel example.}
We present one example of forest-skein category that has a small skein presentation with two colours and two relations of length two, provides a forest-skein group very similar to $F$, but does not fulfil the assumption of Thumann's theorem and for which Witzel's theorem does not help.
Consider the forest-skein category $$\cF=\FC\langle a,b| a_{1,1}b_{1,2} = b_{1,1} a_{1,2}, \ a_{1,1}a_{1,2}=b_{1,1}b_{1,2}\rangle.$$
This example is inspired by a thin monoid provided by Dehornoy in \cite[Example 4.4]{Dehornoy02}.
It is not hard to prove that $\cF$ is left-cancellative and satisfies Ore's property.
A quick computation shows that $\Sp_n(\cF)$ is never empty for every $n$ implying that $\Sp(\cF)$ is infinite.
Hence, we cannot use Thumann's theorem.
Moreover, following an argument of Dehornoy we have that $\Gar(\cF)$ is as large as possible and equal to the whole category $\cF$. Hence, the subcomplex $X$ is equal to $E\cF$ in this case.

However, we will see in a future article that its forest-skein group $G:=\Frac(\cF,1)$ is of type $F_\infty$ using a different complex that is inspired by the work of Tanushevski and Witzel-Zaremsky \cite{Tanushevski16,Witzel-Zaremsky18}.
Moreover, we will show that $G$ decomposes as a wreath-product of the form $\Z_2\wr_X F$ where $F\act X$ is the usual action of $F$ on the dyadic rationals of $[0,1)$. 
The group $G$ can be described as the usual Thompson group $F$ using pairs of trees (with a single type of caret) but where leaves are labelled by $0$ or $1$ (the elements of $\Z/2\Z$). 
From this point of view it is somehow the most elementary forest-skein group strictly larger than $F$ that we could think of.
However, it has infinite spine and its Garside family $\Gar(\cF)$ si equal to $\cF$ preventing the use of two very general theorems on finiteness properties.
%%%%%%%%%%%%%%%%THUMANN%%%%%%%%%%%%%%%%%%%%%%%%%
%%%%%%%%%%%%%%%%%%%%%%%%%%%%%%%%%%%%%%%%%%%%%%%

%%%%%%%%%%%%%%%%%%%%%%%%%%%%%%%%%%%%%%%%%%%%%%%
%%%%%%%%%%%%%EXAMPLES%%%%%%%%%%%%%%%%%%%%%%%%%
\section{A large class of forest-skein groups}\label{sec:class-example}

As stressed before, given a monoid or more generally a category it is a difficult task to check if it is left-cancellative or satisfies Ore's property.
In this section, we will provide a large class of examples of forest-skein categories that are all Ore categories and thus produce forest-skein groups. 
These categories have skein presentations of a particular kind built from a family of {\it monochromatic} trees. 
The monochromaticity makes possible to prove at once that they all satisfy the property of Ore. 
Moreover, using a general criteria of Dehornoy we prove they are left-cancellative. 
Finally, their spine is as small as it could be: equal to the set of trees with two leaves and a single additional tree.
We start by giving the general construction and then specialise to the two colours case.

\subsection{General construction}

Recall that $\cUF\la S\ra$ denotes the free forest-skein category over a set $S$ and $\cUF\la *\ra$ the {\it monochromatic} free forest-skein category where $*$ denotes its unique colour.

\begin{observation}
Let $t\in\cUF\la*\ra$ be a monochromatic tree and let $\cUF\la*\ra(t)\subset \cUF\la*\ra$ be the quasi-forest subcategory generated by $t$ as defined in Section \ref{sec:forest-sub} (i.e.~all forests made of $I$ and $t$).
Define inductively the following sequence of monochromatic trees $(t_n)_{n\geq 1}$ so that $t_1=t$ and $t_{n+1}=t_n \circ f_n$ where $f_n$ is the forest composable with $t_n$ whose each tree is equal to $t$ (hence $t_{n+1}$ is obtained from $t_n$ by attaching to each of its leaf a copy of $t$).

We have that for any tree $s\in\cUF\la*\ra$, there exists a tree $z\in\cUF\la*\ra(t)$ satisfying $s\leq z$.

In particular, the sequence $(t_n)_{n\geq 1}$ is cofinal in the set of trees of $\cUF\la*\ra$ (i.e.~for all tree $s\in \cUF\la*\ra$ there exists $n\geq 1$ so that $s\leq t_n$ and $t_k\leq t_l$ for all  $1\leq k\leq l$).
\end{observation}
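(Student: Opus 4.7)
The second assertion (cofinality of $(t_n)_{n\geq 1}$) is an immediate consequence of the first together with the obvious monotonicity $t_n\leq t_{n+1}$ built into the recursion $t_{n+1}=t_n\circ f_n$. So I focus on the first: for any tree $s\in\cUF\la*\ra$ I will exhibit a tree $z\in\cUF\la*\ra(t)$ with $s\leq z$, and in fact one can take $z=t_n$ for $n$ sufficiently large. I implicitly assume $t\neq I$, since otherwise $\cUF\la*\ra(t)$ reduces to trivial forests and the claim fails for nontrivial $s$.

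\textbf{Main argument.} I proceed via the complete binary tree $c_k$ of depth $k$, defined recursively by $c_0=I$ and $c_{k+1}=Y\circ(c_k\otimes c_k)$. The argument then has two steps. \emph{Step 1:} a short induction on the depth $d(s)$ of $s$ shows $s\leq c_{d(s)}$. Indeed, writing $s=Y\circ(s_L\otimes s_R)$ with $d(s_L),d(s_R)\leq d(s)-1$, the induction hypothesis gives $s_L\leq c_{d(s)-1}$ and $s_R\leq c_{d(s)-1}$, and monotonicity of $\leq$ under $\circ$ and $\otimes$ yields $s\leq Y\circ(c_{d(s)-1}\otimes c_{d(s)-1})=c_{d(s)}$. \emph{Step 2:} letting $d_{\min}(t)\geq 1$ denote the minimum depth of a leaf of $t$, an induction on $n$ shows that every leaf of $t_n$ sits at depth at least $n\cdot d_{\min}(t)$ in $t_n$; this is because at each iteration $t_{n+1}=t_n\circ f_n$ every leaf of $t_n$ is capped by a fresh copy of $t$, adding at least $d_{\min}(t)$ to the depth of every resulting leaf. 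Consequently every vertex of $t_n$ at depth strictly less than $n\cdot d_{\min}(t)$ is interior and hence has two children, which forces $c_k\leq t_n$ whenever $n\cdot d_{\min}(t)\geq k$.

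\textbf{Conclusion.} Combining the two steps, $s\leq c_{d(s)}\leq t_n$ for any $n\geq d(s)/d_{\min}(t)$, which establishes the first assertion, and cofinality of $(t_n)_{n\geq 1}$ follows at once. There is no real obstacle here: the argument reduces to the depth computation of Step 2, and the decisive point is simply $d_{\min}(t)\geq 1$, ensuring that the depths in $t_n$ grow linearly in $n$ and eventually swallow any finite tree.
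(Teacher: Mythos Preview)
Your proof is correct, but it follows a genuinely different route from the paper's.

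The paper argues by induction on the number of carets of $s$: write $s=s_0\circ f$ with $f$ a single caret at position $j$; by induction there is $z=s_0\circ h\in\cUF\la*\ra(t)$ with $s_0\leq z$; if the $j$th tree of $h$ is nontrivial then $f\leq h$ (here monochromaticity is used) so $s\leq z$, and otherwise one grafts a single copy of $t$ at that position to obtain a larger $z'\in\cUF\la*\ra(t)$ with $s\leq z'$. This produces an economical witness $z$ built by attaching copies of $t$ only where necessary, and cofinality of $(t_n)$ is then deduced from the existence of such a $z$.

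Your argument instead passes through the complete binary tree $c_{d(s)}$ and a depth estimate: $s\leq c_{d(s)}$ by induction on depth, and $c_k\leq t_n$ whenever $k\leq n\cdot d_{\min}(t)$ because all leaves of $t_n$ lie at depth at least $n\cdot d_{\min}(t)$. This is slightly less economical (the resulting $t_n$ is typically much larger than the paper's $z$) but has two advantages: it yields an explicit quantitative bound $n\geq d(s)/d_{\min}(t)$, and it establishes cofinality of $(t_n)$ directly rather than via an intermediate $z\in\cUF\la*\ra(t)$. Both proofs ultimately hinge on the same monochromatic fact---that the unique caret $Y$ sits below every nontrivial tree---though in your version this is packaged as $d_{\min}(t)\geq 1$.
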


\begin{proof}
Consider two monochromatic trees $t,s\in\cUF\la*\ra$ assuming that $t$ is nontrivial.
We prove the observation by induction on the number of leaves of $s$.
If $s$ has only one caret, then it is smaller than $t$ since $t$ is nontrivial.
Note, this obvious fact uses in a crucial way that $\cUF\la*\ra$ is monochromatic. 
If not, write $s$ as $s_0\circ f$ where $f$ is an elementary forest with one caret say at the $j$th root of $f$.
By the induction assumption we have that $s_0\leq z$ for a certain $z\in\cUF\la*\ra(t)$.
We can decompose $z$ as $s_0\circ h$.
Now, if the $j$th tree of $z$ is nontrivial, then $z\geq s$ since $h\geq f$.
If not, attach to the $j$th root of $h$ the tree $t$.
We obtain a larger tree in $\cUF\la*\ra(t)$ which is larger than $s$.
\end{proof}

We now define our class of examples.
Fix a nonempty index set $S$ and let $(\tau_a:\ a\in S)$ be a family of nontrivial monochromatic trees of $\cUF\la *\ra$ such that all of them have the same number of leaves.
Let $$C_a:\cUF\la*\ra\to \cUF\la S\ra, Y\mapsto Y_{a}$$
be the colouring map consisting in colouring all vertices of a monochromatic forest of $\cUF\la*\ra$ with the colour $a$.
Define $\cF_\tau=\cF_{(\tau_a:\ a\in S)}$ to be the forest-skein category with colour set $S$ and skein relations  $(C_a(\tau_a), C_{b}(\tau_{b}))$ for all $a\neq b$ in $S$.
We may write $\cF=\cF_\tau$ if the context is clear.

\begin{remark}
Note that we do not require $a\in S\mapsto \tau_a$ to be injective. 
Hence, even if there are only finitely many monochromatic trees with a fixed number of leaves we have no limitation on the cardinal of $S$ since we may choose to have $a\mapsto \tau_a$ constant.
\end{remark}

Here is a very satisfactory theorem that provides a huge family of examples and whose proof is rather short. It was discovered after proving that various one parameter families of forest-skein categories (typically with two colours and one relations) were Ore categories. 

\begin{theorem}\label{theo:class-example}
Let $S$ be a nonempty set and $\tau:S\to\cUF\la*\ra$ a map from $S$ into the set of monochromatic trees.
Assume that all the trees $\tau_a$ with $a\in S$ have the same number leaves $n\geq 2$ and define $\cF_\tau$ as above.

The forest-skein category $\cF_\tau$ is left-cancellative, satisfies Ore's property, and the spine of $\cF_\tau$ is equal to $\cF_\tau(2,1)\cup\{C_a(\tau_a)\}$ where $a\in S$ is any colour.

Let $G_\tau:=\Frac(\cF_\tau,1)$ be its fraction group.
When $S$ is finite of cardinal $n$, then $G_\tau$ admits a finite presentation with $4n-2$ generators and $8n^2-4$ relations. 
Moreover, $G_\tau$ is of type $F_\infty$ (and so are its $T,V$, and $BV$-versions).
\end{theorem}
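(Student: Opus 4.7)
The strategy is to verify the three structural hypotheses (Ore's property, left-cancellativity, finiteness of the spine), and then invoke Theorems \ref{theo:Finfty} and \ref{theo:groupG-presentation} for the finiteness property and the explicit presentation.

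For Ore's property I would exhibit a cofinal sequence of trees. Fix $a\in S$, let $(t_k^{(a)})_{k\ge 1}\subset\cUF\la *\ra$ be the monochromatic sequence generated by $\tau_a$ per the Observation preceding the theorem, and set $u_k:=C_a(t_k^{(a)})$. The key auxiliary identity is $[C_a(t_k^{(a)})]=[C_b(t_k^{(b)})]$ in $\cF_\tau$ for all $a,b\in S$ and $k\ge 1$, which I would prove by induction on $k$ using the factorisation $t_{k+1}^{(a)}=t_k^{(a)}\circ \tau_a^{\ot \ell^k}$ (where $\ell$ denotes the common leaf-count of the $\tau_c$) together with the pointwise skein substitution $C_a(\tau_a)=C_b(\tau_b)$. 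Cofinality of $(u_k)$ in $\cF_\tau$ then follows by induction on the number of leaves of a representative $\tilde s\in\cUF\la S\ra$: writing $\tilde s=Y_c\circ(p\ot q)$, the inductive hypothesis provides $k_0$ with $p,q\le u_{k_0}$ in $\cF_\tau$; the auxiliary identity recolours $u_{k_0}$ to $C_c(t_{k_0}^{(c)})$ so as to fit coherently below the root caret $Y_c$, giving $\tilde s\le C_c\bigl(Y\circ(t_{k_0}^{(c)}\ot t_{k_0}^{(c)})\bigr)$; and the monochromatic Observation dominates this latter element by some $C_c(t_{k'}^{(c)})=u_{k'}$.

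For left-cancellativity I would apply Proposition \ref{prop:LC-complemented}. The skein presentation is complemented: for each pair $\{a,b\}$ with $a\neq b$ there is a single relation $(a_1 w_a,\,b_1 w_b)$, where $w_x$ is the $x$-coloured word spelling $C_x(\tau_x)$ below its root caret. The crucial simplification is that $(x_1\bs y_1)=w_x$ depends only on $x$ and not on $y$; hence for distinct colours $x,y,z$ the reversing $(x_1\bs y_1)\bs(x_1\bs z_1)=w_x\bs w_x$ collapses to $e$ via the deletion rule, and symmetrically on the $y$-side, so the strong cube condition at $(x_1,y_1,z_1)$ reduces to $e\bs e=e$. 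For the spine, $\Sp_0(\cF_\tau)=\cF_\tau(2,1)$ by definition, and for distinct $Y_a,Y_b$ any common multiple must admit representatives with both $a$- and $b$-coloured root carets, which forces a full skein substitution at the root; thus the unique minimal common multiple is $\tau:=[C_a(\tau_a)]=[C_b(\tau_b)]$, giving $\Sp_1=\{\tau\}$, $\Sp_k=\emptyset$ for $k\ge 2$, and $\Sp(\cF_\tau)=\cF_\tau(2,1)\cup\{\tau\}$, which injects in $S\sqcup\{*\}$.

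When $|S|=n$ is finite, $\Sp(\cF_\tau)$ is finite, so Theorem \ref{theo:Finfty} yields $\Frac(\cF_\tau^X,1)$ of type $F_\infty$ for $X\in\{F,T,V,BV\}$. For the explicit presentation, I would apply Theorem \ref{theo:groupG-presentation} to the reduced skein presentation where $R$ is taken to be the $n-1$ relations pairing $a$ with each other colour (a spanning tree for the skein equivalence, which suffices to present $\cF_\tau$): removing $\hat a_1,\hat a_2$ leaves $4n-2$ generators, items (1) and (2) contribute $2(2n-1)\cdot 2n=8n^2-4n$ Thompson-like commutator relations, and item (4) contributes $4(n-1)$ further relations, summing to $8n^2-4$. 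The main obstacle I anticipate is the cofinality induction, specifically the step where one reassembles the upper bounds on $p$ and $q$ under a root caret of an arbitrary colour $c$ that may differ from $a$; this is precisely where the auxiliary identity $C_a(t_k^{(a)})=C_c(t_k^{(c)})$ in $\cF_\tau$ is indispensable and where the monochromatic Observation is invoked at the very end to produce a genuine upper bound in the fixed sequence $(u_k)$.
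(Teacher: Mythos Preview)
Your proposal is correct. The arguments for left-cancellativity, the spine computation, and the presentation count are essentially identical to the paper's.

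For Ore's property you take a slightly different route. The paper argues bottom-up: it peels off an elementary caret $f_0$ from the \emph{leaves} of $s$ (writing $s=s_0\circ f_0$), replaces that caret by a copy of $t=C_a(\tau_a)$, then observes that since $t=C_b(\tau_b)$ for the colour $b$ of the next caret $f_1$, the block $f_1\circ f_0'$ is monochromatic and can be dominated inside $\cF(t)$ via the Observation; iterating until $s_k$ is trivial gives $s\le s'\in\cF(t)$. You instead work top-down, splitting at the \emph{root} as $Y_c\circ(p\ot q)$ and using your auxiliary identity $[C_a(t_k^{(a)})]=[C_b(t_k^{(b)})]$ to absorb the inductive upper bounds on $p$ and $q$ under an arbitrary root colour before invoking the monochromatic Observation once at the end. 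Both arguments hinge on the same recolouring trick $C_a(\tau_a)=C_b(\tau_b)$; yours packages it into a clean global identity and yields the cofinal sequence directly, while the paper's version avoids stating that identity but threads the recolouring through each step of the induction. Neither approach gains generality over the other.
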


\begin{proof}
Fix $\tau:S\to\cUF\la*\ra$ as above and write $\cF$ for $\cF_\tau$.
By abuse of notation we write $C_a$ for the composition 
$$\chi\circ C_a:\cUF\la*\ra\to\cUF\la S\ra\to\cF$$ 
which consists in taking one monochromatic forest $f$, colouring all its vertices by $a$, and considering its image in the quotient forest-skein category $\cF$. 
Note that $C_a(\tau_a)\in\cF$ does not depend on $a\in S$ and is equal to a certain tree that we denote by $t$.

{\it Ore's property.}
We start by proving that $\cF$ satisfies Ore's property.
Consider a tree $s\in\cF$. Let us show that $s$ is dominated by a tree in $\cF(t)$ (this latter being the quasi-forest subcategory generated by the tree $t$).
We proceed by induction on the number of carets of $s$.
If $s$ is trivial, then this is obvious. 
If not, decompose $s$ as $s_0\circ f_0$ where $f_0$ is an elementary forest having a single caret: say a caret of colour $a$ at the $k$th root.
Consider the forest $f'_0$ with its $k$th tree equal to $t$ and all other trees trivial.
We have that $f_0\leq f'_0$ and thus $s\leq s_0\circ f'_0$.
Consider now the tree $s_0$. If $s_0$ is trivial, then we are done.
If not, $s_0 = s_1\circ f_1$ where $f_1$ is an elementary forest with a caret of a certain colour $b\in S$ at the $r$th root of $f_1.$
Now, the composition $f_1\circ f_0'$ can be interpreted as a forest with only $b$-carets. 
Indeed, $f_0'\in\cF(t)$ and thus is a disjoint union of copies of $t$. 
Since $t=C_b(\tau_b)$ it can be interpreted as a tree with only $b$-carets and thus so does $f_1\circ f_0'$.
Now, the observation of above implies that $f_1\circ f_0'$ is dominated by an element $f_1'$ of $\cF(t)$ since $f_1\circ f_0'$ is monochromatic.
We deduce that $s=s_1\circ f_1\circ f_0\leq s_1\circ f_1'$ with $f_1'\in\cF(t)$ and $s_1$ is a proper rooted subtree of $s_0$.
By continuing inductively this process until $s_k$ is trivial we deduce that there exists $s'\in\cF(t)$ satisfying $s\leq s'$.

It is now easy to deduce that $\cF$ satisfies Ore's property.
Indeed, define the sequence $(t_n)_{n\geq 1}$ as in the observation. It is a cofinal sequence of trees in $\cF(t)$ and thus in $\cF$ by the proof given just above. We deduce that $\cF$ satisfies Ore's property.

{\it Left-cancellative.}
Let us prove that $\cF$ is left-cancellative.
Observe that the skein presentation of $\cF$ is complemented (see Definition \ref{def:complemented}) since for each pair of distinct colours $(a,b)$ there is a unique relation of the form $a_1\cdots=b_1\cdots$ which is $(C_a(\tau_a),C_b(\tau_b))$ and no relation of the form $a_1\cdots=a_1\cdots.$
By Proposition \ref{prop:LC-complemented}, we are reduced to check that 
\begin{equation*}E(a,b,c):=[(a_1\bs b_1)\bs (a_1\bs c_1)]\bs [(b_1\bs a_1)\bs (b_1\bs c_1)]\end{equation*}
is either undefined or equal to $e$ for all triple of distinct colours $(a,b,c)\in S^3.$
For each colour $a\in S$ we decompose the tree $C_a(\tau_a)$ as $a_1\circ f^a$ where $f^a$ is a forest with two roots that we identify with the element $f^a\ot I^{\ot \infty}$ of $\cF_\infty.$
By definition we have that $(a_1\bs b_1) = f^a$ and note that this does not depend on the colour $b$.
We deduce that 
$$E(a,b,c)=[f^a\bs f^a]\bs [f^b \bs f^b] = e\bs e =e.$$
We conclude that $\cF$ is left-cancellative and thus $\cF$ is a Ore category admitting a fraction group $G:=\Frac(\cF,1).$

{\it Spine and finiteness property.}
We now compute the spine of $\cF$.
By definition $\Sp_0(\cF)=\{Y_a:\ a\in S\}.$
Now, $\mcm(Y_a,Y_b)=\{C_a(\tau_a)\} = \{ t\}$ for all $a,b\in S, a\neq b.$
We deduce that $\Sp_1(\cF)=\{t\}$ is a singleton implying that $\Sp_n(\cF)$ is empty for all $n\geq 2$. 
Therefore, the spine of $\cF$ is equal to $$\Sp(\cF)=\{Y_a:\ a\in S\}\cup \{t\}.$$ 
In particular, the cardinal of $\Sp(\cF)$ is smaller or equal to the cardinal of $S$ and a point.

By Theorem \ref{theo:Finfty}, we deduce that if $S$ is finite, then the fraction group $\Frac(\cF,1)$ is of type $F_\infty$ and so are its $T,V,$ and $BV$-versions.

{\it Presentation.}
Fix one colour $a\in S$.
A skein presentation of $\cF$ is given by 
$$(S \ | \ (C_a(\tau_a),C_b(\tau_b)),\ b\in S\setminus\{a\})$$
where the set of relations is in bijection with $S\setminus\{a\}$ (rather than all pairs $(b,c)$ with $b\neq c$).
Consider now the reduced group presentation of $G$ associated to the same colour $a$ in Theorem \ref{theo:groupG-presentation}.
Remove the generators $\wh a_1$ and $\wh a_2$ and the third kind of relations.
Assume now that $S$ is finite of cardinal $n.$
We have $4n-2$ generators.
Moreover, the Thompson-like relations provide $2(4n^2 - 2n)=8n^2 - 4n$ group relations for $G$.
The last kind of group relation of $G$ in Theorem \ref{theo:groupG-presentation} provide $4n-4$ relations since we have $n-1$ skein relations.
In conclusion $G$ admits a group presentation with $4n-2$ generators and $8n^2-4$ relations.
\end{proof}

\begin{remark}\label{rem:tau}
\begin{enumerate}
\item Note that if $\tau_a$ has two leaves for all $a\in S$, then $\cF_\tau$ is nothing else than $\cUF\la*\ra$ the free forest-skein category over one colour and thus $\Frac(\cF_\tau,1)=F.$
When the trees $\tau_a,a\in S$ have at least three leaves, then $S$ is in bijection with $\cF(2,1)$.
\item 
We have proven that the spine of $\cF_\tau$ is equal to its trees with two leaves and $t:=C_a(\tau_a)$ for a given $a\in S$.
This is the smallest spine we could expect from a forest-skein category that is not free. 
Hence, $\cF_\tau$ has a very low complexity with respect to Thumann's algorithm for proving finiteness properties. 
\item
Given $\tau:S\to \cF$ and $\sigma$ a permutation of $S$, note that the two groups $G_\tau$ and $G_{\tau\circ \sigma}$ are isomorphic. Hence, $G_\tau$ only remembers the range of $\tau$ and the cardinal of each pre-image $\tau^{-1}(\{t\})$.
\item In general the forest-skein category $\cF_\tau$ does not satisfy the CGP (introduced in Section \ref{sec:CGP}) and thus the two groups $\Frac((\cF_\tau)_\infty)$ and $\Frac(\cF_\tau,1)$ may not be isomorphic. Such an example is given in Section \ref{sec:example-no-CGP}.
\item
The fact that the skein relations of $\cF_\tau$ are of the form $(t,s)$ with $t,s$ {\it monochromatic} makes possible to prove Ore's property at this level of generality. When the trees involved in a skein presentation are not monochromatic then it makes the analysis much harder. Indeed, there are many skein presentations with two colours and one relation that fails to provide Ore's property.
Consider for instance $$\cF=\FC\la a,b| a_1b_1a_1= b_1b_1a_1\ra.$$
\item
Note that this class of examples is very rich thanks to the two-dimensional structures of our diagrams. 
If we adapt this construction to classical monoids where equations are written on a line we would obtain much reduced and less interesting class of examples: the monoids 
$$\Mon\la S| a^n=b^n, \text{ for all } a,b\in S\ra$$
where $S$ is a set and $n$ a natural number.
\end{enumerate}
\end{remark}

{\bf System of groups.}
Fix a map $\tau:S\to\cUF\la *\ra$ as above. 
Now, for any nonempty subset $S_0\subset S$ we have the skein presentation $(S_0,R_{S_0})$ where $R_{S_0}$ is the set of all pairs $(C_a(\tau_a),C_b(\tau_b))$ with $a,b\in S_0, a\neq b$.
This produces a forest-skein category $\cF_{S_0}$ and forest-skein group $G_{S_0}$.
Note that the canonical embedding $S_0\into S$ provides an injective morphism $\cF_{S_0}\into \cF_S$ and group embedding $\iota_{S,S_0}:G_{S_0}\into G_S$.
Moreover, we have $\iota_{S,S_0}\circ \iota_{S_0,S_1}=\iota_{S,S_1}$ for a chain $S_1\subset S_0\subset S$.
We obtain a directed set of groups $(G_{S_0}:\ S_0\subset S, T\neq\emptyset).$
In particular, even if $S$ is infinite we can describe the group $G_S$ as an inductive limit of $G_{S_0}$ with $S_0$ finite.

\subsection{Two colours}\label{sec:two-colours}
Consider the case where we have two colours, i.e.~$S=\{a,b\}$.
Given any pair $(t,s)$ of nontrivial monochromatic trees with the same number of leaves we have a forest-skein group $G_{(t,s)}$ and its $X$-versions $G_{(t,s)}^X$ for $X=T,V,BV$ that are all of type $F_\infty$ by the last section.
This is a very large class of groups. 
As observed in Remark \ref{rem:tau} we have that $G_{(t,s)}\simeq G_{(s,t)}$. 
Moreover, if $t'$ is mirror image of $t$ we have $G_{(t,s)}\simeq G_{(t',s')}.$
However, it seems there are no easy way to systematically decide if two such groups are isomorphic or not. 
Although, they are all of type $F_\infty$ we suspect they may satisfy rather different properties.

Note that a pair $(t,s)$ defines an element $g$ of Thompson's group $F$ but note that if both $(t,s)$ and $(t',s')$ defines $g$ then we don't have in general $G_{(t,s)}\simeq G_{(t',s')}.$
Indeed, one can prove for instance that the family $$(G_{(t,t)}:\ t \text{ tree } )$$
contains infinitely many isomorphism classes of groups.
Although, each pair $(t,t)$ corresponds to the trivial element of $F$.
Here is a short proof of this fact.
\begin{proof}
Consider the monochromatic tree $t_n = x_1^n$ with $n+1$ equal to a left-vine made of $n$ carets.
This produces a Ore forest-skein category $\cF_n:=\FC\la a,b| a_1^n=b_1^n\ra$ with forest-skein groups $G_n:=\Frac(\cF_n,1)$ and $H_n:=\Frac((\cF_n)_\infty)$.
Observe that $\cF_n$ satisfies the CGP and thus $G_n\simeq H_n$.
Now, $H_n$ admits the infinite group presentation:
$$H_n=\Gr\langle a_j,b_j, j\geq 1 | x_q y_j = y_j x_{q+1}, a_j^n=b_j^n, 1\leq j<q\rangle$$
with abelianisation isomorphic to $\Z^2 \oplus (\Z/n\Z)^2$
implying that the forest-skein groups $(G_n:\ n\geq 2)$ are pairwise non-isomorphic.
\end{proof}

This leads to a question for which we have no answer.

\begin{question}
Is there a clear description of the following equivalence relation $\top$ defined as:
$$(t,s)\top (t',s') \text{ if and only if } G_{(t,s)}\simeq G_{(t',s')}?$$
\end{question}
%%%%%%%%%%%%%%EXAMPLES%%%%%%%%%%%%%%%%%%%%%%%%%
%%%%%%%%%%%%%%%%%%%%%%%%%%%%%%%%%%%%%%%%%%%%%%%

%\bibliographystyle{alpha.bst}

%\bibliography{biblio-FC}

%%%%%%%%%%%%%%%%%%%%%%%%%%%%%%%%%%%%%%%%%%%%%%%
%%%%%%%%%%%%%%%BIBLIOGRAPHY%%%%%%%%%%%%%%%%%%%%%%
\newcommand{\etalchar}[1]{$^{#1}$}

\end{document}